\newtheorem{theorem}{Theorem}[section]
\newtheorem{lemma}{Lemma}[section]
\newtheorem{proposition}{Proposition}[section]
\newtheorem{corollary}{Corollary}[section]
\theoremstyle{definition}
\newtheorem{definition}{Definition}[section]
\theoremstyle{remark}
\newtheorem{remark}{Remark}[section]
\def\undertilde#1{\mathord{\vtop{\ialign{##\crcr
$\hfil\displaystyle{#1}\hfil$\crcr\noalign{\kern1.5pt\nointerlineskip}
$\hfil\widetilde{}\hfil$\crcr\noalign{\kern1.5pt}}}}}
\setlist[enumerate,2]{label=\arabic*)} \newcommand*\diff{\mathop{}\!\mathrm{d}}
\newcommand{\R}{\mathbb{R}}
\newcommand{\eps}{\epsilon}
\newcommand{\T}{\mathbb{T}}
\newcommand{\loc}{\text{\textup{loc}}}
\newcommand{\ds}{\displaystyle}
\newcommand{\Sweak}{\mathcal{S}_{\text{weak}}^{d,\eps}}
\newcommand{\aplim}{\textnormal{ap }\lim}
\renewcommand{\norm}[1]{\left\lVert#1\right\rVert}
\renewcommand{\grad}{\nabla}
\numberwithin{equation}{section}
\newcommand{\beq}{\begin{equation}}
\newcommand{\eeq}{\end{equation}}
\title[The Unique limit of the Glimm--Lax construction]{The Unique Limit of the Glimm--Lax Construction for Sobolev data and obstructions to 1-d convex integration}
\author[Jeffrey Cheng]{Jeffrey Cheng}
\address[Jeffrey Cheng]{\newline Department of Mathematics \newline The University of Texas at Austin, \newline 2515
Speedway, Austin, TX 78712 \newline USA}
\email{jeffrey.cheng@utexas.edu}
\author[Cooper Faile]{Cooper Faile}
\address[Cooper Faile]{\newline Department of Mathematics \newline The University of Texas at Austin, \newline 2515
Speedway, Austin, TX 78712 \newline USA}
\email{jcfaile@utexas.edu}
\author[Sam G. Krupa]{Sam G.  Krupa}
\address[Sam G. Krupa]{\newline Département de mathématiques et applications \newline École normale supérieure, Université PSL, CNRS\newline 45 rue d'Ulm - F 75230 PARIS cedex 05 \newline France}
\email{sam.krupa@ens.fr}
\date{\today}
\thanks{2020 \textit{Mathematics Subject Classification}. Primary: 35L65, Secondary: 35B35, 35L45, 76N15}
\thanks{\textit{Key words and phrases}. Front tracking, Conservation law, Relative entropy, Glimm-Lax, Sobolev, Convex integration.}
\thanks{\textbf{Acknowledgment.} 
The authors thank Alexis Vasseur and Matthew Novack for the fruitful discussions during this work. The authors utilized Google Gemini as well as refine.ink for editing. The first and second authors are partially supported by National Science Foundation grants DMS-1840314 \& DMS-2306852 and a joint NSF-EPSRC grant DMS-EPSRC-2219434. The work of the third author is funded by the European Union through the project ``Quantitative Stability and Regularity of Large Data for Conservation Laws.'' Views and opinions expressed are however those of the author(s) only and do not necessarily reflect those of the European Union or European Research Executive Agency (REA). Neither the European Union nor the granting authority can be held responsible for them. }
\begin{document}
\emergencystretch 3em

\begin{abstract}
We consider a genuinely nonlinear $1$-d system of hyperbolic conservation laws with two unknowns. A famous construction of Glimm \& Lax shows that global-in-time ``Glimm--Lax'' weak entropy solutions exist in this setting for any initial data with small $L^\infty$ norm [Mem. Amer. Math. Soc. (1970), no. 101].
Recent work in the $L^1$-stability theory by Bressan, Marconi \& Vaidya has given the first partial uniqueness and stability results for these solutions [Arch. Ration. Mech. Anal. (2025), vol. 249].
In this paper, we build on these results by combining them with recent advances in the $L^2$-theory.
We show that solutions with initial data in the Sobolev space $H^s$ for $s>0$ are unique in the full class of Glimm--Lax solutions that decay in total variation at a rate of $1/t$.
As a secondary result, our techniques are also used to show the recent non-uniqueness result of Chen, Vasseur \& Yu for continuous solutions [preprint (2024)] cannot extend to $C^\alpha$ solutions for $\alpha > 1/2$, alongside some appropriate fractional Sobolev spaces $W^{s,p}$.
An auxiliary result of independent interest is the development of a weighted relative entropy contraction for perturbations of rarefaction waves.
\end{abstract}

\maketitle 

\tableofcontents

\section{Introduction}
In this paper we consider $1$-d hyperbolic $2 \times 2$ systems of conservation laws
\begin{equation} \label{cl}
     \left\{\begin{aligned}
        u_t+[f(u)]_x&=0 & (t,x) &\in \R^+ \times \R, \\
        u(0,x) &=u^0(x) & x &\in \R, 
    \end{aligned}\right.
\end{equation}
where $u=(u_1, u_2) \in \mathcal{G}_0 \subset \R^2$. The state space $\mathcal{G}_0$ is assumed to be convex and allowed to be unbounded, and we denote by $\mathcal{G}$ its interior. For regularity, we take $f=(f_1, f_2) \in C(\mathcal{G}_0) \cap C^4(\mathcal{G})$. Fix a state $d \in \mathcal{G}$. In this context, it is classically known that if both characteristic fields are either linearly degenerate or genuinely nonlinear, there exists a Lipschitz continuous semigroup $\mathcal{S}$, called the ``Standard Riemann Semigroup'', of small-BV solutions around $d$ obtained as the limit of front-tracking approximations, the Glimm scheme, or vanishing viscosity \cite{MR1367356}, \cite{MR1337114}, \cite{MR2150387}, \cite{MR3443431}  (see also \cite{MR1816648}, \cite{MR194770} for existence theory for \eqref{cl} with small-BV initial data). 
When there exists a strictly convex function $\eta$ and a function $q$ verifying 
\begin{equation}\label{eq:entropy-def}
\nabla q=\grad \eta f',
\end{equation}
then we say the system~\eqref{cl} is endowed with an entropy/entropy flux pair $(\eta,q)$. 
If a solution $u$ to the system~\eqref{cl} also satisfies the additional differential inequality in the sense of distributions,
\begin{align}\label{entropic}
[\eta(u)]_t+[q(u)]_x \leq 0,
\end{align}
for every entropy-entropy flux pair $(\eta, q)$, we say $u$ is a \emph{weak entropy solution} to the system~\eqref{cl}. More specifically, such a $u$ verifies
\begin{align*}
\int_0^\infty \int_{-\infty}^\infty [\phi_t(t,x)\eta(u(t,x))+\phi_x(t,x)q(u(t,x))] \diff x \diff t+\int_{-\infty}^\infty \phi(0,x)\eta(u^0(x)) \diff x \geq 0,
\end{align*}
for every $\phi \in C_0^\infty([0,\infty) \times \R)$ with $\phi \geq 0$ and every entropy-entropy flux pair $(\eta,q)$.
A well-developed $L^1$-stability theory for BV solutions shows that if a weak entropy solution exists with finite (but possibly very large) total variation, then it is unique and depends continuously on the initial data (see e.g. \cite{MR4690554}, \cite{MR4661213}, \cite{MR1375345}, \cite{MR1828320}, \cite{MR2091511}, \cite{MR1883740}).

\subsection{The Glimm--Lax class}
However, there is a class of $2 \times 2$ conservation laws for which  a theory of small $L^\infty$, non-BV solutions has developed. We introduce the following condition on the flux $f$:
\vspace{10pt}
\begin{quotation}
    \textbf{(GL)} There exists $r > 0$ such that $f\colon B_r(d) \to \R^2$ is $C^4$ with $f'(d)$ strictly hyperbolic and both characteristic fields genuinely nonlinear.
\end{quotation}
\vspace{10pt}
\par Then, a famous result of Glimm and Lax shows that there exists $\epsilon_1 > 0$ such that if $||u^0-d||_{L^\infty(\R)} \leq \eps_1$, then \eqref{cl} admits a global-in-time weak entropy solution \cite{MR265767} (cf. \cite{MR2737438}). These solutions immediately enter $\textrm{BV}_{\loc}$ and remain small in $L^\infty$. Indeed, let us define
\begin{align*}
TV(u;L):=\sup_{a \in \R}||u||_{BV([a,a+L])}.
\end{align*}
Then, the Glimm--Lax solutions satisfy
\begin{align}
 ||u(t,\cdot) - d||_{L^\infty(\R)} &\leq C_0\sqrt{||u^0-d||_{L^\infty(\R)}}, \label{smalllinfty} \\
TV(u;L) &\leq C_0 \left(\sqrt{||u^0-d||_{L^\infty(\R)}}+\frac{L}{t}\right), \label{tv1/t}
\end{align}
for a constant $C_0 > 0$ depending on $f$ and $d$. Note that there is no restriction on the total variation of $u^0$, which may be infinite. 
We say that these solutions belong to the \emph{Glimm--Lax class}:
\begin{align*}
GL(\eps_1):=\left\{\vphantom{\sum}u\,\middle|\,\text{$u$ is a weak entropy solution to \eqref{cl} verifying \eqref{smalllinfty}, \eqref{tv1/t}, $||u^0-d||_{L^\infty(\R)} \leq \eps_1$}\right\}.
\end{align*}
A major question is then the following:
\newline 
\newline 
\underline{\textbf{Question 1:}} For $2 \times 2$ systems of conservation laws verifying \textbf{(GL)}, is it possible to have a well-posedness theory that includes not just BV solutions, but also solutions $u \in GL(\eps_1)$?
\newline 

\par In particular, establishing the uniqueness and continuous dependence of solutions in $GL(\eps_1)$ has remained an open problem since the original memoir of Glimm and Lax \cite{MR4855159}. In recent work, Bressan, Marconi \& Vaidya showed the uniqueness of solutions to \eqref{cl} for initial data in a class $\mathcal{D}_\alpha$ (see \Cref{eq:D-alpha}) among solutions additionally verifying an algebraic $L^1$ convergence estimate (see \Cref{prop:bmv-main}). They also show unconditional well-posedness for a subclass $(\tilde{\bold{P}}_\alpha) \subset \mathcal{D}_\alpha$ \cite[Theorem 1.2]{2025arXiv250500420B}. Functions in $(\tilde{\bold{P}}_\alpha)$ may have infinite total variation, but it must be concentrated on finitely many open intervals with small total length. In this paper, we aim to resolve {Question 1} for a large class of initial data. In particular, we will be able to show well-posedness for initial data in any fractional Sobolev space, which includes functions that may have infinite total variation on any open interval.

\subsection{Main results}
Our main result is the following:
\begin{theorem}\label{thm:main-gl}
Assume that the system \eqref{cl} verifies \textbf{(GL)}. Fix $p \in [1,\infty]$ and $s > 0$. Then, there exists $\eps_1 > 0$ such that the following holds. Let $u^0 \in W^{s,p}_{\loc}(\R)$ with $\norm{u^0-d}_{L^\infty(\R)} \leq \eps_1$. Then, the solution to \eqref{cl} in $GL(\eps_1)$ with initial data $u^0$ is unique. More specifically, fix $C, R, T >0$. Then, the family of solutions in $GL(\eps_1)$ with initial data in $ W^{s,p}_{\loc}(\R)$ yields a H\"older continuous semigroup, i.e. there exists $K>0$ such that the following estimate holds:
\begin{align*}
||u_1(t)-u_2(t)||_{L^1((-R,R))} \leq K ||u_1^0-u_2^0||_{L^1((-R -ct, R + ct))}^\theta, \indent \forall \  t \in [0,T],
\end{align*}
where $u_1^0,u_2^0$ are any two initial data satisfying $\norm{u^0_i}_{W^{s,p}{((-R-cT, R+cT))}} + \norm{u^0_i}_{L^\infty((-R-cT, R+cT))} \leq C$ for $i=1,2$ and the constant $c > 0$ is the speed of information (see \Cref{rem:soi}). 
Additionally, $\theta < 1$ may be taken arbitrarily close to $1$ by choosing $\eps_1 > 0$ sufficiently small.
\end{theorem}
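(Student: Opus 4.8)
The plan is to combine the conditional uniqueness result of Bressan–Marconi–Vaidya with an $L^2$-type relative entropy argument, the point being that the algebraic $L^1$-convergence hypothesis in the BMV theorem (which is what makes it conditional) can be \emph{verified} for Glimm–Lax solutions launched from $W^{s,p}_{\loc}$ data, using quantitative stability estimates for the Standard Riemann Semigroup $\mathcal{S}$ together with the weighted relative entropy contraction for rarefaction perturbations advertised in the abstract. First I would record that $W^{s,p}_{\loc} \hookrightarrow \mathcal{D}_\alpha$ for an appropriate $\alpha$ (depending on $s,p$ via a Sobolev/Besov embedding argument), so that BMV applies in principle and the task reduces to producing the missing convergence-rate estimate. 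Concretely, given $u^0 \in W^{s,p}_{\loc}$ with $\norm{u^0-d}_{L^\infty} \le \eps_1$, I would mollify: let $u^0_\delta$ be a smooth approximation at scale $\delta$, so that $\norm{u^0 - u^0_\delta}_{L^1_{\loc}} \lesssim \delta^{s'}$ for some $s' = s'(s,p) > 0$ by the embedding, while $u^0_\delta$ has total variation blowing up at most polynomially in $\delta^{-1}$. The solution $v_\delta := \mathcal{S}_t u^0_\delta$ is then the unique small-BV solution with that data, and the $L^1$-stability of $\mathcal{S}$ gives $\norm{\mathcal{S}_t u^0_{\delta_1} - \mathcal{S}_t u^0_{\delta_2}}_{L^1} \lesssim \norm{u^0_{\delta_1} - u^0_{\delta_2}}_{L^1}$; hence $v_\delta \to u_\infty$ in $C([0,T]; L^1_{\loc})$ to some limit independent of the mollification, and this limit lies in $GL(\eps_1)$ by \eqref{smalllinfty}–\eqref{tv1/t} (the Glimm–Lax bounds are uniform in $\delta$).

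The heart of the matter is to show that \emph{every} solution $u \in GL(\eps_1)$ with the given data equals this $u_\infty$, with the quantitative rate. Here I would run a relative-entropy / weighted-$L^2$ comparison between an arbitrary $u \in GL(\eps_1)$ and the BV solution $v_\delta = \mathcal{S}_t u^0_\delta$. The standard relative entropy functional $\int \eta(u\,|\,v_\delta)\,a(t,x)\,dx$, with a suitably constructed weight $a$ adapted to the shock and rarefaction structure of $v_\delta$ (shocks handled by the $a$-contraction method, rarefactions by the new weighted contraction estimate of the auxiliary result), should yield a Grönwall inequality of the form
\begin{align*}
\int \eta(u\,|\,v_\delta)(t,x)\,dx \;\lesssim\; \int \eta(u^0\,|\,u^0_\delta)(x)\,dx \;+\; (\text{error terms controlled by } TV(v_\delta) \cdot \delta\text{-powers}),
\end{align*}
where the error terms come from the discrepancy between the exact wave patterns of $v_\delta$ and the genuine solution, and are summable thanks to the $1/t$ total-variation decay \eqref{tv1/t} enjoyed by \emph{both} $u$ and $v_\delta$ away from $t=0$, combined with the smallness of $\eps_1$. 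Optimizing over $\delta$ — balancing the data term $\norm{u^0 - u^0_\delta}_{L^1} \sim \delta^{s'}$ against the BV-growth error $\sim \delta^{-N}\cdot(\text{small})$ — produces $\norm{u(t) - u_\infty(t)}_{L^1(-R,R)} = 0$, i.e. uniqueness, and tracking the dependence on $\norm{u^0_1 - u^0_2}_{L^1}$ through the same scheme (mollify both data at the same scale, compare via $\mathcal{S}$, then pass to the limit) gives the Hölder bound with exponent $\theta = \theta(s,p,\eps_1) \to 1$ as $\eps_1 \to 0$, since the loss in $\theta$ is exactly the price of the $\delta$-optimization and the relative-entropy errors, both of which degrade gracefully with $\eps_1$.

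I expect the main obstacle to be the relative-entropy comparison against a BV solution that is \emph{not} itself a small perturbation of a single wave: one must patch together the shift-based $a$-contraction near each shock front of $v_\delta$ with the new weighted contraction near each rarefaction fan, control the interaction/crossing errors, and — crucially — make the resulting constants uniform as the number of fronts of $v_\delta$ goes to infinity with $\delta \to 0$. This is where the $TV(v;L) \lesssim \sqrt{\eps_1} + L/t$ estimate is essential: it keeps the cumulative wave strength (hence the accumulated error in the Grönwall argument) bounded uniformly in $\delta$ on any time interval $[\tau, T]$, and a separate short-time argument near $t = 0$ (where TV may be large) handles the initial layer using only the $L^\infty$-smallness and the $L^1$-contraction of $\mathcal{S}$. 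A secondary technical point is verifying that an \emph{arbitrary} $u \in GL(\eps_1)$ — which is only assumed to be a weak entropy solution with the stated bounds, not a limit of any particular scheme — has enough structure (e.g. strong traces, the entropy measure being concentrated appropriately) to be a legitimate competitor in the relative-entropy estimate; this should follow from \eqref{tv1/t}, which forces $u \in BV_{\loc}((0,\infty)\times\R)$, so the classical structure theory for BV entropy solutions applies for $t > 0$.
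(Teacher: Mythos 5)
Your overall architecture is the one the paper uses: mollify $u^0$ at scale $\delta$, compare an arbitrary $u \in GL(\eps_1)$ with the BV semigroup solution $\mathcal{S}_t u^0_\delta$ via a weighted relative entropy ($a$-contraction with shifts for shocks, the new weighted contraction for rarefactions), balance $\norm{u^0-u^0_\delta}_{L^2}\sim\delta^{s}$ against the total-variation blow-up $\sim\delta^{-1}$ to verify the algebraic convergence rate $\norm{u(t)-u^0}_{L^1}\lesssim t^{s/2}$, and then invoke the conditional uniqueness result of Bressan--Marconi--Vaidya (\Cref{prop:bmv-main}) to conclude both uniqueness and the H\"older semigroup. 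The inclusion of $W^{s,p}_{\loc}$ into $\mathcal{D}_\alpha$, the identification of the rarefaction contraction as the new ingredient, and the observation that \eqref{tv1/t} supplies strong traces for $t>0$ all match the paper.

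The gap is in how you handle the initial layer, which is the crux of the theorem. You propose to run the relative-entropy Gr\"onwall over all of $[0,T]$, controlling the errors by the $1/t$ total-variation decay away from $t=0$, and to handle $[0,\tau]$ ``using only the $L^\infty$-smallness and the $L^1$-contraction of $\mathcal{S}$.'' This is backwards: the $L^1$ Lipschitz constant $L(\tau)$ of the positive-time semigroup blows up as $\tau\downarrow 0$ (\Cref{prop:L1-bressan}), and $u$ is not a small-BV object at $t=0$, so neither the $L^1$ contraction nor an unlocalized relative-entropy comparison against $v_\delta$ --- whose \emph{global} total variation is $\sim 1/\delta$ and thus exceeds the smallness threshold $\delta_0$ required by the front-tracking and $a$-contraction machinery --- closes the initial layer. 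The paper's resolution (\Cref{lem:trapezoidconstruction}) is a \emph{spatial} localization: tile $\R$ by trapezoids of base length $L\sim\delta\,\delta_0/\norm{u^0}_{L^\infty}$, on each of which $TV(u^0_\delta)\leq\delta_0$ so the weak-BV $L^2$ estimate of \Cref{thm:cor} applies for times $t\lesssim L/c$, and sum the resulting bounds with Cauchy--Schwarz; the product $\sqrt{L}\cdot\sqrt{R/L}=\sqrt{R}$ is exactly what keeps the constant uniform as the number of fronts diverges. With the choice $\delta=t^{1-s/2}$ the time restriction $t\lesssim\delta$ is self-consistent for small $t$, and large $t$ is handled trivially by the $L^\infty$ bound \eqref{smalllinfty}. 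Without this localization your $\delta$-optimization has no valid estimate to optimize on the time scale where it matters.
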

In particular, \Cref{thm:main-gl} says that the Glimm--Lax construction \cite{MR265767} is well-defined and has a unique limit within $GL(\epsilon_1)$ as long as the initial data is in $H^s(\R)$ for any $s > 0$ and $\epsilon_1$ is sufficiently small. 
\par \Cref{thm:main-gl} is best contextualized in relation to recent works in the $L^1$-stability theory \cite{2025arXiv250500420B}, \cite{MR4841631}, which define classes similar to the following. For any $0 < \alpha < 1$, define 
\begin{align}
\mathcal{D}_\alpha:=\Big\{u^0 \in L^\infty(\R) \, \Big| \, &\text{the Cauchy problem \eqref{cl} has a weak entropy solution} \label{eq:D-alpha} \notag \\
&\text{$u \in GL(\eps_1)$ such that $\sup_{t \in (0,T)}t^{-\alpha}\norm{u(t)-u^0}_{L^1((a,b))} < \infty $ for all $(a,b) \subset \R$}\Big\}, \\
\tilde{\mathcal{D}}_\alpha:=\Big\{u^0 \in L^\infty(\R)\,\Big|\, &\text{the Cauchy problem \eqref{cl} has a weak entropy solution} \notag \\
&\text{$u\in GL(\eps_1)$ such that $\sup_{t \in (0,T)}t^{1-\alpha}\norm{u(t)}_{\textrm{BV}((a,b))} <\infty$ for all $(a,b) \subset \R$}\Big\}. \label{eq:tilde-D-alpha}
\end{align}
The uniqueness of solutions to \eqref{cl} with initial data in $ \mathcal{D}_\alpha$ among solutions additionally verifying the decay rate
\begin{equation} \label{eq:Da-decay}
\sup_{t \in (0,T)} t^{-\alpha} \norm{u(t,\cdot) - u^0}_{L^1((-R,R))} < \infty, \quad \text{for any $R > 0$,}
\end{equation}
was shown in recent work of Bressan, Marconi \& Vaidya \cite[Theorem 1.1]{2025arXiv250500420B}. A corollary of our techniques from the proof of \Cref{thm:main-gl} is that the space $\mathcal{D}_\alpha$ actually contains the Sobolev spaces of fractional order. An analogous statement was proven in the scalar setting in \cite[Proposition 4.2]{MR4841631}, but such an inclusion remained an open problem for solutions to systems satisfying \textbf{(GL)}.

\begin{corollary}\label{thm:main-inc}
For any $\alpha, s \in (0,1)$, there exists $\epsilon_1 > 0$ such that the following inclusions hold
\begin{alignat}{4}
    \tilde{\mathcal{D}}_\alpha \subset&\,\mathcal{D}_\alpha,  &&\label{item:1}\\
     \mathcal{D}_{\alpha}\subset&\, H^{r}_{\loc}(\R; B_{\eps_1}(d)),\quad &&\forall \ r < \frac{\alpha}{2(1+\alpha)}, \label{item:2} \\
    H^{s}_{\loc}(\R; B_{\eps_1}(d)) \subset&\, \mathcal{D}_\omega, &&\forall \ 0 <\omega \leq \frac{s}{2}. \label{item:3}
\end{alignat}
\end{corollary}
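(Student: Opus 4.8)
The plan is to establish the three inclusions of \Cref{thm:main-inc} in a mostly self-contained way, leveraging the decay estimates \eqref{smalllinfty}--\eqref{tv1/t} and the machinery behind \Cref{thm:main-gl}. For \eqref{item:1}, the point is that a quantitative bound on the BV norm of $u(t)$ on intervals controls the $L^1$ distance of $u(t)$ from $u^0$: if $\sup_{t} t^{1-\alpha}\norm{u(t)}_{\textrm{BV}((a,b))} < \infty$, then integrating the transport structure of \eqref{cl} (fronts move with speeds bounded by $c$), we get $\norm{u(t)-u^0}_{L^1((a',b'))} \lesssim \int_0^t \norm{u(\tau)}_{\textrm{BV}((a,b))}\, d\tau \lesssim \int_0^t \tau^{\alpha - 1}\, d\tau \lesssim t^\alpha$ on any slightly smaller interval $(a',b')$. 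So $u^0 \in \mathcal{D}_\alpha$. This is the easy inclusion and should follow from finite speed of propagation plus a routine estimate on how much mass can cross a given spatial window.

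For \eqref{item:3}, the strategy is to show that $H^s_{\loc}$ data lies in $\mathcal{D}_\omega$ for $\omega \le s/2$. The natural approach is to compare the Glimm--Lax solution $u$ to the solution of the same system with the \emph{mollified} initial data $u^0 * \rho_\delta$, or more directly to estimate $\norm{u(t) - u^0}_{L^1}$ by splitting into (i) the distance from $u(t)$ to $u^0 * \rho_\delta$ evolved, and (ii) the approximation error $\norm{u^0 - u^0*\rho_\delta}_{L^1} \lesssim \delta^s \norm{u^0}_{H^s}$ (using the $L^2 \hookrightarrow L^1_{\loc}$ embedding and the fractional smoothing estimate). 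For the first piece, the TV bound \eqref{tv1/t} on the smoothed data behaves like $\delta^{-(1-s)}$ or so at the initial time, hence after time $t$ the solution has moved $O(t \cdot \text{(flux variation)})$; balancing $\delta$ against $t$ (choosing $\delta \sim t^{?}$) and optimizing should produce the rate $t^{s/2}$. I expect the key technical input here is a quantitative stability estimate comparing two Glimm--Lax solutions with nearby data — essentially a weak form of \Cref{thm:main-gl} itself, or the $L^2$-type relative entropy estimates the paper develops — together with the standard interpolation bound $\norm{u^0 - u^0 * \rho_\delta}_{L^1} \lesssim \delta^s$.

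For \eqref{item:2}, which asserts $\mathcal{D}_\alpha \subset H^r_{\loc}$ for $r < \alpha/(2(1+\alpha))$, the idea is to run the previous argument in reverse: membership in $\mathcal{D}_\alpha$ gives $\norm{u(t) - u^0}_{L^1} \lesssim t^\alpha$, while the $1/t$ decay of total variation \eqref{tv1/t} gives $\norm{u(t)}_{\textrm{BV}((a,b))} \lesssim t^{-1}$ for $t$ bounded away from $0$ — combined with the $L^\infty$ bound \eqref{smalllinfty}, one controls $\norm{u(t) - u^0}_{L^2}$ by interpolating the $L^1$ bound against the $L^\infty$ bound, and controls the smoothness of $u(t)$ by its BV norm. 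A Besov/Littlewood--Paley characterization of $H^r$ then lets one estimate the high-frequency part of $u^0$ by $\norm{u^0 - u(t)}_{L^2} + \norm{u(t)}_{\textrm{BV}}$-type quantities with $t = t(\text{frequency})$ chosen to optimize; the exponent $\alpha/(2(1+\alpha))$ emerges from this optimization (the factor $1/2$ from the $L^1$-to-$L^2$ interpolation, the $1+\alpha$ from balancing the BV growth $t^{-1}$ against the $L^1$-decay $t^\alpha$).

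The main obstacle I anticipate is the second piece of \eqref{item:3}: obtaining a \emph{quantitative} comparison between the Glimm--Lax solution with rough data and that with mollified data, with an explicit dependence on the mollification scale $\delta$. The subtlety is that Glimm--Lax solutions are not known a priori to be stable in $L^1$ (that is precisely the content of \Cref{thm:main-gl}), so one cannot simply invoke a contraction estimate; instead one must use the weighted relative entropy contraction for perturbations of shocks and rarefactions advertised in the abstract, applied carefully along the front-tracking approximations, and track how the error constants depend on the total variation (which blows up as $\delta \to 0$). Getting the exponents to close — i.e.\ verifying that the loss from the TV blow-up is beaten by the gain $\delta^s$ from smoothing — is where the real work lies, and it is what forces the rate $s/2$ rather than something better.
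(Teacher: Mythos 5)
Your proposal is correct and follows essentially the same route as the paper for all three inclusions: \eqref{item:1} by integrating the $\text{BV}$ bound in time via $\limsup_{\eps\to0}\eps^{-1}\norm{u(t+\eps)-u(t)}_{L^1}\leq K\norm{u(t)}_{BV}$; \eqref{item:2} by the triangle inequality on $\norm{u^0(\cdot)-u^0(\cdot-h)}_{L^2}$ with the $L^1$--$L^\infty$ interpolation giving $t^{\alpha/2}$, the bound $\norm{u(t,\cdot)-u(t,\cdot-h)}_{L^1}\leq |h|\norm{u(t)}_{BV}\lesssim |h|/t$, and the choice $t\sim|h|^{1/(1+\alpha)}$ (the paper uses the Gagliardo difference-quotient seminorm of $H^r$ rather than Littlewood--Paley, an equivalent characterization for $r\in(0,1)$); and \eqref{item:3} by exactly the mollification-plus-weak-BV-$L^2$-stability decomposition from the proof of \Cref{thm:main-gl} with $\delta=t^{1-s/2}$. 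The technical input you anticipate for \eqref{item:3} is precisely \Cref{thm:cor} localized to trapezoids of base length $\sim\delta$ (\Cref{lem:trapezoidconstruction}), which is how the paper beats the TV blow-up of the mollified data.
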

Some comments are in order:
\begin{itemize}
    \item The inclusion \eqref{item:1} is known already and was proven in \cite[Equation (1.11)]{2025arXiv250500420B}.
    \item \cite[Theorem 1.2]{2025arXiv250500420B} shows that there exists a subclass $(\tilde{\bold{P}}_\alpha) \subset \mathcal{D}_\alpha$. \Cref{item:3} in \Cref{thm:main-inc} shows that the uniqueness domains $D_\alpha$ are actually extremely large classes that contain fractional Sobolev spaces of all orders. In particular, they contain initial data of infinite total variation that is highly oscillatory everywhere. An example of such data is a path of a fractional Brownian motion, which lies in $W^{s,\infty}_{\loc}$ almost surely for $s$ dependent on the Hurst index.
    \item By combining ~\eqref{item:3} and \cite[Theorem 1.1]{2025arXiv250500420B}, one can show that \eqref{cl} is well-posed in the class of solutions 
    \begin{align*}
    \Big\{u \in GL(\eps_1) \, \Big|\, u \text{ verifies \eqref{eq:Da-decay}}\Big\},
    \end{align*}
    for initial data in $W^{s,p}_{\loc}$. 
    However, this can be upgraded by using \Cref{thm:main-gl} to show well-posedness in the full Glimm--Lax class $GL(\eps_1)$ for initial data in $W^{s,p}_{\loc}$. The key step is to show that \emph{every function} $u \in GL(\eps_1)$ with initial data $u^0 \in W^{s,p}_{\loc}$ verifies the algebraic $L^1$ convergence rate~\eqref{eq:Da-decay}. In fact, \Cref{thm:main-gl}, \eqref{item:2} and \eqref{item:3} show that solutions in $GL(\eps_1)$ which verify \eqref{eq:Da-decay} are \emph{completely characterized} by having Sobolev initial data.
\end{itemize}

\subsection{On non-uniqueness for continuous solutions}
As another application of our methods, we also provide a theorem complementing recent work on non-uniqueness for continuous solutions to $2 \times 2$ systems. For a class of  systems verifying \textbf{(GL)} and additionally verifying a so-called ``twisted condition,'' non-unique continuous periodic solutions to \eqref{cl} were constructed by Chen, Vasseur \& Yu \cite{cvy}. This construction gives solutions with arbitrarily small $L^\infty$ norm, low regularity, and it is unknown if they satisfy the entropy inequality~\eqref{entropic}. It is of interest to know whether this same phenomenon can hold for solutions satisfying an entropy inequality, or possessing a higher degree of regularity. Our next theorem gives an obstruction to extending these results to have more regularity.
\begin{theorem}\label{thm:main-obs}
Let $\alpha > \frac{1}{2}$. Then, there exists $\eps_1 > 0$ such that the following is true. For any $T>0$, suppose $u \in L^\infty(0,T;C^\alpha(\R;B_{\eps_1}(0)))$ is a periodic weak solution to \eqref{cl} with initial data $u^0 \in C^\alpha(\R;B_{\eps_1}(0))$. Then, $u$ coincides with the unique Glimm--Lax solution in $GL(\eps_1)$ with initial data $u^0$.
\end{theorem}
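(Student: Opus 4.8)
The plan has two movements: first upgrade $u$ from a weak solution to a weak \emph{entropy} solution, and then identify it with the Glimm--Lax solution carrying the same data, using the stability machinery behind \Cref{thm:main-gl}. For the first movement, the threshold $\alpha > \tfrac12$ enters through the Onsager-type phenomenon for conservation laws. I would mollify \eqref{cl} in space, set $u_\delta := u \ast \rho_\delta$, and pair the mollified equation against $\nabla\eta(u_\delta)$; using the equation to transfer regularity in time (so that no time mollification is needed beyond what the weak formulation already provides), the only obstruction to the entropy equality $\partial_t\eta(u) + \partial_x q(u) = 0$ is the commutator $\nabla\eta(u_\delta)\cdot\partial_x\big((f(u))_\delta - f(u_\delta)\big)$. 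On a period cell there are no boundary terms, and the Constantin--E--Titi estimate bounds the space-time integral of this commutator by $C\,\delta^{2\alpha - 1}$, with $C$ depending only on $f$, $\eta$ near $d$, and $\norm{u}_{L^\infty_t C^\alpha_x}$; since $2\alpha - 1 > 0$ this tends to $0$ as $\delta \to 0$. Hence $u$ satisfies the entropy equality for every entropy pair $(\eta,q)$, in particular the inequality \eqref{entropic}, so $u$ is a weak entropy solution; note also $\norm{u^0}_{L^\infty(\R)} < \eps_1$.

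For the second movement, I would observe that $u^0 \in C^\alpha(\R;B_{\eps_1}(0))$ embeds into $W^{s,p}_{\loc}(\R)$ for every $s \in (0,\alpha)$ and every $p \in [1,\infty]$, so in particular $u^0 \in W^{s,p}_{\loc}$ with $s>0$. Taking $\eps_1$ small enough, the Glimm--Lax theorem \cite{MR265767} produces a solution $\bar u \in GL(\eps_1)$ with data $u^0$ (which, by the translation invariance of \eqref{cl} and of $GL(\eps_1)$ together with the uniqueness in \Cref{thm:main-gl}, may be taken periodic), and \Cref{thm:main-gl} tells us that $\bar u$ is the \emph{unique} element of $GL(\eps_1)$ with this data. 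Thus it remains only to prove $u = \bar u$.

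I would establish $u = \bar u$ by running the weighted relative entropy / $a$-contraction comparison that drives \Cref{thm:main-gl}, now with a front-tracking approximation of $\bar u$ — whose shock and rarefaction structure is explicit — as the reference solution, and $u$, known to be a weak entropy solution with $\norm{u(t)}_{L^\infty(\R)} \le \eps_1$, as the perturbation. Because the two initial data coincide, the shifted weighted relative-entropy functional vanishes at $t=0$; the contraction estimate then forces it to stay $0$ on $[0,T]$, whence $u(t,\cdot) = \bar u(t,\cdot)$ almost everywhere for each $t$. Two features of the setup make this go through: periodicity renders all the $L^2$-per-period quantities appearing in the argument finite (a $C^\alpha$ periodic function is square integrable on a period cell), and the finite speed of information (see \Cref{rem:soi}) lets one pass freely between the periodic and whole-line formulations. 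Alternatively, one can first verify that $u \in GL(\eps_1)$ directly — the bound \eqref{smalllinfty} is immediate from $\norm{u(t)}_{L^\infty(\R)} \le \eps_1$ once $\eps_1$ is small, and the $1/t$ total-variation decay \eqref{tv1/t} follows from the same comparison against front-tracking approximations — and then invoke \Cref{thm:main-gl} verbatim.

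The main obstacle is this last comparison: a relative entropy estimate between a solution with a genuinely intricate (front-tracking) BV structure and a merely $C^\alpha$ perturbation, which requires the correct choice of shifts and weight and the control of Glimm-type wave-interaction error terms. Fortunately that analysis is precisely what is carried out in the proof of \Cref{thm:main-gl}; the only genuinely new ingredient here is the Onsager-type first movement, and the role of the hypothesis $\alpha > \tfrac12$ is exactly that it is the entropy-conservation threshold, which is what certifies $u$ as an admissible perturbation for the machinery. Everything else reduces to checking that $u$ meets the mild hypotheses — weak entropy solution, small in $L^\infty$ — demanded of the perturbation.
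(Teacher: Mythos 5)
Your first movement (entropy conservation via a Constantin--E--Titi commutator estimate) matches the paper's \Cref{prop:entropy-conservation}, though note the sharp threshold there is the Onsager exponent $\alpha>\tfrac13$ (the commutator is $O(\delta^{3\alpha-1})$ in the $B^\alpha_{3,\infty}$ framework), not $\tfrac12$. The genuine gap is in your second movement, and it concerns precisely where $\alpha>\tfrac12$ is actually needed. You propose to run the $a$-contraction comparison with a front-tracking approximation of $\bar u$ as reference, claim the relative-entropy functional vanishes at $t=0$ because ``the two initial data coincide,'' and conclude it stays zero on $[0,T]$. This cannot work as stated: \Cref{thm:cor}/\Cref{prop:main} require the reference datum to be small-BV, while $u^0\in C^\alpha$ generically has infinite total variation on every interval, and $\bar u$ itself only enters $BV_{\loc}$ for $t>0$ with $TV\sim 1/t$. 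So the front-tracking reference must be started from \emph{mollified} data $u^0\ast\gamma_\delta\neq u^0$ (the functional does not vanish at $t=0$; it is of size $\delta^{\alpha}$ in $L^2$), and the weak-BV estimate is only valid on trapezoids of temporal height $\sim\delta^{1-\alpha}$ dictated by the TV density $\sim\delta^{\alpha-1}$ of the mollification. There is no single contraction valid on $[0,T]$.

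To reach a fixed time one must therefore iterate, and naive iteration of the $L^2\to L^1$ (or H\"older-$1/2$) short-time estimates over the $n\sim\delta^{\alpha-1}$ required steps loses too much. The paper's \Cref{prop:nonperiodic} resolves this by resampling $u$ at times $i\tau$ with $\tau\sim\delta^{1-\alpha}$, using \Cref{lem:trapezoidconstruction} only on each short window, and then propagating the comparison of consecutive samples $v_{i-1},v_i$ to the terminal time via the $L^1$-Lipschitz semigroup of \Cref{prop:L1-bressan} with quantified blow-up $L(\tau)\leq C\tau^{-\beta}$. The resulting error is $\delta^\alpha(1+L(\tau)n)\sim\delta^{2\alpha-1-\beta(1-\alpha)}$, which tends to zero exactly when $\alpha>\tfrac12$ and $\beta$ (hence $\eps_1$) is chosen small. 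So $\alpha>\tfrac12$ is not the entropy-conservation threshold, as you assert; it is the threshold for closing this iteration. Your fallback (``verify $u\in GL(\eps_1)$ directly, including the $1/t$ TV decay'') is likewise unjustified a priori: the TV decay for $u$ is a consequence of $u=\bar u$, not an input you can assume.
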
 
\begin{remark}
The same techniques used in the proof of \Cref{thm:main-obs} can be used to prove a similar theorem about non-periodic solutions (see \Cref{prop:nonperiodic}). We chose to write \Cref{thm:main-obs} with periodic functions to more clearly compare the result to \cite[Theorem 1.1]{cvy}.
\end{remark}
\begin{remark}
We remark here that solutions with this level of regularity automatically obey the entropy inequality \eqref{entropic} for any (not necessarily convex) entropy $\eta$ (cf. \Cref{prop:entropy-conservation}).
\end{remark}
\Cref{thm:main-obs} shows that, as soon as one has a periodic solution in $C^\alpha_\loc (\R;B_{\eps_1}(0))$ for any $\alpha > \frac{1}{2}$, it must coincide with the unique Glimm--Lax solution. Recall that the Glimm--Lax solution is unique by \Cref{thm:main-gl}. Thus, the non-uniqueness result of \cite{cvy} cannot be strengthened to display non-uniqueness for this degree of regularity. Using the same techniques, we can also prove an analogous theorem for periodic solutions $u \in L^\infty(0,T;W^{s,p}_{\loc}(\R;B_{\eps_1}(0)))$. We are able to consider pairs $s,p$ such that the spaces $W^{s,p}$ do not embed into $C^\alpha$ for $\alpha > \frac{1}{2}$.
\begin{theorem} \label{thm:sob-ext-obs}
    Fix $p \in (1,\infty)$ and let 
    \beq \label{eq:sob-s-con} s > \begin{cases}\frac{p}{2p-1} & p \geq 2, \\ \frac{2}{p+1} & p < 2.\end{cases}\eeq Then, there exists $\eps_1 > 0$ such that the following is true. Suppose $u \in L^\infty(0,T;W^{s,p}_{\loc}(\R;B_{\eps_1}(0)))$ is a weak solution to \eqref{cl} with initial data $u^0 \in W^{s,p}_\loc(\R;B_{\eps_1}(0))$. Then, $u$ is the unique Glimm--Lax solution in $GL(\eps_1)$ with initial data $u^0$.
\end{theorem}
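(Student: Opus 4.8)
The plan is to mirror the proof of \Cref{thm:main-obs}, the single new ingredient being that the given solution must be shown to be entropy-admissible by arguing \emph{directly} in the fractional Sobolev scale. Indeed, for $s$ close to the threshold in \eqref{eq:sob-s-con} one has $W^{s,p}_\loc\hookrightarrow C^{s-1/p}_\loc$ only with $s-1/p<\tfrac12$, so \Cref{thm:main-obs} cannot be invoked and a reduction to the H\"older case is unavailable; the conditions \eqref{eq:sob-s-con}, and the split between $p\geq 2$ and $p<2$, are precisely what is needed to close the relevant commutator estimate. Once entropy admissibility is established, the argument reduces to \Cref{thm:main-gl}.

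First I would show that $u$ is a weak entropy solution --- in fact that the entropy \emph{equality} holds for every (not necessarily convex) entropy pair --- following the argument of \Cref{prop:entropy-conservation} adapted to $W^{s,p}$. Mollifying in space, $u^\eps:=u\ast\rho_\eps$, the weak formulation gives $\partial_t u^\eps+\partial_x(f(u))^\eps=0$, hence for any entropy pair $(\eta,q)$,
\[
\partial_t\eta(u^\eps)+\partial_x q(u^\eps)=\nabla\eta(u^\eps)\,\partial_x\!\big[f(u^\eps)-(f(u))^\eps\big]=:\mathcal R_\eps ,
\]
and it remains to see $\mathcal R_\eps\to 0$ in $\mathcal D'$. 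Taylor-expanding $f$ yields the Constantin--E--Titi bound $\lvert(f(u))^\eps-f(u^\eps)\rvert\lesssim\int\rho_\eps(y)\lvert u(\cdot-y)-u^\eps(\cdot)\rvert^2\,dy$, which one estimates using $\norm{u-u^\eps}_{L^p}\lesssim\eps^{s}\norm{u}_{W^{s,p}}$, $\norm{\partial_x u^\eps}_{L^p}\lesssim\eps^{s-1}\norm{u}_{W^{s,p}}$ (for the borderline range $s<1$), and a Gagliardo-seminorm bound for $\int\rho_\eps(y)\norm{u(\cdot-y)-u(\cdot)}_{L^q}^2\,dy$ obtained by H\"older in $y$ together with interpolation of the translations against the $L^\infty$ smallness $\lesssim\eps_1$. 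When $p\geq 2$ the quadratic remainder lies in a Lebesgue space and a single H\"older pairing with $\partial_x u^\eps$ suffices, and the bookkeeping of exponents works out to $s>p/(2p-1)$; when $p<2$ the remainder lies only in $L^{p/2}$ with $p/2<1$, so one must keep the double integral and distribute the spatial derivative differently, at the cost of the stronger requirement $s>2/(p+1)$. In both regimes \eqref{eq:sob-s-con} forces $\mathcal R_\eps\to 0$, whence $u$ is a weak entropy solution.

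Next I would check that $u\in GL(\eps_1)$. The smallness \eqref{smalllinfty} comes from the hypothesis $u(t,\cdot)\in B_{\eps_1}(0)$ together with the $L^\infty$ bounds for entropy-admissible solutions of genuinely nonlinear $2\times 2$ systems with small oscillation, and the $1/t$ decay \eqref{tv1/t} follows from the Oleinik/Lax-type a priori estimates for such solutions --- here the $W^{s,p}_\loc$ regularity of $u^0$ is what controls the wave interactions near $t=0$ (this is parallel to the inclusions recorded in \Cref{thm:main-inc}, whose proof uses the same machinery). With $u\in GL(\eps_1)$ and $u^0\in W^{s,p}_\loc(\R;B_{\eps_1}(0))$ in hand, \Cref{thm:main-gl} finishes the argument: the element of $GL(\eps_1)$ with initial data $u^0$ is unique, and the Glimm--Lax construction produces one, so $u$ coincides with it. (Alternatively, one verifies the algebraic rate \eqref{eq:Da-decay} for $u$ directly and appeals to the uniqueness of \cite[Theorem 1.1]{2025arXiv250500420B}.) Finite speed of propagation renders the local nature of the hypotheses harmless.

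The hard part will be the commutator estimate of the first step in the regime $p<2$, where the quadratic remainder is not controlled in any normed space and one must argue through the Gagliardo double integral with a careful placement of the spatial derivative; this is exactly the source of the gap between the two branches of \eqref{eq:sob-s-con}. A secondary point, inherited from \Cref{thm:main-gl}, is that in the weighted relative-entropy comparison with the Glimm--Lax solution the initial-layer $L^1$ decay coming from $u^0\in W^{s,p}$ must dominate a Gr\"onwall-type factor whose exponent is $O(\eps_1)$, which is why $\eps_1$ must be chosen small.
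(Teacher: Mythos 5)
Your first step (entropy conservation) is sound and lands where the paper does: the paper proves it by embedding $W^{s,p}(\T^1)$ into $B^\alpha_{3,\infty}(\T^1)$ with $\alpha>1/3$ under \eqref{eq:sob-s-con} and then invoking \Cref{prop:entropy-conservation}, whereas you propose to run the Constantin--E--Titi commutator argument directly in $W^{s,p}$; either route yields the same thresholds and the same split at $p=2$. The genuine problem is your second step. You claim that, once entropy admissibility is known, one can verify $u\in GL(\eps_1)$ — in particular the total variation decay \eqref{tv1/t} — ``from the Oleinik/Lax-type a priori estimates for such solutions,'' with the $W^{s,p}$ regularity of $u^0$ ``controlling the wave interactions near $t=0$.'' No such a priori estimate exists for an arbitrary weak entropy solution of a $2\times 2$ system: \eqref{smalllinfty} and \eqref{tv1/t} are properties of solutions \emph{constructed} by the Glimm--Lax/front-tracking scheme, not consequences of the entropy inequality plus spatial Sobolev regularity. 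A solution that is merely $L^\infty_t W^{s,p}_x$ is not even $BV$ in $x$ at positive times, and there is no mechanism in the paper (or in the literature it cites) to upgrade it. The same objection defeats your parenthetical alternative: \Cref{prop:bmv-main} and \Cref{thm:main-gl} both quantify uniqueness only \emph{within} $GL(\eps_1)$, so verifying the rate \eqref{eq:Da-decay} for $u$ does not help unless $u$ is already known to lie in that class. In effect your step 2 assumes exactly what \Cref{thm:sob-ext-obs} is designed to prove.

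The paper's actual route (\Cref{prop:sob-obs}) avoids ever placing $u$ in $GL(\eps_1)$. It samples $u$ at times $i\tau$, mollifies each sample to get small-BV data, launches a front-tracking/semigroup solution $v_i$ from each, compares $u$ to $v_i$ over one time step of length $\tau\sim\delta^{q(1-s)}$ via the weak--BV $L^2$ stability of \Cref{lem:trapezoidconstruction} (applicable because \eqref{eq:sob-s-con} forces $sp>1$, so $u$ is continuous and has strong traces), and then compares consecutive $v_i$'s up to time $t_1$ using the Lipschitz estimate $L(\tau)\lesssim\tau^{-\beta}$ of \Cref{prop:L1-bressan}. The resulting error is of order $\delta^{\min(s,sp/2)}L(\tau)n$ with $n\sim\tau^{-1}$, and the thresholds \eqref{eq:sob-s-con} are needed a \emph{second} time, independently of the entropy-conservation step, to make this product vanish as $\delta\downarrow 0$. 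Your proposal is missing this entire mechanism, and with it the actual reason the two branches of \eqref{eq:sob-s-con} appear in the final statement.
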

\par In the multi-d setting there are are several results concerning the uniqueness and non-uniqueness of solutions to isentropic Euler; we mention two here. It is known that solutions to the multi-d isentropic Euler system lying in $B_p^{\alpha, \infty}$ for $\alpha > \frac{1}{2}$ and suitable $p$ whose symmetric velocity gradient further verifies a one-sided Lipschitz bound enjoy a weak-strong uniqueness property \cite{MR4000838}. In regards to non-uniqueness, Giri \& Kwon have constructed infinitely many weak entropy solutions to the 3-dimensional isentropic Euler system in $C^\alpha$ for $0 < \alpha < \frac{1}{7}$.

\subsection{Stability with respect to non-BV perturbations}
There is a very elegant classical $L^1$ stability theory for small-BV solutions that holds in far more generality than for the class $\textbf{(GL)}$ \cite{MR1686652}, \cite{MR1723032}. In these results, the perturbations of the small-BV solution must still be small-BV. The $L^2$-stability program regarding stability with respect to non-BV perturbations was introduced by Vasseur \cite{MR2807139} as an extension of the weak-strong stability principle of Dafermos and DiPerna \cite{MR546634}, \cite{MR523630}. The study of non-BV perturbations is significant due to the lack of BV bounds when studying asymptotic limits to conservation laws. For example, the stability of small-BV solutions in the inviscid limit of artificial viscosities was shown in a paper of Bianchini \& Bressan \cite{MR2150387}. However, stability in the limit from Navier--Stokes to Euler remained open precisely because BV bounds do not appear to be available uniformly with respect to the viscosity parameter for physical viscosities. In recent work, Chen, Kang \& Vasseur showed that small-BV solutions are stable in the physical inviscid limit from Navier--Stokes to Euler \cite{nsvs}. Their result is achieved by circumventing the need for uniform BV bounds with respect to the viscosity parameter. We are able to obtain \Cref{thm:main-gl} for precisely the same reason. Solutions in $GL(\eps_1)$ do not have uniform BV bounds in the limit as time goes to zero. In this way, there is a striking analogy between the vanishing viscosity limit from Navier--Stokes to Euler and the flow backwards in time along a Glimm--Lax solution.
\par A similar result to \Cref{thm:main-inc} in the scalar setting \cite[Proposition 4.2]{MR4841631} hinges crucially on the Kru\v{z}kov theory of $L^1$ contraction which applies to solutions without uniform BV estimates in time. In the proof of \Cref{thm:main-inc}, we are able to replace the use of the $L^1$-contraction with an $L^2$ estimate for non-BV solutions. This showcases that the $L^2$ theory is a Kru\v{z}kov-like theory for systems which is strong enough to give stability estimates involving non-BV solutions (see Section \ref{relent} for further discussion of this analogy). 

\subsection{Advancements in the $L^2$-stability theory}
In order to obtain \Cref{thm:main-gl} and \Cref{thm:main-obs}, we will use a quantitative ``weak--BV'' stability result recently developed in \cite{cfk} that represents the current state-of-the-art in the $L^2$-stability program. We use the $L^2$-stability result \`{a} la \cite{cfk} to obtain short time estimates, then use the $L^1$-stability theory developed in \cite{2025arXiv250500420B} later in time. We refer to \Cref{sec:mainproof} for the main idea of the argument. The major obstruction is that Theorem 1 in \cite{cfk} applies only to a special class of $2 \times 2$ conservation laws. In particular, Assumptions 1 (d) in \cite{cfk} is the critical assumption that is not verified for the full class of fluxes verifying \textbf{(GL)}, even locally. (Note that this is the same as {Hypotheses 1} \ref{hyp-d} in \Cref{sec:localstructure}.) For an example of a flux satisfying \textbf{(GL)} and not satisfying this assumption see \Cref{appendix-hypd}. One of the major contributions of the present work is to extend Theorem 1 in \cite{cfk} to apply to fluxes merely verifying the condition \textbf{(GL)}. Our main theorem in this regard, which is crucially used in the proof of \Cref{thm:main-gl}, is the following:
\begin{theorem}\label{thm:cor}
Assume that the system \eqref{cl} verifies \textbf{(GL)}. There exists $\eps_1,\delta_0 > 0$ such that the following is true. Fix $R,T > 0$.  Consider initial data $v^0$ such that $\norm{v^0}_{BV(\R)}+\norm{v^0-d}_{L^\infty(\R)} \leq \delta_0$ and let $\mathcal{S}_tv^0$ be the Standard Riemann Semigroup solution. Then there exists a constant $K = K(d, f)> 0$ such that any solution $u \in GL(\eps_1)$ with initial data $u^0$ satisfies the stability estimate,
\begin{align*}
||u(t, \cdot)-\mathcal{S}_tv^0||_{L^1((-R,R))} \leq K\sqrt{R + T}\norm{u^0-v^0}_{L^2((-R-ct, R+ct))},
\end{align*}
where the constant $c > 0$ is the speed of information (see \Cref{rem:soi}). Further, by interpolation we obtain
\begin{align*}
\norm{u(t, \cdot)-\mathcal{S}_tv^0}_{L^2((-R,R))} \leq K\sqrt[4]{R + T}\sqrt{\norm{u^0-v^0}_{L^2((-R-ct, R+ct))}}.
\end{align*}
\end{theorem}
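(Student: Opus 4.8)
\textbf{Proof proposal for \Cref{thm:cor}.}

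The plan is to reduce the general case of a flux verifying \textbf{(GL)} to the special class treated by Theorem~1 of \cite{cfk}, where Assumption~1(d) (equivalently Hypothesis~1 \ref{hyp-d}) is available. The first step is to perform a nonlinear change of state variable in a neighborhood $B_r(d)$ that puts the system into a normal form. Concretely, I would use the Riemann invariants $w = (w_1, w_2)$ of the $2 \times 2$ genuinely nonlinear strictly hyperbolic system as new unknowns; in these coordinates the eigenvector fields straighten out, and I expect that the offending structural hypothesis can be recovered, or else replaced by a weaker one that the weighted relative entropy argument of \cite{cfk} still tolerates. Since the change of variables is a $C^3$ diffeomorphism on $B_r(d)$ with bounded derivatives, it distorts $L^1$, $L^2$, $L^\infty$ and $BV$ norms only by multiplicative constants depending on $f$ and $d$, so an estimate proved in the new variables transfers back to the original ones at the cost of enlarging $K$. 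The smallness parameters $\eps_1, \delta_0$ are chosen so that all solutions under consideration stay inside $B_r(d)$, which is consistent with \eqref{smalllinfty}.

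Granting that the transformed system falls within (an appropriate extension of) the scope of \cite{cfk}, the second step is to invoke the quantitative weak--BV stability estimate there: for the Standard Riemann Semigroup solution $\mathcal{S}_t v^0$ with small-BV data $v^0$ and \emph{any} weak entropy solution $u \in GL(\eps_1)$, one controls a localized relative entropy $\int_{-R}^{R} \eta(u(t,x) \mid \mathcal{S}_t v^0(x))\,dx$ by the initial relative entropy on the domain of dependence $(-R-ct, R+ct)$, up to a loss that grows polynomially in $R+T$. Because $\eta$ is strictly convex and both states lie in a fixed small ball, $\eta(a \mid b)$ is comparable to $|a-b|^2$; hence the relative entropy bound yields $\norm{u(t) - \mathcal{S}_t v^0}_{L^2((-R,R))}^2 \lesssim (R+T)\,\norm{u^0 - v^0}_{L^2((-R-ct,R+ct))}^2$, which after taking square roots is exactly the claimed $L^2 \to L^2$ estimate but with the factor $\sqrt[4]{R+T}$ replaced by $\sqrt{R+T}$. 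The stated $L^1((-R,R))$ estimate then follows from Cauchy--Schwarz on the interval of length $2R$, absorbing $\sqrt{2R}$ into $K\sqrt{R+T}$; conversely the sharper $L^2$ bound with exponent $1/2$ on the norm and $\sqrt[4]{R+T}$ prefactor is obtained by interpolating the $L^1$ bound against the uniform $L^\infty$ bound \eqref{smalllinfty}, i.e. $\norm{g}_{L^2}^2 \le \norm{g}_{L^\infty}\norm{g}_{L^1}$ with $g = u - \mathcal{S}_t v^0$, whose $L^\infty$ norm is $O(\sqrt{\eps_1})$ and can be absorbed into $K$.

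The main obstacle, and the real content of the theorem, is the first step: showing that \textbf{(GL)} alone — without Assumption~1(d) of \cite{cfk} — suffices. I would expect to handle this by reexamining where 1(d) is actually used in \cite{cfk}. It should enter only in the construction of the weight (the ``$a$-contraction'' shift function) and in the treatment of the rarefaction part of the wave pattern; this is presumably why the excerpt advertises ``a weighted relative entropy contraction for perturbations of rarefaction waves'' as an auxiliary result of independent interest. So the plan is: (i) isolate the rarefaction-wave interactions as the only place 1(d) was invoked, (ii) prove the new weighted relative entropy contraction for perturbations of rarefactions using only genuine nonlinearity and strict hyperbolicity, with the weight now allowed to vary across the rarefaction fan rather than being locally constant, and (iii) splice this into the localization-and-patching scheme of \cite{cfk} verbatim, since the shock and linearly-degenerate (here, absent) pieces go through unchanged. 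A secondary technical point is making the domain-of-dependence/finite-speed-of-propagation bookkeeping uniform in $T$ so that the polynomial loss is exactly $R+T$ and not worse; this is routine once the speed of information $c$ from \Cref{rem:soi} is fixed as an upper bound for all wave speeds in $B_r(d)$.
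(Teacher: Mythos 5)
Your closing paragraph is, in substance, the paper's actual strategy: Hypothesis~1~\ref{hyp-d} is used in \cite{cfk} only to handle the rarefaction part of the wave pattern (the shock dissipation estimate, \Cref{lem:shockdissipationestimate}, goes through without it), and the new ingredient is precisely a weighted relative entropy contraction for rarefactions (\Cref{rarecontract}, \Cref{prop:raredissipationestimate}) with a weight $a(t,x)=\exp[(-1)^iCy(t,x)]$ that varies across the fan; this is spliced into the shifted front-tracking scheme of \cite{cfk}, with the only further modification being that the global weight now assigns mass to rarefaction fronts as well as shocks (\Cref{a}). The theorem itself is then a corollary of \Cref{prop:main} once one checks (via \Cref{lem:localstructurelemma}, which constructs a local $C^3$ strictly convex entropy by solving Goursat's problem) that $GL(\eps_1)\subset\Sweak$: Glimm--Lax solutions are small in $L^\infty$ by \eqref{smalllinfty}, enter $\textrm{BV}_{\loc}$ instantly so they have strong traces, and are entropic for every convex entropy. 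Your derivation of the two stated inequalities from the relative entropy bound (Cauchy--Schwarz for $L^1$, interpolation $\norm{g}_{L^2}^2\le\norm{g}_{L^1}\norm{g}_{L^\infty}$ for the H\"older-$1/2$ $L^2$ bound) is also the paper's.

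Two points do not survive scrutiny. First, your proposed ``step one'' --- passing to Riemann invariants to recover Assumption~1(d) --- would fail and is not what the paper does. A nonlinear change of dependent variable destroys the conservation form (the diagonalized system $w_t+\Lambda(w)w_x=0$ is quasilinear, not a conservation law), so weak solutions, the entropy inequality, and the entire relative entropy framework do not transfer; moreover the obstruction to 1(d) is the sign of the intrinsic quantity $\ell_i(b)f''(b)(r_j(b),r_j(b))$ (a non-strict Smoller--Johnson condition), and \Cref{appendix-hypd} exhibits a flux verifying \textbf{(GL)} for which this fails in \emph{every} neighborhood of the base state. The correct move is to abandon 1(d) entirely, as your later paragraph (and the paper) does. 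Second, the claim that the method yields a Lipschitz $L^2\to L^2$ estimate $\norm{u(t)-\mathcal{S}_tv^0}_{L^2}\lesssim\sqrt{R+T}\norm{u^0-v^0}_{L^2}$ overstates what is available: the relative entropy controls $u$ against the \emph{shifted} front-tracking solution $\psi$ in $L^2$, but the discrepancy between $\psi$ and the classical front-tracking solution (hence $\mathcal{S}_tv^0$) is controlled only in $L^1$, via Cauchy--Schwarz on the total amount of shifting. This is exactly why the final $L^2$ estimate is only H\"older-$1/2$ in the data, recovered by interpolation as you also note.
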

\begin{remark}
Our main proposition---\Cref{prop:main}---will be slightly more general than \Cref{thm:cor}. It is comparable to Theorem 1 in \cite{cfk}. The major difference is that \Cref{prop:main} applies to a wider class of systems than Theorem 1 in \cite{cfk}. The main advancement which lets us widen the class of systems we may handle is the development of a relative entropy contraction for perturbations of rarefaction waves. 
\end{remark}

\subsection{Open problems}\label{openprobs}
Our work raises some important questions for future study. We list a few of them here.
\newline
\newline 
\textbf{Open Problem \#1:} Is the solution $u$ in $GL(\eps_1)$ with initial data $u^0$ unique if the initial data $u^0$ is not in any fractional Sobolev space?
\newline
\newline
\textbf{Open Problem \#2:} Theorem 1.1 in \cite{cvy} shows non-uniqueness for continuous solutions to \eqref{cl} with no derivatives and no entropy condition. On the other hand, \Cref{thm:main-obs} shows uniqueness for continuous solutions to \eqref{cl} with at least half a derivative. It is open to obtain either uniqueness or non-uniqueness for continuous solutions with intermediate levels of regularity and entropy.
\newline 

\subsection{Outline of the paper}
The paper is structured as follows. Firstly, in \Cref{sec:mainproof}, we assume \Cref{thm:cor} and give the proofs of \Cref{thm:main-gl}, \Cref{thm:main-inc}, \Cref{thm:main-obs}, and \Cref{thm:sob-ext-obs}. In \Cref{sec:localstructure}, we exposit the terminology and structure needed to use the $a$-contraction method to prove \Cref{thm:cor}. We state \Cref{prop:main}, of which \Cref{thm:cor} is a simple corollary. The remainder of the paper will be devoted to proving \Cref{prop:main}. In Section \ref{relent}, we exposit the $a$-contraction method and obtain the desired $L^2$-estimates for shock and rarefaction waves. The relative entropy estimates for shocks will be quoted from \cite{cfk} without proof, while the relative entropy estimates for rarefactions are novel to this work. In \Cref{sec:ft}, we exposit the front tracking method we use (from \cite{MR1367356}, \cite{cfk}), and some key estimates therein. In Section \ref{a}, we construct the weight used to measure the $L^2$-stability. Finally, in Section \ref{thmpf}, we prove \Cref{prop:main}.

\subsection{Notation}
Throughout the paper, we will abide by the following conventions.
\begin{itemize}
    \item We write $g'$ as the derivative of a map $g$ which is either vector valued, matrix valued, or has scalar domain, and $\nabla g$ when $g$ is scalar valued. For higher-order derivatives, we write repeated primes (e.g. $g''$) or write  $\nabla^k g$ (e.g. successive derivatives of a scalar valued function $g$ are written $\nabla^k g$ rather than $(\nabla g)'''$).
    \item The constant $K$ denotes a universal constant which may vary from line to line. It is allowed to depend on the parameters $R$ and $T$ which determine the space-time cone of information, the flux $f$, and the point $d$ we are localizing around.
\end{itemize}

\section{The main ideas and proofs of \Cref{thm:main-gl}, \Cref{thm:main-inc}, \Cref{thm:main-obs}, and \Cref{thm:sob-ext-obs}}\label{sec:mainproof}

\subsection{The semigroup generated by Glimm-Lax solutions at positive times}
For $\tau > 0$, consider the positively invariant domains
$$ \mathcal{D}^{(\tau)} = \left\{ u(s,\cdot)\,\middle|\,s \geq \tau,\  u \in GL(\epsilon_1)\right \}.$$
Recall the existence of at least one solution $u \in GL(\eps_1)$ has been proven for any initial data $u^0$ verifying $||u^0-d||_{L^\infty(\R)} \leq \eps_1$ \cite{MR2737438}, \cite{MR265767}. In \cite{2025arXiv250500420B}, the following Standard Riemann Semigroup on $\mathcal{D}^{(\tau)}$ is shown to be well-defined and have the following properties.
\begin{proposition}[{\cite[contained in the proof of Theorem 1.1]{2025arXiv250500420B}}]\label{prop:L1-bressan}
    For any $\beta > 0$, there exists $\epsilon_1 > 0$ such that the following holds: \newline There exists a function $\mathcal{S}:\R^+ \times \mathcal{D}^{(\tau)} \to \mathcal{D}^{(\tau)}$ verifying the following properties.
    \begin{enumerate}
        \item Given $u(s,\cdot) \in \mathcal{D}^{(\tau)}, \mathcal{S}_tu(s,\cdot)$ is obtained as the unique limit of front tracking approximations at time $t$ with initial data $u(s,\cdot)$.
        \item For any $u(s,\cdot) \in \mathcal{D}^{(\tau)}$ and any $t \geq 0$, $\mathcal{S}_t(u(s,\cdot))=u(s+t,\cdot)$. \label{enum2}
        \item For any
    $\bar u, \bar v \in \mathcal{D}^{(\tau)}$ we have the Lipschitz estimate
    \beq
        \norm{\mathcal{S}_t \bar u - \mathcal{S}_t \bar v}_{L^1((-R,R))} \leq L(\tau)\norm{\bar u - \bar v}_{L^1((-R- ct ,R + c t ))}, \quad \text{for} \quad t \geq 0,
    \eeq
    where $L(\tau) \leq C \tau^{-\beta}$ for some $C > 0$ and the constant $c > 0$ is the speed of information (see \Cref{rem:finitepropspeed}). 
    \end{enumerate}
\end{proposition}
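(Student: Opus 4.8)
The plan is to reduce everything to the classical small-BV theory through a localization at the length scale $\tau$. First I would record that, by the Glimm--Lax bound \eqref{tv1/t}, every $w = u(s,\cdot) \in \mathcal{D}^{(\tau)}$ (with $s \geq \tau$) satisfies $TV(w;L) \leq C_0(\sqrt{\eps_1} + L/\tau)$ for all $L>0$; after choosing $\eps_1$ small enough that $C_0\sqrt{\eps_1}$ lies below the smallness threshold $\delta_0$ of the Standard Riemann Semigroup, this says precisely that the data in $\mathcal{D}^{(\tau)}$ have total variation $\leq \delta_0$ on every window of width $\sim \delta_0\tau/C_0$, and the same is true of $\mathcal{S}_tw$ by the positive invariance of $\mathcal{D}^{(\tau)}$ (established together with property (2) below). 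One then defines $\mathcal{S}_tw$ by running the front-tracking scheme of \cite{MR1367356, cfk} on piecewise-constant approximations $w_\nu$ of $w$ carrying the same local BV bound, and gluing: by finite speed of propagation (\Cref{rem:finitepropspeed}), the approximate solution on a given window depends only on the data in that window enlarged by $2ct$, and for $t$ short compared to $\tau$ the enlarged window still has total variation $\leq \delta_0$, so the classical convergence theory applies on it and the resulting local limits are compatible on overlaps; this yields, for short times, a limit independent of the approximating sequence, and iterating in time (legitimately, by positive invariance) extends the construction to all $t \geq 0$. Property (1) then holds by construction.

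For property (2) I would argue that the orbit of $\mathcal{S}$ through $u(s,\cdot)$ must reproduce the Glimm--Lax solution $u(s+t,\cdot)$: both are weak entropy solutions agreeing at time $s$ and both carry the local BV bound above, so the standard small-BV uniqueness theory, applied window-by-window via finite speed of propagation, forces them to coincide --- equivalently, this is the localized statement that the Glimm scheme and front tracking produce the same limit.

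The Lipschitz estimate (3) is the heart of the matter. I would fix $t \in [0,T]$, cover $(-R-ct, R+ct)$ by windows $I_j$ with enlargements $\widetilde I_j$, split $[0,t]$ into short steps, and at each step apply the uniform $L^1$-Lipschitz estimate of the Standard Riemann Semigroup on each enlarged window (keeping its total variation $\leq \delta_0$); summing over windows and multiplying the overlap multiplicities across the $\sim t/\tau$ steps produces $\norm{\mathcal{S}_t\bar u - \mathcal{S}_t\bar v}_{L^1((-R,R))} \leq L(\tau)\norm{\bar u - \bar v}_{L^1((-R-ct, R+ct))}$, after which the semigroup identities and continuity in $t$ transfer from front tracking on each window. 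The hard part is making $L(\tau)$ merely \emph{polynomial} in $\tau^{-1}$, with an exponent $\beta$ that can be shrunk by taking $\eps_1 \to 0$: a crude cover by windows of width $\sim \tau$ forces an overlap multiplicity $\geq 2$ at each of the $\sim t/\tau$ steps and hence a constant that is \emph{exponential} in $\tau^{-1}$, so one must instead work on windows of a favorable intermediate width --- a small power of $\tau$, on which the total variation exceeds $\delta_0$ but is still controlled --- and feed in the sharper decay and cancellation estimates specific to genuinely nonlinear $2\times 2$ systems satisfying \eqref{tv1/t}; it is exactly this step where the smallness of $\eps_1$ buys the small exponent. This entire analysis is carried out in \cite{2025arXiv250500420B}.
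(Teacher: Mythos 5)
The paper gives no proof of \Cref{prop:L1-bressan} at all: the statement is imported verbatim from the proof of Theorem~1.1 in \cite{2025arXiv250500420B}, and your proposal, after sketching the localization-and-gluing construction, ultimately delegates the decisive step to that same reference, so the two treatments coincide where it matters. Your outline of parts (1) and (2) --- local BV control from \eqref{tv1/t} on windows of width $\sim\tau$, gluing by finite speed of propagation, and window-by-window small-BV uniqueness --- is a faithful reconstruction of the standard argument, and your diagnosis of part (3) is the right one: naive iteration over $\sim t/\tau$ time steps with overlapping windows yields a Lipschitz constant exponential in $\tau^{-1}$, and obtaining $L(\tau)\leq C\tau^{-\beta}$ with $\beta$ shrinking as $\eps_1\to 0$ requires the refined decay and interaction estimates of Bressan--Marconi--Vaidya, which neither you nor the paper reproduces. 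Since the present paper treats this as a black-box citation, your proposal is acceptable as is; just be aware that the polynomial bound on $L(\tau)$ is the entire content of the proposition and is not recoverable from the classical Standard Riemann Semigroup theory alone.
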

In particular, \eqref{enum2} shows that for any $\tau > 0$ and $u \in GL(\eps_1)$, there is a unique continuation of $u(\tau, \cdot)$ in $D^{(\tau)}$ given as the limit of front-tracking approximations. Thus, if non-uniqueness is to occur, it must happen instantaneously at $t=0$. 
\par Equipped with \Cref{prop:L1-bressan} one can then deduce uniqueness of Glimm--Lax solutions after supposing an additional a priori decay estimate.
For instance, if $u,v \in GL(\eps_1)$ have initial data $u^0$ one can estimate
$$ \norm{u(t) - v(t)}_{L^1} \leq L(\tau) \norm{u(\tau) - v(\tau)}_{L^1} \leq L(\tau) \left(\norm{u(\tau) - u^0}_{L^1} + \norm{v(\tau) - u^0}_{L^1}\right), \indent \text{ for $t \geq \tau$}.$$
From this, if $u(\tau) \to u^0$ and $v(\tau) \to v^0$ converge in $L^1$ sufficiently quickly to overwhelm the blow up of $L(\tau)$ then one can establish a uniqueness result. 
Following this line of reasoning, Bressan, Marconi \& Vaidya proved the following conditional uniqueness and stability result.

\begin{proposition}[{\cite[Theorem 1.1]{2025arXiv250500420B}}] \label{prop:bmv-main}
    Assume that the system \eqref{cl} verifies \textbf{(GL)}. Fix $0 < \alpha < 1$ and $M,R,T>0$. Let $c>0$ be the speed of information (see \Cref{rem:soi}). Then, there exists $\eps_1 > 0$ such that the following holds. Let $u \in GL(\epsilon_1)$ have initial data $u^0$, with the additional fast convergence rate
    \beq \label{eq:bressan-rate} \sup_{t \in (0,T)} t^{-\alpha} \norm{u(t,\cdot) - u^0}_{L^1((-R-c(T-t),R+c(T-t)))} \leq M. \eeq 
    Then, $u$ is the unique solution in $GL(\eps_1)$ with initial data $u^0$ satisfying~\eqref{eq:bressan-rate}. Furthermore, the family of solutions in $GL(\eps_1)$ satisfying~\eqref{eq:bressan-rate} also form a H\"older continuous semigroup, i.e. there exists $K > 0$ such that the following estimate holds:
    \begin{align*}
        \norm{u_1(t)-u_2(t)}_{L^1((-R,R))} \leq K\norm{u_1^0-u_2^0}_{L^1((-R-ct,R+ct))}^\theta \indent t \in [0,T],
    \end{align*}
    where $u_1, u_2 \in GL(\eps_1)$ satisfy \eqref{eq:bressan-rate}. Here, $\theta < 1$ may be taken arbitrarily close to $1$ by choosing $\eps_1 > 0$ sufficiently small.
\end{proposition}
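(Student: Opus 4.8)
The plan is to bootstrap from the positive-time Standard Riemann Semigroup supplied by \Cref{prop:L1-bressan}; essentially all of the analytic content already sits inside that proposition, and what remains is to play its blow-up rate $L(\tau) \leq C\tau^{-\beta}$ as $\tau \downarrow 0$ against the algebraic decay rate $\tau^{\alpha}$ built into the hypothesis \eqref{eq:bressan-rate}. Since \Cref{prop:L1-bressan} allows $\beta>0$ to be chosen as small as we like at the price of shrinking $\eps_1$, I would first fix $\beta \in (0,\alpha)$ and shrink $\eps_1$ so that \Cref{prop:L1-bressan} holds with this $\beta$ and so that the smallness requirements inherited from \Cref{prop:L1-bressan} are met.

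Next I would take $u_1,u_2 \in GL(\eps_1)$ with initial data $u_1^0,u_2^0$ both satisfying \eqref{eq:bressan-rate} with constant $M$, and set $\delta := \norm{u_1^0-u_2^0}_{L^1((-R-cT,R+cT))}$. For a parameter $\tau\in(0,T)$ to be optimized and for $t\in[\tau,T]$, property~\eqref{enum2} of \Cref{prop:L1-bressan} identifies $u_i(t)=\mathcal{S}_{t-\tau}u_i(\tau)$, so the Lipschitz estimate there, followed by the triangle inequality and then \eqref{eq:bressan-rate} applied at time $\tau$ to each solution — valid because $t\leq T$ forces the cone $(-R-c(t-\tau),R+c(t-\tau))$ to lie inside $(-R-c(T-\tau),R+c(T-\tau))$ — gives
\begin{align*}
\norm{u_1(t)-u_2(t)}_{L^1((-R,R))}
&\leq L(\tau)\,\norm{u_1(\tau)-u_2(\tau)}_{L^1((-R-c(t-\tau),R+c(t-\tau)))}\\
&\leq L(\tau)\big(\norm{u_1(\tau)-u_1^0}_{L^1}+\delta+\norm{u_2^0-u_2(\tau)}_{L^1}\big)\leq C\tau^{-\beta}\big(2M\tau^{\alpha}+\delta\big).
\end{align*}
For the complementary range $t\in(0,\tau)$ no semigroup is needed: the triangle inequality together with \eqref{eq:bressan-rate} gives directly $\norm{u_1(t)-u_2(t)}_{L^1((-R,R))}\leq 2Mt^{\alpha}+\delta\leq 2M\tau^{\alpha}+\delta$.

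Then I would optimize in $\tau$. If $\delta\geq 2MT^{\alpha}$ the asserted estimate is trivial after enlarging $K$; otherwise $\tau:=(\delta/2M)^{1/\alpha}\in(0,T)$ makes the parenthesis $2M\tau^{\alpha}+\delta$ equal to $2\delta$ and reduces both displays to $\leq 2C\tau^{-\beta}\delta = 2C(2M)^{\beta/\alpha}\,\delta^{\,1-\beta/\alpha}$, which is exactly the claimed H\"older estimate with $\theta=1-\beta/\alpha$; since $\beta$ can be taken arbitrarily small for $\eps_1$ small, $\theta$ can be taken arbitrarily close to $1$. Uniqueness is the case $\delta=0$: the first display forces $\norm{u_1(t)-u_2(t)}_{L^1((-R,R))}\leq 2CM\tau^{\alpha-\beta}\to 0$ as $\tau\downarrow 0$ for each fixed $t$ and $R$, hence $u_1\equiv u_2$. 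Finally, uniqueness makes the map $u^0\mapsto u(t,\cdot)$ single-valued, and it is a semigroup: for $s>0$ the restarted profile $u(s,\cdot)$ lies in $BV_{\loc}$ with small $L^\infty$ norm by \eqref{smalllinfty}--\eqref{tv1/t}, hence is Lipschitz in $t$ in $L^1_{\loc}$ and so trivially satisfies a bound of the form \eqref{eq:bressan-rate} from time $s$; moreover $u(s+\cdot,\cdot)\in GL(\eps_1)$ with initial data $u(s,\cdot)$, so by the uniqueness just proven it coincides with the continuation of $u(s,\cdot)$, giving $\mathcal{S}_t\mathcal{S}_s = \mathcal{S}_{s+t}$.

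I expect the only genuinely delicate point to be the bookkeeping of the cones of dependence: one must keep the finite-speed-of-propagation domains that \Cref{prop:L1-bressan} requires at time $\tau$ inside the domains controlled by \eqref{eq:bressan-rate}, uniformly over $t\in[\tau,T]$, and handle the domain in \eqref{eq:bressan-rate} shrinking as $t\to T$. Everything else is the elementary one-parameter optimization above, all the substantive estimates having been absorbed into \Cref{prop:L1-bressan}.
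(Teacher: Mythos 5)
Your argument is correct and is exactly the strategy the paper sketches immediately before the proposition (the paper itself gives no proof, deferring to [BMV, Theorem~1.1]): play the blow-up $L(\tau)\leq C\tau^{-\beta}$ of the positive-time semigroup of \Cref{prop:L1-bressan} against the $\tau^{\alpha}$ decay from \eqref{eq:bressan-rate} and optimize in $\tau$. The only cosmetic point is that to obtain the stated bound in terms of $\norm{u_1^0-u_2^0}_{L^1((-R-ct,R+ct))}$ rather than the larger $(-R-cT,R+cT)$-norm, you should define $\delta$ using the $t$-dependent window, which the cone inclusions you already track permit without change.
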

We remark that the above condition~\eqref{eq:bressan-rate} is actually weaker than the assumption made in \cite[Theorem 1.1]{2025arXiv250500420B}, but their proof still holds.

\subsection{Short time weak-BV stability for approximations of Sobolev data}
In the proofs of \Cref{thm:main-gl} and \Cref{thm:main-obs}, it will be necessary to show a short time stability estimate for solutions with BV initial data that is a mollification of a function in $H^s_{\loc}(\R; B_{\eps_1}(d))$. In this section, we give an exposition of this procedure. 
\par To begin, we fix a mollifier $\gamma \in C^\infty(\R)$ satisfying 
\begin{equation} \label{eq:mol-assumptions}
    \gamma \geq 0,\quad \text{supp}(\gamma) \subset[-1,1],\quad \int_\R \gamma(x)\,\diff x = 1, \quad\text{and}\quad \gamma(x) = \gamma(-x)\quad \forall \ x\in \R.
\end{equation} Fix a $t_0 \geq 0$. Let $v \in H^s_{\loc}(\R; B_{\eps_1}(d))$ for $s > 0$. We define the approximants $\{v_\delta\}_{\delta > 0}$ by 
\begin{equation}\label{eq:mol-approx}v_\delta:= v \ast \gamma_\delta,\end{equation} where $\gamma_\delta(x) := \frac{1}{\delta}\gamma(\frac{x}{\delta})$.
We note that for any interval $(-R,R)$ there exists a $K > 0$ such that these approximants satisfy the following bounds,
\begin{align}
    TV(v_\delta; L) &\leq K \norm{v}_{L^\infty(\R)} \frac{L}{\delta}, \quad \forall \ L > 0, \label{eq:Hs-mol-TV} \\
    \norm{v - v_\delta}_{L^2((-R,R))} &\leq K \norm{v}_{H^s( (-R-\delta,R+\delta) )} \delta^{s}. \label{eq:Hs-L2-conv}
\end{align}
We provide a proof in \Cref{lem:sob-for-thm-obs}.

Let $\delta,R >0$ be fixed. Recall that there exists a constant $\delta_0 > 0$ such that if $\norm{u^0}_{\text{BV}} \leq \delta_0$ and $\norm{u^0-d}_{L^\infty} \leq \delta_0$, then a sequence of front-tracking approximations emanating from $u^0$ is well-defined and has a unique limit \cite{MR1367356} (see \Cref{sec:ft} for a detailed overview of front-tracking). 
Define
\begin{align}\label{traplength}
L:=\frac{\delta\delta_0}{2K \norm{v}_{L^\infty(\R)}}.
\end{align}
Then, $\eqref{eq:Hs-mol-TV}$ implies that $TV(v_\delta;L) \leq \delta_0$. 
We define a family of trapezoids $\Delta_n$ for $n \in \mathbb{Z}$, where:
\begin{align*}
\Delta_n:=\left\{(\tau,x)\, \Big|\, \tau \in \left [t_0, t_0+\frac{L}{4c}\right],\, x \in \left[\frac{L}{2}n+c(\tau-t_0), \frac{L}{2}n+L-c(\tau-t_0)\right]\right\},
\end{align*}
where the constant $c > 0$ is the speed of information from \Cref{thm:cor} (see \Cref{fig:trapezoid-sum}). 
\begin{figure}
\begin{center}
\begin{tikzpicture}[scale=1.5]

\draw[->, thick] (0,1.3) -- (0,3.5) node[above] {$t$};

\draw[dotted] (0,3) -- (8,3);
\node[left] at (-0.2,3) {$t_0 + \frac{L}{4 c}$};
\node[right] at (8+0.2,3) {$\hphantom{t_0 + \frac{L}{4 c}}$};

\draw[dotted] (0,2) -- (8,2);
\node[left] at (-0.2,2) {$t_0$};

\draw[->, thick] (0,1.3) -- (8,1.3) node[right] {$x$};

\foreach \x in {1,2,4,5}
    \draw[dashed,color={rgb:red,191;green,87;blue,0}] (\x,2) -- (\x + 2,2) -- (\x + 1.5,3) -- (\x + .5,3) -- cycle; 
\node at (.5,2.5) {$\cdots$};
\node at (7.5,2.5) {$\cdots$};
\draw[thick] (3,2) -- (3 + 2,2) -- (3 + 1.5,3) -- (3 + .5,3) -- (3,2); 
\draw [very thick, decorate, decoration={calligraphic brace,amplitude=5pt,mirror}] (3,2-.1) -- (5,2-.1) node[midway,yshift=-1.2em]{$L$};
\draw [very thick, decorate, decoration={calligraphic brace,amplitude=5pt}] (3.5,3+.1) -- (4.5,3+.1) node[midway,yshift=1.2em]{$L/2$};

\end{tikzpicture}
\end{center}
\caption{A weak entropy solution is defined for short time by taking a limit of small-BV front tracking approximations on each trapezoid.}
\label{fig:trapezoid-sum}
\end{figure}
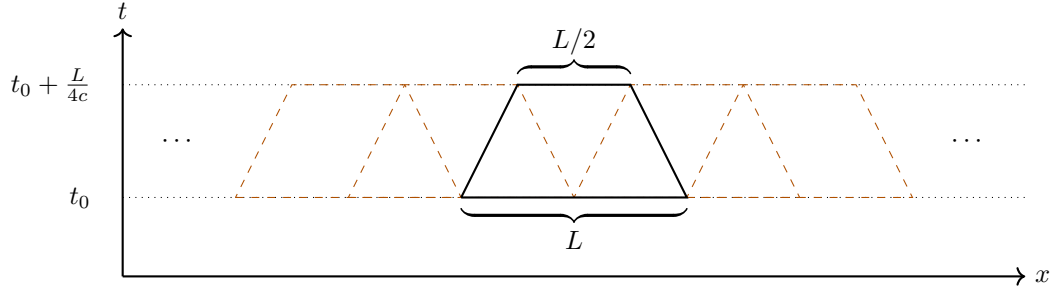
By construction, $v_\delta$ has total variation less than $\delta_0$ at the base of each trapezoid, so a limit of front-tracking approximations $\mathcal{S}_t v_\delta$ may be defined within the trapezoid $\Delta_n$ for each $n$. Further, by \Cref{thm:cor}, we have an estimate
\begin{multline}\label{eq:trapest}
\norm{u(t, \cdot)-\mathcal{S}_t v_\delta}_{L^1((\frac{L}{2}n+c(t-t_0), \frac{L}{2}n+L-c(t-t_0)))} \leq K\sqrt{L} \norm{u(t_0,\cdot)-v_\delta}_{L^2((\frac{L}{2}n, \frac{L}{2}n+L))}\\\text{ for $t \in \left[t_0,t_0+\frac{L}{4c}\right]$},
\end{multline}
for any $u \in GL(\eps_1)$. Note that we may actually define $\mathcal{S}_tv_\delta$ on the entire time strip $\big\{(t,x)\big| t \in [t_0, t_0+\frac{L}{4c}], x \in \R\big\}$ as each front-tracking approximation is well-defined on the overlap of trapezoids $\Delta_n$ and $\Delta_{n+1}$ (cf. \Cref{rem:finitepropspeed}). In other words, $\mathcal{S}_tv_\delta$ is a weak entropy solution to the initial value problem
\begin{align}\label{trapivp}
\left\{\begin{aligned}
        v_t+[f(v)]_x&=0 & (t,x) &\in \left[t_0, t_0+\frac{L}{4c}\right] \times \R, \\
        v(t_0,x) &= v_\delta(x) & x &\in \R. 
    \end{aligned}\right.
\end{align}
    Finally, we may sum the estimate \eqref{eq:trapest} in $n$ and apply Cauchy-Schwarz to obtain
\begin{align*}\label{eq:stripest}
\norm{u(t, \cdot)-\mathcal{S}_t v_\delta}_{L^1((-R+c(t-t_0), R-c(t-t_0)))} &\leq K\sqrt{L} \sum_{n = - 2R/L }^{2R/L} \norm{u(t_0,\cdot)-v_\delta}_{L^2((\frac{L}{2}n, \frac{L}{2}n+L))} \\
&\leq 4K\sqrt{L} \sqrt{\frac{R}{L}} \norm{u(t_0) - v_\delta}_{L^2((-R,R))} \\
&= 4K\sqrt{R} \norm{u(t_0) - v_\delta}_{L^2((-R,R))}, \indent \text{for $t \in \left[t_0,t_0+\frac{L}{4c}\right]$},
\end{align*}
for any $R > 0$. We remark that we also decrease $L$ slightly if needed such that $2R/L$ is an integer.
We summarize this section in the following lemma.
\begin{lemma}\label{lem:trapezoidconstruction}
Fix $R>0$, $s > 0$, and $t_0 \geq 0$.  Then, there exists a constant $ \eps_1 > 0$ such that the following holds. For any $v\in H^s_{\loc}(\R; B_{\eps_1}(d))$, let $\{v_\delta\}_{\delta > 0}$ be a sequence of approximants of $v$ given by mollification~\eqref{eq:mol-approx}. Then, there exists a weak entropy solution $\mathcal{S}_tv_\delta$ to the initial value problem \eqref{trapivp}, where $L$ is defined in \eqref{traplength}, which is obtained as the limit of front-tracking approximations. Further, for any $u \in GL(\eps_1)$, we have an estimate
\begin{align*}
||u(t, \cdot)-\mathcal{S}_t v_\delta||_{L^1((-R+c(t-t_0), R-c(t-t_0)))} \leq K\sqrt{R} \norm{u(t_0, \cdot)-v_\delta}_{L^2((-R,R))}, \quad \textrm{\emph{for} } t \in \left[t_0,t_0+\frac{L}{4c}\right].
\end{align*}
\end{lemma}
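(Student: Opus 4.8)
The plan is to carry out the trapezoidal construction laid out in the paragraphs preceding the statement; I will describe how the pieces assemble and flag the one delicate point. First I would record the two approximation properties of the mollifications $v_\delta = v\ast\gamma_\delta$: the $L^\infty$-to-$\mathrm{BV}$ bound \eqref{eq:Hs-mol-TV}, which holds because $\norm{\gamma_\delta'}_{L^1(\R)}$ is of order $\delta^{-1}$ and $TV(v_\delta;L)=TV((v-d)\ast\gamma_\delta;L)$; and the $H^s$ mollification rate \eqref{eq:Hs-L2-conv}, which is the standard estimate coming from the Fourier-multiplier bound $|1-\hat\gamma(\delta\xi)|\le C|\delta\xi|^{\min(s,1)}$ together with a localization to $(-R,R)$. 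Both are proved in \Cref{lem:sob-for-thm-obs}. I would then fix $\eps_1>0$ small enough that \Cref{thm:cor} is applicable and that $\eps_1\le\delta_0$; since $v$ takes values in $B_{\eps_1}(d)$ we have $\norm{v}_{L^\infty(\R)}<\infty$, so $L$ in \eqref{traplength} is a well-defined positive number, \eqref{eq:Hs-mol-TV} gives $TV(v_\delta;L)\le\delta_0$, and mollification gives $\norm{v_\delta-d}_{L^\infty(\R)}\le\norm{v-d}_{L^\infty(\R)}\le\eps_1\le\delta_0$.

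Next I would construct $\mathcal{S}_t v_\delta$ on the strip $[t_0,t_0+\tfrac{L}{4c}]\times\R$. Since $TV(v_\delta;L)\le\delta_0$, the restriction of $v_\delta$ to each base $[\tfrac L2 n,\tfrac L2 n+L]$ has total variation at most $\delta_0$; I extend it to a global function with total variation and $L^\infty$-distance from $d$ both at most $\delta_0$, so that the front-tracking scheme of \cite{MR1367356} (see \Cref{sec:ft}) is well-defined with a unique limit. By finite propagation speed (\Cref{rem:finitepropspeed}) this limit restricted to the trapezoid $\Delta_n$ depends only on $v_\delta|_{[\frac L2 n,\frac L2 n+L]}$, hence is independent of the extension, and the limits built on two overlapping trapezoids $\Delta_n$, $\Delta_{n+1}$ agree on $\Delta_n\cap\Delta_{n+1}$. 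Each $\Delta_n$ has top width $L/2$, equal to the horizontal spacing of the bases, so the $\Delta_n$ overlap and their union is the entire strip (at the final time consecutive trapezoids still meet at a point); gluing the local pieces yields a single weak entropy solution of \eqref{trapivp}, which is the object the lemma denotes $\mathcal{S}_t v_\delta$.

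For the estimate I would apply \Cref{thm:cor} inside each trapezoid $\Delta_n$, taking as its Standard Riemann Semigroup reference the solution emanating from the globally small-$\mathrm{BV}$ extension used above (which coincides with $\mathcal{S}_t v_\delta$ on $\Delta_n$ by finite speed) and the roles of $R$ and $T$ to be $L/2$ and $L/(4c)$; then the factor $\sqrt{R+T}$ in \Cref{thm:cor} is at most $K\sqrt L$, which is exactly \eqref{eq:trapest}, valid for every $u\in GL(\eps_1)$. Keeping only the at most $4R/L$ indices $n$ whose base lies in $(-R,R)$ — these cover $(-R+c(t-t_0),R-c(t-t_0))$ at time $t$, and every point of the line belongs to at most two of the bases — I would sum \eqref{eq:trapest} over $n$ and apply Cauchy--Schwarz to bound the $\ell^1$-sum of the $L^2((\tfrac L2 n,\tfrac L2 n+L))$-norms of $u(t_0,\cdot)-v_\delta$ by $(4R/L)^{1/2}$ times their $\ell^2$-sum, which by the finite-overlap property is at most $2\sqrt2\,(R/L)^{1/2}\norm{u(t_0,\cdot)-v_\delta}_{L^2((-R,R))}$. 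Multiplying through by the prefactor $K\sqrt L$ cancels the $L$-dependence ($\sqrt L\cdot(R/L)^{1/2}=\sqrt R$) and produces the stated bound; as remarked in the text one shrinks $L$ slightly, if needed, so that $2R/L\in\N$.

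The main obstacle is the gluing in the second step: one must know that the front-tracking construction has finite propagation speed uniformly in its approximation parameter and that the small-$\mathrm{BV}$ front-tracking limit is unique, so that the solutions built on different trapezoids are genuinely restrictions of one and the same weak entropy solution. Granting those two facts — both available from \cite{MR1367356} and recalled in \Cref{sec:ft} — the rest is the elementary geometry of the trapezoids in \Cref{fig:trapezoid-sum} together with Cauchy--Schwarz.
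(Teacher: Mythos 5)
Your proposal is correct and follows essentially the same route as the paper, which proves this lemma by the trapezoid construction, the application of \Cref{thm:cor} on each $\Delta_n$, and the Cauchy--Schwarz summation you describe. The extra care you take with extending $v_\delta|_{[\frac{L}{2}n,\frac{L}{2}n+L]}$ to a globally small-BV function and gluing via finite propagation speed and uniqueness of the front-tracking limit is exactly what the paper leaves implicit, and your geometric and overlap counting matches the text.
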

We can now begin the proof of \Cref{thm:main-gl}.

\subsection{Proof of \Cref{thm:main-gl}}
Without loss of generality, we treat only the case $u^0 \in H^s_{\loc}(\R)$ as $W^{s,p}_{\loc}(\R)$ embeds into $H^{s^*}_{\loc}(\R)$ for $s^*$ smaller if needed (see \cite[Corollary 2.30]{MR4567945}). Fix $R,T > 0$. Denote the interval $I := [-R -cT, R+cT]$ as the base of the information cone, where the constant $c > 0$ is the speed of information (see \Cref{rem:soi}). Further, let $I_\delta := [-R - cT - \delta, R+cT + \delta]$ be a slightly larger set. This is introduced to accommodate the mollification $u_\delta$ depending on values of $u$ outside $I$. Let $u^0 \in H^s_\loc(\R)$ and let $u_\delta = u^0 \ast \gamma_\delta$. Without loss of generality, we assume $s < 1$. Let $u \in GL(\epsilon_1)$ be any solution in the Glimm--Lax class having initial data $u^0$. Our goal is to show that $u$ verifies \eqref{eq:bressan-rate} and apply \Cref{prop:bmv-main}.
Fix $0 < t < T$ and let $\delta >0$ be a constant to be determined. By the triangle inequality we have 
\beq \label{eq:fast-cv-triangle}\begin{aligned} 
        \norm{u(t,\cdot) - u^0(\cdot)}_{L^1((-R-c(T-t),R+c(T-t)))} \leq& \norm{u(t,\cdot) - \mathcal{S}_tu_\delta }_{L^1((-R-c(T-t),R+c(T-t)))}\\ &\ +\norm{\mathcal{S}_tu_\delta - u_\delta}_{L^1((-R-c(T-t),R+c(T-t)))}\\ &\ +\norm{u_\delta - u^0}_{L^1((-R-c(T-t),R+c(T-t)))}.   
    \end{aligned} \eeq
\par By \Cref{lem:trapezoidconstruction}, we have
\begin{align*}
\norm{u(t,\cdot) - \mathcal{S}_tu_\delta}_{L^1((-R-c(T-t),R+c(T-t)))} \leq K||u^0-u_\delta||_{L^2(I)} \leq K \norm{u^0}_{H^s(I_\delta)} \delta^{s}, \text{ for $t \in \left[0, \frac{L}{4c}\right]$}.
\end{align*}
Choosing $\delta=t^{1-\frac{s}{2}}$, it follows that $t < \frac{L}{4c}$ when it satisfies $t \leq \left(\frac{\delta_0}{8 c \norm{u^0}_{L^\infty}(\R)}\right)^{\frac{2}{s}}$ by the definition of $L$~\eqref{traplength}. 
That is, for $t$ sufficiently small we have
\begin{multline*}
\norm{u(t,\cdot) - \mathcal{S}_tu_\delta}_{L^1(-R-c(T-t),R+c(T-t)))} \leq K\norm{u^0-u_\delta}_{L^2(I)} \leq K\norm{u^0}_{H^s(I_\delta)} t^{\frac{s}{2}}, \, \\\text{for $t \in \left[0,\left(\frac{\delta_0}{8c\norm{u^0}_{L^\infty(\R)} }\right)^{\frac{2}{s}}\right]$},
\end{multline*}
where $\delta^s=t^{s(1-\frac{s}{2})} < t^{\frac{s}{2}}$ for small $t$ follows from $s<1$. Next, we bound the second and third term on the right-hand side of \eqref{eq:fast-cv-triangle}. Recall the following estimates for any BV function $v$ and any interval $(a,b) \subset \R$:
    \begin{align} 
        \limsup_{\epsilon \to 0+} \frac{\norm{\mathcal{S}_{t+\epsilon} v - \mathcal{S}_t v}_{L^1((a,b))} }{\epsilon} &\leq K\ ||\mathcal{S}_tv||_{BV((a,b))}, \label{eq:diff-L1-TV}\\
        ||\mathcal{S}_tv||_{BV((a,b))} &\leq K\ ||v||_{BV((a-ct,a+ct))}, \label{eq:short-time-BV-bound}
    \end{align}
    where~\eqref{eq:short-time-BV-bound} holds for all $t < \frac{L}{4c}$.
    Equation~\eqref{eq:diff-L1-TV} can be found in \cite[Theorem 4.3.1]{MR3468916} while~\eqref{eq:short-time-BV-bound} holds on each $\Delta_n$ by standard front tracking considerations.
    From the equations~\eqref{eq:diff-L1-TV} and~\eqref{eq:short-time-BV-bound} and the total variation of $u_\delta$~\eqref{eq:Hs-mol-TV} we have the estimate
    \begin{align*}
        \norm{\mathcal{S}_tu_\delta - u_\delta}_{L^1((-R-c(T-t),R+c(T-t)))} &\leq K\int_0^t \norm{u_{\delta}}_{BV((-R-cT,R+cT))}\,\diff s \\
        &\leq K\norm{u^0}_{L^\infty(\R)} \frac{2(R+cT)}{\delta}t \leq K \norm{u^0}_{L^\infty(\R)} t^{\frac{s}{2}},
    \end{align*}
    where the last inequality follows from the choice $\delta = t^{1-\frac{s}{2}}$. 
    This choice and~\eqref{eq:Hs-L2-conv} also immediately bounds the final term
    \beq \label{eq:proof-alg-cv} \norm{u_\delta - u^0}_{L^1((-R-c(T-t),R+c(T-t)))} \leq K\norm{u^0}_{H^s(I_\delta)}\delta^{s} \leq K\norm{u^0}_{H^s(I_\delta)} t^{\frac{s}{2}}. \eeq
    This suffices to show 
    \begin{multline} \label{eq:alg-conv}  \norm{u(t,\cdot) - u^0(\cdot)}_{L^1((-R-c(T-t),R+c(T-t)))} \leq K \left(\norm{u^0}_{H^s(I_\delta)} + \norm{u^0}_{L^\infty(\R)}\right) t^{\frac{s}{2}},  \\ \text{for } t \in \left[0,\left(\frac{\delta_0}{8c\norm{u^0}_{L^\infty(\R)} }\right)^{\frac{2}{s}}\right]. \end{multline}
    Furthermore, for $t > \left(\frac{\delta_0}{8c\norm{u^0}_{L^\infty(\R)} }\right)^{\frac{2}{s}}$ we have 
    \beq \label{eq:large-time} \norm{u(t,\cdot) - u^0(\cdot)}_{L^1((-R-c(T-t),R+c(T-t)))} \leq C_0 \sqrt{\eps_1} (2R + 2cT) \leq K \norm{u^0}_{L^\infty(\R)} t^{\frac{s}{2}}, \eeq
    which follows from~\eqref{smalllinfty}.
    This establishes 
    $ u^0 \in {\mathcal{D}}^\alpha$ for $\alpha=\frac{s}{2}$ and that every solution $u \in GL(\eps_1)$ with initial data $u^0$ has the algebraic $L^1$ convergence backwards in time~\eqref{eq:bressan-rate}. 
    In particular, we are able to choose the associated constant $M$ in \Cref{prop:bmv-main} to be larger than the constants appearing in the bounds~\eqref{eq:proof-alg-cv} and~\eqref{eq:large-time}. 
    Taking $\epsilon_1$ sufficiently small to apply \Cref{prop:bmv-main}, \Cref{thm:main-gl} follows. 
\begin{remark}
We remark that the preceding proof is comparable to one in the scalar case showing that solutions with initial data in $H^s$ have the convergence rate~\eqref{eq:Da-decay} \cite[Proposition 4.2]{MR4841631}. 
Their computation uses the Kru\v{z}kov theory, which gives an $L^1$ semigroup on solutions up to $t=0$, to control the first term on the right of the inequality~\eqref{eq:fast-cv-triangle}. 
This is not available at the level of systems, and it is unknown how to bound this term using the $L^1$ theory, even with the refined estimate \Cref{prop:L1-bressan}. 
\Cref{thm:cor} provides an intermediate estimate which suffices due to being a weak-BV type estimate. 
\end{remark}
\begin{remark}
We also remark that one can obtain \Cref{thm:main-gl} by classical methods. Indeed, the term $||u(t,\cdot)-\mathcal{S}_tu_\delta||_{L^1}$ may be estimated by replacing the use of \Cref{lem:trapezoidconstruction} in the proof of \Cref{thm:main-gl} by the classical weak-strong $L^2$-stability estimate (see \cite[Theorem 5.2.1]{MR3468916}). This works because an $L^2$-type estimate is required on a time scale of length $\sim \delta$ and the solution $\mathcal{S}_tu_\delta$ will be Lipschitz on a time scale of length $\sim \delta$.
This, however, requires careful tracking of the blow-up time of the solution, while our more robust estimate from \Cref{lem:trapezoidconstruction} holds after blow-up. 
This can be seen in the proof of \Cref{thm:sob-ext-obs}, where we use \Cref{lem:trapezoidconstruction} for time scale greater than $\sim \delta$. Thus, we use this $L^2$-type estimate on a time scale which is longer than the time scale on which the approximate solutions remain Lipschitz. This shows the versatility of using a weak-BV stability theory rather than the classical weak-strong theory.
\end{remark}
\subsection{Proof of \Cref{thm:main-inc}}
    To begin, we show the inclusion $\tilde {\mathcal{D}}_{\alpha} \subset {\mathcal{D}}_\alpha$ following \cite[p. 3]{2025arXiv250500420B}.
    Fix any $u^0 \in \tilde D_{\alpha}$.
    By definition, there exists a weak solution $u$ with initial data $u^0$ satisfying 
    $$  \sup_{ t \in (0,T)} t^{1-\alpha}\norm{u(t,\cdot )}_{\text{BV}( (a,b))} < \infty,$$
    for any $(a,b) \subset \R$.
    To show the $L^1$ convergence rate as $t \downarrow 0$ as in the definition~\eqref{eq:D-alpha} we recall~\eqref{eq:diff-L1-TV}.
    Integrating~\eqref{eq:diff-L1-TV} in time using the total variation decay of $\tilde D_{\alpha}$ we find
    $$ \norm{u(t,\cdot) - u^0}_{L^1((a,b))} \leq t^{\alpha} \sup_{ t \in (0,T)} \left( t^{1-\alpha}\norm{u(t)}_{\text{BV}((a,b))}\right),$$
    establishing $u^0 \in {\mathcal{D}}_\alpha.$ 

    Next, we show $ {\mathcal{D}}_\alpha \subset  H^r_\loc(\R)$ for $r < \frac{\alpha}{2(1+\alpha)}$.
    Let $u^0 \in \mathcal{D}_\alpha$ and let $u$ be a weak entropy solution with initial data $u^0$ verifying the condition
    \beq \label{eq:Dalpha-inc-cond} \sup_{t \in (0,T)} t^{-\alpha } \norm{u(t,\cdot) - u^0}_{L^1( (a,b))} < \infty, \eeq
    for all $(a,b)\subset \R$.
    For any $h \in \R$ we have 
    \beq \label{eq:HsDa-tri}\begin{aligned}
        \norm{u^0(\cdot) - u^0(\cdot - h)}_{L^2(I_h)} &\leq \norm{u^0(\cdot) - u(t,\cdot)}_{L^2(I_h)}  + \norm{u(t,\cdot) - u(t,\cdot - h)}_{L^2(I_h)}  \\
        &+\norm{u(t,\cdot - h) - u^0(\cdot - h)}_{L^2(I_h)} =: J_1 + J_2 +J_3,
    \end{aligned}\eeq
    for any $t > 0$, where $I_h := (a,b)\cap(a+h,b+h)$.
    We first note that
    \beq \label{eq:HsDa-I1I3} J_1 + J_3 \leq K t^{\alpha/2} \sup_{t \in (0,T)} \left[t^{-\alpha} \norm{u^0(\cdot) - u(t,\cdot)}_{L^1((a,b))} \right]^{1/2}, \eeq
    where~\eqref{eq:Dalpha-inc-cond} implies that the above supremum in~\eqref{eq:HsDa-I1I3} is finite. 
    Next, by Fubini's theorem we have
    \beq \label{eq:HsDa-I2}
    \begin{aligned}
        J_2 &\leq K\norm{u}_{L^\infty([0,t]\times\R)}^{1/2} \norm{u(t,\cdot) -u(t,\cdot - h)}_{L^1(I_h)}^{1/2} \\
        &\leq K\norm{u}_{L^\infty([0,t]\times\R)}^{1/2} \left[|h| \norm{u(t,\cdot)}_{\text{BV}((a,b))} \right]^{1/2} \\
        &\leq K\norm{u}_{L^\infty([0,t]\times\R)}^{1/2} \left[ \frac{|h|(b-a)}{t} + \sqrt{\eps_1}|h|  \right]^{1/2}.
    \end{aligned}\eeq
    Taking $t = |h|^{1-\frac{\alpha}{1+\alpha}}$ in~\eqref{eq:HsDa-I1I3} and~\eqref{eq:HsDa-I2} and substituting these bounds into~\eqref{eq:HsDa-tri}, we finally have 
    $$ \norm{u^0(\cdot) - u^0(\cdot - h)}_{L^2(I_h)} \leq K |h|^{\frac{\alpha}{2(1+\alpha)}}, $$
    where $K$ depends on the supremum~\eqref{eq:Dalpha-inc-cond} and the size of our interval $(a,b)$.
    To complete the proof, we use the above bound to show the $H^r$ seminorm of $u^0$ is finite,
    \begin{align*}
        |u^0|_{H^r((a,b))}^2 &= \int_a^b \int_{x-b}^{x-a} \frac{|u^0(x) - u^0(x-h)|^2}{|h|^{1+2r}}\,\diff h\,\diff x \\
        &= \int_{-(b-a)}^{b-a} \frac{\norm{u^0(\cdot) - u^0(\cdot-h)}_{L^2((a,b) \cap (a+h,b+h))}^2}{|h|^{1+2r}}\,\diff h \\
        &\leq K \int_0^{b-a} \frac{|h|^{\frac{\alpha}{1+\alpha}}}{|h|^{1+2r}}\,\diff h < \infty,
    \end{align*}
    provided $r < \frac{\alpha}{2(1+\alpha)}$.

    To see $H^s_\loc(\R; B_{\eps_1}(d)) \subset {\mathcal{D}}_\omega$ for $\omega \leq \frac{s}{2}$ we note if $\epsilon_1$ is sufficiently small then any data $u^0 \in H^s_\loc(\R; B_{\eps_1}(d))$ has a solution $u \in GL(\epsilon_1)$ by the classical result of Glimm--Lax. 
    The fact that this solution $u$ satisfies 
    $$ \sup_{t \in (0,T)} t^{-s/2 } \norm{u(t,\cdot) - u^0}_{L^1((a,b))} < \infty, $$
    for all $(a,b) \subset \R$
    follows from the proof of \Cref{thm:main-gl} when $\eps_1$ is sufficiently small.
    Additionally, since the solution $u$ is given by the Glimm--Lax construction it satisfies~\eqref{tv1/t}, establishing $u^0 \in {\mathcal{D}}_\omega$.

\subsection{Proof of \Cref{thm:main-obs}} 
Now, we prove \Cref{thm:main-obs}. \Cref{thm:main-obs} will follow directly from \Cref{prop:entropy-conservation} and \Cref{prop:nonperiodic}. To begin, we give a well known criterion to determine whether a periodic solution to \eqref{cl} additionally verifies an entropy inequality \eqref{entropic}. In the forthcoming theorem, we denote the flat, 1-dimensional torus as $\T^1$.
\begin{proposition} \label{prop:entropy-conservation}
    There exists an $\eps_1 > 0$ such that the following is true. 
    Fix $T > 0$ and any convex entropy $\eta\in C^2(B_{\eps_1}(0))$. If $u \in L^3(0,T; B^\alpha_{3,\infty}(\T^1;B_{\eps_1}(0)))$ is a solution to the equation
    \begin{equation} \label{cltorus}
     \begin{aligned}
        u_t+[f(u)]_x&=0 & (t,x) &\in \R^+ \times \T^1, \\
    \end{aligned}
\end{equation}
    then the solution locally conserves entropy, 
    \begin{equation} \label{eq:ent-eq}
        \partial_t \eta(u) + \partial_x q(u) = 0 \quad \text{in}\quad \mathcal{D}'( [0,T] \times \T^1).
    \end{equation}
\end{proposition}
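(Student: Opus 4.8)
The plan is to run the classical commutator/mollification argument of Constantin--E--Titi adapted to the $1$-d hyperbolic setting, exploiting the fact that $B^\alpha_{3,\infty}$ with $\alpha > 1/3$ is precisely the threshold at which the cubic nonlinearity closes. First I would mollify the equation in the spatial variable: set $u^\eps := u \ast \rho_\eps$ where $\rho_\eps$ is a standard spatial mollifier on $\T^1$. Then $u^\eps$ solves $\partial_t u^\eps + \partial_x (f(u))^\eps = 0$, which we rewrite as $\partial_t u^\eps + \partial_x f(u^\eps) = \partial_x\big(f(u^\eps) - (f(u))^\eps\big) =: \partial_x r_\eps$. Since $u^\eps$ is smooth in $x$ for each fixed $\eps$ and lies in $L^3_t$ uniformly, one can legitimately multiply by $\nabla \eta(u^\eps)$ and use the chain rule (valid for the smooth-in-$x$ function $u^\eps$, after also mollifying in time or arguing by duality against test functions) to get
\begin{equation*}
\partial_t \eta(u^\eps) + \partial_x q(u^\eps) = \nabla\eta(u^\eps)\cdot \partial_x r_\eps \quad \text{in } \mathcal{D}'([0,T]\times\T^1).
\end{equation*}
The goal is then to show the right-hand side, tested against a fixed $\phi \in C_0^\infty$, tends to $0$ as $\eps \to 0$, while the left-hand side converges to $\partial_t\eta(u) + \partial_x q(u)$ by the strong convergence $u^\eps \to u$ in $L^3_{t,x}$ (which holds since $B^\alpha_{3,\infty} \hookrightarrow L^3$ and mollification converges in $L^3$) together with continuity of $\eta, q$ and the uniform $L^\infty$ bound $\norm{u^\eps}_{L^\infty} \le \eps_1$.

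The core estimate is the commutator bound. Integrating by parts against $\phi$, the right-hand side becomes $-\int_0^T\int_{\T^1} \partial_x\big(\nabla\eta(u^\eps)\phi\big)\, r_\eps\, dx\,dt$, so it suffices to show $\norm{r_\eps}_{L^{3/2}_{t,x}} \to 0$ together with a uniform bound on $\norm{\partial_x\nabla\eta(u^\eps)}_{L^3_{t,x}}$... but the latter is not uniformly bounded. The correct route, as in Constantin--E--Titi, is to not distribute the derivative: instead write $r_\eps = f(u^\eps) - (f(u))^\eps$ and use the standard commutator identity for the \emph{quadratic} part of $f$ (Taylor expand $f$ to second order around a fixed state; the linear part gives an exact cancellation since mollification commutes with linear maps, and the higher-order Taylor remainder is controlled since $f \in C^4$ and $\norm{u}_{L^\infty}$ is small). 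For the quadratic term $Q(u,u)$ one has the Constantin--E--Titi commutator estimate $\norm{Q(u,u)^\eps - Q(u^\eps, u^\eps)}_{L^{3/2}} \lesssim \eps^{2\alpha}\norm{u}_{B^\alpha_{3,\infty}}^2$, hence $\norm{r_\eps}_{L^{3/2}_{t,x}} \lesssim \eps^{2\alpha}\norm{u}_{L^3_t B^\alpha_{3,\infty}}^2$ (using $u \in L^3_t$, so $\norm{u}_{L^3_t B^\alpha_{3,\infty}}^2 \le \norm{u}_{L^3_t B^\alpha_{3,\infty}}^2 < \infty$ after Hölder; here the integrability exponent $3$ in time matches the quadratic pairing into $L^{3/2}_t$). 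Meanwhile $\nabla\eta(u^\eps)$ is bounded in $B^\alpha_{3,\infty}$ uniformly (composition with the $C^2$ function $\nabla\eta$ on the bounded range $B_{\eps_1}(0)$ preserves $B^\alpha_{3,\infty}$ for $\alpha < 1$), so $\partial_x\big(\nabla\eta(u^\eps)\phi\big)$ pairs with $r_\eps$ via a second commutator-type bound: more precisely, test against $\phi$, integrate by parts once, and estimate $\big|\int \nabla\eta(u^\eps)\phi\, \partial_x r_\eps\big| = \big|\int \partial_x(\nabla\eta(u^\eps)\phi)\, r_\eps\big|$; since $\nabla\eta(u^\eps) \in B^\alpha_{3,\infty}$ uniformly one bounds this by $\norm{\nabla\eta(u^\eps)}_{B^\alpha_{3,\infty}}\norm{r_\eps}_{B^{1-\alpha}_{3/2, 1}}$-type pairings — but it is cleaner to instead note directly that the whole right-hand side $\nabla\eta(u^\eps)\cdot\partial_x r_\eps$ tested against $\phi$ equals (after integration by parts) a sum of a commutator term $\int \phi\, \nabla\eta(u^\eps)\cdot \partial_x\big(f(u^\eps) - (f(u))^\eps\big)$ which Constantin--E--Titi's Lemma handles as a single trilinear expression with total regularity $3\alpha > 1$, hence $\to 0$.

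The main obstacle I anticipate is organizing the commutator estimate at the level of the \emph{entropy flux} rather than the equation itself: one must show $\int_0^T\int_{\T^1}\phi\,\big[\nabla\eta(u^\eps)\cdot(f(u))^\eps_x - (\text{correct limiting flux divergence})\big] \to 0$, and the clean way is to prove the "entropy commutator" $\nabla\eta(u^\eps)\cdot\partial_x f(u^\eps) - \partial_x q(u^\eps) \equiv 0$ exactly (this is just the chain rule plus the defining relation $\nabla q = \nabla\eta\, f'$, valid pointwise for the smooth $u^\eps$), so that everything funnels into controlling $\nabla\eta(u^\eps)\cdot\partial_x\big(f(u^\eps) - (f(u))^\eps\big)$. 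That term, written as $\partial_x\big[\nabla\eta(u^\eps)\cdot r_\eps\big] - \partial_x\nabla\eta(u^\eps)\cdot r_\eps$ and paired with $\phi$, reduces to $\int \partial_x\nabla\eta(u^\eps)\cdot r_\eps\,\phi$ plus lower order; applying the Besov product/commutator calculus with the three factors $\partial_x\nabla\eta(u^\eps)$ (regularity $\alpha - 1$), and $r_\eps$ (which carries an $\eps^{2\alpha}$ gain, i.e. effectively regularity $2\alpha$ beyond scale $\eps$) gives net scaling $\eps^{(\alpha - 1) + 2\alpha} = \eps^{3\alpha - 1} \to 0$ iff $\alpha > 1/3$. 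Since the hypothesis is $u \in L^3_t B^\alpha_{3,\infty}$ with the implicit understanding $\alpha > 1/3$ (this is the Onsager exponent and must be where $\eps_1$-independence of the argument lives; if $\alpha$ is only assumed positive one cannot conclude, so I would either add $\alpha > 1/3$ or read it as implicit), the argument closes. I would present the details via the three-commutator lemma of Constantin--E--Titi, cite it, and keep the time integration trivial since all estimates are pointwise-in-$t$ after applying Hölder with the exponent $3$.
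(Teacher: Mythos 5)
Your proposal is correct and follows essentially the same route as the paper's proof: a Constantin--E--Titi mollification and commutator argument in which the key term $\int \phi\,\nabla\eta(u_\delta)\,\partial_x[f(u_\delta)-f(u)_\delta]$ is bounded by $\norm{\partial_x u_\delta}_{L^3}\norm{f(u_\delta)-f(u)_\delta}_{L^{3/2}} \lesssim \delta^{(\alpha-1)+2\alpha}$, closing precisely when $\alpha>1/3$ (a condition you rightly note must be read into the statement). The only organizational differences are that the paper mollifies in time as well (extending $u$ to negative times to handle the initial slice) and invokes the general $C^2$-flux commutator lemma of Gwiazda et al.\ rather than Taylor-expanding $f$ to its quadratic part --- the latter being where your sketch is slightly loose, since the cubic Taylor remainder needs the same commutator estimate, not merely smallness of $\norm{u}_{L^\infty}$.
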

The above proposition is well known, and essentially follows from the considerations in Constantin, E, Titi \cite{CET-Onsager}. We note that entropy conservation for very general systems of conservation laws (even in multi-d) was treated in \cite{MR4038422}, \cite{gwiazda2018note}.
For the reader's convenience, we provide a proof of this proposition for the $1$-d system \eqref{cltorus} in \Cref{appendix-entropy}.

The next proposition is a more general statement than \Cref{thm:main-obs} that shows uniqueness for non-periodic solutions.
\begin{proposition}\label{prop:nonperiodic}
Let $\alpha > \frac{1}{2}$. Then, there exists $\eps_1 > 0$ such that the following is true. For any $T>0$, suppose $u \in L^\infty(0,T;C^\alpha_\loc(\R;B_{\eps_1}(0)))$ is a weak solution to \eqref{cl} verifying \eqref{entropic} for at least one strictly convex entropy $\eta \in C^3(B_{\eps_1(0)};\R)$ with initial data $u^0 \in C^\alpha_\loc(\R;B_{\eps_1}(0))$. Then, $u$ coincides with the unique solution in $GL(\eps_1)$ with initial data $u^0$.
\end{proposition}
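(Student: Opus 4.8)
The plan is to run the argument of \Cref{thm:main-gl} for $u$ in place of the $GL(\eps_1)$ solution in order to extract the algebraic $L^1$ convergence rate \eqref{eq:bressan-rate}, to upgrade the regularity of $u$ at positive times so that it in fact belongs to $GL(\eps_1)$, and then to appeal to \Cref{thm:main-gl} to identify $u$ with the Glimm--Lax solution. First I would reduce to Sobolev data: on every bounded interval one has $C^\alpha \hookrightarrow H^s$ whenever $s<\alpha$, so fixing $s\in(\tfrac12,\min(\alpha,1))$ --- nonempty since $\alpha>\tfrac12$ --- gives $u^0\in H^s_\loc(\R)$; after the standard localization (replace $u^0$ outside an enlarged cone of information by a function with small finite $\norm{\cdot-d}_{L^\infty(\R)}$, which does not affect $u$ inside the cone) \Cref{thm:main-gl} produces the unique $\bar u:=\mathcal{S}u^0\in GL(\eps_1)$ with initial data $u^0$, and it remains to show $u=\bar u$.

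For the rate, the one point that must be checked is that the weak--BV estimate \eqref{eq:trapest} underlying \Cref{lem:trapezoidconstruction} --- i.e. \Cref{thm:cor}, or more precisely \Cref{prop:main} --- is available with $u$ as the perturbed solution. Since that estimate is of $a$-contraction/relative-entropy type, it asks of the perturbed solution only that it be a weak solution of \eqref{cl} with $\norm{u-d}_{L^\infty(\R)}\le\eps_1$ verifying the entropy inequality \eqref{entropic} for a single strictly convex entropy, and all of these hold here. Granting this, \Cref{lem:trapezoidconstruction} holds for $u$, and repeating the proof of \Cref{thm:main-gl} verbatim --- splitting $\norm{u(t)-u^0}_{L^1}$ as in \eqref{eq:fast-cv-triangle} through the mollification $u_\delta=u^0\ast\gamma_\delta$, choosing $\delta=t^{1-s/2}$, and controlling the two total-variation error terms by \eqref{eq:diff-L1-TV}, \eqref{eq:short-time-BV-bound}, \eqref{eq:Hs-mol-TV}, \eqref{eq:Hs-L2-conv} --- yields $\sup_{t\in(0,T)}t^{-s/2}\norm{u(t)-u^0}_{L^1((-R-c(T-t),R+c(T-t)))}<\infty$ for every $R,T$, which is \eqref{eq:bressan-rate} with exponent $s/2<1$.

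The main obstacle is the passage from this rate to $u=\bar u$, because --- unlike in \Cref{thm:main-gl} --- $u$ is not a priori a member of $GL(\eps_1)$: a generic $C^\alpha$ profile has infinite total variation, so the decay estimate \eqref{tv1/t} is not automatic, and \Cref{prop:bmv-main} cannot be applied directly. I would resolve this using that $\alpha>\tfrac12$ forces $u$ to be entropy-conservative: the Constantin--E--Titi / Onsager commutator estimate behind \Cref{prop:entropy-conservation} is local and applies to any $C^\alpha_\loc$ weak solution once $\alpha>\tfrac13$, so $\partial_t\eta(u)+\partial_x q(u)=0$ in $\mathcal{D}'$ for every entropy pair $(\eta,q)$. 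A continuous, entropy-conservative solution of the genuinely nonlinear system \eqref{cl} supports no compressive structure; its generalized characteristics of each family do not cross on $[0,T]$, and passing to Riemann coordinates this produces one-sided Oleinik-type lower bounds $\gtrsim-1/t$ on the spatial derivatives of the Riemann invariants of $u(t,\cdot)$, whence $u(t,\cdot)\in\mathrm{BV}_\loc$ with $TV(u(t,\cdot);L)\lesssim\sqrt{\eps_1}+L/t$. Together with the $L^\infty$ bound \eqref{smalllinfty}, which follows from the invariant-region structure of $2\times2$ genuinely nonlinear systems with small data, this gives $u\in GL(\eps_1)$, and then \Cref{thm:main-gl} yields $u=\bar u$. (If matching the exact constant in \eqref{tv1/t} proves awkward, it suffices to know $u(t_0,\cdot)\in\mathrm{BV}_\loc$ for $t_0>0$: one may then combine the uniqueness of front-tracking continuations \eqref{enum2} in \Cref{prop:L1-bressan} with its Lipschitz bound $L(\tau)\lesssim\tau^{-\beta}$, taken with $\beta<s/2$, and the rate above, and let $t_0\downarrow0$ in $\norm{u(t)-\bar u(t)}_{L^1((-R,R))}\le L(t_0)\bigl(\norm{u(t_0)-u^0}_{L^1}+\norm{\bar u(t_0)-u^0}_{L^1}\bigr)$.) I expect the identification of this entropy-conservation/no-compression mechanism --- and the corresponding positive-time $\mathrm{BV}_\loc$ regularization of $u$ --- to be the heart of the proof.
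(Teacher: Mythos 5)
Your first two steps are sound: the weak--BV estimate of \Cref{prop:main} does apply with $u$ as the perturbed solution (continuity gives the Strong Trace Property, and one fixed strictly convex entropy suffices), and rerunning the proof of \Cref{thm:main-gl} with this $u$ yields the algebraic rate \eqref{eq:bressan-rate}. The gap is exactly the step you flag as the heart of the argument: the passage from ``continuous and entropy-conservative'' to ``$u(t,\cdot)\in \mathrm{BV}_{\loc}$ with $TV(u(t,\cdot);L)\lesssim \sqrt{\eps_1}+L/t$,'' i.e.\ to $u\in GL(\eps_1)$, which you need in order to invoke \Cref{prop:bmv-main}. For $2\times 2$ systems the decay \eqref{tv1/t} is known only for solutions produced by the Glimm--Lax or front-tracking constructions, where it is extracted from wave tracing and the interaction potential; it is not a property of arbitrary weak solutions. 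Generalized characteristics are developed for BV solutions, and for a continuous non-BV solution the ODE $\dot x=\lambda_i(u)$ has non-unique Peano solutions, so ``characteristics of each family do not cross'' is not even well-posed without further work; moreover, any one-sided Oleinik bound on a Riemann invariant of one family is contaminated by coupling with the other family, which for BV solutions is controlled precisely by the Glimm functional you do not have. Tellingly, your mechanism uses only $\alpha>1/3$ (for entropy conservation) and so would prove the statement well below the threshold $\alpha>1/2$, whereas that threshold must enter somewhere. The fallback in your parenthesis does not close the gap either: to write $u(t)=\mathcal{S}_{t-t_0}u(t_0)$ and apply \Cref{prop:L1-bressan} you still need $u(t_0,\cdot)$ to be locally small-BV \emph{and} a uniqueness statement identifying the continuation of $u$ from time $t_0$ with the semigroup trajectory, both of which rest on the unproven regularization.

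The paper never places $u$ in $GL(\eps_1)$. It samples $u$ at times $i\tau$, mollifies each sample to obtain genuine BV data, lets $v_i$ be the corresponding Glimm--Lax solution, controls $\norm{u-v_i}_{L^1}$ only over the short window of length $\sim\tau$ via the weak--BV $L^2$ estimate (\Cref{lem:trapezoidconstruction} with trapezoid base $L\sim\delta^{1-\alpha}$), and then chains the $v_i$ together using the Lipschitz bound $L(\tau)\lesssim\tau^{-\beta}$ of \Cref{prop:L1-bressan}, applied only to the BV objects $v_i$, never to $u$. The condition $\alpha>1/2$ enters in balancing the number of samples $n\sim\delta^{\alpha-1}$ against the per-sample error $\delta^{\alpha}$, giving a total error $\sim\delta^{2\alpha-1-\beta(1-\alpha)}\to 0$. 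This telescoping comparison is the missing ingredient that replaces your BV-regularization claim.
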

\begin{proof}[Proof of \Cref{prop:nonperiodic}]
Let $u \in L^\infty(0,T;C^\alpha(\R;B_{\eps_1}(0)))$ verify \eqref{entropic} with respect to a strictly convex entropy $\eta$. The primary idea is to sample $u$ at a sequence of times uniformly separated by a time step $\tau > 0$. 
\par At each time step, we construct a new approximate solution $v_i$ with initial data approximating $u$ at the time of sampling. 
We then use \Cref{prop:main} to prove a small modification of \Cref{lem:trapezoidconstruction}\footnote{\label{footnote:weak-bv-for-Ca}We remark that \Cref{prop:main} is simply a more general version of \Cref{thm:cor}. \Cref{prop:main} only requires solutions to be regular enough to establish a notion of limit across Lipschitz paths $h(t)$ (cf. \Cref{def:strongtrace}). 
This degree of regularity is guaranteed in this case, due to all solutions considered being continuous. As a result, \Cref{lem:trapezoidconstruction} holds when the weak solution is $C^\alpha$ in space by applying \Cref{prop:main} whenever \Cref{thm:cor} is used.} which lets us deduce that, for a short time after the approximation, $v_i$ and $u$ are close. 
After this short time, we stop comparing $v_i$ to $u$ using \Cref{prop:main} and instead opt to compare the samples $v_i$ to each other using the very fine estimate of \Cref{prop:L1-bressan}. 
In doing this, we are able deduce that any solution with a certain degree of regularity must coincide with the Glimm--Lax solution emanating from the same initial data, which is unique due to \Cref{thm:main-gl}. 

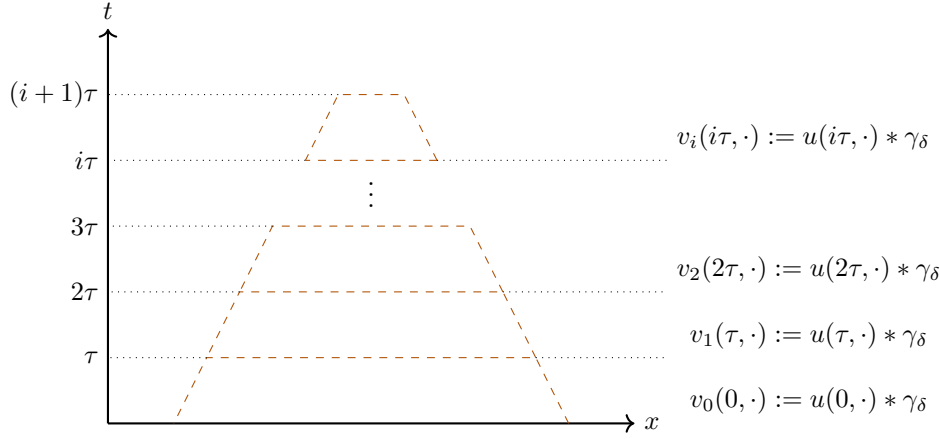
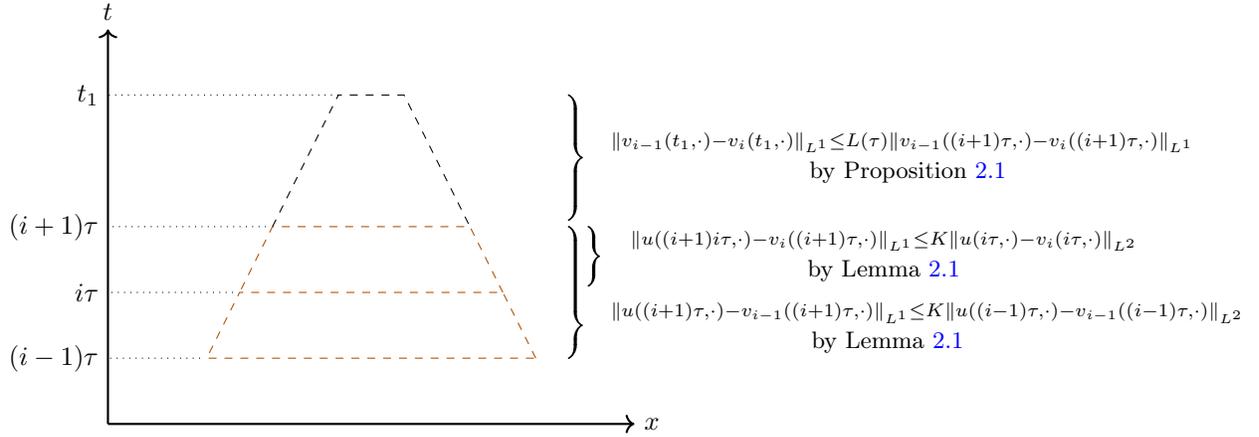
\begin{figure}
\begin{subfigure}{\textwidth}
  \centering
  \begin{tikzpicture}[scale=1.75]

\draw[->, thick] (0,0) -- (0,3) node[above] {$t$};


\draw[->, thick] (0,0) -- (4,0) node[right] {$x$};

\foreach \y in {0,0.5,1,2}
    \draw[dashed,color={rgb:red,191;green,87;blue,0}] (3.5-\y/2,\y) -- (3.5-\y/2-.25,\y+.5) -- (.75+\y/2,\y+.5) -- (.5+\y/2,\y); 
\draw[dotted] (0+.75,0+.5) -- (0,0 + .5) node[anchor=east] {$\tau$};
\foreach \y in {2,3}
    \draw[dotted] (\y/4+.5,\y/2) -- (0,\y/2 ) node[anchor=east] {$\y \tau$};
\draw[dotted] (1.5,2) -- (0,2) node[anchor=east] {$i \tau$};
\draw[dotted] (1.75,2+.5) -- (0,2+.5 ) node[anchor=east] {$(i+1) \tau$};
\draw[dashed,color={rgb:red,191;green,87;blue,0}] (5/2,2) -- (3/2,2); 
\node at (2,1.8) {$\vdots$};
\node[anchor=west] at (3.7,0.5) {$\hphantom{\scriptstyle \norm{u((i+1)\tau,\cdot) - v_{i-1}((i+1)\tau,\cdot)}_{L^1} \leq K\norm{u((i-1)\tau,\cdot) - v_{i-1}((i-1)\tau,\cdot)}_{L^2} }$};

\foreach \y in {0}
    \draw[draw] (4.25,\y/2) node[anchor=south west] {$\hphantom{2}v_{\y}(0,\cdot) := u(0,\cdot)\ast \gamma_\delta$};
\foreach \y in {1}
    \draw[dotted] (3.5-\y/4, \y/2) -- (4.25,\y/2) node[anchor=south west] {$\hphantom{2}v_{\y}( \tau,\cdot) := u(\tau,\cdot)\ast \gamma_\delta$};
\foreach \y in {2}
    \draw[dotted] (3.5-\y/4, \y/2) -- (4.25,\y/2) node[anchor=south west] {$v_{\y}(\y \tau,\cdot) := u(\y \tau,\cdot)\ast \gamma_\delta$};
\foreach \y in {4}
    \draw[dotted] (3.5-\y/4, \y/2) -- (4.25,\y/2) node[anchor=south west] {$v_{i}(i \tau,\cdot) := u(i \tau,\cdot)\ast \gamma_\delta$};

\end{tikzpicture}
  \caption{The definition of the approximants $v_i$.}
  \label{fig:v-def-obs}
\end{subfigure} \\
\begin{subfigure}{\textwidth}
  \centering
  \begin{tikzpicture}[scale=1.75]

\draw[->, thick] (0,0) -- (0,3) node[above] {$t$};
\draw[->, thick] (0,0) -- (4,0) node[right] {$x$};

\foreach \y in {.5}
    \draw[dashed,color={rgb:red,191;green,87;blue,0}] (3.5-\y/2,\y) -- (3.5-\y/2-.25,\y+.5) -- (.75+\y/2,\y+.5) -- (.5+\y/2,\y) -- cycle; 
\foreach \y in {1}
    \draw[dashed,color={rgb:red,191;green,87;blue,0}] (3.5-\y/2,\y) -- (3.5-\y/2-.25,\y+.5) -- (.75+\y/2,\y+.5) -- (.5+\y/2,\y); 
\draw[dotted] (.75-.05,.5) -- (0,.5) node[anchor=east] {$(i-1) \tau$};
\draw[dotted] (1-.05,1) -- (0,1 ) node[anchor=east] {$i \tau$};
\draw[dotted] (1.25-.05,1.5) -- (0,1.5 ) node[anchor=east] {$(i+1) \tau$};
\draw[dotted] (1.75,2.5) -- (0,2.5 ) node[anchor=east] {$t_1$};
\draw[dashed] (1.25,1.5) -- (1.75,2.5) -- (2.25,2.5) -- (2.75,1.5);
\draw [very thick,decorate,decoration={calligraphic brace,amplitude=4pt,mirror}] (3.65,1 + .05) -- (3.65,1.5) node[align=center, midway,anchor=west, xshift=1.2em] {$\scriptstyle \norm{u((i+1)i\tau,\cdot) - v_i((i+1)\tau,\cdot)}_{L^1} \leq K\norm{u(i\tau,\cdot) - v_i(i\tau,\cdot)}_{L^2} $ \\  \small by \Cref{lem:trapezoidconstruction}};
\draw [very thick,decorate,decoration={calligraphic brace,amplitude=5pt,mirror,aspect=0.25}] (3.5,0.5) -- (3.5,1.5) node[align=center, midway,anchor=north west, xshift=1.2em] {$\scriptstyle \norm{u((i+1)\tau,\cdot) - v_{i-1}((i+1)\tau,\cdot)}_{L^1} \leq K\norm{u((i-1)\tau,\cdot) - v_{i-1}((i-1)\tau,\cdot)}_{L^2} $\\  \hspace{-3em}\small by \Cref{lem:trapezoidconstruction}};

\draw [very thick,decorate,decoration={calligraphic brace,amplitude=5pt,mirror}] (3.5,1.5+.05) -- (3.5,2.5) node[align=center, midway,anchor=west, xshift=1.2em] { $\scriptstyle \norm{v_{i-1}(t_1,\cdot) - v_i(t_1,\cdot)}_{L^1} \leq L(\tau)\norm{v_{i-1}( (i+1)\tau,\cdot) - v_i((i+1)\tau,\cdot)}_{L^1} $ \\  \hspace{.45em}\small by \Cref{prop:L1-bressan}};

\end{tikzpicture}
  \caption{Estimating $\norm{v_{i-1} - v_i}_{L^1}$ at the terminal time $t_1$.}
  \label{fig:single-obs}
\end{subfigure}
\caption{In \Cref{fig:v-def-obs} we cover the information cone in trapezoids. At the base of each trapezoid we define the approximant $v_i$, which has initial data approximating $u(i\tau,\cdot)$ and is also a solution to the system~\eqref{cl}. \Cref{fig:single-obs} shows our strategy for estimating the norm $\norm{v_{i-1}(t_1,\cdot)-v_i(t_1,\cdot)}_{L^1}$. For short time (of size $\tau$ or $2\tau$) we use \Cref{lem:trapezoidconstruction} to estimate $\norm{u - v_i}_{L^1}$ and $\norm{u-v_{i-1}}_{L^1}$. From time $(i+1)\tau$ to $t_1$, we apply the $L^1$-Lipschitz bound from \Cref{prop:L1-bressan}. This results in a much better estimate than simply iterating \Cref{lem:trapezoidconstruction}.}
\label{fig:fig}
\end{figure}

Fix an interval $(-R,R) \subset \R$. 
To sample $u$, we first fix a suitable mollifier $\gamma \in C^\infty_c(\R)$ satisfying~\eqref{eq:mol-assumptions}.
For $\delta > 0$ we then denote $\gamma_\delta(x) := \frac{1}{\delta}\gamma(\frac{x}{\delta})$. 
We note that if $v \in C^\alpha_\loc(\R;B_{\eps_1}(d))$ then its mollifications satisfy the estimates
\begin{align}
    \norm{v \ast \gamma_\delta}_{\text{BV}((-a,b))} &\leq \norm{v}_{C^\alpha((-R-\delta,R+\delta))} \frac{b-a}{\delta^{1-\alpha}}, \quad \forall \ (a,b) \subset (-R,R), \label{eq:Ca-TV}\\
    \norm{v - v \ast \gamma_\delta}_{L^2(( -R, R))} &\leq  2R \norm{v}_{C^\alpha( (-R-\delta,R+\delta) )}\delta^\alpha,  \label{eq:Ca-L2}
\end{align}
(cf. the $p = \infty$ case of \Cref{lem:sob-for-thm-obs}).
Hence by the construction of Glimm and Lax \cite{MR265767} and \Cref{thm:main-gl} there exists a unique Glimm--Lax solution $u_{GL} \in GL(\epsilon_1)$ with initial data $u(0,\cdot)$, provided $\epsilon_1$ is sufficiently small. 
To show that $u$ coincides with $u_{GL}$, we consider the unique solution $v_0$ to~\eqref{cl} with initial data $u(0,\cdot) \ast \gamma_\delta$ for a $\delta  > 0$. 
Fix any $t_1 \in (0,T]$. The triangle inequality then gives
    \beq \label{eq:triangle-sob-gl} \begin{aligned} \norm{u(t_1,\cdot) - u_{GL}(t_1,\cdot)}_{L^1((-R + ct_1,R - ct_1))} \leq&\, \norm{u(t_1,\cdot) -v_0(t_1,\cdot)}_{L^1((-R + ct_1,R - ct_1))} \\&+ \norm{u_{GL}(t_1,\cdot) - v_0(t_1,\cdot)}_{L^1((-R + ct_1,R - ct_1))}. \end{aligned} \eeq
   We will proceed by bounding both terms on the right of~\eqref{eq:triangle-sob-gl} by quantities which converge to zero as $\delta \downarrow 0$, provided $\alpha > 1/2$ and $\epsilon_1$ is sufficiently small.

We begin with the term containing $u$. First, we fix a $\tau > 0$ to be determined later.
For each $i = 0,\dots, n$ where $n$ is the largest integer satisfying $n\tau < t_1$ we define $v_i$ as the unique Glimm--Lax solution to the initial value problem (see \Cref{fig:v-def-obs})
$$ \left\{\begin{aligned}
        (v_i)_t+[f(v_i)]_x&=0 & (t,x) &\in [i\tau,\infty) \times \R, \\
        v_i(i\tau,x) &= [u(i\tau,\cdot)\ast\gamma_\delta](x) & x &\in \R. 
    \end{aligned}\right. $$
We note that by \Cref{lem:trapezoidconstruction}, $v_i(t,\cdot)$ coincides with $ \mathcal{S}_{t-i\tau} \left[ u(i\tau,\cdot)\ast \gamma_\delta \right]$. 
This allows us to apply \Cref{lem:trapezoidconstruction} with two key modifications. Firstly, we replace $u \in GL(\eps_1)$ in \Cref{lem:trapezoidconstruction} with $u \in L^\infty(0,T;C^\alpha(\R;B_{\eps_1}(0)))$ (see  Footnote \ref{footnote:weak-bv-for-Ca} on p. \pageref{footnote:weak-bv-for-Ca}). Secondly, using the bound \eqref{eq:Ca-TV} in the proof of \Cref{lem:trapezoidconstruction} instead of \eqref{eq:Hs-mol-TV}, the length of the base of the trapezoids can instead be set as 
$$L = \frac{\delta^{1-\alpha}\delta_0}{\norm{u}_{L^\infty(0,T; C^\alpha( (-R-\delta,R+\delta)))}}.$$
This allows for the estimate \eqref{eq:trapest} to hold on a time interval longer than $\sim \delta$, which will be vital in upcoming estimates. This gives us the bound
\begin{multline} \label{eq:weak-strong-obs-whole}
    \norm{u(t,\cdot) - v_i(t,\cdot)}_{L^1( (-R + ct , R - ct) )} \leq K \norm{u(i\tau,\cdot) - v_i(i\tau,\cdot)}_{L^2((-R+ci\tau,R-ci\tau))}  \\\text{for } t \in \left[i\tau,i\tau + \frac{\delta^{1-\alpha} \delta_0}{8 c\norm{u}_{L^\infty( 0,T; C^\alpha( (-R-\delta,R+\delta)))}}\right].
\end{multline}
We wish to be able to compare the wild solution $u$ to at least one approximate $v_i$ at every time $r \in [0,t_1]$. 
This can be achieved by selecting the size of our time steps to be 
\beq  \tau = \frac{\delta^{1-\alpha} \delta_0}{16\Gamma c} = K \frac{\delta^{1-\alpha}}{\norm{u}_{L^\infty( 0,T; C^\alpha( (-R-\delta,R+\delta) ))}}. \label{eq:def-tau}\eeq
We note that this additionally gives us a bound on the number of samples taken,  
\beq n \leq \frac{t_1}{\tau} \leq K\norm{u}_{L^\infty( 0,T; C^\alpha( (-R-\delta,R+\delta)))} \delta^{\alpha - 1}, \label{eq:n-estimate} \eeq
for some $K > 0$. 

Next, we note the first term in~\eqref{eq:triangle-sob-gl} can be expanded by triangle inequality to give
\beq \label{eq:u-v0-triangle} \begin{aligned} 
    \norm{ u(t_1,\cdot) - v_0(t_1,\cdot) }_{L^1(( -R+ct_1, R-ct_1))} \leq&\, \norm{u(t_1,\cdot) - v_n(t_1,\cdot)}_{L^1(( -R+ct_1, R-ct_1))} \\
    &+ \sum_{i=1}^n \norm{v_i(t_1,\cdot) - v_{i-1}(t_1,\cdot)}_{L^1(( -R+ct_1, R-ct_1))}.
\end{aligned} \eeq
The first term of~\eqref{eq:u-v0-triangle} can be controlled by~\eqref{eq:weak-strong-obs-whole}, giving 
\begin{align*} \norm{u(t_1,\cdot) - v_n(t_1,\cdot)}_{L^1(( -R+ct_1, R-ct_1))} &\leq K \norm{u(n\tau,\cdot) - v_n(n\tau,\cdot)}_{L^2((-R+cn\tau,R-cn\tau))} \\&\leq K \norm{u(n\tau,\cdot) }_{C^\alpha((-R-\delta,R+\delta))} \delta^\alpha, \end{align*}
where the last inequality follows from definition of $v_i$ and~\eqref{eq:Ca-L2}.
For the sum we observe that at time $(i+1)\tau$ we have $v_i((i+1)\tau,\cdot),v_{i-1}((i+1)\tau,\cdot) \in D^{(\tau)}$. 
This allows us to apply \Cref{prop:L1-bressan}, yielding 
$$\norm{v_i(t_1,\cdot) - v_{i-1}(t_1,\cdot)}_{L^1(( -R+ct_1, R-ct_1))} \leq L(\tau)\norm{v_i( (i+1)\tau ,\cdot) - v_{i-1}((i+1)\tau ,\cdot)}_{L^1(( -R+c(i+1)\tau , R-c(i+1)\tau ))}.$$
To return to their time of definition, we simply use the triangle inequality, the bound~\eqref{eq:weak-strong-obs-whole}, and~\eqref{eq:Ca-L2} to find (see  \Cref{fig:single-obs})
\begin{align}
    \norm{v_i( (i+1)\tau ,\cdot) - v_{i-1}((i+1)\tau ,\cdot)}&_{L^1(( -R+c(i+1)\tau , R-c(i+1)\tau ))} \nonumber\\
    \leq&\, \hphantom{+} \norm{v_i( (i+1)\tau ,\cdot) - u((i+1)\tau ,\cdot)}_{L^1(( -R+c(i+1)\tau , R-c(i+1)\tau ))} \nonumber \\
    &+ \norm{v_{i-1}( (i+1)\tau ,\cdot) -u((i+1)\tau ,\cdot)}_{L^1(( -R+c(i+1)\tau , R-c(i+1)\tau ))} \nonumber \\
    \leq&\,\hphantom{+} K\norm{v_i( i\tau ,\cdot) - u(i\tau ,\cdot)}_{L^2(( -R+ci\tau , R-ci\tau ))} \nonumber \\
    &+ K\norm{v_{i-1}( (i-1)\tau ,\cdot) -u((i-1)\tau ,\cdot)}_{L^2(( -R+c(i-1)\tau , R-c(i-1)\tau ))} \nonumber \\
    \leq&\, K \left( \norm{u(i\tau, \cdot)}_{C^\alpha((-R-\delta,R+\delta))} + \norm{u((i-1)\tau, \cdot)}_{C^\alpha((-R-\delta,R+\delta))}\right) \delta^\alpha.\label{eq:obs-where-L2-rate-used}
\end{align} 
Substituting these considerations into~\eqref{eq:u-v0-triangle} and recalling the bounds~\eqref{eq:Ca-L2},~\eqref{eq:n-estimate}, we find 
\begin{align} 
    \norm{ u(t_1,\cdot) - v_0(t_1,\cdot) }&_{L^1(( -R+ct_1, R-ct_1))} \nonumber\\
    &\leq CK\norm{u(n\tau, \cdot)}_{C^\alpha((-R-\delta,R+\delta))} \delta^\alpha + \sum_{i=1}^n 2K\norm{u(i\tau, \cdot)}_{C^\alpha((-R-\delta,R+\delta))}L(\tau) \delta^\alpha \nonumber \\
    &\leq K\delta^\alpha \norm{u}_{L^\infty(0,T;C^\alpha((-R-\delta,R+\delta)))}\left( 1 + L(\tau)n \right).\label{eq:u-v0-triangle2} 
\end{align}
Finally, we recall that by \Cref{prop:L1-bressan} for a $\beta>0$ to be determined, there exists $\eps_1 > 0$ such that the $L^1$ Lipschitz constant satisfies $L(\tau) \leq C\tau^{-\beta}$. 
Substituting this, along with~\eqref{eq:def-tau} and~\eqref{eq:n-estimate}, into~\eqref{eq:u-v0-triangle2} we find 
\begin{multline} \label{eq:u-Ca-term-done} \norm{ u(t_1,\cdot) - v_0(t_1,\cdot) }_{L^1(( -R+ct_1, R-ct_1))} \\\leq K\left( \norm{u}_{L^\infty(0,T;C^\alpha((-R-\delta,R+\delta)))} \delta^\alpha + \norm{u}_{L^\infty(0,T;C^\alpha((-R-\delta,R+\delta)))}^{2 + \beta } \delta^{\beta(\alpha - 1)}\delta^{2\alpha - 1} \right).\end{multline}

For the second term on the right in~\eqref{eq:triangle-sob-gl} we note that $v_0(\tau,\cdot),\ u_{GL}(\tau,\cdot) \in \mathcal{D}^{(\tau)}$, giving us
    $$ \norm{u_{GL}(t_1,\cdot) - v_0(t_1,\cdot)}_{L^1((-R+ct_1, R-ct_1))} \leq L(\tau)\norm{u_{GL}(\tau,\cdot) - v_0(\tau,\cdot)}_{L^1((-R+c\tau, R-c\tau))}.$$
    Then using~\eqref{eq:weak-strong-obs-whole} and~\eqref{eq:Ca-L2} we find
    \beq \label{eq:GL-term-done} \norm{u_{GL}(t_1,\cdot) - v_0(t_1,\cdot)}_{L^1((-R,R))} \leq L(\tau)  K \norm{u}_{L^\infty(0,T;C^\alpha((-R-\delta,R+\delta)))} \delta^\alpha. \eeq

    Substituting~\eqref{eq:u-Ca-term-done} and~\eqref{eq:GL-term-done} into~\eqref{eq:triangle-sob-gl}, we have deduced 
    \beq \label{eq:tri-GL-done} \begin{aligned}
        \norm{u(t_1,\cdot) - u_{GL}(t_1,\cdot)}_{L^1((-R + ct_1, R - ct_1))} \leq K \Big(&\norm{u}_{L^\infty(0,T;C^\alpha((-R-\delta,R+\delta)))} \delta^\alpha \\&+ \norm{u}_{L^\infty(0,T;C^\alpha((-R-\delta,R+\delta)))}^{1+\beta}\delta^{\alpha-\beta(1-\alpha)} \\& +  \norm{u}_{L^\infty(0,T;C^\alpha((-R-\delta,R+\delta)))}^{2+\beta}\delta^{2\alpha - 1 -\beta(1 - \alpha)}\Big),
    \end{aligned}\eeq
    for any $\delta > 0$. 
    We now select the constant $\beta > 0$ to satisfy
    $$  \beta(1-\alpha) < 2\alpha - 1,$$
    which exists due to $1 > \alpha > 1/2$.
    Now, taking the limit as $\delta \downarrow 0$ we establish the right side of~\eqref{eq:tri-GL-done} converges to zero, establishing that the solutions are equal.
\end{proof}
Finally, \Cref{thm:main-obs} follows easily from \Cref{prop:entropy-conservation} and \Cref{prop:nonperiodic}. We give the proof here for convenience.
\begin{proof}[Proof of \Cref{thm:main-obs}]
Let $u \in L^\infty(0,T;C^\alpha_{\loc}(\T^1;B_{\eps_1}(0)))$ be a periodic weak solution to \eqref{cl}. By \Cref{prop:entropy-conservation}, $u$ verifies \eqref{entropic} for any $C^2$ entropy verifying \eqref{eq:entropy-def}. We note that by taking $\eps_1$ smaller if needed, there exists a $C^3$ strictly convex entropy $\eta_0 \in C^3(B_{\eps_1}(0))$ (see \Cref{lem:localstructurelemma}). So by \Cref{prop:nonperiodic}, $u$ coincides with the unique Glimm-Lax solution with initial data $u^0 \in C^\alpha_{\loc}(\R)$.
\end{proof}
\subsection{Proof of \Cref{thm:sob-ext-obs}}
Now, we prove \Cref{thm:sob-ext-obs}. We note that the result of \Cref{thm:sob-ext-obs} includes spaces $W^{s,p}$ which do not embed into $C^\alpha$ for any $\alpha > 1/2$. The proof follows almost identically to the above proof, so we omit it. The only difference is we use \Cref{cor:sob-con} instead of \Cref{prop:entropy-conservation} and \Cref{prop:sob-obs} instead of \Cref{prop:nonperiodic}. In the following we will only focus on the modifications required to prove \Cref{cor:sob-con} and \Cref{prop:sob-obs}.
\par To begin, we note 
\begin{corollary} \label{cor:sob-con}
    There exists an $\eps_1 > 0$ such that the following is true. 
    Fix $p > 1$ and $s$ satisfying~\eqref{eq:sob-s-con} and a convex entropy $\eta \in C^2(B_{\eps_1}(0))$. 
    If $u \in L^\infty(0,T;W^{s,p}(\T^1;B_{\eps_1}(0)))$ is a solution to the system~\eqref{cltorus} then the solution locally conserves entropy, i.e. ~\eqref{eq:ent-eq} holds.
\end{corollary}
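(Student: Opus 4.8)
The plan is to run the Constantin--E--Titi mollification argument, exactly as in the proof of \Cref{prop:entropy-conservation} given in \Cref{appendix-entropy}, and to replace the commutator bookkeeping tailored to $B^\alpha_{3,\infty}$ by the analogous bookkeeping for $W^{s,p}$; the hypothesis~\eqref{eq:sob-s-con} will enter at exactly one step. Fix a mollifier $\gamma$ as in~\eqref{eq:mol-assumptions} and mollify in the space variable only, writing $u^\delta := u\ast\gamma_\delta$. Since $u(t,\cdot)$ takes values in the convex ball $B_{\eps_1}(0)$ so does $u^\delta(t,\cdot)$; since $f(u)\in L^\infty((0,T)\times\T^1)$, the field $u^\delta$ is smooth in $x$, solves $\partial_t u^\delta + \partial_x(f(u))^\delta = 0$, and has $\partial_t u^\delta\in L^\infty_t C^\infty_x$, so the chain rule is legitimate. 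Using the entropy relation~\eqref{eq:entropy-def} to rewrite $\partial_x q(u^\delta) = \nabla\eta(u^\delta)\,f'(u^\delta)\,\partial_x u^\delta = \nabla\eta(u^\delta)\cdot\partial_x f(u^\delta)$ gives, in $\mathcal{D}'((0,T)\times\T^1)$,
\[ \partial_t\eta(u^\delta) + \partial_x q(u^\delta) = \nabla\eta(u^\delta)\cdot\partial_x\!\big[f(u^\delta) - (f(u))^\delta\big] =: \mathcal{R}_\delta. \]
Because $u^\delta\to u$ in $L^p((0,T)\times\T^1)$ and $\eta,q$ are Lipschitz on $B_{\eps_1}(0)$, the left-hand side converges to $\partial_t\eta(u) + \partial_x q(u)$ in $\mathcal{D}'$, so it suffices to prove $\mathcal{R}_\delta\to 0$ in $\mathcal{D}'$.

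Testing $\mathcal{R}_\delta$ against $\phi\in C^\infty_c((0,T)\times\T^1)$ and integrating by parts in $x$ (valid since $f(u^\delta) - (f(u))^\delta$ is smooth in $x$ for each $t$) yields
\[ \langle\mathcal{R}_\delta,\phi\rangle = \iint \partial_x\phi\,\nabla\eta(u^\delta)\cdot\big[(f(u))^\delta - f(u^\delta)\big] + \iint \phi\,\nabla^2\eta(u^\delta)\,\partial_x u^\delta\cdot\big[(f(u))^\delta - f(u^\delta)\big] =: I_\delta + \mathcal{E}_\delta. \]
The term $I_\delta$ tends to $0$ because $(f(u))^\delta - f(u^\delta)\to 0$ in $L^1((0,T)\times\T^1)$: both $(f(u))^\delta$ and $f(u^\delta)$ converge to $f(u)$ in $L^1_{t,x}$, by mollification and the Lipschitz continuity of $f$ on $B_{\eps_1}(0)$. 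For the main term $\mathcal{E}_\delta$ one uses the Constantin--E--Titi commutator estimate: Taylor expanding $f$ to second order and noting that the first-order contributions cancel after averaging against $\gamma_\delta$ gives the pointwise bound
\[ \big|(f(u))^\delta(x) - f(u^\delta(x))\big| \le \|f''\|_{L^\infty(B_{\eps_1}(0))}\int\gamma_\delta(y)\,|u(x-y) - u(x)|^2\,dy, \]
so that, with $|\nabla^2\eta(u^\delta)| \le \|\eta\|_{C^2(B_{\eps_1}(0))}$,
\[ |\mathcal{E}_\delta| \lesssim \|\phi\|_\infty\int_0^T\!\!\int_{\T^1} |\partial_x u^\delta(t,x)|\left(\int\gamma_\delta(y)\,|u(t,x-y) - u(t,x)|^2\,dy\right)dx\,dt. \]

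It remains to show this quantity vanishes as $\delta\downarrow 0$, and this is the single place where~\eqref{eq:sob-s-con} is needed. The relevant inputs, valid for $u(t,\cdot)\in W^{s,p}(\T^1)$ with $0<s<1$, are the two standard estimates
\[ \|\partial_x u^\delta\|_{L^p(\T^1)} \lesssim \delta^{s-1}\|u\|_{W^{s,p}}, \qquad \|u(\cdot - y) - u(\cdot)\|_{L^p(\T^1)} \lesssim |y|^{s}\|u\|_{W^{s,p}}, \]
(the second after passing through the embedding $W^{s,p}\hookrightarrow B^s_{p,\infty}$), together with the a priori smallness $\|u(t,\cdot)\|_{L^\infty}\le\eps_1$. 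One estimates the inner $x$-integral by H\"older's inequality, interpolating the $L^p$ and $L^\infty$ bounds for $u(\cdot-y)-u(\cdot)$ to match the exponent conjugate to the one used for $\partial_x u^\delta$, then integrates in $y$ via $\int\gamma_\delta(y)|y|^{\sigma}\,dy\lesssim\delta^{\sigma}$, and in $t$ trivially over the finite interval. Balancing the H\"older exponents produces a bound $|\mathcal{E}_\delta|\lesssim\delta^{\kappa(s,p)}$ with $\kappa(s,p) > 0$ under~\eqref{eq:sob-s-con}; the dichotomy $p\ge 2$ versus $p<2$ reflects whether or not $L^p(\T^1)$ embeds into $L^2(\T^1)$, i.e. whether the $L^\infty$-smallness has to be spent to reach the integrability exponent forced by the quadratic commutator. (As a consistency check, $p\to\infty$ in~\eqref{eq:sob-s-con} gives $s>1/2$, matching the $C^\alpha$ threshold of \Cref{prop:nonperiodic}, while $p\to 1$ forces $s\to 1$, essentially $W^{1,1}$, as one expects since an $L^1$-based space that conserves entropy should be close to $BV$.) With $\mathcal{E}_\delta\to 0$ one gets $\mathcal{R}_\delta\to 0$ in $\mathcal{D}'$, hence~\eqref{eq:ent-eq}.

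The main obstacle is this last step: carrying out the H\"older-exponent optimization carefully enough to extract a genuinely positive power of $\delta$ under the somewhat awkward-looking condition~\eqref{eq:sob-s-con}, and in particular handling the range $1<p<2$, where one cannot simply place the quadratic commutator in $L^1$ and must interpolate it against the trivial $L^\infty$ bound, losing a factor that is controlled only because $\|u\|_{L^\infty}\le\eps_1$ is small (and $\eps_1$ is then chosen accordingly). Everything else — the commutator identity, the chain rule for the regularized field $u^\delta$, the strong $L^p$ convergence $u^\delta\to u$ — is routine and is copied directly from the proof of \Cref{prop:entropy-conservation}.
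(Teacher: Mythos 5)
Your proposal is correct in substance but takes a genuinely different route from the paper. The paper does not redo the commutator analysis at all: it proves \Cref{cor:sob-con} by showing that under~\eqref{eq:sob-s-con} one has the embedding $W^{s,p}(\T^1)\hookrightarrow W^{\alpha,3}(\T^1)=B^\alpha_{3,3}(\T^1)\hookrightarrow B^\alpha_{3,\infty}(\T^1)$ for some $\alpha>\tfrac13$ (via $W^{s,p}\hookrightarrow W^{s+\frac1p-\frac13,3}$ for $p\ge 3$, $W^{s,p}\hookrightarrow W^{\frac{sp}{3},3}$ for $p<3$ using the $L^\infty$ bound, and Morrey for $p\ge 6$, checked in four cases), and then invokes \Cref{prop:entropy-conservation} as a black box. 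You instead rerun the Constantin--E--Titi argument directly in $W^{s,p}$ with an $L^p$--$L^q$ H\"older split in place of the $L^3$--$L^{3/2}$ split. That is a valid alternative, and the step you defer does close: pairing $\norm{\partial_x u^\delta}_{L^p}\lesssim \delta^{s-1}$ against the quadratic commutator in $L^q$ requires the increments in $L^{2q}$, which for $p\ge 3$ follows from $L^p\subset L^{2q}$ (giving $\delta^{3s-1}$, i.e. $s>\tfrac13$) and otherwise from interpolation with $\norm{u}_{L^\infty}\le\eps_1$ (giving $\delta^{s-1+sp/q}$, i.e. $s>\tfrac1p$); both thresholds are implied by, and in fact strictly weaker than,~\eqref{eq:sob-s-con}, so your route even yields a slightly larger admissible range (the stated condition is the one needed elsewhere, in \Cref{prop:sob-obs}). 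Two things you should still attend to: first, actually write out this exponent computation, since it is the entire content of the corollary and you currently only assert that it works; second, your test functions are compactly supported in $(0,T)$, so as written you obtain~\eqref{eq:ent-eq} only in $\mathcal{D}'((0,T)\times\T^1)$, whereas the paper's statement is in $\mathcal{D}'([0,T]\times\T^1)$ including the initial trace — recovering that endpoint requires the negative-time extension and the one-sided time mollifier $\chi_\epsilon$ used in \Cref{appendix-entropy}.
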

\begin{proof}
    By \Cref{prop:entropy-conservation} it suffices to show that $W^{s,p}(\T^1;B_{\eps_1}(d))$ embeds into $B^\alpha_{3,\infty}(\T^1; B_{\eps_1}(d))$ for some $\alpha > 1/3$. 
    To do this, first note the Sobolev embedding
    \beq \label{eq:sob-ent-con-embedding} W^{s,p}(\T^1; B_{\eps_1}(d)) \hookrightarrow \begin{cases}
        W^{s+\frac{1}{p}- \frac{1}{3},3}(\T^1; B_{\eps_1}(d)) & p \geq 3 \\
        W^{\frac{sp}{3},3}(\T^1; B_{\eps_1}(d)) & p < 3.
    \end{cases} \eeq
    Indeed, when $p \geq 3$ we simply note that $\T^1\times \T^1$ has finite measure.
    This gives the $L^p(\T^1\times \T^1) \subset L^3(\T^1\times \T^1)$ inclusion,
    \begin{align*} 
        |u|_{W^{s+\frac{1}{p} - \frac{1}{3},3}(\T^1)} &= \left(\iint_{\T^1\times\T^1} \frac{|u(x) - u(y)|^3}{|x-y|^{1+3s+ \frac{3}{p} - 1}} \,\diff y\,\diff x\right)^{1/3} \\
        &\leq K \left(\iint_{\T^1\times\T^1} \frac{|u(x) - u(y)|^p}{|x-y|^{1+sp}} \,\diff y\,\diff x\right)^{1/p} = K |u|_{W^{s,p}(\T^1)}.
    \end{align*} 
    When $p < 3$ we simply take advantage of the fact that $\norm{u}_{L^\infty(\T^1)}$ is bounded, 
    \begin{align*}
        |u|_{W^{\frac{ps}{3} ,3}(\T^1)} &= \left(\iint_{\T^1\times\T^1} \frac{|u(x) - u(y)|^3}{|x-y|^{1+ps}} \,\diff y\,\diff x\right)^{1/3} \\
        &\leq K \norm{u}_{L^\infty(\T^1)}^{1 - \frac{p}{3}} \left(\iint_{\T^1\times\T^1} \frac{|u(x) - u(y)|^p}{|x-y|^{1+ps}} \,\diff y\,\diff x\right)^{1/3} \\
        &= K \norm{u}_{L^\infty(\T^1)}^{1 - \frac{p}{3}}|u|_{W^{s ,p}(\T^1)}^{\frac{p}{3}}.
    \end{align*}
    From this embedding we break into several cases:
    \begin{description}
        \item[\underline{Case 1, $p \in (1,2)$}] In this case $s > \frac{2}{p+1}$, so by~\eqref{eq:sob-ent-con-embedding} we have $W^{s,p}(\T^1) \hookrightarrow W^{\alpha,3}(\T^1)$ where $\alpha = \frac{2p}{3(p+1)}. $ One can then verify that $\alpha > 1/3$ for all $p \in (1,2)$. 
        
        \item[\underline{Case 2, $p\in [2,3)$}] In this range, $s > \frac{p}{2p-1}$, so by~\eqref{eq:sob-ent-con-embedding} we have $W^{s,p}(\T^1) \hookrightarrow W^{\alpha,3}(\T^1)$ where $\alpha = \frac{p^2}{3(2p-1)}.$ It is again simple to verify that $\alpha > 1/3$ for all $p \in [2,3)$. 

        \item[\underline{Case 3, $p\in [3,6)$}] Here, we note $s > \frac{p}{2p-1}$, so by~\eqref{eq:sob-ent-con-embedding} we have $W^{s,p}(\T^1) \hookrightarrow W^{\alpha,3}(\T^1)$ where $\alpha = \frac{p}{2p-1} + \frac{1}{p} - \frac{1}{3}$. One can verify that $\alpha > \frac{1}{3}$ in this case exactly when $p \in (4 - \sqrt{13} , 4+\sqrt{13})$, which contains this case.
    \end{description}
    \begin{description}
        \item[\underline{Case 4, $p\in [6,\infty)$}] In this case, we note $s > \frac{p}{2p-1} > \frac{1}{2}$ hence $sp > \frac{5}{2}$, allowing us to apply Morrey's embedding,
    $$ W^{s,p}(\T^1) \hookrightarrow C^{s - \frac{1}{p}} (\T^1). $$
        This gives $W^{s,p}(\T^1) \hookrightarrow C^\alpha(\T^1)$ where $\alpha = s-\frac{1}{p} > \frac{1}{2} - \frac{1}{6} = \frac{1}{3}. $
    \end{description}
    Hence, in all cases $W^{s,p}(\T^1) \hookrightarrow W^{\alpha,3}(\T^1)$ for some $\alpha > \frac{1}{3}$.
    We conclude recalling the well known embedding $W^{\alpha,3}(\T^1) = B^\alpha_{3,3}(\T^1) \hookrightarrow B^\alpha_{3,\infty}(\T^1)$, and the result follows from \Cref{prop:entropy-conservation}. 
\end{proof}

Also note from the proof of \Cref{cor:sob-con} that the $s,p$ mentioned in \Cref{thm:sob-ext-obs} always provide an embedding $W^{s,p} \hookrightarrow C^\alpha$ for a suitable $\alpha > 0$. 
In particular, the $u$ under consideration have traces and can be used as the weak solution in \Cref{prop:main} (and thus in \Cref{lem:trapezoidconstruction}). 
\par The next proposition is the analog of \Cref{prop:nonperiodic} in the Sobolev framework. To obtain this result, one can use the estimates from \Cref{lem:sob-for-thm-obs} for $1 < p < \infty$ and make a suitable modification to \Cref{lem:trapezoidconstruction} accounting for the total variation bound for mollified data $u_\delta$ no longer being linear in the interval length parameter $L$. 
\begin{proposition}\label{prop:sob-obs}
Let $s,p$ be as in \Cref{thm:sob-ext-obs}. Then, there exists $\eps_1 > 0$ such that the following is true. For any $T>0$, suppose $u \in L^\infty(0,T;W^{s,p}_\loc(\R;B_{\eps_1}(0)))$ is a weak solution to \eqref{cl} verifying \eqref{entropic} for at least one strictly convex entropy $\eta \in C^3(B_{\eps_1}(0))$ with initial data $u^0 \in W^{s,p}_\loc(\R;B_{\eps_1}(0))$. Then, $u$ coincides with the unique solution in $GL(\eps_1)$ with initial data $u^0$.
\end{proposition}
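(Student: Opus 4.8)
\emph{Overall strategy.} The plan is to run the proof of \Cref{prop:nonperiodic} essentially line by line, with the $C^\alpha$ mollification estimates \eqref{eq:Ca-TV}--\eqref{eq:Ca-L2} replaced by their $W^{s,p}$ analogues and with a correspondingly adjusted version of \Cref{lem:trapezoidconstruction}. First I would record, for $v \in W^{s,p}_\loc(\R; B_{\eps_1}(d))$ with $1 < p < \infty$ and $\gamma_\delta$ as in \eqref{eq:mol-assumptions}--\eqref{eq:mol-approx}, the two estimates (proved in \Cref{lem:sob-for-thm-obs})
\[
 TV(v \ast \gamma_\delta; L) \,\leq\, K\,|v|_{W^{s,p}}\,\delta^{\,s-1}\,L^{\,1-1/p}, \qquad \norm{v - v\ast\gamma_\delta}_{L^2((-R,R))} \,\leq\, K\,\delta^{\sigma},
\]
where $\sigma = s$ when $p \geq 2$ and $\sigma = sp/2$ when $p < 2$; the first follows by writing $\partial_x(v\ast\gamma_\delta)(x) = \int \partial_x\gamma_\delta(x-y)\big(v(y)-v(x)\big)\,\diff y$ and applying Hölder's inequality in the conjugate pair $(p,p')$, and the second from the standard $L^p$ modulus-of-continuity bound for $W^{s,p}$ followed by $L^p \subset L^2$ on bounded sets (using $\norm{v}_{L^\infty}\leq \eps_1$ when $p<2$). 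The decisive new feature is that the total variation bound is \emph{sublinear} in the window length, scaling like $L^{1-1/p}$ rather than $L$, which forces the trapezoid base to be smaller than in the $C^\alpha$ case. Note also that, under \eqref{eq:sob-s-con}, $W^{s,p}\hookrightarrow C^{\alpha'}$ for some $\alpha' > 0$ (as observed after \Cref{cor:sob-con}), so the wild solution $u$ has strong traces along Lipschitz curves and may legitimately be fed into \Cref{prop:main}, hence into the modified \Cref{lem:trapezoidconstruction}, exactly as in the proof of \Cref{prop:nonperiodic}.

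\emph{The sampling construction.} Fixing $\delta > 0$, I would mollify and sample $u$ at times $i\tau$, $i = 0,\dots,n$, defining $v_i$ as the Glimm--Lax solution with initial data $u(i\tau,\cdot)\ast\gamma_\delta$, just as in \Cref{fig:v-def-obs}. Imposing $TV(v_\delta;L) \leq \delta_0$ via the first estimate above now requires the trapezoid base to be of size
\[
 L \,=\, \Big(\tfrac{\delta_0}{K\,\norm{u}_{L^\infty(0,T;W^{s,p}((-R-\delta,R+\delta)))}}\Big)^{\!p'}\, \delta^{(1-s)p'} \,=:\, c_0\,\delta^{\ell}, \qquad \ell := (1-s)\tfrac{p}{p-1} > 0,
\]
and correspondingly $\tau \sim L \sim \delta^{\ell}$, so that the modified \Cref{lem:trapezoidconstruction} gives $\norm{u(t,\cdot) - v_i(t,\cdot)}_{L^1} \leq K\delta^{\sigma}$ for $t$ in a window of length $\sim L$ starting at $i\tau$, while the number of samples needed to reach the terminal time $t_1$ is $n \sim t_1/\tau \sim \delta^{-\ell}$. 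From here the argument is mechanical: I would insert these into the triangle inequalities \eqref{eq:triangle-sob-gl} and \eqref{eq:u-v0-triangle}, estimate the consecutive differences $\norm{v_i(t_1,\cdot) - v_{i-1}(t_1,\cdot)}_{L^1}$ by first pulling back to time $(i+1)\tau$ with \eqref{eq:weak-strong-obs-whole} and then applying the $L^1$-Lipschitz bound $L(\tau) \leq C\tau^{-\beta}$ of \Cref{prop:L1-bressan}, and likewise control $\norm{u_{GL}(t_1,\cdot) - v_0(t_1,\cdot)}_{L^1}$. Exactly as in \eqref{eq:u-v0-triangle2}--\eqref{eq:tri-GL-done}, this yields
\[
 \norm{u(t_1,\cdot) - u_{GL}(t_1,\cdot)}_{L^1((-R+ct_1,R-ct_1))} \,\leq\, K\big(\delta^{\sigma} + \delta^{\,\sigma - \ell - \beta\ell}\big),
\]
for every $t_1 \in (0,T]$ and every $\delta > 0$.

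\emph{Conclusion and the main obstacle.} To finish, the exponent $\sigma - \ell - \beta\ell$ must be made positive by choosing $\beta > 0$ small, which is possible precisely when $\sigma > \ell$. For $p \geq 2$ this reads $s > (1-s)\tfrac{p}{p-1}$, i.e. $s(2p-1) > p$, i.e. $s > \tfrac{p}{2p-1}$; for $p < 2$ it reads $\tfrac{sp}{2} > (1-s)\tfrac{p}{p-1}$, i.e. $s(p+1) > 2$, i.e. $s > \tfrac{2}{p+1}$ --- these are exactly the hypotheses \eqref{eq:sob-s-con}. Letting $\delta \downarrow 0$ then forces $u(t_1,\cdot) = u_{GL}(t_1,\cdot)$, and since $u_{GL}$ is the unique Glimm--Lax solution in $GL(\eps_1)$ with data $u^0$ by \Cref{thm:main-gl}, the proposition follows. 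The only genuinely new work relative to \Cref{prop:nonperiodic} is the sublinear-in-$L$ total variation estimate for $v\ast\gamma_\delta$ and the bookkeeping it imposes on $L$, $\tau$, and $n$; the point to be careful about --- and the reason \eqref{eq:sob-s-con} takes the form it does --- is that the final exponent $\sigma - \ell$ leaves no slack, so the admissible range of $(s,p)$ is saturated. Everything downstream (the triangle inequalities, the interplay between \Cref{lem:trapezoidconstruction} and \Cref{prop:L1-bressan}, the choice of $\beta$, and the passage to the limit) is identical to the proof of \Cref{prop:nonperiodic} and introduces no new ideas.
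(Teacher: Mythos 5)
Your proposal is correct and follows essentially the same route as the paper's proof: the same mollification estimates (the sublinear-in-$L$ total variation bound $TV(v\ast\gamma_\delta;L)\lesssim \delta^{s-1}L^{1/q}$ and the $L^2$ rate $\delta^{\min(s,sp/2)}$ from \Cref{lem:sob-for-thm-obs}), the same rescaled trapezoid base $L\sim\delta^{q(1-s)}$ with $\tau\sim L$ and $n\sim\delta^{-q(1-s)}$, and the same final exponent condition $\sigma>\ell(1+\beta)$, which reproduces exactly the thresholds \eqref{eq:sob-s-con}. No gaps.
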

\begin{proof}
    Fix a $p \in (1,\infty)$ and, without loss of generality, take $s < 1$. 
    We begin by fixing our approximants as mollifications $u_\delta = u\ast \gamma_\delta$, where $\gamma_\delta$ verifies \eqref{eq:mol-assumptions}.
    In this situation, a modification of \Cref{lem:trapezoidconstruction} still holds, but for intervals $(a,b)\subset(-R,R)$ with a larger base length $L = b-a$ due to the total variation bound in \Cref{lem:sob-for-thm-obs} depending on $p$,
    $$ \norm{u_\delta}_{\text{BV}((a,b))} \leq K \norm{u}_{W^{s,p}((-R-\delta,R+\delta))}\frac{|b-a|^{1/q}}{\delta^{1-s}}, \quad \forall \ (a,b) \subset (-R,R).$$
    Specifically, recall in the proof of \Cref{lem:trapezoidconstruction} we selected $L$ such that it satisfied
    $$ \norm{u_\delta}_{\text{BV}((a,b))} < \delta_0 \quad \forall (a,b)\subset (-R,R)\text{ with }b-a < L. $$
    By the total variation estimate~\eqref{eq:TV-frac-L} of \Cref{lem:sob-for-thm-obs} we can ensure this for any $$L < K \norm{u}_{L^\infty(0,T;W^{s,p}((-R-\delta,R+\delta)))}^{-q}\delta_0^q \delta^{q(1-s)},$$
    where $q$ is the H\"older conjugate to $p$. Note that the main difference is that the total variation estimate is no longer linear in $L$.
    By this same consideration, we then modify~\eqref{eq:def-tau} and~\eqref{eq:n-estimate} to 
    \beq \label{eq:new-tau-n} \tau = \frac{L}{16c} = K \frac{\delta_0^q \delta^{q(1-s)}}{16c\norm{u}_{L^\infty(0,T;W^{s,p}((-R-\delta,R+\delta)))}^{q}}\quad \text{and} \quad n \leq \frac{t_1}{\tau} \leq K \norm{u}_{L^\infty(0,T; W^{s,p}((-R-\delta,R+\delta))) }^q \delta^{q(s-1)}.\eeq
    Now, identically as in the proof of \Cref{thm:main-obs}, we can arrive at the bound
    \beq  \label{eq:new-tri} \begin{aligned}
        \norm{u(t_1) - v_0(t_1)}_{L^1} &\leq \norm{u(t_1) - v_n(t_1)}_{L^1}+\sum_{i=1}^n \norm{v_{i-1}(t_1) - v_i(t_1)}_{L^1} \\
        &\leq K\sup_{0 \leq i \leq n} \norm{u(\tau i) - u(\tau i)\ast \gamma_\delta}_{L^2}(1 + L(\tau)n). \end{aligned}\eeq
    Now, recalling~\eqref{eq:L2-cv-frac-L},~\eqref{eq:new-tau-n}, \Cref{prop:L1-bressan}, and \Cref{lem:trapezoidconstruction} we find for any $\beta > 0$ there exists $\epsilon_1 >0$ giving the bound
    $$\sup_{0 \leq i \leq n} \norm{u(\tau i) - u(\tau i)\ast \gamma_\delta}_{L^2}(1 + L(\tau)n) \leq K \norm{u}_{L^\infty(0,T;W^{s,p}( (-R-\delta,R+\delta) ))}^{\min(1,p/2)+q+\beta q} \delta^{\min(s,sp/2)+q(s-1) + \beta q(s-1)}. $$
    
    Firstly, we consider the case $p \geq 2$. Then, when $s$ satisfies $s > \frac{q}{q+1}$, there exists a $\beta > 0$ such that $$ \beta q(1-s)< (q+1)s - q.$$
    In this case,
    $$ \norm{u(t_1) - v_0(t_1)}_{L^1} \leq K \norm{u}_{L^\infty(0,T;W^{s,p}( (-R-\delta,R+\delta) ))}\delta^s L(\tau) n  \to 0$$
    as $\delta \downarrow 0$. 
    Finally, recalling that $q$ is the H\"older conjugate to $p$ we see our condition on $s$ is 
    $$s > \frac{q}{q+1} = \frac{p}{2p - 1}. $$
    
    For $p < 2$ this requirement changes to $s > q/(q + p/2)$, in which case there exists $\beta >0$ granting
    $$ \beta q(1-s)< (q+p/2)s - q.$$
    This gives
    $$ \norm{u(t_1) - v_0(t_1)}_{L^1} \leq K \norm{u}_{L^\infty(0,T;W^{s,p}( (-R-\delta,R+\delta) ))}^{p/2}\delta^{sp/2} L(\tau) n  \to 0$$
    as $\delta \downarrow 0$.
    This condition on $s$ is exactly 
    $$s > \frac{q}{q + \frac{p}{2}} = \frac{2}{p+1}.$$ 

    Finally, we note that the proof that $$ \norm{u_{GL}(t_1) - v_0(t_1)}_{L^1} \to 0,$$ follows identically to the argument in the proof of \Cref{thm:main-obs}. This establishes \Cref{prop:sob-obs} (and so also \Cref{thm:sob-ext-obs}).
\end{proof}

In order to complete the program, it simply remains to prove \Cref{thm:cor}. The rest of the paper will be devoted to doing so. Actually, we will prove \Cref{prop:main}, from which \Cref{thm:cor} will be a simple corollary.

\section{Structure of the system and the main proposition}\label{sec:localstructure}
We start by introducing the reader to the framework used in \cite{cfk}. Recall that we consider a state space $\mathcal{G}_0$ which may be unbounded with interior $\mathcal{G}$. Fix a bounded set $\mathcal{V}_0 \subset \mathcal{G}_0$ and denote by $\mathcal{V}$ its interior. Then, we have the following set of hypotheses utilized in \cite{cfk}, \cite{MR4487515}, \cite{MR4667839}.

\textbf{Hypotheses 1.} The system \eqref{cl} verifies Hypotheses 1 relative to $\mathcal{V}_0$ if the following conditions hold. 
\begin{enumerate}[(a)]
    \item For any $u \in \mathcal{V}$, the matrix $f'(u)$ is diagonalizable with eigenvalues $\lambda_1(u) < \lambda_2(u)$. We denote by $r_i(u)$ a unit eigenvector associated to the eigenvalue $\lambda_i(u)$ for $i=1,2$. \label{hyp-a}
    \item For any $u \in \mathcal{V}$ we have $\nabla \lambda_i(u) \cdot r_i(u) > 0$ for $i=1,2$.\label{hyp-b}
    \item There exists a strictly convex function $\eta \in C(\mathcal{V}_0) \cap C^3(\mathcal{V})$ and a function $q \in C(\mathcal{V}_0) \cap C^3(\mathcal{V})$ satisfying
    $$ \nabla q=\nabla\eta f' \quad\text{on}\quad \mathcal{V}.$$ \label{hyp-c}
    \item For any state $b \in \mathcal{V}$, and any left eigenvector $\ell_i$ of $f'(b)$ corresponding to $\lambda_i(b), \ i=1,2$, the function $u \mapsto \ell_i \cdot f(u)$ is either convex or concave on $\mathcal{V}$. \label{hyp-d}
    \item There exists $L > 0$ such that $|\lambda_i(u)| \leq L$ for any $u \in \mathcal{V}$ and $i=1,2$. \label{hyp-e}
     \item \label{assum:shock-curve-param}For $u_L\in \mathcal{V}$, we denote $s\mapsto  S^1_{u_L}(s)$ the $1$-shock  curve through $u_L$ defined for $s\in[0,s_{u_L})$. See \Cref{lem:dafermoslemma} for the definition of $s_{u_L}$ (remark that possibly $s_{u_L}=+\infty$). We choose the parametrization such that $s=|u_L-S^1_{u_L}(s)|$.
     Further, we define $\sigma^1_{u_L}(s)$ to be the Rankine-Hugoniot velocity associated to the pair $(u_L,u_R)$.
     Therefore, $(u_L, S^1_{u_L}(s), \sigma^1_{u_L}(s))$ is the $1$-shock with left-hand state $u_L$ and strength $s$. Similarly, we define $s\mapsto S^2_{u_R}(s)$ to be the $2$-shock curve and $\sigma_{u_R}^2$ the Rankine-Hugoniot velocity such that $(S^2_{u_R}(s), u_R, \sigma^2_{u_R}(s))$ is the $2$-shock with right hand state $u_R$ and strength $s$. We assume that these curves are defined globally in $\mathcal{V}$ for every $u_L\in \mathcal{V}$ and $u_R\in \mathcal{V}$. \label{hyp-f}
    \item If $(u_L,u_R)$ is an entropic shock with speed $\sigma$, then $\sigma>\lambda_1(u_R)$. \label{hyp-g}
    \item (for $1$-shocks) If $(u_L,u_R)$ with $u_L \in \mathcal{V}$ is an entropic shock with speed $\sigma$ verifying,
    \begin{align*}
    \sigma\leq \lambda_1(u_L),
    \end{align*}
    then $u_R$ is in the image of $S^1_{u_L}$. That is, there exists $s_{u_R}\in[0,s_{u_L})$ such that $S^1_{u_L}(s_{u_R})=u_R$ (so $\sigma=\sigma^1_{u_L}(s_{u_R})$). \label{hyp-h}
    \item If $(u_L,u_R)$ is an entropic shock with speed $\sigma$, then $\sigma<\lambda_2(u_L).$ \label{hyp-i}
    \item (for $2$-shocks) If $(u_L,u_R)$ with $u_R \in \mathcal{V}$ is an entropic shock with speed $\sigma$ verifying,
    \begin{align*}
    \sigma\geq \lambda_2(u_R),
    \end{align*}
    then $u_L$ is in the image of $S^2_{u_R}$. That is, there exists $s_{u_L}\in[0,s_{u_R})$ such that $S^2_{u_R}(s_{u_L})=u_L$ (so $\sigma=\sigma^2_{u_R}(s_{u_L})$). \label{hyp-j}
    \item For $u_L\in \mathcal{V}$, and  for all $s>0$,  $\ds{\frac{d}{ds}\eta(u_L | S^1_{u_L}(s))}>0$ (the shock ``strengthens'' with $s$).
    Similarly, for $u_R\in \mathcal{V}$, and for all $s>0$, $\ds{\frac{d}{ds}\eta(u_R | S^2_{u_R}(s))}>0$. Moreover, for each $u_L,u_R\in \mathcal{V}$ and $s > 0$, $\frac{d}{ds}\sigma^1_{u_L}(s) < 0$ and $\frac{d}{ds}\sigma^2_{u_R}(s) > 0$. \label{hyp-k}
\end{enumerate}
Note that these hypotheses are global conditions on the set $\mathcal{V}_0$. Such global conditions are needed in \cite{cfk} precisely to obtain a stability theorem with respect to large $L^\infty$ perturbations. In this work, we will restrict to a space $\Sweak$ consisting of small $L^\infty$ perturbations of the state $d \in \mathcal{G}$. Thus, we do not need to make any of these a priori assumptions. In particular, we do not need to assume Hypotheses 1 \ref{hyp-d}, which is not generally verified for systems verifying \textbf{(GL)} even locally (see \Cref{appendix-hypd}). In past works, Hypotheses 1 \ref{hyp-d} was necessary to obtain relative entropy stability for rarefaction waves. In this paper, we develop a new method to obtain stability for rarefaction waves that completely circumvents the need for Hypotheses 1 \ref{hyp-d} (see \Cref{prop:raredissipationestimate}).

Let us give some terminology before precisely defining $\Sweak$. Firstly, we assume there exists a $C^3$ strictly convex entropy $\eta$ in a neighborhood of $d$. We remark that we will be able to show the existence of such an entropy $\eta_d$ (cf. \Cref{lem:localstructurelemma}). Secondly, we will require solutions to verify the Strong Trace Property: 
\begin{definition}\label{def:strongtrace}
Let $u \in L^\infty(\R^+ \times \R)$. We say that $u$ verifies the \emph{Strong Trace Property} if for any Lipschitz curve $t \mapsto h(t)$, there exists two bounded functions $u_-, u_+ \in L^\infty(\R^+)$ such that for any $T > 0$,
\begin{align*}
&\lim_{n\to \infty}\int_0^T\sup_{y \in (0,\frac{1}{n})}|u(t,h(t)+y)-u_+(t)|\diff t \\
&=\lim_{n\to \infty}\int_0^T\sup_{y \in (-\frac{1}{n},0)}|u(t,h(t)+y)-u_-(t)|\diff t=0.
\end{align*}
\end{definition}
With this in hand, we may define the space of non-BV perturbations for a strictly convex entropy $\eta$
\begin{align*}
\Sweak:=\Big\{u \in L^\infty(([0,T) \times \R);B_\eps(d)) \,\Big|\,&\text{weak solution to \eqref{cl} and \eqref{entropic} (with respect to $\eta$)} \\
&\text{verifying \Cref{def:strongtrace}}\Big\}.
\end{align*}
Note that this space has a smallness condition in $L^\infty$, but contains solutions which may have large or even infinite total variation. Our main proposition is the following:
\begin{proposition}\label{prop:main}
Assume that the system \eqref{cl} verifies \textbf{(GL)}. There exists $\eps > 0$ such that the following is true. Fix $R,T > 0$. Let $\delta_0 > 0$ be sufficiently small such that Theorem 1 in \cite{MR1367356} applies, which shows the existence of a small-BV semigroup of limits of front-tracking approximations. Let $u \in \Sweak$ have initial data $u^0$. Consider initial data $v^0$ such that $||v^0||_{BV}+||v^0-d||_{L^\infty(\R)} \leq \delta_0$ and let $\mathcal{S}_tv^0$ be the Standard Riemann Semigroup solution. Then we have the stability estimate
\begin{align}\label{L1L2}
||u(t, \cdot)-\mathcal{S}_tv^0||_{L^1((-R,R))} \leq K\sqrt{R + T}\norm{u^0-v^0}_{L^2((-R-ct, R+ct))},
\end{align}
for some $K = K(d, f)> 0$, where the constant $c > 0$ is the speed of information (see \Cref{rem:soi}). Further, by interpolation we obtain
\begin{align}\label{L2holder}
||u(t, \cdot)-\mathcal{S}_tv^0||_{L^2((-R,R))} \leq K\sqrt[4]{R + T}\sqrt{||u^0-v^0||_{L^2((-R-ct, R+ct))}}.
\end{align}
\end{proposition}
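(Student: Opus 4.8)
The plan is to run the relative-entropy method with shifts (the $a$-contraction technique) of \cite{cfk} against front-tracking approximations of the Standard Riemann Semigroup solution $\mathcal{S}_tv^0$, but arranged so that it no longer uses Hypotheses 1 \ref{hyp-d}. Fix $R,T>0$ and the target time $t\le T$, and work inside the backward cone of dependence over $\{t\}\times(-R,R)$; since $c$ is the speed of information (see \Cref{rem:soi}), every quantity will depend only on the data on $(-R-ct,R+ct)\subset(-R-cT,R+cT)$. Using \textbf{(GL)} together with \Cref{lem:localstructurelemma}, fix $\eps>0$ small enough that $B_\eps(d)$ carries a $C^3$ strictly convex entropy $\eta_d$ and that \emph{all} of Hypotheses 1 \emph{except} \ref{hyp-d} hold on $B_\eps(d)$: strict hyperbolicity and genuine nonlinearity are immediate from \textbf{(GL)}, while for sufficiently weak shocks the Lax-admissibility statements, the global definedness of the shock curves, and the monotonicity of shock strength along them are classical. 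Since $u\in\Sweak$ satisfies \eqref{entropic} for $\eta_d$ and verifies \Cref{def:strongtrace}, the relative entropy $\eta_d(u\,|\,\cdot)$ and its one-sided traces along Lipschitz fronts are well-defined. Finally, let $v^\nu$ be a front-tracking approximate solution whose initial data converges to $v^0$ in $L^1$ (hence, being uniformly bounded, in $L^2$); since $v^\nu\to\mathcal{S}_tv^0$ in $L^1_{\loc}$, it suffices to prove \eqref{L1L2} with $\mathcal{S}_tv^0$ replaced by $v^\nu$, up to an error tending to $0$ as $\nu\to0$.

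Next I would construct the weight. Writing $v^\nu(\tau,\cdot)$ as a piecewise-constant function with finitely many fronts $x=x_i(\tau)$, each a small shock front or a rarefaction front inside a rarefaction fan, I attach to each front a shift $\tau\mapsto h_i(\tau)$ and define $a^\nu(\tau,\cdot)$ piecewise constant in $x$, jumping across the shifted fronts $x_i(\tau)+h_i(\tau)$, with jump sizes chosen exactly as in \cite{cfk} so that $1/C\le a^\nu\le C$ uniformly in $\nu$. Set, for $\tau\in[0,t]$,
\[
E^\nu(\tau):=\int_{|x|<R+c(t-\tau)} a^\nu(\tau,x)\,\eta_d\!\big(u(\tau,x)\,|\,v^\nu(\tau,x)\big)\diff x,
\]
so that $E^\nu(t)$ controls $\norm{u(t)-v^\nu(t)}_{L^2((-R,R))}^2$ and $E^\nu(0)$ is controlled by $\norm{u^0-v^\nu(0)}_{L^2((-R-ct,R+ct))}^2$. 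Differentiating in $\tau$: the interior divergence terms (where $v^\nu$ and $a^\nu$ are locally constant, and the relative entropy inequality holds) reassemble at the fronts; the moving boundary of the cone contributes with a favorable sign because $c$ dominates the wave speeds; and one is left with a sum over fronts of contributions of the form $\dot h_i(\tau)$ times the jump of $a^\nu\eta_d(u\,|\,v^\nu)$ across the $i$-th shifted front, minus the jump of the weighted relative entropy flux across it, plus front-tracking error terms (non-physical fronts, speed perturbations, rarefaction-fan splitting) which are summable and tend to $0$ with $\nu$ after integration on $[0,t]$ by the standard interaction estimates.

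Then I would choose the shifts. For each shock front, pick the shift ODE $\dot h_i$ and invoke the relative-entropy shock-dissipation estimates quoted from \cite{cfk} to make its contribution nonpositive up to error. For each rarefaction front, instead apply the new weighted relative entropy contraction for perturbations of rarefaction waves, \Cref{prop:raredissipationestimate}, which is precisely what replaces the role of Hypotheses 1 \ref{hyp-d}. Summing, $\tfrac{d}{d\tau}E^\nu(\tau)\le C\,E^\nu(\tau)+o_\nu(1)$, so Gronwall (the constant $K$ being permitted to depend on $T$) gives $E^\nu(t)\lesssim E^\nu(0)+o_\nu(1)$. Because $\eta_d(a\,|\,b)\sim|a-b|^2$ on $B_\eps(d)$ and $a^\nu\sim1$, this reads $\norm{u(t)-v^\nu(t)}_{L^2((-R,R))}^2\lesssim\norm{u^0-v^\nu(0)}_{L^2((-R-ct,R+ct))}^2+o_\nu(1)$; letting $\nu\to0$ and applying Cauchy--Schwarz on $(-R,R)$ yields \eqref{L1L2}, with the geometric factor $\sqrt{R+T}$ reflecting the size of the truncated cone. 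Finally, \eqref{L2holder} follows from the elementary interpolation $\norm{u(t)-\mathcal{S}_tv^0}_{L^2}^2\le\norm{u(t)-\mathcal{S}_tv^0}_{L^\infty}\norm{u(t)-\mathcal{S}_tv^0}_{L^1}\le C\eps\norm{u(t)-\mathcal{S}_tv^0}_{L^1}$ combined with \eqref{L1L2}.

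The main obstacle is the rarefaction contribution. For general fluxes satisfying \textbf{(GL)} the condition Hypotheses 1 \ref{hyp-d} fails even locally (see the appendix), so the classical device that makes the relative-entropy flux across a rarefaction front favorable---namely a definite sign of $\ell_i\cdot f$ along the corresponding field---is unavailable. The heart of the proof is therefore \Cref{prop:raredissipationestimate}: constructing the correct local weight and shift near a rarefaction front and proving the accompanying dissipation inequality using only genuine nonlinearity and the local strictly-convex-entropy structure. Everything else---the construction of $a^\nu$, the front-tracking interaction and error bookkeeping, and the passage to the limit $\nu\to0$---is by now standard in the $a$-contraction literature, but keeping all of those estimates uniform in $\nu$ while simultaneously accommodating the new rarefaction estimate is where the care is needed.
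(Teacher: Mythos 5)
Your overall architecture --- shifted fronts, a piecewise-constant weight, the shock dissipation estimates quoted from \cite{cfk}, and \Cref{prop:raredissipationestimate} in place of Hypotheses 1 \ref{hyp-d} at the rarefaction fronts --- matches Step 1 of the paper's proof, and your interpolation for \eqref{L2holder} is the same. But there is a genuine gap at the end. Once you move the shock fronts of $v^\nu$ along paths $h_i$ dictated by $u$, the object you are comparing $u$ to via $E^\nu$ is the \emph{shifted} front-tracking solution $\psi$ of \Cref{sec:shifted}, not $v^\nu$ itself, and $\psi$ does \emph{not} converge to $\mathcal{S}_tv^0$ as $\nu\to0$: its discontinuities sit at artificial locations determined by $u$. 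Your concluding line, that $\norm{u(t)-v^\nu(t)}_{L^2}^2\lesssim\norm{u^0-v^\nu(0)}_{L^2}^2+o_\nu(1)$ and hence \eqref{L1L2} follows by letting $\nu\to0$, silently identifies $\psi(t)$ with $v^\nu(t)$. The discrepancy $\norm{\psi(t)-v^\nu(t)}_{L^1}$ is not an $o_\nu(1)$ front-tracking error; it is of the same order as the main term, and controlling it is an entire step of the paper's argument (Step 2). There one uses the Bressan--Colombo weighted distance $d_\nu$ (\Cref{bclemma}) together with \Cref{cfklemma} to get $\tfrac{d}{dt}d_\nu(v_\nu,\psi)\lesssim\sum_i|\sigma_i|\,|\dot h_i-\lambda^\varphi_{\alpha_i}|$, and then Cauchy--Schwarz against the \emph{quantitative} shock dissipation $-Ks_0(\dot h-\sigma_{LR})^2$ of \Cref{lem:shockdissipationestimate} --- which is itself bounded by the initial relative entropy through the Step 1 computation --- to conclude $\norm{\psi(\tau)-v_\nu(\tau)}_{L^1}\leq K\sqrt{\tau}\,\bigl[\norm{u^0-v^0}_{L^2}^2+o_\nu(1)\bigr]^{1/2}$. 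Your proposal only ever uses that the shock contribution is ``nonpositive up to error,'' never the squared speed-discrepancy lower bound on the dissipation, and that lower bound is exactly the missing ingredient; it is also where the $\sqrt{T}$ in the constant $K\sqrt{R+T}$ originates (the $\sqrt{R}$ comes from H\"older in Step 3).

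Two secondary points. First, no Gronwall argument is needed or used: with the weight built to decay across interaction times (\Cref{aprops}), the front contributions are nonpositive up to $O(\nu)$ errors, giving $E^\nu(t)\le E^\nu(0)+K\nu+K\nu|\mu_u|(Q)$ directly, with no source term proportional to $E^\nu$. Second, as written your functional pairs a weight jumping at the shifted locations with the unshifted profile $v^\nu$; for the trace computation at each front to produce the left and right states required by \Cref{lem:shockdissipationestimate} and \Cref{prop:raredissipationestimate}, the profile itself must be the shifted solution $\psi$, whose global well-definedness (shifted fronts emanating from a Riemann problem do not cross before the next interaction) has to be checked as part of the construction.
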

We will show existence of an entropy $\eta_d$ for which $\Sweak$ may be defined. In particular, for this $\eta_d$, $\Sweak$ will include the Glimm--Lax solutions $GL(\eps_1)$ for $\eps_1$ sufficiently small as they obey the $L^\infty$ estimate \eqref{smalllinfty}, they verify \Cref{def:strongtrace} (as they enter BV for positive times), and they are entropic for any convex entropy. Thus, \Cref{thm:cor} follows directly from \Cref{prop:main}.

We remark that \Cref{prop:main} has a simple corollary, establishing uniqueness of BV solutions in the Glimm--Lax class $GL(\eps_1)$ among solutions in the weaker class $\mathcal{S}_{\text{weak}}^{d,C_0\sqrt{\eps_1}}$.
\begin{corollary}
    There exists $\eps_1 > 0$ such that the following is true. 
    Let $v\in GL(\eps_1)$ be the solution to~\eqref{cl} with initial data $v^0 \in \textrm{BV}_{\loc}(\R;B_{\eps_1}(d)).$ Then $v$ is the unique solution within the class $\mathcal{S}_{\textrm{weak}}^{d,C_0\sqrt{\eps_1}}$ with initial data $v^0$.
\end{corollary}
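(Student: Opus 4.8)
\textit{Proof sketch.} The plan is to use \Cref{prop:main} as a \emph{local} comparison tool: finite propagation speed lets us trade globally small‑BV comparison data for $\mathrm{BV}_{\loc}$ comparison data restricted to short intervals, and the $1/t$ total‑variation decay \eqref{tv1/t} enjoyed by $v$ is exactly what prevents the resulting time increments from collapsing when the argument is iterated in $t$. First I would record the elementary facts that $GL(\eps_1)\subset \mathcal{S}_{\textrm{weak}}^{d,C_0\sqrt{\eps_1}}$ (so $v$ lies in the ambient class, as already noted after \Cref{prop:main}), and that $\eps_1$ may be fixed so small that $C_0\sqrt{\eps_1}$ lies below both the threshold $\eps$ of \Cref{prop:main} and $\tfrac14\delta_0$; in particular every jump of a function valued in $B_{\eps_1}(d)$ has size $<\delta_0$. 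It then suffices to show that any $u\in\mathcal{S}_{\textrm{weak}}^{d,C_0\sqrt{\eps_1}}$ with $u(0,\cdot)=v^0$ agrees with $v$ a.e.\ on $(-R,R)\times(0,T)$ for every $R,T>0$.

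The core step is a local comparison statement: if $w_1,w_2\in \mathcal{S}_{\textrm{weak}}^{d,C_0\sqrt{\eps_1}}$ have the same initial datum $g\in \mathrm{BV}_{\loc}(\R;B_{\eps_1}(d))$, then $w_1=w_2$ a.e.\ on $\{0\le t\le t^\ast,\ |x|<R+cT-ct\}$ for some $t^\ast>0$ depending only on the local BV modulus of $g$ on $[-R-cT,R+cT]$. To see this, note that since all jumps of $g$ are $<\delta_0$ and $g$ has finite total variation on the compact set $[-R-cT,R+cT]$, that set can be covered by a finite family of mutually overlapping open intervals $\{I_n\}$, each with $TV(g;\overline{I_n})\le\tfrac12\delta_0$ and with all interval lengths and consecutive overlaps bounded below. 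For each $n$ let $\tilde g_n$ be the extension of $g|_{I_n}$ that is constant outside $I_n$, so $\|\tilde g_n\|_{BV}+\|\tilde g_n-d\|_{L^\infty}\le\delta_0$ and the Standard Riemann Semigroup solution $\mathcal{S}_t\tilde g_n$ exists. Applying \Cref{prop:main} (translated to $I_n$, legitimate by the $x$‑translation invariance of \eqref{cl}) separately to $w_1$ and to $w_2$ over the time horizon $|I_n|/(2c)$, and using that $w_i(0,\cdot)-\tilde g_n=g-\tilde g_n$ vanishes on $I_n$, the right‑hand side of \eqref{L1L2} is zero; hence $w_1=\mathcal{S}_t\tilde g_n=w_2$ on the open trapezoid over $I_n$ shrinking at speed $c$. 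As in the trapezoidal construction preceding \Cref{lem:trapezoidconstruction} (adapted to unequal lengths), these trapezoids cover $(-R-cT+ct,\,R+cT-ct)\times\{t\}$ for all $t$ up to some $t^\ast>0$, giving the claim.

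Now I would apply this and iterate in $t$. With $w_1=u$, $w_2=v$, $g=v^0$ it gives $u=v$ a.e.\ on $\{0\le t\le t_1^\ast,\ |x|<R+cT-ct\}$ for some $t_1^\ast>0$. Suppose inductively $u=v$ a.e.\ on $\{0\le t\le\tau,\ |x|<R+cT-ct\}$ with $\tau<T$; then $u(\tau+\cdot,\cdot),v(\tau+\cdot,\cdot)\in\mathcal{S}_{\textrm{weak}}^{d,C_0\sqrt{\eps_1}}$ share the common initial datum $g_\tau:=u(\tau,\cdot)=v(\tau,\cdot)$ on $(-R-c(T-\tau),\,R+c(T-\tau))$. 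Since $v\in GL(\eps_1)$, \eqref{tv1/t} gives $TV(g_\tau;L)\le C_0\sqrt{\eps_1}+C_0L/\tau\le\tfrac14\delta_0+C_0L/\tau$, so \emph{every} interval of length $\le\delta_0\tau/(4C_0)$ has total variation $\le\tfrac12\delta_0$; hence in the local comparison applied to $g_\tau$ one may take all $I_n$ of length comparable to $\tau$, whence the corresponding $t^\ast$ is comparable to $\tau$. This yields $u=v$ a.e.\ up to time $\tau(1+\kappa)$ for a fixed $\kappa=\kappa(\delta_0,C_0,c)>0$. Since the available time thus grows geometrically from the positive value $t_1^\ast$, finitely many iterations give $u=v$ a.e.\ on $(-R,R)\times(0,T)$; letting $R,T\uparrow\infty$ finishes the proof.

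I expect the main obstacle to be exactly this interplay: \Cref{prop:main} controls only a time span governed by the \emph{local} total variation of the comparison data, and a priori nothing stops that span from shrinking to zero as the scheme is iterated — it is the $1/t$ decay \eqref{tv1/t}, available because $v$ is a Glimm--Lax solution, that rescues the span to be comparable to the current time and produces geometric growth. Secondary care is needed to arrange the overlapping covers at each stage so that no positive‑measure ``crack'' survives along the breakpoints, and to fix the hierarchy of small parameters ($\eps_1$ versus the $\eps$ of \Cref{prop:main}, versus $\delta_0$) once and for all, independently of $R$ and $T$. $\hfill\square$
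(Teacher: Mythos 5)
Your proposal is correct and follows essentially the same route as the paper's proof: cover the base of the information cone by overlapping intervals on which the data has total variation below $\delta_0$, apply \Cref{prop:main} on the resulting trapezoids (where the right-hand side vanishes since the initial data coincide), and then iterate in time using the $1/t$ decay \eqref{tv1/t} of the Glimm--Lax solution to guarantee the time increments are comparable to the current time, so finitely many steps exhaust $(0,T)$. The only cosmetic difference is that the paper phrases each step as a H\"older-$1/4$ stability estimate (yielding exponent $1/2^{m}$ after $m$ iterations) rather than as exact equality, but for uniqueness the two formulations are interchangeable.
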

\begin{proof}
    We let $\eps_1 > 0$ be sufficiently small such that $C_0\sqrt{\eps_1} < \min(\eps,\delta_0/2)$, where $\eps$ and $\delta_0$ are the constants from \Cref{prop:main}.
    For any $R > 0$ we fix the interval $I = (-R,R)$. 
    Since $v^0$ has finite BV norm on $I$ and no discontinuities larger than $\delta_0$, there exists an $L > 0$ such that for any $(a,b) \subset (-R,R)$ with $b-a < L$ we have $\norm{v(0,\cdot)}_{BV((a,b))} < \delta_0$. 
    Using this, we can decompose $I$ into overlapping intervals of size at most $L$ and, by the same reasoning as in \Cref{lem:trapezoidconstruction}, we have for some $T_1 \sim L$ the estimate 
    $$ \norm{u(t,\cdot) - v(t,\cdot)}_{L^2( (-R + ct, R -ct))} \leq 2K \sqrt{\norm{u(0,\cdot) - v^0}_{L^2(I)}}, $$
    for all $t < T_1$ and $u$ in $\mathcal{S}^{d,\eps}_\text{weak}$.
    Next, by~\eqref{tv1/t} we have a bound on the distribution of total variation of $v(T_1,\cdot)$, giving that $v(T_1,\cdot) \in BV_\loc(\R)$ and having no discontinuities larger than $\delta_0$. From this we can repeat the argument at the initial time, obtaining a H\"older-1/4 estimate for times $t \in [T_1,T_2]$ for some $T_2 > T_1$. 
    Inducting, we have the stability estimate
    $$ \norm{u(t,\cdot) - v(t,\cdot)}_{L^2( (-R + ct, R -ct) )} \leq 2K \norm{u(0,\cdot) - v^0}_{L^2( I)}^{1/2^{m(v_0,t)}}, $$
    where $m(v_0,t)$ is the number of iterations $m$ applied to have $t \in [T_{m-1}, T_m]$ given the initial data $v_0$. 
    We remark that for some finite $m$ $T_m > R/c$ due to the estimate~\eqref{tv1/t} spreading out the total variation as time increases, covering the information cone.
    Since $R>0$ was taken to be arbitrary, this establishes the uniqueness of $v$ among strong trace solutions.
\end{proof}

\par The rest of the paper will be devoted to proving \Cref{prop:main}. The strategy we employ is to approximate $\mathcal{S}_t v^0$ by \emph{shifted front tracking approximations}, which we construct in \Cref{sec:ft}. These differ from traditional front tracking approximations in that our approximate shocks travel at velocities determined by our weak solution $u$, rather than the Rankine--Hugoniot velocity. In section \Cref{relent} we show a weighted $L^2$-contraction between weak solutions and (shifted) solutions to the Riemann problem. In \Cref{sec:mainproof} we combine this with traditional front tracking procedures to establish an approximate form of the bound~\eqref{L1L2}. This proof involves a delicate construction of a weight in \Cref{a} which can handle a front tracking solution with many discontinuities. We then recover \Cref{prop:main} by taking our front tracking parameter to zero.

Our first key lemma says that there exists a neighborhood $W$ of the state $d$ such that the system \eqref{cl} verifies Hypotheses 1 relative to $W$ (except for Hypotheses 1 \ref{hyp-d}). 
\begin{lemma}\label{lem:localstructurelemma}
Let $\eqref{cl}$ verify \textbf{(GL)}. Fix a state $d \in \mathcal{G}$. Then, there exists a neighborhood $W$ containing $d$ such that $\eqref{cl}$ verifies all Hypotheses 1 (except for \ref{hyp-d}) relative to $W$. 
\end{lemma}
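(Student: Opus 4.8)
The plan is to produce $W$ as a finite intersection of neighborhoods of $d$, on each of which one group of the conditions \ref{hyp-a}--\ref{hyp-k} (excluding \ref{hyp-d}) holds, and then to shrink one last time so that $W$ is a bounded ball $B_\rho(d)$ with $\rho$ small enough for all the ``sufficiently small amplitude'' statements below. The conditions \ref{hyp-a}, \ref{hyp-b}, \ref{hyp-e} are purely infinitesimal, and I would dispose of them first by continuity: by \textbf{(GL)} the matrix $f'(d)$ is strictly hyperbolic, hence has two simple real eigenvalues, and since eigenvalues and (after fixing a normalization) eigenvectors of a matrix with simple spectrum depend smoothly on its entries, $f'(u)$ has simple real spectrum $\lambda_1(u) < \lambda_2(u)$ with smooth unit eigenvectors $r_i(u)$ on a neighborhood of $d$, which is \ref{hyp-a}. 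Genuine nonlinearity at $d$ reads $\nabla\lambda_i(d)\cdot r_i(d)\neq 0$; orienting each $r_i$ so that this quantity is positive at $d$, it stays positive nearby, which is \ref{hyp-b}. The continuous functions $\lambda_i$ are automatically bounded on a bounded neighborhood, which is \ref{hyp-e}.

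The single genuinely constructive step is \ref{hyp-c}, the existence of a strictly convex entropy; this is also what furnishes the $C^3$ strictly convex $\eta_d$ used to define $\Sweak$. Here being a $2\times 2$ system is essential. I would pass to Riemann invariants $(w,z)$ --- which exist and have the required regularity near $d$ by \ref{hyp-a} and the $C^4$ hypothesis on $f$ --- and use the classical fact (Lax) that the integrability condition $(\nabla^2\eta)\,f' = (f')^{\top}\nabla^2\eta$ characterizing entropies becomes, in these coordinates, a single linear second-order hyperbolic equation for $\eta$ with characteristic lines $\{w=\mathrm{const}\}$ and $\{z=\mathrm{const}\}$. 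Solving an appropriate Goursat problem for this equation with Cauchy data chosen so that $\nabla^2\eta(d)$ is positive definite produces a solution $\eta_d$, which by continuity of the Hessian is strictly convex on a neighborhood of $d$; the entropy flux $q$ is then recovered by integrating the one-form $\nabla\eta_d\, f'$, whose closedness is exactly the equation just solved. (Alternatively one may simply quote the classical local existence of a strictly convex entropy for genuinely nonlinear strictly hyperbolic $2\times 2$ systems, e.g. from \cite{MR3468916}.)

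Everything that remains, namely \ref{hyp-f}--\ref{hyp-k}, is the classical local structure theory of genuinely nonlinear strictly hyperbolic $2\times 2$ systems, and I would deduce it by taking $\rho$ small. Condition \ref{hyp-f} is the Lax shock-curve construction: the implicit function theorem applied to the Rankine--Hugoniot relations through $u_L$ yields, for each $u_L\in W$, an arc-length-parametrized $1$-shock curve $s\mapsto S^1_{u_L}(s)$ (and similarly a $2$-shock curve through each $u_R$), defined until it leaves $W$; one takes $s_{u_L}$ to be this exit parameter, consistently with \Cref{lem:dafermoslemma}. Since any shock with both states in $B_\rho(d)$ has amplitude at most $2\rho$, for $\rho$ small its entropy-admissibility with respect to $\eta_d$ is equivalent to the Lax inequalities $\lambda_i(u_R) < \sigma < \lambda_i(u_L)$ for an $i$-shock; together with $\lambda_1 < \lambda_2$ this gives \ref{hyp-g} and \ref{hyp-i} at once, and it also forces an entropic shock with $\sigma \le \lambda_1(u_L)$ (resp. $\sigma \ge \lambda_2(u_R)$) to be a $1$-shock (resp. $2$-shock); since within $B_\rho(d)$ the admissible $1$-shocks out of $u_L$ (resp. $2$-shocks into $u_R$) are exactly those on $S^1_{u_L}$ (resp. $S^2_{u_R}$), this is \ref{hyp-h} and \ref{hyp-j}. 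Finally \ref{hyp-k} follows from the standard Taylor expansions along the shock curves for small strength: $\sigma^1_{u_L}(s) = \lambda_1(u_L) - c\,(\nabla\lambda_1\cdot r_1)(u_L)\,s + O(s^2)$ with $c>0$ (and the mirror statement for $2$-shocks), giving $\tfrac{d}{ds}\sigma^1_{u_L} < 0$ and $\tfrac{d}{ds}\sigma^2_{u_R} > 0$, while $\eta_d(u_L\mid S^1_{u_L}(s)) = \tfrac12\langle \nabla^2\eta_d(u_L) r_1(u_L), r_1(u_L)\rangle s^2 + O(s^3)$ is strictly increasing for small $s$ by strict convexity. Shrinking $\rho$ so that every shock with states in $W$ has strength in the range where these expansions dominate finishes the argument. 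I expect the entropy construction in \ref{hyp-c} to be the one genuinely substantive point; the rest is a careful but routine assembly of the classical genuinely-nonlinear local theory (as in \cite{cfk}, \cite{MR3468916}) on a sufficiently small ball.
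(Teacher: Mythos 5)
Your proposal is correct and follows essentially the same route as the paper's proof in Appendix~\ref{appendixB}: conditions \ref{hyp-a}, \ref{hyp-b}, \ref{hyp-e} by continuity, the entropy \ref{hyp-c} via a Goursat problem along the Riemann invariants with data making $\nabla^2\eta_d(d)$ positive definite, and \ref{hyp-f}--\ref{hyp-k} from the classical local shock-curve theory (\Cref{lem:dafermoslemma}), the equivalence of entropy admissibility with the Lax condition for weak shocks, and the small-strength asymptotic expansions. The only cosmetic difference is that for \ref{hyp-k} you Taylor-expand $\eta_d(u_L\,|\,S^1_{u_L}(s))$ itself while the paper expands its $s$-derivative directly; both yield the same conclusion.
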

\begin{remark}
We remark that \Cref{lem:localstructurelemma} is sharp in the sense that there are systems verifying \textbf{(GL)} that do not verify {Hypotheses 1} (d) relative to any neighborhood of a given state. See \Cref{appendix-hypd} for an example.
\end{remark}
A proof of \Cref{lem:localstructurelemma} will be given in Appendix \ref{appendixB}. This lemma allows us to use the $a$-contraction method developed in \cite{MR4667839}, \cite{cfk} to obtain $L^2$-stability for small perturbations of small shocks near $d$ (see \Cref{lem:shockdissipationestimate}).

\section{Method of relative entropy}\label{relent}
We use the relative entropy method introduced by Dafermos \cite{MR546634} and DiPerna \cite{MR523630}. Recall that we are assuming the existence of a $C^3$ strictly convex entropy in a neighborhood $W$ of $d$. We define the associated pseudo-distance for any $a,b \in W \times W$,
\begin{equation} \label{def:rel-ent}
\eta(a|b)=\eta(a)-\eta(b)-\nabla \eta(b)(a-b).
\end{equation}
The quantity $\eta(a|b)$ is called the \emph{relative entropy} of $a$ with respect to $b$, and is equivalent to the square of their difference. We also define the \emph{relative entropy flux} for $a,b \in W \times W$,
\begin{equation} \label{def:rel-ent-flux}
q(a;b)=q(a)-q(b)-\nabla \eta(b)(f(a)-f(b)),
\end{equation}
and the \emph{relative flux} for $a,b \in W \times W$,
\begin{equation}\label{def:rel-flux}
f(a|b)=f(a)-f(b)-f'(b)(f(a)-f(b)).
\end{equation}
If $u$ is a weak solution to \eqref{cl} and \eqref{entropic}, then $u$ verifies the following family of entropy inequalities for all $b \in W$,
\begin{align}\label{relentb}
(\eta(u|b))_t+(q(u;b))_x \leq 0.
\end{align}
In this way, we see that the single entropy inequality \eqref{entropic} actually induces an infinite family of entropy inequalities \eqref{relentb}. It is this fact which allows the $L^2$-theory to grant stability with respect to non-BV perturbations, and shows that the $L^2$-theory can be viewed as a Kru\v{z}kov-like theory for systems. 

\subsection{Quantitative dissipation estimates for small shocks}
For the study of discontinuous solutions, the relative entropy requires an intricate modification by considering a spatially weighted relative entropy along with a shift ($a$-contraction). This was introduced by Kang-Vasseur \cite{kang2016criteria}. Given a fixed entropic shock $(u_L, u_R, \sigma)$ and solution $u \in L^\infty( \R^+\times \R;W)$, consider the pseudo-distance
\begin{align}\label{pseudodistanceshock}
E_t(u;a_1, a_2, h):=a_1 \int_{-\infty}^{h(t)}\eta(u(t,x)|u_L)\diff x+a_2 \int_{h(t)}^\infty \eta(u(t,x)|u_R)\diff x,
\end{align}
where $a_1, a_2$ are constant weights to be determined and $h(t)$ is a Lipschitz shift function. By the following lemma, we see that if $E_t$ is decreasing then we have $L^2$-control on the distance between $u$ and the shifted shock $(u_L, u_R, \dot{h})$. 
\begin{lemma}\label{relentisl2}
There exists a constant $K$ such that for all $(u,v) \in W \times W$, we have
\begin{align*}
\frac{1}{K}|u-v|^2 \leq \eta(u|v) \leq K|u-v|^2.
\end{align*}
\end{lemma}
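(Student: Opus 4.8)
The plan is the standard Taylor-with-integral-remainder computation for the relative entropy, combined with a compactness bound on the Hessian $\nabla^2\eta$. The only point deserving a word of care is that we should take the neighborhood $W$ from \Cref{lem:localstructurelemma} to be, without loss of generality, a ball $B_\rho(d)$ whose closure is contained in the open set on which $\eta$ is $C^3$ and strictly convex; this costs nothing, since both properties hold on some such ball around $d$, and shrinking $W$ only strengthens the statement.

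First I would record the integral representation of $\eta(u|v)$. Since $W = B_\rho(d)$ is convex, for $u,v \in W$ the segment $s \mapsto v + s(u-v)$ stays in $W$, so setting $g(s) := \eta(v+s(u-v))$ and using $g(1) = g(0) + g'(0) + \int_0^1 (1-s)\,g''(s)\,\diff s$ together with the definition~\eqref{def:rel-ent} of $\eta(u|v)$ gives
\[
\eta(u|v) = \int_0^1 (1-s)\,(u-v)^{\mathsf T}\,\nabla^2\eta\big(v + s(u-v)\big)\,(u-v)\,\diff s .
\]
Next, because $\eta \in C^3(\overline{W})$ and $\eta$ is strictly convex, the Hessian $\nabla^2\eta$ is continuous and positive definite on the compact set $\overline{W}$, so there are constants $0 < \lambda_- \le \lambda_+ < \infty$ (depending only on $\eta$, hence only on $d$ and $f$) with $\lambda_-|\xi|^2 \le \xi^{\mathsf T}\nabla^2\eta(w)\,\xi \le \lambda_+|\xi|^2$ for all $w \in \overline{W}$ and $\xi \in \R^2$. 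Inserting this into the representation above and using $\int_0^1(1-s)\,\diff s = \tfrac12$ yields
\[
\frac{\lambda_-}{2}\,|u-v|^2 \;\le\; \eta(u|v) \;\le\; \frac{\lambda_+}{2}\,|u-v|^2 ,
\]
which is the claim with $K := \max\{\,2/\lambda_-,\ \lambda_+/2\,\}$.

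I do not expect a genuine obstacle here: the estimate is the routine "Hessian comparison" bound for relative entropies. The only thing to be explicit about is the reduction to a convex $W$ with compact closure inside the region where $\eta$ is $C^3$ and strictly convex, which guarantees both that the Taylor segment remains in the domain and that the uniform two-sided Hessian bound is available.
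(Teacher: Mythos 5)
Your proof is correct and is exactly the standard argument the paper implicitly relies on: the paper states \Cref{relentisl2} without proof as a classical fact (cf.\ Dafermos), and your Taylor-with-integral-remainder computation together with the uniform two-sided Hessian bound on a convex, compactly contained $W$ is the canonical justification. The remark about shrinking $W$ to a convex set with closure inside the region where $\eta$ is $C^3$ and strictly convex is the right point of care and is consistent with how $W$ is constructed in \Cref{lem:localstructurelemma}.
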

To control $E_t$ we compute its time derivative for a solution $u$ with the Strong Trace Property (i.e. satisfying \Cref{def:strongtrace}). 
Integrating \eqref{relentb} once with $b=u_L$ on $\{(s,x) \,|\, s\in [0,t]\text{ and } x < h(t))\}$ and again with $b=u_R$ on $\{(s,x) \,|\, s\in [0,t]\text{ and } x > h(t))\}$, adding the results and using the Strong Trace Property we arrive at the following bound, 
\begin{equation} \label{eq:rel-ent-diss} \begin{aligned}
    {\frac{d}{dt}}E_t \leq \hphantom{-}a_2&\left[q(u(h(t)+,t);u_R)-\dot{h}(t) \ \eta(u(h(t)+,t)|u_R)\right] \\
             -a_1&\left[q(u(h(t)-,t);u_L)-\dot{h}(t) \ \eta(u(h(t)-,t)|u_L)\right].
    \end{aligned}
\end{equation}
The following lemma provides weights $a_1,a_2$ and a shift $h$ for which the right side of~\eqref{eq:rel-ent-diss} is negative, establishing $E_t$ is decreasing.
\begin{proposition}[\protect{Existence of the shift function for small shocks with quantitative dissipation estimate}]\label{lem:shockdissipationestimate}
    Consider a strictly hyperbolic system \eqref{cl} with flux $f$ satisfying the condition \textbf{(GL)}. Fix $d\in \mathcal{G}_0$, and let $W$ be the neighborhood of $d$ constructed in \Cref{lem:localstructurelemma}. 
    Then there exists $\hat \lambda, C_1, K, \eps > 0$ such that the following holds:
        Consider any $1$-shock or $2$-shock $(u_L,u_R,\sigma_{LR})$ satisfying 
        \begin{align*}
        |u_L - d| + |u_R - d| \leq \epsilon.
        \end{align*}
        Letting $s_0$ be the arclength between $u_L$ and $u_R$ along the curve $S_{u_L}^1$, if $a_1,a_2$ satisfy
        \begin{align*}
            1 + \frac{C_1s_0}{2} &\leq \frac{a_1}{a_2} \leq 1+2C_1s_0 \quad\quad \text{if $(u_L,u_R)$ is a 1-shock} \\
            1 - 2C_1s_0 &\leq \frac{a_1}{a_2} \leq 1-\frac{C_1s_0}{2} \quad\quad\, \text{if $(u_L,u_R)$ is a 2-shock}, 
        \end{align*}
        then for any $u \in \Sweak$, $T > 0$, $T_{start} \in [0,T]$, and $x_0 \in \R$  there exists a Lipschitz map $h:[T_{start},T] \to \R$ such that $h(T_{start}) = x_0$ and we have the following bound on our dissipation functional
        \begin{equation}\begin{aligned} 
            &a_2\left[q(u(h(t)+,t);u_R)-\dot{h}(t) \ \eta(u(h(t)+,t)|u_R)\right]\\ \label{diss:shock}
            &\qquad\qquad  -a_1\left[q(u(h(t)-,t);u_L)-\dot{h}(t) \ \eta(u(h(t)-,t)|u_L)\right] \leq -a_2 Ks_0(\dot h(t) - \sigma_{LR})^2,
        \end{aligned}\end{equation}
        for almost all $t \in [T_{start},T]$. 
        Furthermore, if $(u_L,u_R)$ is a 1-shock, then for almost all $t \in [T_{start},T]$
        \begin{align*}
            -\frac{\hat\lambda}{2} \leq \dot h(t) \leq \sup_{v \in B_{2\epsilon}} \lambda_1(v).
        \end{align*}
        Similarly, if $(u_L,u_R)$ is a 2-shock, then for almost all $t \in [T_{start},T]$
        \begin{align*}
            \inf_{v \in B_{2\epsilon}} \lambda_2(v) \leq \dot h(t) \leq \frac{\hat\lambda}{2}.
        \end{align*}
\end{proposition}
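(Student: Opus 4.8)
The plan is to reduce \Cref{lem:shockdissipationestimate} to the shock dissipation estimate already established in \cite{cfk}. The only condition among \textbf{Hypotheses 1} that can fail for a flux satisfying \textbf{(GL)} is \ref{hyp-d}, and by \Cref{lem:localstructurelemma} all of the remaining conditions hold relative to the neighborhood $W$ of $d$; I would fix $\eps > 0$ small enough that $B_{2\eps}(d) \subset W$, so that every state appearing in the argument lies in $W$. Since \ref{hyp-d} enters the analysis of \cite{cfk} only through rarefaction waves and never through shocks, the shock part of that argument applies here without change. For completeness I sketch its structure.

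First I would abbreviate the right-hand side of \eqref{eq:rel-ent-diss}, for generic one-sided traces $v_-, v_+$ and a generic velocity $\dot h$, as
\begin{equation*}
F(v_-,v_+;\dot h) := a_2\big[q(v_+;u_R) - \dot h\,\eta(v_+|u_R)\big] - a_1\big[q(v_-;u_L) - \dot h\,\eta(v_-|u_L)\big].
\end{equation*}
The proof then separates into two tasks. The first is a \emph{pointwise} dissipation estimate: whenever $|v_- - u_L| + |v_+ - u_R|$ is small there is a velocity $V = V(v_-,v_+)$, lying in the claimed sub-interval of $[-\hat\lambda/2,\ \sup_{B_{2\eps}}\lambda_1]$ for a $1$-shock (resp.\ of $[\inf_{B_{2\eps}}\lambda_2,\ \hat\lambda/2]$ for a $2$-shock), with $F(v_-,v_+;V) \leq - a_2 K s_0 (V - \sigma_{LR})^2$. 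The second is the construction of a Lipschitz map $h$ with $h(T_{start}) = x_0$ solving $\dot h(t) = V(u_-(t),u_+(t))$ for a.e.\ $t$. Given both, inserting this $h$ into \eqref{eq:rel-ent-diss} produces \eqref{diss:shock} together with the stated bounds on $\dot h$.

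For the pointwise estimate I would treat a $1$-shock, the $2$-shock being symmetric. Using genuine nonlinearity \ref{hyp-b}, the global parametrization and monotonicity of the shock curve \ref{hyp-f}, \ref{hyp-k}, and the entropy conditions \ref{hyp-g}--\ref{hyp-j}, one expands $F$ to second order in the small quantities $s_0$, $|v_- - u_L|$, $|v_+ - u_R|$ and $\dot h - \sigma_{LR}$; the weight ratio being pinned to $1 + O(s_0)$ is precisely what makes the resulting quadratic form negative with a gain linear in $s_0$, while traces $v_\pm$ far from $u_L, u_R$ are treated separately, the strict Lax inequalities supplying a margin that dominates any positive contribution. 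The velocity $V(v_-,v_+)$ produced this way is bounded by $L$ via \ref{hyp-e} and, by construction, confined to the narrower interval claimed. For the Lipschitz shift I would run the now-standard argument of Kang--Vasseur \cite{kang2016criteria}: for any fixed Lipschitz curve the one-sided traces $u_\pm$ exist by the Strong Trace Property (\Cref{def:strongtrace}), and since $t \mapsto (u_-(t),u_+(t))$ is then only bounded measurable, one builds $h$ as a limit of solutions of regularized ODEs, extracts a uniformly Lipschitz limit from the bound $|V| \leq L$, and checks that the limit still solves $\dot h = V(u_-,u_+)$ a.e.

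The hard part is the pointwise estimate. The gain $-a_2 K s_0 (V - \sigma_{LR})^2$ degenerates linearly as $s_0 \to 0$, so every positive perturbation term in $F$ --- which is not a priori small compared with $s_0$ --- has to be absorbed; it is exactly the interplay of the $O(s_0)$-window for $a_1/a_2$, genuine nonlinearity (keeping the shock curve non-degenerate), and the entropy/Lax conditions that makes the cancellation work, and arranging the constants so that this holds uniformly in $s_0$ is the delicate point.
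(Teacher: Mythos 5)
Your reduction is exactly the paper's proof: \Cref{lem:shockdissipationestimate} is obtained by applying \Cref{lem:localstructurelemma} and then invoking Proposition 3.1 of \cite{cfk} with $\mathcal{V}_0 = W$, noting that Hypotheses 1 \ref{hyp-d} is never used in the proof of that proposition. Your additional sketch of the internal structure of the \cite{cfk} argument is consistent with that reference but is not reproduced in the paper, which treats the result as a direct citation.
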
 
\begin{proof}
The proof follows directly from \Cref{lem:localstructurelemma} and \cite[Proposition 3.1]{cfk} (taking $\mathcal{V}_0=W$ in their Proposition 3.1), where we note that Hypotheses 1 are exactly \cite[Assumptions 1]{cfk}, and that Hypotheses 1 \ref{hyp-d} (Assumption 1 (d)) is not used in the proof of Proposition 3.1 therein.
\end{proof}

\subsection{A weighted relative entropy contraction for rarefaction waves}
In this section, we prove a novel weighted relative entropy contraction for rarefaction waves (Lemma \ref{rarecontract}). For inviscid conservation laws, this is the first result in the relative entropy method for continuous solutions that are not Lipschitz (cf. \cite{jiayun}). Before we state exactly the result, let us recall some basic facts about rarefactions (following \cite{MR1625845}). 
\par Recall that for $u \in W$ and $i=1,2$, there exists a curve $R^i_u:\R \to \mathcal{G}_0$ verifying the ODE
\begin{equation} 
\left\{\begin{aligned}
\ds{\frac{d}{ds}}R^i_u(s)&=r_i(R^i_u(s)), \quad s \in \R, \\
R^i_u(0)&=u. 
\end{aligned} \right. \end{equation}
We call $R^i_u$ the \emph{$i^{th}$ rarefaction curve at $u$}. Given two states $u_L, u_R$ such that $u_R=R^i_{u_L}(s_0)$ for some $s_0 > 0$, a rarefaction wave connecting $u_L$ and $u_R$ can be composed in the following way. Firstly, define
\begin{align*}
f_i(s):=\int_0^s\lambda_i(R^i_{u_L}(t)) \diff t, \quad \forall \ s \in \R.
\end{align*}
Then, consider the Riemann problem for the scalar conservation law
\begin{align}\label{scl}
\begin{aligned}
y_t+[f_i(y)]_x&=0, \\
y(0,x)&=\begin{cases}
0 & x < 0, \\
s_0 & x > 0,
\end{cases}
\end{aligned}
\end{align}
where we recall $R^i_{u_L}(0)=u_L$ and $R^i_{u_L}(s_0)=u_R$. The scalar problem~\eqref{scl} has the unique entropy solution
\begin{align*}
y(t,x)=\begin{cases}
0 & \frac{x}{t} <f_i'(0) ,\\
[f_i']^{-1} (\frac{x}{t}) & f_i'(0) < \frac{x}{t} < f_i'(s_0), \\
s_0 & \frac{x}{t} > f_i'(s_0).
\end{cases}
\end{align*}
It then follows that the rarefaction for $\eqref{cl}$ is 
\begin{equation} \label{eq:u-bar-def}
\overline{u}(t,x)=R^i_{u_L}(y(t,x)).
\end{equation}
Without confusion, we may refer to $\overline{u}$ and $y$ as functions of merely one argument as they are self-similar along the lines $\frac{x}{t} \equiv c$. 

The main proposition for use in the front tracking scheme later is the following:
\begin{proposition}\label{prop:raredissipationestimate}
Consider a strictly hyperbolic system \eqref{cl} verifying \textbf{(GL)}. Let $d \in \mathcal{G}$, and let $W$ be the neighborhood of $d$ constructed in \Cref{lem:localstructurelemma}. Then, there exists $K, C_2, \eps > 0$ such that the following holds. Consider any two states $(u_L, u_R)$ satisfying
\begin{align*}
    |u_L - d| + |u_R - d| \leq \eps,
\end{align*}
where $u_R=R^i_{u_L}(s_0)$ for some $s_0 > 0$ and $i=1,2$ (so that $u_L, u_R$ are connected by a $1$-or-$2$-rarefaction $\overline{u}$). Denote
    \begin{align*}
    \overline{\sigma}=|v_L-v_R|+\sup_{t,x}|u_L-\overline{u}(y(t,x))|,
    \end{align*}
    where $v_L=\lambda_i(u_L), v_R=\lambda_i(u_R)$, and $y$ solves the scalar problem~\eqref{scl}.
        Then, if $a_1, a_2$ satisfy
        \begin{align}
            1 + \frac{C_2s_0}{2} &\leq \frac{a_1}{a_2} \leq 1+2C_2s_0, \quad\quad \text{if $u_R=R^1_{u_L}(s_0)$,} \label{control_a_one_rare}\\
            1 - 2C_2s_0 &\leq \frac{a_1}{a_2} \leq 1-\frac{C_2s_0}{2}, \quad\quad \text{if $u_R=R^2_{u_L}(s_0)$}, \label{control_a_two_rare}
        \end{align}
        then for any $u \in \Sweak, v \in I[v_L,v_R]$ (where $I[v_L,v_R]:=[\min(v_L,v_R), \max(v_L,v_R)]$), and $t > 0$, we have
\begin{align*}
&\int_0^t \Bigl\{a_2 \left(q(u(\tau,\tau v+);u_R)-v\eta(u(\tau,v\tau+)|u_R)\right) \\
&-a_1 \left(q(u(\tau, \tau v-);u_L)-v\eta(u(\tau,v\tau-)|u_L)\right)\Bigr\} \diff \tau \leq a_2\left(K\overline{\sigma}^2t+K\overline{\sigma}|\mu_u|(\mathcal{C}_{v_L, v_R})\right),
\end{align*}
where $\mu_u$ is the locally finite non-positive entropy dissipation  measure $[\partial_t \eta(u)+\partial_x q(u)]$ and 
\begin{equation} \label{eq:rare-fan} \mathcal{C}_{v_L,v_R}:=\{(\tau,x)\,|\,0 \leq \tau \leq t, v_L\tau \leq x \leq v_R \tau\},\end{equation} is the rarefaction fan.
\end{proposition}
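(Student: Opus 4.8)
The plan is to read the left–hand side as a \emph{weighted relative–entropy flux of $u$ across the ray $x=v\tau$} and to estimate it by pivoting on the exact rarefaction profile. Write $\Psi(a;b,w):=q(a;b)-w\,\eta(a|b)$. Two facts are used throughout: since $u$ is a weak entropy solution, $\partial_\tau\eta(u|b)+\partial_x q(u;b)=\mu_u\le 0$ in $\mathcal{D}'$ for every constant state $b$; and from $\grad q=\grad\eta\,f'$ one has $\Psi(a;b,w)=O(|a-b|^2)$ (the $a$-gradient of $\Psi$ vanishes at $a=b$) together with the cocycle-with-correction identity
\[ \Psi(a;c,w)=\Psi(a;b,w)+\Psi(b;c,w)+\big(\grad\eta(b)-\grad\eta(c)\big)\cdot\big[(f(a)-f(b))-w(a-b)\big]. \]
Let $w_v:=\overline u(\tau,v\tau)=R^i_{u_L}(s_v)$ with $\lambda_i(w_v)=v$; by self-similarity $w_v$ is constant along $x=v\tau$, it lies on the rarefaction curve between $u_L$ and $u_R$, and $|w_v-u_L|+|w_v-u_R|\le Ks_0\le K\overline\sigma$. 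Applying the cocycle identity with $b=w_v$ (once with $c=u_R$ and trace $u_+:=u(\tau,v\tau+)$, once with $c=u_L$ and $u_-:=u(\tau,v\tau-)$) splits the integrand as $a_2\Psi(u_+;u_R,v)-a_1\Psi(u_-;u_L,v)=A+B+C$, where $A:=a_2\Psi(w_v;u_R,v)-a_1\Psi(w_v;u_L,v)$, $B:=a_2\Psi(u_+;w_v,v)-a_1\Psi(u_-;w_v,v)$, and $C$ is the sum of the two correction terms, each of modulus at most $Ka_2\,\overline\sigma\,(|u_+-w_v|+|u_--w_v|)$ since $|\grad\eta(w_v)-\grad\eta(u_{L/R})|\le K\overline\sigma$.

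The estimate of $A$ is the step that removes Hypotheses~1\,\ref{hyp-d}. Differentiating along the rarefaction curve and using $\grad q=\grad\eta f'$ and $f'(R^i_{u_L})r_i=\lambda_i r_i$ gives
\[ \tfrac{d}{ds}\Psi(R^i_{u_L}(s);b,v)=\big(\lambda_i(R^i_{u_L}(s))-v\big)\,\big(\grad\eta(R^i_{u_L}(s))-\grad\eta(b)\big)\cdot r_i(R^i_{u_L}(s)). \]
Integrating from $s_v$ (where $\Psi(w_v;w_v,v)=0$) to the endpoints, and using that $v\in I[v_L,v_R]$ forces $|\lambda_i(R^i_{u_L}(s))-v|\le|v_R-v_L|\le\overline\sigma$ while $|\grad\eta(R^i_{u_L}(s))-\grad\eta(u_{L/R})|\le K\overline\sigma$ on the relevant parameter range, yields $|A(\tau)|\le a_2K\overline\sigma^2$, hence $\int_0^t A\,\diff\tau\le a_2K\overline\sigma^2 t$. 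This is a Kru\v{z}kov-type bound: only genuine nonlinearity enters, no convexity of $u\mapsto\ell_i\cdot f(u)$.

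For $B$ and $C$ one writes $B=a_2\big(\Psi(u_+;w_v,v)-\Psi(u_-;w_v,v)\big)+(a_2-a_1)\Psi(u_-;w_v,v)$. The first bracket is $\le0$ because $u_\pm$ are the one-sided traces of the same entropy solution along the straight ray $x=v\tau$, so the generalized Rankine--Hugoniot relation together with \eqref{entropic} along that ray give $\Psi(u_+;b,v)-\Psi(u_-;b,v)\le0$ for any fixed $b$. In the remaining term the weight window \eqref{control_a_one_rare}--\eqref{control_a_two_rare} fixes the sign up to a cubic remainder, since $v=\lambda_i(w_v)$ makes the quadratic form $\Psi(\,\cdot\,;w_v,v)$ degenerate exactly along $r_i(w_v)$ and signed on its complement. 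What remains — the part of $B$ and of $C$ that is $O(\overline\sigma)$ times the distance of $u$ to the rarefaction profile — is charged to $\overline\sigma\,|\mu_u|(\mathcal{C}_{v_L,v_R})$: applying Green's theorem to $\partial_\tau\eta(u|w_v)+\partial_x q(u;w_v)=\mu_u$ on the sub-wedges of the fan bounded by $x=v\tau$ and the edges $x=v_L\tau$, $x=v_R\tau$ (both inside $\mathcal{C}_{v_L,v_R}$) converts the flux of $u$'s relative entropy across $x=v\tau$ into boundary terms of size $O(\overline\sigma^2 t)$ plus $-\iint\mu_u=|\mu_u|(\text{wedge})\le|\mu_u|(\mathcal{C}_{v_L,v_R})$. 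Summing $A+B+C$ and integrating in $\tau$ gives the asserted bound.

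The main obstacle is exactly this last charging step. For a shock, \Cref{lem:shockdissipationestimate} furnishes a strictly negative term $-a_2Ks_0(\dot h-\sigma)^2$ that can absorb errors; a rarefaction front is non-entropic, so there is no such gain, and the only smallness available is the $O(\overline\sigma)$ aperture of the fan and the $O(s_0)$ weight imbalance. Because the error terms carry $|u-w_v|$, which is controlled only by $\eps$ (not by $\overline\sigma$) and cannot be dominated pointwise by $\mu_u$, one cannot argue pointwise in $\tau$: the degenerate direction $r_i(w_v)$ of the relative-entropy-flux quadratic form must be tracked carefully, and the genuinely uncontrolled contribution must be traded for $\mu_u$ globally through the Green's-theorem identity on the fan rather than pointwise. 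Making this bookkeeping uniform as $\overline\sigma\downarrow0$ is the crux, and is what the novel \Cref{prop:raredissipationestimate} supplies.
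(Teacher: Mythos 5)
There is a genuine gap, and it sits exactly where you flag it: the treatment of your terms $B$ and $C$. Your pivot state $w_v=\overline u(\tau,v\tau)$ is a \emph{constant}, so the only information you can extract from $u$ being an entropy solution relative to $w_v$ is $\partial_\tau\eta(u|w_v)+\partial_x q(u;w_v)\le 0$; this carries no coercivity in the direction $r_i(w_v)$, which is precisely the degenerate direction of the quadratic form of $q(\cdot\,;w_v)-v\,\eta(\cdot\,|w_v)$ when $v=\lambda_i(w_v)$. Concretely: $(a_2-a_1)\bigl(q(u_-;w_v)-v\eta(u_-|w_v)\bigr)$ has prefactor $|a_1-a_2|\sim\overline\sigma$, while the second factor is only $O(|u_--w_v|^2)=O(\eps^2)$ and can be of the wrong sign at cubic order along $r_i$ (the cubic coefficient is $(r_i^t\nabla^2\eta\, r_i)(\nabla\lambda_i\cdot r_i)$, so the form is genuinely negative on one side of the degenerate line). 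Likewise each correction term in $C$ is $O(\overline\sigma)\cdot O(|u_\pm-w_v|)=O(\overline\sigma\,\eps)$ pointwise in $\tau$. Integrated over $[0,t]$ these contribute $O(\overline\sigma\,\eps\, t)$, which is \emph{not} dominated by $K\overline\sigma^2t+K\overline\sigma|\mu_u|(\mathcal{C}_{v_L,v_R})$: $\eps$ is a fixed constant while $\overline\sigma\downarrow0$ in the front-tracking limit, and $|\mu_u|$ measures the entropy dissipation of $u$, not its distance to the rarefaction (it can vanish identically while $|u-w_v|\sim\eps$). So the proposed Green's-theorem ``charging to $\mu_u$'' cannot close; summed over the discretized rarefaction fronts it would leave an $O(\eps^2 t)$ error that does not vanish as $\nu\to0$.

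The missing idea is the one supplied by \Cref{rarecontract}: one must compare $u$ with the full self-similar profile $\overline u(t,x)$, not with its constant trace on the ray. Only then does the extra term $\partial_x\{\nabla\eta(\overline u)\}f(u|\overline u)$ of \eqref{eq:unweighted-rel-rare} appear, whose quadratic part in the degenerate direction is $\tfrac12(r_i^t\nabla^2\eta\, r_i)(\nabla\lambda_i\cdot r_i)\,a_i^2>0$ by genuine nonlinearity; combined with the exponential weight $a=\exp[(-1)^iCy]$ (whose derivative produces $C\bigl(q(u;\overline u)-\lambda_i(\overline u)\eta(u|\overline u)\bigr)$, coercive in the transverse direction for $C$ large), this yields a \emph{pointwise} nonnegative dissipation inside the fan, with the cubic remainder absorbed by taking $\eps$ small relative to $1/C_2$. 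After that, every residual error in the paper's proof is a spatial integral over the top of the fan, whose width is $|v_R-v_L|\,t\lesssim\overline\sigma t$ — that geometric factor, absent from your time integrals along the ray, is where the second power of $\overline\sigma$ comes from. Your estimate of $A$ (the Kru\v{z}kov-type bound along the rarefaction curve) is fine and indeed uses only genuine nonlinearity, but it is not the step that removes Hypotheses~1~\ref{hyp-d}; the removal happens in the coercivity argument just described.
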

\begin{remark}\label{same}
Note that in the proofs of \Cref{prop:raredissipationestimate}  and \Cref{lem:shockdissipationestimate} (which is found in \cite[Section 8]{cfk}), we may take $C_1, C_2$ arbitrarily large. In particular, we may assume $C_1=C_2$. 
\end{remark}
\begin{remark}
    The fact that the entropy dissipation $\mu_u$ is a locally finite measure follows from the Riesz-Markov representation theorem. 
\end{remark}
We prove \Cref{prop:raredissipationestimate} in \Cref{sec:raredissipationestimatepfsec}. Before we can do so, we need an important lemma in which we obtain an exact weighted relative entropy contraction for the true rarefaction wave. The strategy is as follows. Drawing inspiration from the quantity \eqref{pseudodistanceshock}, let us define
\begin{align}\label{pseudodistancerare}
D_t:=\int_{-\infty}^\infty a(t,x)\eta(u(t,x)|\overline{u}(t,x))\diff x,
\end{align}
where here $u$ solves \eqref{cl} and \eqref{entropic}, and $\overline{u}(t,x)$ is the rarefaction wave. In this setting, we do not have any shift $h$, and the weight $a(t,x)$ will no longer be piecewise constant (cf.~\eqref{pseudodistanceshock}). It is again our goal to estimate $\ds{\frac{d}{dt}}D_t$.
\begin{lemma}\label{rarecontract}
Consider a strictly hyperbolic system \eqref{cl} verifying \textbf{(GL)}. Let $d \in \mathcal{G}$ and $W$ be the neighborhood of $d$ constructed in \Cref{lem:localstructurelemma}. Then, for any $C_2$ sufficiently large, there exists $\eps > 0$ such that the following holds. 
Let $(u_L,u_R)$ be two states with $u_R = R_{u_L}^i(s_0)$ for a $s_0 > 0$ satisfying  
\begin{align*}
    \sup_{s \in [0,s_0]} |R_{u_L}^i(s) - d| \leq \eps.
\end{align*}
For any constant $C$  satisfying
\begin{equation}\label{eq:C-range-cont} C \in [C_2/4, 4C_2],\end{equation}
we fix a weight of the form
\begin{equation} \label{eq:a-def-cont-rare}
a(t,x):=\exp\left[(-1)^{i}\ C\ y(t,x)\right],
\end{equation}
where $y$ solves~\eqref{scl}.
Then, for any $u \in \Sweak$, we have 
\begin{align}\label{rarecontractdistribution}
\partial_t\left[a(y(t,x))\eta(u(t,x)|\overline{u}(t,x))\right]+\partial_x\left[a(y(t,x))q(u(t,x);\overline{u}(t,x))\right] \leq a(y)\mu_u,
\end{align}
in the sense of distributions, i.e. in $\mathcal{D}'(\R^+ \times \R)$, where $\mu_u$ is the locally finite non-positive entropy dissipation  measure $[\partial_t \eta(u)+\partial_x q(u)]$.

\end{lemma}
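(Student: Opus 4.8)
The plan is to differentiate the weighted relative entropy, reduce the claimed distributional inequality to a pointwise quadratic inequality on the rarefaction fan, and then prove the latter by a second-order Taylor expansion in the perturbation $w := u - \overline{u}$, exploiting genuine nonlinearity and strict hyperbolicity.

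First I would reduce to a pointwise statement. Since $r_i$ is $C^3$ and $f_i'' = \nabla\lambda_i\cdot r_i$ is bounded below by a positive constant on compact subsets of $W$ (genuine nonlinearity), $[f_i']^{-1}$ is Lipschitz, so $y(t,x) = g(x/t)$ and hence $\overline{u} = R^i_{u_L}\circ y$ are locally Lipschitz on $\{t>0\}$. The Dafermos--DiPerna relative entropy identity then holds in $\mathcal{D}'(\R^+\times\R)$:
\[\partial_t \eta(u|\overline{u}) + \partial_x q(u;\overline{u}) = \mu_u - (\partial_x\overline{u})^T\nabla^2\eta(\overline{u})\,f(u|\overline{u}),\]
where the relative-flux term uses the symmetry of $\nabla^2\eta\, f'$. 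Multiplying by the Lipschitz weight $a$, applying the product rule, and inserting $\partial_x a = (-1)^i C\, a\,\partial_x y$, $\partial_t a = -(-1)^i C\, a\,\lambda_i(\overline{u})\,\partial_x y$ (valid because $y$ solves \eqref{scl}, so $\partial_t y = -\lambda_i(\overline{u})\partial_x y$) and $\partial_x\overline{u} = r_i(\overline{u})\,\partial_x y$, one finds
\[\partial_t[a\,\eta(u|\overline{u})] + \partial_x[a\,q(u;\overline{u})] = a\,\mu_u + a\,(\partial_x y)\,\mathcal{Q},\]
with $\mathcal{Q} := -\,r_i(\overline{u})^T\nabla^2\eta(\overline{u})\,f(u|\overline{u}) + (-1)^i C\big(q(u;\overline{u}) - \lambda_i(\overline{u})\,\eta(u|\overline{u})\big)$. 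Since $a>0$ and $\partial_x y\ge 0$ — with $\partial_x y = 0$ off the rarefaction fan, where \eqref{rarecontractdistribution} is then immediate — it suffices to prove $\mathcal{Q}\le 0$ a.e. on the fan for every $u\in\Sweak$, i.e. for every $|w| = |u-\overline{u}| \le 2\eps$.

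Next I would expand $\mathcal{Q}$ in $w$. Because the relative quantities carry no constant or linear terms, one gets (with $O(|w|^3)$ remainders uniform over $\overline{u}\in B_\eps(d)$, using $f\in C^4$ and $\eta,q\in C^3$):
\[\eta(u|\overline{u}) = \tfrac12 w^T\nabla^2\eta(\overline{u})w + O(|w|^3),\qquad q(u;\overline{u}) - \lambda_i(\overline{u})\eta(u|\overline{u}) = \tfrac12 w^T\nabla^2\eta(\overline{u})\big(f'(\overline{u})-\lambda_i(\overline{u})I\big)w + O(|w|^3),\]
and $r_i^T\nabla^2\eta\, f(u|\overline{u}) = \tfrac12 r_i^T\nabla^2\eta\, f''(\overline{u})(w,w) + O(|w|^3)$. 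Decomposing $w = \alpha\,r_i(\overline{u}) + \beta\,r_j(\overline{u})$ with $j\ne i$, two structural facts control $\mathcal{Q}$. First, symmetry of $\nabla^2\eta f'$ together with $f'r_i = \lambda_i r_i$ gives $r_i^T\nabla^2\eta(f'-\lambda_i I) = 0$; hence the weight term is purely transverse, $w^T\nabla^2\eta(f'-\lambda_i I)w = \beta^2(\lambda_j-\lambda_i)(r_j^T\nabla^2\eta r_j)$, and the same identity together with $f''(r_i,r_i) = (\nabla\lambda_i\cdot r_i)\,r_i + (\lambda_i I - f')\,(r_i\cdot\nabla)r_i$ yields $r_i^T\nabla^2\eta\, f''(r_i,r_i) = (\nabla\lambda_i\cdot r_i)(r_i^T\nabla^2\eta r_i) =: \kappa_i > 0$, positive by genuine nonlinearity and strict convexity of $\eta$. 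Second, strict hyperbolicity makes $\gamma_i := |\lambda_j-\lambda_i|(r_j^T\nabla^2\eta r_j) > 0$, and a sign check gives $(-1)^i C(\lambda_j-\lambda_i) = -C|\lambda_j-\lambda_i|$ — exactly the sign the weight must carry. Collecting terms,
\[\mathcal{Q} = -\tfrac12\big[\kappa_i\,\alpha^2 + 2\mu_i\,\alpha\beta + (\nu_i + C\gamma_i)\,\beta^2\big] + O(|w|^3),\]
where $\mu_i, \nu_i$ are bounded coefficients built from $f''$ and $\nabla^2\eta$ at $\overline{u}$.

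Finally I would conclude. The quantity $\Theta(\eps) := \sup\{(\mu_i^2/\kappa_i - \nu_i)/\gamma_i : i=1,2,\ \overline{u}\in \overline{B_\eps(d)}\}$ is finite and, by continuity, bounded as $\eps\downarrow 0$; so I would first fix $C_2$ large enough that $C_2/4 > \sup_{\eps \le \eps_1}\Theta(\eps)$ for some $\eps_1>0$ with $\overline{B_{\eps_1}(d)}\subset W$, which forces the bracketed quadratic form to be positive definite with a uniform bound $\ge c_0|w|^2$ for all $C\in[C_2/4,4C_2]$. Then $\mathcal{Q}\le -\tfrac{c_0}{2}|w|^2 + C'|w|^3 \le 0$ once $\eps\le\min(\eps_1, c_0/(2C'))$, proving $\mathcal{Q}\le 0$ on the fan and hence \eqref{rarecontractdistribution}; the edges of the fan and $t=0$ cause no trouble since the statement is posed in $\mathcal{D}'((0,\infty)\times\R)$ and $a,\overline{u}$ are locally Lipschitz there. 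The heart of the matter is the previous step: genuine nonlinearity supplies the coercivity in the direction $r_i$ along which the rarefaction actually varies — a direction the weight cannot see — while the exponential weight supplies coercivity transverse to $r_i$, and the $\nabla^2\eta$-orthogonality of the eigenframe is precisely what decouples these two effects so that the combined $2\times2$ form is definite.
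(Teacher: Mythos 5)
Your proposal is correct and follows essentially the same route as the paper: multiply the Dafermos--DiPerna relative entropy identity for the Lipschitz rarefaction by the exponential weight, reduce to a pointwise quadratic inequality on the fan using $x/t=\lambda_i(\overline u)$ and $\partial_x y\ge 0$, and obtain coercivity from the identity $r_i^T\nabla^2\eta\,f''(r_i,r_i)=(\nabla\lambda_i\cdot r_i)(r_i^T\nabla^2\eta\,r_i)>0$ in the $r_i$-direction together with the weight term $C(\lambda_j-\lambda_i)(r_j^T\nabla^2\eta\,r_j)\beta^2$ transversally, then absorb the cubic remainder by taking $\eps\lesssim 1/C_2$. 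The only cosmetic differences are that the paper reduces to $i=1$ by the reflection $x\mapsto -x$ rather than tracking the $(-1)^i$ sign, and closes the quadratic form by Young's inequality rather than a discriminant argument.
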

\begin{proof}
We note that by the change of variables $ x \mapsto -x $ the system~\eqref{cl} transforms into another conservation law with flux $-f$, mapping $2$-rarefactions to $1$-rarefactions. This reflection also switches the weight $a(t,x)$ for a $2$-rarefaction to a corresponding weight for $1$-rarefactions. Therefore, without loss of generality, it suffices to just consider $1$-rarefactions.
\par We recall from \cite[Equation (4.7)]{MR4487515} that any weak solution $u$ verifying \eqref{entropic} satisfies the distributional inequality
\begin{equation} \label{eq:unweighted-rel-rare}
    \partial_t \eta(u|\overline u) + \partial_x q(u;\overline u) + \partial_x \{ \nabla \eta(\overline u) \} f(u|\overline u) \leq \mu_u,
\end{equation} 
where $\overline u(t,x)$ is the $1$-rarefaction wave connecting states $u_L$, $u_R$ as defined in~\eqref{eq:u-bar-def} (a non-sharp form of \eqref{eq:unweighted-rel-rare} inequality is classical, see \cite[Equation (5.2.10)]{MR3468916}).
Multiplying both sides of~\eqref{eq:unweighted-rel-rare} by the positive function $ a(t,x) $ we find 
\begin{equation} \label{eq:rel-entr-rare-weighted}
    \begin{aligned}
        \partial_t[a\eta(u|\overline u)] + \partial_x[a q(u;\overline u)] + a\partial_x \{ \nabla \eta(\overline u) \} f(u|\overline u) - \partial_x(a) q(u;\overline u) - \partial_t(a) \eta(u|\overline u) \leq a\mu_u,
    \end{aligned}
\end{equation}
which holds distributionally in $\mathcal{D}'(\R^+ \times \R)$.
We note that for any fixed state $u \in W$,
\begin{equation} \label{eq:crit-terms-exp}\begin{aligned}
    a\partial_x \{ \nabla \eta(\overline u)) \} &f(u|\overline u) - \partial_x(a) q(u;\overline u) - \partial_t(a) \eta(u|\overline u) \\
    &= \frac{a}{t} \left([f_1']^{-1}\right)'(x/t) \left[Cq(u; \overline u)-C\frac{x}{t} \eta(u| \overline u) + r_1(\overline u)^t\nabla^2 \eta(\overline u)f(u|\overline u)\right]\mathbbm{1}_{\mathcal{C}_{v_L,v_R}}(t,x),
\end{aligned} \end{equation}
where $\mathcal{C}_{v_L,v_R}$ is the fan defined in~\eqref{eq:rare-fan} and our weight $a$ is as in~\eqref{eq:a-def-cont-rare} for some $C > 0$ to be determined.
We now observe that by construction $\overline u$ satisfies $x/t = \lambda_1(\overline u(t,x))$ for all $(t,x) \in \mathcal{C}_{v_L,v_R}$.
Taylor-expanding $q(u;\overline{u}), \eta(u|\overline{u}), \& f(u|\overline{u})$ in $u$ around $u = \overline u$ using the relation~\eqref{eq:entropy-def},
\begin{align*}
    &Cq(u; \overline u)-C\lambda_1(\overline u) \eta(u| \overline u) +  r_1(\overline u)^t\nabla^2 \eta(\overline u)f(u|\overline u) \\
    &\qquad\geq \frac{C}{2} \nabla^2 \eta(\overline u)[ f'(\overline u) - \lambda_1(\overline u) I](u-\overline u) \cdot (u-\overline u) + \frac{1}{2}r_1(\overline u)^t \nabla^2 \eta(\overline u) f''(\overline u)(u-\overline u, u-\overline u)- KC|u-\overline u|^3,
\end{align*}
where the constant $K$ depends only on $f$ and the set $W$ from Lemma \ref{lem:localstructurelemma} and $f''(\overline u)(v,v)$ is the second Fr\'{e}chet derivative of $f$. 
Writing 
$$ u - \overline u = \sum_{i=1}^2 a_i r_i(\overline u),$$
by hyperbolicity and the fact that $\nabla^2 \eta f' = f'\nabla^2 \eta$, there exists a constant $K_1 > 0$ independent of $\overline u \in W$ such that 
\begin{equation} \label{eq:lower-preyoung}
\begin{aligned}
    &Cq(u; \overline u)-C\lambda_1(\overline u) \eta(u| \overline u) +  r_1(\overline u)^t\nabla^2 \eta(\overline u))f(u|\overline u)\\
    &\qquad\geq C K_1a_2^2 + \sum_{1 \leq i,j \leq 2} \frac{a_ia_j}{2}r_1(\overline u)^t\nabla^2 \eta(\overline u)f''(\overline u)(r_i(\overline u),r_j(\overline u)) - KC|u-\overline u|^3.
\end{aligned}
\end{equation}
For the second sum we note that the coefficient to the $i=j=1$ term is strictly positive. Indeed, taking the derivative of the equation $f'\ r_1 = \lambda_1 r_1$ in the direction $r_1$ and solving for $f''(r_1, r_1)$, one discovers 
$$ f''(r_1,r_1) = (\nabla \lambda_1 \cdot r_1) r_1 + (\lambda_1 I - f') (r_1'r_1). $$
Evaluating the above relationship at $\bar u$ and multiplying by the left eigenvector $r_1(\bar u)^t \nabla^2 \eta(\bar u)$ which is parallel to $\ell_1$ we uncover 
$$ r_1(\overline u)^t\nabla^2 \eta(\overline u) f''(r_1(\overline u),r_1(\overline u)) = (r_1(\overline u)^t \nabla^2 \eta(\overline u)r_1(\overline u)) (\nabla \lambda_1(\overline u)\cdot r_1(\overline u)) > 0,$$
where the sign follows from strict convexity of $\eta$ and genuine nonlinearity.
In fact, since we are restricted to $\overline u \in W$ there is a positive lower bound on this quantity, uniform over $\overline u \in W$, 
while the remaining terms are bounded below by Young's with $\epsilon$ (with $\epsilon = \alpha$) like so:
$$\frac{1}{2}\sum_{\substack{1 \leq i,j \leq 2 \\ ij > 1}} a_ia_j r_1(\overline u)^t\nabla^2 \eta(\overline u) f''(r_i(\overline u),r_j(\overline u)) \geq K_2 \left[- \alpha a_1^2 - \frac{1}{\alpha}a_2^2 - a_2^2\right],$$
where $K_2 > 0$ depends on $f$ and $W$.
Substituting this into~\eqref{eq:lower-preyoung} and using our bounds on $C$ from~\eqref{eq:C-range-cont} we find 
\begin{equation} \label{eq:lower-postyoung}
\begin{aligned}
    &Cq(u; \overline u)-C\lambda_1(\overline u) \eta(u| \overline u) +  r_1(\overline u)^t\nabla^2 \eta(\overline u))f(u|\overline u)\\
    &\qquad\geq \left(\frac{C_2}{4} K_1 -K_2\left(\frac{1}{\alpha} + 1\right)\right) a_2^2 + \left((r_1(\overline u)^t \nabla^2 \eta(\overline u) r_1(\overline u)\frac{\nabla \lambda_1(\overline u) \cdot r_1(\overline u)}{2} - K_2\alpha\right) a_1^2 - 4KC_2|u-\overline u|^3 \\
    &\qquad\geq K_3|u-\overline u|^2 - 4KC_2|u-\overline u|^3,
\end{aligned}
\end{equation}
for a constant $K_3 > 0$, where the final line follows by choosing $\alpha > 0$ sufficiently small and $C_2$ sufficiently large. 
From~\eqref{eq:lower-postyoung} it is clear that if $|u-\overline u| \leq K_3/(8KC_2)$ we find 
\begin{equation} \label{eq:positivity-of-extras}
    Cq(u; \overline u)-C\lambda_1(\overline u) \eta(u| \overline u) +  r_1(\overline u)^t\nabla^2 \eta(\overline u))f(u|\overline u)\geq \frac{1}{2}K_3 |u-\overline u|^2.
\end{equation}
At this point, we select 
\begin{align}\label{epsval}
\epsilon = K_3/(16KC_2),
\end{align}
to be the value of $\epsilon$ in the statement of Lemma \ref{rarecontract}. We note by triangle inequality $|u - \overline u| \leq |u-d| + |\overline u - d| <  2\epsilon $ for all states $u$ attained by solutions in $\Sweak$ and $\overline u$ attained by the rarefaction, hence verifying the lower bound~\eqref{eq:positivity-of-extras}.

Furthermore, the term
$$ ([f_1']^{-1} )' = \frac{1}{\nabla \lambda_1(\overline u)\cdot r_1(\overline u)},$$
is positive within $W$ due to genuine nonlinearity. Substituting this all into~\eqref{eq:crit-terms-exp} and the inequality~\eqref{eq:rel-entr-rare-weighted}, we finally deduce
\begin{equation} \label{eq:dist-weighted-rare-ineq} \partial_t[a\eta(u|\overline u)] + \partial_x[a q(u;\overline u)] \leq a \mu_u \end{equation}
distributionally in $\mathcal{D}'(\R^+ \times \R)$ when $ \sup_{s \in [0,s_0]} \left| R_{u_L}^1(s) - d \right| < \epsilon$ and $u \in \Sweak$.
\end{proof}
\begin{remark}
Note that the same computation would hold for rarefaction waves in extremal families of an $n \times n$ system with $n > 2$. Also note that taking the derivative of $D_t$ in \eqref{pseudodistancerare} and using the inequality \eqref{rarecontractdistribution} gives a weighted relative entropy contraction for the rarefaction wave among small perturbations that \emph{may not even verify the Strong Trace Property} (\Cref{def:strongtrace}) (where the smallness is quantified by \eqref{epsval}).
\end{remark}

Before continuing, we note the relative entropy and relative entropy flux are Lipschitz in their second entry. 
\begin{lemma}[{\cite[Lemma 7.2]{MR4487515}}] \label{lem:rel-quantities-properties}
    There exists a constant $K$ such that for all $(a,b_1,b_2) \in W^3$ we have the Lipschitz bounds
    \begin{align*}
        |q(a;b_1) - q(a;b_2)| &\leq K|b_1 - b_2|, \\
        |\eta(a|b_1) - \eta (a|b_2)| &\leq K|b_1 - b_2|.
    \end{align*}
\end{lemma}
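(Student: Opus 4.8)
The plan is to differentiate both relative quantities in their second argument, show that the resulting $b$-derivatives are bounded uniformly on $W\times W$, and conclude by the mean value theorem. Every constant that appears will be finite because $W$ is bounded and—shrinking it to a ball around $d$ if necessary, which is harmless since only such $W$ is used in the rest of the paper—convex with $\overline W \subset \mathcal V$, so that $\nabla^2\eta$, $\nabla\eta$, $f'$ and hence the Lipschitz constant of $f$ on $W$ are all finite. This is the only place the $C^3$ (in fact $C^2$) regularity in Hypotheses~1 is used.

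First I would treat $\eta(a|\cdot)$. Differentiating $b\mapsto \eta(a|b)=\eta(a)-\eta(b)-\nabla\eta(b)(a-b)$ in $b$, the first-order terms $-\nabla\eta(b)$ and $+\nabla\eta(b)$ cancel—this is precisely the statement that $\eta(a|b)$ is the second-order Taylor remainder of $\eta$ at $b$—leaving
\[ \nabla_b\,\eta(a|b) = -\nabla^2\eta(b)\,(a-b). \]
Since $|\nabla^2\eta|$ is bounded on $W$ and $|a-b|\le \operatorname{diam} W$ for $a,b\in W$, we obtain $|\nabla_b\,\eta(a|b)|\le K$ uniformly on $W\times W$. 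Integrating along the segment $[b_1,b_2]\subset W$ then yields $|\eta(a|b_1)-\eta(a|b_2)|\le K|b_1-b_2|$.

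For the flux term, differentiating $b\mapsto q(a;b)=q(a)-q(b)-\nabla\eta(b)(f(a)-f(b))$ gives
\[ \nabla_b\,q(a;b) = -\nabla q(b) - \nabla^2\eta(b)\,(f(a)-f(b)) + \nabla\eta(b)\,f'(b). \]
By the entropy relation~\eqref{eq:entropy-def} one has $\nabla q(b)=\nabla\eta(b)f'(b)$, so the outer two terms cancel and $\nabla_b\,q(a;b) = -\nabla^2\eta(b)\,(f(a)-f(b))$. Bounding $|f(a)-f(b)|\le \operatorname{Lip}_W(f)\,|a-b|\le K$ on $W$ and applying the same mean-value argument produces $|q(a;b_1)-q(a;b_2)|\le K|b_1-b_2|$. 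I do not anticipate any genuine obstacle: the two substantive points are the cancellation of the naive first-order terms (built into the definition of the relative quantities and into the choice $\nabla q=\nabla\eta f'$) and the finiteness of all the constants on the bounded set $W$; everything else is the mean value theorem.
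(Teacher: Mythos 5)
Your proof is correct: the cancellation of the first-order terms via the definition of the relative quantities and the relation $\nabla q=\nabla\eta f'$, followed by boundedness of $\nabla^2\eta$ and $\mathrm{Lip}(f)$ on the (convex, precompact) neighborhood $W$ and the mean value theorem, is exactly the standard argument. The paper itself does not reprove this lemma but quotes it from the cited reference, whose proof proceeds in the same way, so there is nothing to add.
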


\subsection{Proof of \Cref{prop:raredissipationestimate}}\label{sec:raredissipationestimatepfsec}
Now, we give control on the error due to the approximation of the rarefaction in the front-tracking method by proving \Cref{prop:raredissipationestimate}.
\begin{proof}
Firstly, take $C_2$ sufficiently large and $\eps$ sufficiently small so that Lemma \ref{rarecontract} holds. For $a,b \in \R$, define the two-parameter family of fans:
\begin{align*}
\mathcal{C}_{a,b}:=\{(x,\tau)\, |\,0 \leq \tau \leq t, a\tau \leq x \leq b\tau\}.
\end{align*}
Testing the distributional inequality \eqref{rarecontractdistribution} with the function
\begin{align}\label{test}
\psi(\tau,x)=\omega(\tau)\chi(\tau,x),
\end{align}
where
\begin{align*}
\omega(\tau)=\begin{cases}
0 & \tau \leq \frac{\delta}{2}, \\
2\delta^{-1}(\tau-\frac{\delta}{2}) & \frac{\delta}{2} \leq \tau \leq \delta, \\
1 & \delta \leq \tau <t-\delta, \\
\delta^{-1}(t-\tau) & t-\delta \leq \tau \leq t, \\
0 & \tau \geq t,
\end{cases}
\end{align*}
and
\begin{align*}
\chi(\tau,x)=\begin{cases}
1 & v_L\tau+\delta \leq x \leq v_R \tau-\delta, \\
\delta^{-1}(v_R\tau-x) & v_R\tau-\delta \leq x \leq v_R\tau, \\
\delta^{-1}(x-v_L\tau) & v_L\tau \leq x \leq v_L\tau+\delta, \\
0 & \text{otherwise},
\end{cases}    
\end{align*}
taking the limit $\delta \downarrow 0$ and using the Strong Trace Property, we obtain
\beq \label{discineq1}\begin{aligned}
0 \leq&-\int_{v_Lt}^{v_Rt}a(y(t,x))\eta(u(t,x)|\overline{u}(t,x))\diff x+\int_{\mathcal{C}_{v_L, v_R}}a(y)\diff \mu_u(t,x)  \\
&+\int_0^ta(y(v_R))\left[-q(u(\tau,v_R\tau-);u_R) + v_R\eta(u(\tau, v_R\tau-)|u_R)\right]\diff \tau  \\
&-\int_0^ta(y(v_L))\left[-q(u(\tau,v_L\tau+);u_L)+v_L\eta(u(\tau, v_L\tau+)|u_L)\right]\diff \tau,  \\
\end{aligned} \eeq
where $a$ is defined in \eqref{eq:a-def-cont-rare}. Further, by the definition of the relative entropy~\eqref{def:rel-ent}, entropy flux~\eqref{def:rel-ent-flux}, and $u$ weakly solving~\eqref{cl} we have the equalities
\begin{align}\label{entequality}
\partial_t\left[\eta(u(t,x)|u_i)-\eta(u(t,x))\right]+\partial_x\left[q(u(t,x);u_i)-q(u(t,x))\right]=0, \indent  i=L,R.
\end{align}
Testing the $i=L$ case of \eqref{entequality} with a similar test function to \eqref{test} (adapted to the fan $\mathcal{C}_{v_L,v}$) and multiplying the result by $a(y(v_{L}))$, we obtain
\beq\label{disceq2} \begin{aligned}
-\int_0^ta(y(v_L))[-q(u(\tau,v_L\tau+);u_L)+&v_L\eta(u(\tau, v_L\tau+)|u_L)]\diff \tau\\ 
=&\int_{v_Lt}^{vt}a(y(v_L))\eta(u(t,x)|u_L)\diff x-\int_{\mathcal{C}_{v_L,v}}a(y(v_L))\diff \mu_u  \\
&-\int_0^ta(y(v_L))[v\eta(u\tau,v\tau-|u_L)-q(u(\tau,v\tau-;u_L))] \diff \tau.
\end{aligned} \eeq
In the same manner, testing the $i=R$ case of \eqref{entequality} with a similar test function to \eqref{test} (adapted to the fan $\mathcal{C}_{v,v_R}$) and multiplying the result by $a(y(v_{R}))$, we obtain
\beq \label{disceq3}\begin{aligned}
\int_0^ta(y(v_R))[-q(u(\tau,v_R\tau-);u_R) +& v_R\eta(u(\tau, v_R\tau-)|u_R)]\diff \tau \\=
&\int_{vt}^{v_Rt}a(y(v_R))\eta(u(t,x)|u_R) \diff x-\int_{\mathcal{C}_{v,v_R}}a(y(v_R))\diff \mu_u \\
&-\int_0^ta(y(v_R)[ q(u(\tau, v\tau+);u_R)-v\eta(u(\tau,v\tau+)|u_R)]\diff \tau.
\end{aligned}\eeq

Substituting \eqref{disceq2} and \eqref{disceq3} into \eqref{discineq1} gives
\begin{align}\label{discineq4}
\int_0^t& a(y(v_R)) \left(q(u(\tau,\tau v+);u_R)-v\eta(u(\tau,v\tau+)|u_R)\right) -a(y(v_L)) \left(q(u(\tau, \tau v-);u_L)-v\eta(u(\tau,v\tau-)|u_L)\right) \diff \tau \notag \\
&\leq \int_{v_Lt}^{vt}a(y(v_L))\eta(u(t,x)|u_L)\diff x+\int_{vt}^{v_Rt}a(y(v_R))\eta(u(t,x)|u_R) \diff x-\int_{v_Lt}^{v_Rt}a(y(t,x))\eta(u(t,x)|\overline{u}(t,x))\diff x \notag \\
&+\int_{\mathcal{C}_{v_L, v_R}}a(y)\diff \mu_u-\int_{\mathcal{C}_{v_L,v}}a(y(v_L))\diff \mu_u-\int_{\mathcal{C}_{v,v_R}}a(y(v_R))\diff \mu_u.
\end{align}
It remains to estimate the right-hand side of \eqref{discineq4}. We break this into two parts. For the first three terms on the right-hand side, we estimate
\begin{align*}
\text{(first three terms on RHS of \eqref{discineq4})} \leq&\hphantom{+} \int_{v_Lt}^{vt}[a(y(v_L))-a(y(t,x))]\eta(u(t,x)|\overline{u}(t,x)) \diff x \\
&+\int_{vt}^{v_Rt}[a(y(v_R))-a(y(t,x))]\eta(u(t,x)|\overline{u}(t,x))\diff x \\
&+\int_{v_Lt}^{vt}a(y(v_L))[\eta(u(t,x)|u_L)-\eta(u(t,x)|\overline{u}(t,x))]\diff x \\
&+\int_{vt}^{v_Rt}a(y(v_R))[\eta(u(t,x)|u_R)-\eta(u(t,x)|\overline{u}(t,x))]\diff x.
\end{align*}
Using the boundedness of $u$ and $\overline{u}$, properties of our relative quantities \Cref{lem:rel-quantities-properties}, and the definition of $a$ from Lemma \ref{rarecontract}, we obtain
\begin{align}\label{discineq5}
\text{(first three terms on RHS of \eqref{discineq4})} \leq Kt\overline{\sigma}^2,
\end{align}
where we recall the definition of $\overline{\sigma}$ from \Cref{prop:raredissipationestimate}.
\par For the second three terms on the right-hand side of \eqref{discineq4}, again using the definition of $a$, we compute
\beq\label{discineq6}\begin{aligned}
\int_{\mathcal{C}_{v_L, v_R}}&a(y)\diff \mu_u-\int_{\mathcal{C}_{v_L,v}}a(y(v_L))\diff \mu_u-\int_{\mathcal{C}_{v,v_R}}a(y(v_R))\diff \mu_u \\
&=\int_{\mathcal{C}_{v_L,v}}[a(y(t,x))-a(y(v_L))]\diff \mu_u+\int_{\mathcal{C}_{v,v_R}}[a(y(t,x))-a(y(v_R))] \diff \mu_u \\
& \leq K\overline{\sigma}|\mu_u|(\mathcal{C}_{v_L,v_R}).
\end{aligned}\eeq
Plugging \eqref{discineq5} and \eqref{discineq6} into \eqref{discineq4} gives
\begin{multline*}
\int_0^t a(y(v_R)) \left(q(u(\tau,\tau v+);u_R)-v\eta(u(\tau,v\tau+)|u_R)\right)-a(y(v_L)) \left(q(u(\tau, \tau v-);u_L)-v\eta(u(\tau,v\tau-)|u_L)\right) \diff \tau \\
\leq Kt\overline{\sigma}^2+K\overline{\sigma}|\mu_u|(\mathcal{C}_{v_L,v_R}).
\end{multline*}
Finally, dividing both sides by $a(y(v_R))$, 
noting that $\frac{a(y(v_L))}{a(y(v_R))}=e^{Cs_0}$ (for $1$-rarefactions), and noting that Lemma \ref{rarecontract} holds for any $C \in [\frac{C_2}{4}, 4C_2]$, we obtain
\begin{multline*}
\int_0^t \Bigl\{\left(q(u(\tau,\tau v+);u_R)-v\eta(u(\tau,v\tau+)|u_R)\right) -\frac{a_1}{a_2} \left(q(u(\tau, \tau v-);u_L)-v\eta(u(\tau,v\tau-)|u_L)\right)\Bigr\} \diff \tau  \\
\leq Kt\overline{\sigma}^2+K\overline{\sigma}|\mu_u|(\mathcal{C}_{v_L,v_R}),
\end{multline*}
for any $a_1, a_2$ such that $e^{\frac{C_2}{4}s_0} \leq \frac{a_1}{a_2} \leq e^{4C_2 s_0}$. Finally, using $e^x \approx 1+x$ for $x$ sufficiently small, we obtain \Cref{prop:raredissipationestimate} by taking $\eps$ sufficiently small. 
\end{proof}

\section{The front tracking algorithm}\label{sec:ft}
In this section, we exposit the front tracking algorithm we will use for the semigroup solution. We use exactly the scheme introduced in \cite{MR1367356} and adapted to the $a$-contraction method in \cite{cfk}. In the interest of readability, we will follow the notations used in those articles for the rest of the paper. All the results of this section will be stated without proof. We reference the papers \cite{MR1367356} and \cite{cfk} for proofs of the results.

\subsection{Introduction to the Riemann problem}\label{riemannalg}
For \eqref{cl}, let $A(u)=f'(u)$ be the Jacobian matrix of $f$ at $x$. Define the averaged matrix
\begin{align*}
A(u,u')=\int_0^1f'(\theta u+(1-\theta)u')\diff \theta.
\end{align*}
Let $\lambda_i(u,u'), r_i(u,u')$, and $\ell_i(u,u')$ be the $i^{th}$ eigenvalue and the associated right and left eigenvectors of $A(u,u')$, with the normalization
\begin{align*}
    \ell_i \cdot r_j=\delta_{ij},
\end{align*}
where $\delta_{ij}$ is the Kronecker delta.
In particular, for $u=u'$, $\lambda_i(u), r_i(u)$, and $\ell_i(u)$ are the $i^{th}$ eigenvalue and the associated right/left eigenvectors of $A(u)=A(u,u)$. 
\par Let $v=(v_1, v_2)$ be Riemann invariants of the first and second family respectively. Then, the rarefaction curves take the form
\begin{align}
\phi_1^+(v,\sigma)=(v_1+\sigma, v_2), \indent \phi_2^+(v,\sigma)=(v_1, v_2+\sigma).
\end{align}
Regarding the shock curves, we have the following representation
\begin{lemma}[{\cite[p. 10]{MR1367356}}]
The $i$-shock curve through the point $v=(v_1,v_2)$ can be parameterized with respect to the Riemann coordinates as
\begin{align}
\phi_1^-(v,\sigma)=(v_1+\sigma, v_2+\hat{\phi_2}(v,\sigma)\sigma^3), \indent \phi_2^-(v,\sigma)=(v_1+\hat{\phi_1}(v,\sigma)\sigma^3, v_2+\sigma),
\end{align}
for some smooth functions $\hat{\phi_2}, \hat{\phi_1}$. 
\end{lemma}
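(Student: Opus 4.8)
The plan is to obtain this representation by combining the classical second-order tangency of the shock and rarefaction curves with a passage to Riemann coordinates. I would treat only the $1$-shock curve; the $2$-shock curve follows by interchanging the two characteristic families, or equivalently via the reflection $x \mapsto -x$ used in the proof of \Cref{rarecontract}.

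First I would record the structural facts near $d$. Since $f'(d)$ is strictly hyperbolic, a pair of Riemann invariants $(v_1,v_2)$ exists on a neighborhood of $d$ and is a local diffeomorphism of state space; in these coordinates the $i$-rarefaction curve through any state is the coordinate segment along which only $v_i$ varies, and the tangent direction $r_1$ to the $1$-shock curve at its base point is parallel to $\partial_{v_1}$. Hence the first Riemann coordinate is strictly monotone along the $1$-shock curve near the base point, so it is legitimate to use its increment $\sigma$ as the curve parameter; with this parameterization the first component of $\phi_1^-(v,\sigma)$ is exactly $v_1+\sigma$ by construction, and all the content lies in the second component.

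Next I would analyze that second component. Writing the $1$-shock curve through a state $u_0$ with Riemann coordinates $v$ in the form $(v_1+\sigma,\ v_2+g(v,\sigma))$, I claim $g(v,0)=\partial_\sigma g(v,0)=\partial_\sigma^2 g(v,0)=0$. This is the classical fact that the Hugoniot locus through a state has contact of order (at least) two with the integral curve of $r_1$ through the same state: differentiating the Rankine--Hugoniot relation $f(u)-f(u_0)=\sigma(u)(u-u_0)$ once and twice at $u_0$, normalizing using genuine nonlinearity, and comparing with the corresponding derivatives of the rarefaction curve (obtained by differentiating $f'(u)r_1(u)=\lambda_1(u)r_1(u)$ along $r_1$), one finds that the two curves have the same tangent line and the same curvature at $u_0$ --- the second derivatives agree up to a multiple of the common tangent, so the components transverse to the tangent agree. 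In Riemann coordinates the $1$-rarefaction curve is the straight segment $\{v_2=\mathrm{const}\}$, so this says precisely that $g$, as a function of $\sigma$, vanishes together with its first two $\sigma$-derivatives at $\sigma=0$, for every base state $v$.

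Finally, the Hugoniot locus depends smoothly and jointly on the base state and on $\sigma$ (with regularity determined by that of $f$) by the implicit function theorem applied to the Rankine--Hugoniot relations, so $g$ is smooth in $(v,\sigma)$. A parameterized Taylor expansion with integral remainder --- Hadamard's lemma applied three times in the variable $\sigma$ --- then yields $g(v,\sigma)=\sigma^3\,\hat\phi_2(v,\sigma)$ with $\hat\phi_2$ smooth in $(v,\sigma)$, which is the asserted form; the symmetric argument gives $\phi_2^-(v,\sigma)=(v_1+\hat\phi_1(v,\sigma)\sigma^3,\ v_2+\sigma)$. I do not expect a genuine obstacle here, as the tangency computation is classical; the only points that need care are translating ``second-order contact of the two curves'' into the honest statement that $g$ and its first two $\sigma$-derivatives vanish (rather than an ambiguous statement depending on parameterizations), and verifying that the increment of $v_1$ really is an admissible parameter along the shock curve near the base point.
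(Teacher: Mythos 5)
The paper does not prove this lemma --- it is quoted verbatim from \cite[p. 10]{MR1367356} --- and your argument is the standard proof of this classical fact and is correct: the Hugoniot curve has second-order contact with the integral curve of $r_i$ at the base state, reading this in Riemann coordinates (where that integral curve is a straight coordinate segment and the transverse part of the second derivative is parameterization-invariant) gives $g(v,0)=\partial_\sigma g(v,0)=\partial_\sigma^2 g(v,0)=0$, and Hadamard's lemma extracts the factor $\sigma^3$. The only point worth flagging is regularity bookkeeping: under the present paper's assumption $f\in C^4$ the shock curve is only $C^3$ jointly in $(v,\sigma)$ (cf. \Cref{lem:dafermoslemma}), so three applications of Hadamard's lemma leave $\hat\phi_2$ merely continuous; the word ``smooth'' in the statement is inherited from the smoother flux assumed in the cited source, and only boundedness of $\hat\phi_i$ near $\sigma=0$ is actually used downstream (e.g.\ in \eqref{eq:small-shock-errs}).
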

We emphasize that we use the same front tracking algorithm as \cite{MR1367356}. This means that in this section, we call jumps with $\sigma > 0$ rarefaction waves and jumps with $\sigma < 0$ shocks (this differs with the notation used in previous sections). Let us define the $\nu$-approximate solution to the Riemann problem. Let $\phi:\R \to \R$ be $C^\infty$ such that
\begin{align}\label{phidef}
\begin{cases}
\varphi(s)=1, & s \leq -2, \\
\varphi'(s) \in [-2,0], & s \in [-2,-1], \\
\varphi(s)=0, & s \geq -1.
\end{cases}
\end{align}
Then, for $\nu > 0$ fixed, define the following interpolations between the $i$-shock and $i$-rarefaction curve:
\begin{align}\label{interpcurve}
\Phi^\nu_i(v,\sigma)=\varphi\left(\frac{\sigma}{\sqrt{\nu}}\right)\phi_i^-(v,\sigma)+\left(1-\varphi\left(\frac{\sigma}{\sqrt{\nu}}\right)\right)\phi_i^+(v,\sigma). 
\end{align}
Note that when $\sigma \geq -\sqrt{\nu}$, $\Phi_i^\nu(v,\sigma)$ matches exactly the rarefaction curve $\phi_i^+$ and when, $\sigma \leq -2\sqrt{\nu}$, it matches exactly with the shock curve.
\par Given $u^r, u^l$ with corresponding Riemann coordinates $v^r=(v_1^r, v_2^r)$ and $v^l=(v_1^l, v_2^l)$, we can find a $\nu$-approximate solution to the Riemann problem with initial data:
\begin{align*}
u(0,x)=\begin{cases}
    u_l & x < 0, \\
    u_r & x > 0,
\end{cases}
\end{align*}
by finding a unique intermediate state $v^m$ and a pair $(\sigma_1, \sigma_2)$ such that
\begin{align*}
v^m=\Phi_1^\nu(v^l, \sigma_1), \indent v^r=\Phi_2^\nu(v^m, \sigma_2).
\end{align*}
Then, we approximate the two waves in the following manner. We give only the approximation of the $1$-waves. The $2$-waves are exactly the same. 
\par Assume $\sigma_1 > 0$. Then, let $h,k \in \mathbb{N}$ such that $h\nu \leq v_1^l < (h+1)\nu$ and $k \nu \leq v_1^m < (k+1)\nu$. Next, introduce the states $\omega_1^j=(j\nu, v_2^l)$ and $\hat{\omega}_1^j=( (j+\frac{1}{2})\nu, v_2^l)$ for $j=h, ..., k$. Then, we construct the $\nu$-approximate rarefaction in the following manner:
\begin{align}\label{nuaprare}
u^\nu(t,x):=\begin{cases}
    v^l, & \frac{x}{t} < \lambda_1\hat{\omega}_1^h), \\
    \omega_1^j, & \lambda_1(\hat{\omega}_1^{j-1}) < \frac{x}{t} < \lambda_1(\hat{\omega}_1^j), \\
    v^m, & \lambda_1(\hat{\omega}_1^k) < \frac{x}{t}.
\end{cases}
\end{align}
\par If $\sigma_1 < 0$, then the states $v^l$ and $v^m$ are connected by a single shock traveling with an interpolated speed
\begin{align*}
u^\nu(t,x)=\begin{cases}
    v^l, & \frac{x}{t} < \lambda_1^\phi(v^l, \sigma_1), \\
    v^m, & \frac{x}{t} > \lambda_1^\phi(v^l, \sigma_1),
\end{cases}
\end{align*}
where the interpolated speed is
\begin{align}\label{nushockspeed}
    \lambda_1^\varphi(v^l, \sigma_1)=\varphi\left(\frac{\sigma_1}{\sqrt{\nu}}\right)\lambda_1^s(v^l, \sigma_1)+\left(1-\varphi\left(\frac{\sigma_1}{\sqrt{\nu}}\right)\right)\lambda_1^r\left(v^l, \sigma_1\right),
\end{align}
with
\begin{align*}
&\lambda_1^s(v^l, \sigma_1)=\lambda_1(v^l,\phi_1^-(v^l, \sigma_1)), \\
&\lambda_1^r(v^l, \sigma_1)=\sum_j \left(\frac{\text{meas}([j\nu, (j+1)\nu] \cap [v_1^m, v_1^l])}{|\sigma_1|}\right)\lambda_1(\hat{\omega}_1^j).
\end{align*}

\subsection{Classical $\nu$-approximate solutions}\label{classical}
Now, let $\hat{u}(0,x) \in \mathcal{S}_{BV,\eps}^0$ be piecewise constant initial data. We construct the $\nu$-approximate solution $u^\nu$ as follows. At time $t=0$, solve all the Riemann problems defined by the jumps in $\hat{u}(0,x)$ using the algorithm in Section \ref{riemannalg}. Let the new piecewise constant solution evolve until the first interaction takes place. Then, use the algorithm to solve the Riemann problem at the place of this interaction. By inductively repeating this process, for $\eps$ sufficiently small, the piecewise constant function $u^\nu$ can be defined until any finite time $T$, since there are at most finitely many interactions in $[0,T]$, as long as $\hat{u}(0,x) \in \mathcal{D}$, where $\mathcal{D}$ is defined in Section \ref{bvbd}.

\subsection{Shifted $\nu$-approximate solutions}\label{sec:shifted}
Following the ideas introduced in \cite{MR4487515}, we also consider a shifted $\nu$-approximate front tracking solution $\psi$, where we still use \eqref{interpcurve} to solve the Riemann problem. We still use the same approximated wave speeds in \eqref{nuaprare} as before for rarefaction waves. However, for shocks, we use the shifted wave speed given by \Cref{lem:shockdissipationestimate}. Let us be more precise. For example, if $(u^l, u^m)$ are connected via a $1$-shock (where $u^m$ in physical variables corresponds to $v^m$ in Riemann variables), then the states $v^l, v^m$ are connected by a single shock
\begin{align}\label{shiftedshock}
u^\nu(t,x)=\begin{cases}
    v^l, & x < h(t), \\
    v^m, & x > h(t),
\end{cases}
\end{align}
where $h$ is given by applying \Cref{lem:shockdissipationestimate} with $u^l=u_L$ and $u^m=u_R$ when $\sigma < -2 \sqrt{\nu}$. 
When $\sigma > -2\sqrt{\nu}$ the states $(v^l,v^m)$ are not necessarily a shock, so instead we select the shift from \Cref{lem:shockdissipationestimate} corresponding to the nearby physical shock $(v^l, \phi_i^-(v^l,\sigma))$. 
We note, for future use, that 
\begin{equation} \label{eq:small-shock-errs}
    \begin{aligned}
        | \Phi_i^\nu(v^l, \sigma) - \phi_i^-(v^l, \sigma) | & \leq K | \sigma|^3, \\
        | \lambda_i^\varphi(v^l,\sigma) - \lambda_i^s(v^l,\sigma)| & \leq K\nu,
    \end{aligned}
\end{equation}
where $\lambda_i^\varphi$ is the interpolated shock speed~\eqref{nushockspeed} and $\lambda_i^s$ is the Rankine-Hugoniot velocity.
The first of these estimates follows from~\eqref{interpcurve} and classical asymptotic estimates on the shock curve (see Lemma \ref{lem:dafermoslemma}). The second is known from \cite[Section 7]{MR1367356}.

As remarked in \cite[p. 16]{cfk}, using similar arguments developed in \cite{MR1367356} for the classical $\nu$-approximate solutions, we can show that the shifted $\nu$-approximate solution $\psi$ is well-defined until any finite time $T$ as there are only finitely many interactions in $[0,T]$ for any initial data in $\mathcal{D}$. 
\begin{remark}
A crucial property to note is that the construction of the shift for shocks in \Cref{lem:shockdissipationestimate} guarantees that waves generated in the solution of a Riemann problem will not meet at a future time, even with shocks traveling at the shifted speeds, as long as $\eps_1$ is sufficiently small such that all shocks satisfy \Cref{lem:shockdissipationestimate}.
\end{remark}

\subsection{Control on the BV-norm}\label{bvbd}
For both the classical and shifted $\nu$-approximate solutions, we use the following notation for total variation and Glimm potential. At any time $t$, we denote waves (and their strengths) by $\sigma_{i,\alpha}$ (strength $|\sigma_{i,\alpha}|$), where the wave corresponding to $\sigma_{i,\alpha}$ is located at $x=x_\alpha$ and is in the $i^{th}$ family. We abuse notation slightly here as at a time of interaction when a rarefaction fan is formed, there may be more than one wave corresponding to $\sigma_{i,\alpha}$, but they split immediately afterwards.
\par For any piecewise constant function $u$ generated from either the classical or shifted $\nu$-approximate algorithm, we denote:
\begin{align}\label{tvandglimm}
V(u):=\sum_\alpha \sum_i|\sigma_{i,\alpha}|, \indent Q(u):=\sum_{\sigma_{i,\alpha},\sigma_{j,\beta} \in \mathcal{A}}|\sigma_{i,\alpha}\sigma_{j,\beta}|, \indent U(u):=V(u)+\kappa Q(u),
\end{align}
where $\mathcal{A}$ is the set of pairs of approaching waves and $\kappa > 0$ is a constant to be determined. Recall that $\sigma_{i,\alpha} \text{ and } \sigma_{j,\beta}$ are approaching if either $x_\alpha < x_\beta$ and $j < i$, or if $j=i$ and $\min(\sigma_{i\alpha}, \sigma_{j,\beta}) < 0$. 
\par Both the classical and shifted $\nu$-approximate solutions will be well-defined until any finite time $T$, and there will be only finitely many interactions in $[0,T]$ if the initial data belongs to the domain $\mathcal{D}$ (see \cite[Proposition 4]{MR1367356}), which is defined as follows
\begin{align}\label{epsforft}
\mathcal{D}:=\{u-d \in L^1(\R;\R^2)\,|\,u \text{ piecewise constant, } 0 < U(u) < \eps\},
\end{align}
for sufficiently small $\eps$ and sufficiently large $\kappa$. Furthermore, $U(u^\nu(t,\cdot))$ decays in $t$ and there exists a constant $K$ such that for any $t$:
\begin{align}\label{bvcontrol}
\frac{1}{K}||u^\nu(t,\cdot)||_{BV(\R)} \leq U(u^\nu(t,\cdot)) \leq K||u^\nu||_{BV(\R)}.
\end{align}
The same holds for the shifted $\nu$-approximate solution by \cite[Equation (4.22)]{cfk}
\begin{align}
||\psi(t, \cdot)||_{BV(\R)} \leq K||\psi(0,\cdot)||_{BV(\R)}. \label{eq:shifed-bv-bound}
\end{align}
\begin{remark}\label{rem:finitepropspeed}
There exists $\delta_0$ sufficiently small so that the following holds. Let $u^0 \in L^1(\R)$ verify $||u^0-d||_{L^\infty(\R)} \leq \delta_0$ and $||u^0||_{BV(\R)} \leq \delta_0$. Let $u_\nu$ be a sequence of $\nu$-approximate solutions with initial data $u_\nu(0,\cdot) \to u^0$ in $L^2$ as $\nu \to 0$. Then, using \eqref{bvcontrol}, we may take the limit $\nu \to 0$ and obtain a weak entropy solution $u$ to \eqref{cl} with initial data $u^0$. As each $u^\nu$ has a finite speed of propagation property, this remains true in the limit as well. That is, the value of $u(t,x)$ is only determined by the initial value $u^0$ on the interval $[x-ct, x+ct]$, where the constant $c$ is the speed of information (cf. \Cref{rem:soi}).
\end{remark}

\subsection{Weighted $L^1$-estimates for classical and shifted $\nu$-approximate solutions}
In this section, we quote results from \cite{MR1367356} and \cite{cfk} regarding control over the distance between two solutions generated from various $\nu$-approximate front tracking schemes. The first controls the distance between two classical $\nu$-approximate solutions.
\begin{lemma}[{\cite[Proposition 8]{MR1367356}}]\label{bclemma}
For any $\mathcal{S}$, which is the classical $\nu$-approximate semigroup, there exists a weighted distance $d_\nu$ that is uniformly (in $\nu$) equivalent to the $L^1$-distance, i.e.
\begin{align*}
\frac{1}{K}||u-\overline{u}||_{L^1(\R)} \leq d_\nu(u,\overline{u}) \leq K||u-\overline{u}||_{L^1(\R)},
\end{align*}
for some $K > 0$. Furthermore, $d_\nu$ is contractive with respect to the semigroup $\mathcal{S}$, i.e.
\begin{align*}
d_\nu(\mathcal{S}_tu_0, \mathcal{S}_t\overline{u}_0) \leq d_\nu(u_0, \overline{u}_0),
\end{align*}
for any $t \geq 0$ and initial data $u_0, \overline{u}_0 \in \mathcal{D}$. 
\end{lemma}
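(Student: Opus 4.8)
The plan is to follow the classical construction of Bressan's Riemannian-type distance on piecewise constant states, as developed in \cite{MR1367356} and adapted to the shifted setting in \cite{cfk}. The distance $d_\nu$ is built by measuring $L^1$ length along admissible \emph{pseudopolygonal} paths. Concretely, for $u_0,\overline u_0\in\mathcal D$ one considers continuous paths $\theta\mapsto\gamma(\theta)$, $\theta\in[0,1]$, with $\gamma(0)=u_0$ and $\gamma(1)=\overline u_0$, such that each $\gamma(\theta)$ is piecewise constant with finitely many fronts and the path is obtained by finitely many elementary moves: linearly shifting the locations $x_\alpha$ of the fronts and linearly varying their strengths $\sigma_{i,\alpha}$. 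A tangent vector to such a path at a state $u$ is then described by the shift rates $\{\dot x_\alpha\}$ of the fronts, and one assigns to it the \emph{weighted length}
\[
\|v\|_{u}:=\sum_{\alpha}|\dot x_\alpha|\,|\sigma_{i(\alpha),\alpha}|\,W_{i(\alpha),\alpha},
\]
where the weights $W_{i,\alpha}=W_{i,\alpha}(u)$ depend on the strength and family of the front and on the total strength of the fronts approaching it, together with the Glimm potential $Q$ from \eqref{tvandglimm}. One then sets $d_\nu(u_0,\overline u_0)$ to be the infimum of $\int_0^1\|\dot\gamma(\theta)\|_{\gamma(\theta)}\,\diff\theta$ over all such paths.

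The equivalence with the $L^1$ distance is immediate once the weights are chosen in a range $W_{i,\alpha}\in[1,1+C\eps]$ for a fixed constant $C$; this is possible because every state in $\mathcal D$ satisfies $U(u)<\eps$, so the approaching strength and $Q(u)$ are uniformly small. Then the weighted length of any tangent vector is comparable, up to the universal constants $1$ and $1+C\eps$, to its unweighted $L^1$ length $\sum_\alpha|\dot x_\alpha|\,|\sigma_{i(\alpha),\alpha}|$, and a standard argument — approximating the straight-line homotopy between two piecewise constant functions by pseudopolygons — shows that the unweighted infimum equals $\|u_0-\overline u_0\|_{L^1(\R)}$. This yields the two-sided bound with a $K$ independent of $\nu$.

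For contraction one fixes a pseudopolygon $\gamma$ joining $u_0$ and $\overline u_0$ and lets it evolve under the semigroup, $\gamma_t(\theta):=\mathcal S_t\gamma(\theta)$; since each $\gamma(\theta)\in\mathcal D$, this is again, for a.e.\ $t$, a pseudopolygon whose front locations move at the $\nu$-approximate speeds and occasionally interact. One shows that $t\mapsto\int_0^1\|\dot\gamma_t(\theta)\|_{\gamma_t(\theta)}\,\diff\theta$ is Lipschitz and computes its derivative away from interaction times. Between interactions each front travels at constant speed, each weight is constant, and the contribution of a fixed front changes only when it is crossed by another front of the polygon; the sign of that variation, together with the jump in weighted length across interaction times, is controlled by the classical front-tracking interaction estimates, at which the Glimm potential $Q$ decreases by an amount that dominates both the strengths of the newly created fronts and the discrepancy between the $\nu$-approximate wave speeds and the exact Rankine--Hugoniot/characteristic speeds (the errors recorded in \eqref{eq:small-shock-errs}). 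Choosing the weights $W_{i,\alpha}$ so that this decrease in $Q$ exactly compensates these error terms is the delicate point and the main obstacle; once it is arranged, one obtains $\frac{d}{dt}\int_0^1\|\dot\gamma_t(\theta)\|_{\gamma_t(\theta)}\,\diff\theta\le 0$ for a.e.\ $t$, hence $\int_0^1\|\dot\gamma_t(\theta)\|_{\gamma_t(\theta)}\,\diff\theta\le\int_0^1\|\dot\gamma(\theta)\|_{\gamma(\theta)}\,\diff\theta$, and taking the infimum over all initial pseudopolygons gives $d_\nu(\mathcal S_t u_0,\mathcal S_t\overline u_0)\le d_\nu(u_0,\overline u_0)$. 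The explicit construction of the weights and the full bookkeeping of interaction estimates are carried out in \cite[Section 7]{MR1367356} (see also \cite{cfk}), and we refer there for the complete argument.
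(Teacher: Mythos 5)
Your proposal is consistent with the paper, which states this lemma without proof as a direct quotation of \cite[Proposition 8]{MR1367356} and defers entirely to that reference; your outline accurately describes the Bressan--Colombo construction (pseudopolygonal paths, weighted lengths of shift-rate tangent vectors with weights built from approaching wave strengths and the Glimm potential, and contractivity via the interaction estimates), and like the paper you ultimately cite \cite[Section 7]{MR1367356} for the delicate choice of weights and the full bookkeeping.
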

The second controls the distance between a classical and shifted $\nu$-approximate solution.
\begin{lemma}[{\cite[Lemma 4.4]{cfk}}]\label{cfklemma}
Let $u$ be a classical $\nu$-approximate front tracking solution constructed as described in Section \ref{classical}, and let $\psi$ be a $\nu$-approximate shifted front tracking solution constructed as described in \Cref{sec:shifted}, with shocks moving at artificial speeds $h_\alpha(t)$. Then, in the sense of distributions
\begin{align}
\ds{\frac{d}{dt}}d_\nu(u(t,\cdot), \psi(t,\cdot)) \leq K \sum_{\alpha=1}^n|\sigma_\alpha||\dot{h}_\alpha(t)-\dot{h}_{true, \alpha}(t)|, 
\end{align}
for some constant $K>0$, where the summation in $\alpha$ runs over all shocks in $\psi$ at time $t$. The quantity $\dot{h}_\alpha$ is the artificial speed for $\psi(t,x)$ given by \Cref{lem:shockdissipationestimate}, while $\dot{h}_{true, \alpha}$ is the speed that this discontinuity in $\psi$ would have if it were a classical front tracking solution (the speed given by \eqref{nushockspeed}).
\end{lemma}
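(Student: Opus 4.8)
The plan is to rerun the weighted–metric stability estimate of \cite{MR1367356} behind \Cref{bclemma}, keeping careful track of the single place where it breaks down when one of the two front tracking solutions carries shocks propagating at artificial speeds. Recall that $d_\nu$ is Bressan's weighted pseudometric on piecewise constant data: it is the infimum over pseudo-polygonal paths $\theta\mapsto\gamma_\theta$ joining the two functions of a weighted length, where a front of the $i$-th family with strength $|\sigma|$ carries a weight $W = 1 + \kappa_1 A + \kappa_2 Q$ with $A$ the total strength of the waves approaching it and $Q$ the Glimm interaction potential. On the domain $\mathcal{D}$ of \eqref{epsforft} these weights are bounded above and below by absolute constants, which is precisely the source of the uniform equivalence with the $L^1$ distance in \Cref{bclemma}. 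The crucial structural fact from \cite{MR1367356} is that, away from interaction times, the Dini derivative of $t\mapsto d_\nu$ evaluated along a minimizing pseudo-polygonal decomposes into a finite sum of contributions — one per front of each of the two evolving solutions, plus interaction terms from fronts of the path crossing — and for two classical $\nu$-approximate solutions every one of these contributions is nonpositive.

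First I would fix a time $t$ that is an interaction time neither for $u$ nor for $\psi$, take a pseudo-polygonal $\gamma^t$ realizing $d_\nu(u(t,\cdot),\psi(t,\cdot))$, and write out this decomposition with the second solution equal to $\psi$. The contributions from rarefaction fronts of $\psi$ are unchanged: by construction (see \Cref{sec:shifted}) rarefaction fronts of the shifted scheme move at exactly the classical speeds appearing in \eqref{nuaprare}, so these terms are the same nonpositive quantities as in \cite{MR1367356}. The interaction terms — coming from two fronts of the component solutions along $\gamma^t$ crossing — are still dominated by the decrease of $Q$ baked into the weights; here one uses that the shifts produced by \Cref{lem:shockdissipationestimate} never make two waves born from the same Riemann problem recollide, so no spurious interactions are created and the combinatorics is that of the classical case. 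The only contributions that are altered are those of the \emph{shocks} of $\psi$, which now travel at $\dot{h}_\alpha(t)$ rather than at the classical Rankine--Hugoniot-type speed $\dot{h}_{true,\alpha}(t)$ of \eqref{nushockspeed}.

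Next I would quantify this alteration. In the functional of \cite{MR1367356} the contribution of a front of strength $|\sigma_\alpha|$ depends on that front's speed in a Lipschitz fashion, with Lipschitz constant $\lesssim |\sigma_\alpha| W_\alpha$; replacing $\dot{h}_{true,\alpha}$ by $\dot{h}_\alpha$ therefore perturbs the corresponding term by at most $K|\sigma_\alpha|\,|\dot{h}_\alpha(t)-\dot{h}_{true,\alpha}(t)|\,W_\alpha$. Summing over the finitely many shocks $\alpha$ of $\psi$ present at time $t$ and invoking the uniform bound on the weights $W_\alpha$ on $\mathcal{D}$ gives
\begin{align*}
\ds{\frac{d}{dt}}\, d_\nu(u(t,\cdot),\psi(t,\cdot)) &\le \big(\text{nonpositive classical terms}\big) + K\sum_\alpha |\sigma_\alpha|\,|\dot{h}_\alpha(t)-\dot{h}_{true,\alpha}(t)| \\
&\le K\sum_\alpha |\sigma_\alpha|\,|\dot{h}_\alpha(t)-\dot{h}_{true,\alpha}(t)|.
\end{align*}
Finally, since $u$ and $\psi$ are piecewise constant with only finitely many interactions on any bounded time interval and $Q$ can only jump down at interaction times, those instants do not contribute a positive amount; patching the pointwise bound over the interaction-free subintervals then yields the claimed distributional inequality.

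The main obstacle is the bookkeeping in the second step: one must verify that switching on the shock shifts perturbs \emph{only} the shock-front contributions and leaves all the other (rarefaction-front, interaction, and weight-evolution) contributions exactly as in the classical contraction estimate — or at least still nonpositive — so that the net new effect is precisely the linear-in-$|\dot{h}_\alpha-\dot{h}_{true,\alpha}|$ term. This relies on three facts already in hand: rarefaction fronts evolve identically in the classical and shifted schemes; the shift construction of \Cref{lem:shockdissipationestimate} creates no new wave interactions; and the weights $W_\alpha$ are uniformly controlled on $\mathcal{D}$. Granted these, the result is a perturbation of the classical weighted-distance computation of \cite{MR1367356}.
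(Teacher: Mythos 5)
The paper does not prove this lemma: it is imported verbatim from \cite{cfk} (Lemma 4.4 there), and Section 6 explicitly states that all results of that section are quoted without proof. So there is no in-paper argument to compare against, and your proposal has to be judged on its own merits. On those terms it is essentially correct and follows the standard route: the only change between the classical and the shifted scheme is the speed of the shock fronts of $\psi$, rarefaction fronts move at the classical speeds of \eqref{nuaprare}, the shifts of \Cref{lem:shockdissipationestimate} create no new interactions, and the weights in the Bressan--Colombo metric are uniformly bounded on $\mathcal{D}$, so the only new contribution to the derivative of $d_\nu$ is the drift of each shock, weighted by its strength. Two remarks. First, a cleaner packaging of the same idea avoids opening up the internals of \cite{MR1367356}: for small $h>0$ write $d_\nu(u(t+h),\psi(t+h))\le d_\nu(\mathcal{S}_h u(t),\mathcal{S}_h\psi(t))+d_\nu(\mathcal{S}_h\psi(t),\psi(t+h))$, use the contractivity of \Cref{bclemma} on the first term, and note that $\mathcal{S}_h\psi(t)$ and $\psi(t+h)$ differ only in that each shock is displaced by $h\,|\dot h_\alpha-\dot h_{true,\alpha}|+o(h)$, so the second term is at most $Kh\sum_\alpha|\sigma_\alpha|\,|\dot h_\alpha-\dot h_{true,\alpha}|+o(h)$ by the $L^1$-equivalence of $d_\nu$; this sidesteps your need for a (possibly non-attained) minimizing pseudo-polygonal and for the per-front decomposition. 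Second, the one step you assert rather than derive --- that the contribution of a front to the Dini derivative is Lipschitz in its speed with constant $\lesssim|\sigma_\alpha|W_\alpha$ --- is the entire quantitative content of the lemma; it is true, but in your formulation it would need to be extracted from the tangent-vector computation of \cite{MR1367356}, whereas in the triangle-inequality formulation it is immediate.
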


\section{The weight function $a$}\label{a}
In this section, we construct the weight $a(t,x)$ needed to obtain $L^2$-stability via \Cref{lem:shockdissipationestimate} and \Cref{prop:raredissipationestimate}. We define the weight function in a similar manner as \cite{cfk}. However, our weight has some slight changes as we need to weight both shocks and rarefactions instead of just shocks. In all previous works on the $a$-contraction, the rarefactions do not have to be weighted due to the assumption that {Hypotheses 1} \ref{hyp-d} holds. We continue to follow the notation introduced in Section \ref{sec:ft}. Finally, for any function $f(t)$, we denote
\begin{align*}
    \Delta f(t):=f(t+)-f(t-).
\end{align*}

\subsection{Wave interaction estimates}
Firstly, we recall the basic interaction estimates from \cite{MR1367356}. Henceforth, given a shifted $\nu$-approximate solution $\psi$ (defined in \Cref{sec:shifted}), we define $U(t):=U(\psi(t,\cdot))$ and $Q(t):=Q(\psi(t,\cdot))$.
\begin{lemma}[{\cite[Lemma 5]{MR1367356}}]\label{lem:waveinteractionestimates}
For any time $t$ where waves with strength $|\sigma_i|$ and $|\sigma_j|$ interact, we have
\begin{align*}
\Delta U(t) \leq -\frac{\kappa}{2}|\sigma_i||\sigma_j|.
\end{align*}
Further, when there is an interaction at $(t,x)$ between incoming waves from the second family with strengths $\sigma_i''$ and incoming waves from the first family with strengths $\sigma_i'$, we denote the strength of the outgoing wave of the first family as $\sigma_1$ and the second wave as $\sigma_2$ and the following estimate holds:
\begin{align*}
\left|\sigma_2-\sum \sigma_i''\right|+\left|\sigma_1-\sum \sigma_i'\right| \leq K_0|\Delta Q|,
\end{align*}
for some constant $K_0$ independent of $\kappa$. Finally the following properties hold:
\begin{enumerate}
    \item In any family, if there are two or more incoming rarefaction waves, they cannot be adjacent.
    \item If, in any family, all incoming waves are shocks, the outgoing wave of that family is still a shock.
\end{enumerate}
\end{lemma}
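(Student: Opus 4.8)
The plan is to follow the classical interaction analysis of Glimm and DiPerna (see \cite[Lemma 5]{MR1367356}), working throughout in the Riemann coordinates $v=(v_1,v_2)$. Recall from the excerpt that the interpolated wave curves $\Phi_i^\nu$ in \eqref{interpcurve} agree with the rarefaction curves $\phi_i^+$ for $\sigma\geq-\sqrt\nu$ and with the shock curves $\phi_i^-$ for $\sigma\leq-2\sqrt\nu$, and that the shock curves are third-order tangent to the rarefaction curves at their base point (the $\sigma^3$ terms). At an interaction at time $t$ and location $x$, let $v^l$ and $v^r$ be the states to the far left and far right of all colliding fronts. One resolves the collision by solving the (approximate) Riemann problem $[v^l,v^r]$, i.e.\ by finding the unique intermediate state $v^m$ and pair $(\sigma_1,\sigma_2)$ with $v^m=\Phi_1^\nu(v^l,\sigma_1)$ and $v^r=\Phi_2^\nu(v^m,\sigma_2)$; this exists by the implicit function theorem once $\eps$ is small, and $\sigma_1,\sigma_2$ are smooth functions of the incoming strengths.

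First I would establish the strength comparison $|\sigma_2-\sum\sigma_i''|+|\sigma_1-\sum\sigma_i'|\leq K_0|\Delta Q(t)|$. Expanding the composition $v\mapsto\Phi_2^\nu(\Phi_1^\nu(v,\cdot),\cdot)$ to second order in the incoming strengths, and using the third-order tangency of shock and rarefaction curves so that $\Phi_i^\nu$ differs from the true wave curves only at order $|\sigma|^3$ and $\nu$ (cf.\ \eqref{eq:small-shock-errs}), the leading-order behaviour is that strengths of the \emph{same} family add while waves of \emph{different} families pass through each other unchanged; the remainder is a bilinear expression in the strengths of those pairs of fronts that were approaching \emph{before} the collision. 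Since exactly those pairs are removed from the Glimm potential $Q$ at time $t$ (any new approaching pairs created by the outgoing fronts have strictly higher-order strength), the remainder is bounded by a constant $K_0$ times $|\Delta Q(t)|$, with $K_0$ depending only on $f$ near $d$ through the $C^4$ bound in \textbf{(GL)} and not on $\kappa$. In particular $\Delta V(t)\leq K_0|\Delta Q(t)|$ by the triangle inequality.

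Next I would bound $\Delta Q(t)$. The colliding pair $(\sigma_i,\sigma_j)$ leaves the set $\mathcal A$ of approaching pairs, contributing $-|\sigma_i||\sigma_j|$; the reassignment of pairings between the outgoing fronts (which are mutually non-approaching within the Riemann fan) and the rest of the configuration changes $Q$ by at most a constant times $V(t)\,|\Delta Q(t)|$, by the strength comparison above. Hence $|\Delta Q(t)|\geq(1-\eta)|\sigma_i||\sigma_j|$ and $\Delta Q(t)\leq-(1-\eta)|\sigma_i||\sigma_j|$, with $\eta$ as small as desired once $\eps$ (hence the total strength $V$) is small. Combining, $\Delta U(t)=\Delta V(t)+\kappa\,\Delta Q(t)\leq K_0|\Delta Q(t)|+\kappa\,\Delta Q(t)=(K_0-\kappa)|\Delta Q(t)|\leq(K_0-\kappa)(1-\eta)|\sigma_i||\sigma_j|$; choosing first $\kappa$ large relative to $K_0$ and then $\eps$ small enough that $\eta$ is correspondingly small yields $\Delta U(t)\leq-\frac{\kappa}{2}|\sigma_i||\sigma_j|$, which also makes $\mathcal D$ in \eqref{epsforft} positively invariant.

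Finally, the two structural items follow from the construction in \eqref{nuaprare} together with genuine nonlinearity. By \textbf{(GL)}, $\lambda_i$ is strictly monotone along the $i$-rarefaction curve, so in \eqref{nuaprare} the rarefaction fronts of a given family are emitted with strictly increasing speeds along the fan; consequently two rarefaction fronts of the same family never catch up to one another, which gives property (1). For property (2), if every incoming front of family $i$ at a collision is a shock ($\sigma<0$), then—since the composition of $i$-shock curves remains on the shock side, a fact that follows from the same second-order expansion together with the sign conventions of \Cref{lem:dafermoslemma}—the solution of the resulting Riemann problem has $\sigma_i<0$, i.e.\ the outgoing $i$-front is again a shock. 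I expect the main obstacle to be the careful bookkeeping in the $Q$ estimate—in particular tracking the contributions of the interpolated curves $\Phi_i^\nu$ and of the newly created fronts precisely enough to obtain the stated constant $\frac{\kappa}{2}$—rather than any conceptual difficulty; all of this is carried out in detail in \cite{MR1367356}, from which the statement is quoted.
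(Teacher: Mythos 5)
The paper does not prove this lemma at all: it is quoted verbatim from \cite[Lemma 5]{MR1367356}, and the only remark the authors add is that the estimates carry over to the shifted scheme because they concern the Riemann solver rather than the front speeds. Your sketch is a faithful reconstruction of the standard Glimm--Bressan--Colombo interaction analysis from that reference (expansion of the composed wave curves, comparability of $|\Delta Q|$ with the interaction products, then $\kappa\gg K_0$), and is correct in outline; the one point worth tightening is property (2), where the sign of the outgoing wave really rests on the \emph{third}-order tangency of the shock curves to the coordinate lines in Riemann coordinates (the $\hat\phi\,\sigma^3$ terms), not merely a second-order expansion.
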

We note that \Cref{lem:waveinteractionestimates} is proved in \cite{MR1367356} only for classical $\nu$-approximate solutions, but it holds also for shifted ones, as remarked in \cite[p. 24]{cfk}.
It is standard to obtain the following from \Cref{lem:waveinteractionestimates} (see  \cite[Lemma 5]{MR1367356}.
\begin{lemma}\label{lem:shiftedfnldec}
For any initial data $u^0 \in \mathcal{D}$, for any $\kappa > 0$ sufficiently large and any $\eps > 0$ sufficiently small, the functionals $U(t)$ and $Q(t)$ are decreasing in time.
\end{lemma}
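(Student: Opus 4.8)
The plan is to run the classical Glimm interaction-potential argument, taking \Cref{lem:waveinteractionestimates} as the only input. Fix a shifted $\nu$-approximate solution $\psi$ with $\psi(0,\cdot)\in\mathcal D$, so that $\psi$ is defined with only finitely many interactions on each $[0,T]$. Between two consecutive interaction times the wave strengths $|\sigma_{i,\alpha}|$ are unchanged, and the set $\mathcal A$ of approaching pairs is constant: two fronts can change their relative order or their approaching status only by crossing, which by definition is an interaction, and the Remark in \Cref{sec:shifted} guarantees that even with the shifted speeds the fronts issuing from a single Riemann problem never recollide. Hence $V(t)$, $Q(t)$ and $U(t)=V(t)+\kappa Q(t)$ are piecewise constant in $t$, with possible jumps only at the finitely many interaction times; it therefore suffices to prove $\Delta U(t)\le 0$ and $\Delta Q(t)\le 0$ at each such time.

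The bound on $\Delta U$ is immediate: if fronts of strengths $|\sigma_i|,|\sigma_j|$ collide at time $t$, the first estimate of \Cref{lem:waveinteractionestimates} gives $\Delta U(t)\le -\tfrac{\kappa}{2}|\sigma_i||\sigma_j|\le 0$, so $U$ is nonincreasing (indeed strictly decreasing across every interaction). For $\Delta Q$ I would first extract from the second estimate of \Cref{lem:waveinteractionestimates} a bound on the change of total strength: since outside the collision point the fronts are untouched and the outgoing fronts of each family have combined strength differing from that of the incoming fronts of that family by at most $K_0|\Delta Q(t)|$, one obtains $|\Delta V(t)|\le K_0|\Delta Q(t)|$ after relabeling $K_0$ (this is where properties (1) and (2) of \Cref{lem:waveinteractionestimates} enter, to handle same-family collisions and the splitting of rarefaction fans). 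Now combine with $\Delta U(t)=\Delta V(t)+\kappa\,\Delta Q(t)\le -\tfrac{\kappa}{2}|\sigma_i||\sigma_j|$. If $\Delta Q(t)\ge 0$, then $\Delta V(t)\le -\kappa\,\Delta Q(t)-\tfrac{\kappa}{2}|\sigma_i||\sigma_j|\le 0$, whence $|\Delta V(t)|\ge \kappa\,\Delta Q(t)$, which together with $|\Delta V(t)|\le K_0\,\Delta Q(t)$ forces $\Delta Q(t)=0$ as soon as $\kappa>K_0$; but $\Delta Q(t)=0$ then forces $\Delta V(t)=0$, contradicting $\Delta U(t)\le -\tfrac{\kappa}{2}|\sigma_i||\sigma_j|<0$. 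Hence $\Delta Q(t)<0$, so $Q$ is nonincreasing as well.

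To close the argument I would invoke the standard continuation/bootstrap. Choosing $\eps$ small enough that the interaction estimates of \Cref{lem:waveinteractionestimates} are valid in the relevant regime, and $\kappa>K_0$, the monotonicity just established keeps $U(t)\le U(0)<\eps$ for every $t$ at which $\psi$ is defined, which is precisely the smallness needed for $\psi$ to exist for all $t\ge 0$ with finitely many interactions on each bounded interval, as recorded in \Cref{sec:shifted} and \Cref{bvbd}. Since this applies to every piecewise constant datum $\psi(0,\cdot)\in\mathcal D$, the statement follows for all $u^0\in\mathcal D$.

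The only genuinely delicate point — and the sole place the hypotheses ``$\kappa$ large'' and ``$\eps$ small'' are really used — is the passage from the combined estimate $\Delta U\le -\tfrac{\kappa}{2}|\sigma_i||\sigma_j|$ to the sign of $\Delta Q$ alone: unlike in the unshifted scheme, where one can quote a decay estimate for $Q$ directly, here one must run the small algebraic argument above and be careful that the input $|\Delta V|\le K_0|\Delta Q|$ is deduced uniformly over all interaction types, with $K_0$ independent of $\kappa$ so that $\kappa>K_0$ can be arranged; everything else is a verbatim repetition of the front-tracking bookkeeping of \cite{MR1367356}.
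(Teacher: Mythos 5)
Your treatment of $U$ is fine and is exactly the intended (cited) argument: \Cref{lem:waveinteractionestimates} gives $\Delta U(t)\le -\tfrac{\kappa}{2}|\sigma_i||\sigma_j|$ at each interaction and $U$ is constant in between. The gap is in your treatment of $Q$, specifically the claimed two-sided estimate $|\Delta V(t)|\le K_0|\Delta Q(t)|$. What \Cref{lem:waveinteractionestimates} controls is $|\sigma_1-\sum\sigma_i'|+|\sigma_2-\sum\sigma_i''|$, i.e.\ the \emph{signed} sums; this yields only the one-sided bound $\Delta V\le K_0|\Delta Q|$, since $|\sigma_1|-\sum|\sigma_i'|\le |\sigma_1|-|\sum\sigma_i'|\le|\sigma_1-\sum\sigma_i'|$. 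The reverse inequality fails whenever incoming waves of the same family have opposite signs and partially cancel (a shock overtaking a rarefaction front of the same family, which properties (1)--(2) of \Cref{lem:waveinteractionestimates} do not exclude): there $\sum|\sigma_i'|$ exceeds $|\sum\sigma_i'|$ by twice the cancelled mass, so $\Delta V\sim -2\min(s,r)$ while $|\Delta Q|$ can be as small as $sr$, and no uniform $K_0$ exists. Your contradiction in the case $\Delta Q\ge 0$ needs precisely the missing lower bound $\Delta V\ge -K_0|\Delta Q|$ (the two upper bounds $\Delta V\le K_0\Delta Q$ and $\Delta V\le-\kappa\Delta Q-\tfrac{\kappa}{2}|\sigma_i||\sigma_j|$ are compatible with $\Delta V$ being very negative), so the argument for $\Delta Q\le 0$ does not close.

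The standard route — the one the paper is implicitly invoking by citing \cite[Lemma 5]{MR1367356} — estimates $\Delta Q$ directly rather than through $\Delta V$: the interacting pair leaves the approaching set, contributing $-|\sigma_i||\sigma_j|$, and the change in the interaction potential between the outgoing waves and all \emph{other} (untouched) waves is controlled by the change in strength of each outgoing family times the total strength of the other waves, i.e.\ by $K_0|\Delta Q|\,V(t-)\le K_0\,\eps\,|\Delta Q|$. Hence $\Delta Q\le -|\sigma_i||\sigma_j|+K_0\eps|\Delta Q|$, and for $\eps<1/(2K_0)$ the assumption $\Delta Q\ge 0$ gives $(1-K_0\eps)\Delta Q\le-|\sigma_i||\sigma_j|<0$, a contradiction; so $\Delta Q<0$. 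Note this also corrects your accounting of where the hypotheses enter: the smallness of $\eps$ is what drives the decay of $Q$ (not merely the validity of the interaction estimates), while the largeness of $\kappa$ is only needed for $U$.
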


\subsection{Construction of the weight $a$}
Here, we give the definition of the weight $a(t,x)$ associated with a shifted $\nu$-approximate front tracking solution. Note that the weight needed for $1$-shocks is exactly the same as the weight needed for $1$-rarefactions, as we may take $C_1=C_2$ by Remark \ref{same} (and the same goes for the $2$-waves). With that in mind, we set
\begin{align*}
\frac{a_r}{a_l}=e^{-\frac{3C_1}{4}|\sigma|} \text{ for $1$-waves, and } \frac{a_r}{a_l}=e^{\frac{3C_1}{4}|\sigma|} \text{ for $2$-waves},
\end{align*}
where $a_r$ and $a_l$ are the values of $a$ to the right and left of the wave respectively, and $C_1=C_2$ is the constant from \Cref{lem:shockdissipationestimate} and \Cref{prop:raredissipationestimate}. Then, when the wave strengths are small enough
\begin{align*}
e^{\frac{3C_1}{4}|\sigma|} \approx 1+\frac{3C_1}{4}|\sigma|,
\end{align*}
so the conditions on $a_r, a_l$ (here playing the role of $a_1, a_2$) are satisfied in \Cref{lem:shockdissipationestimate} and \Cref{prop:raredissipationestimate}. 
\par Let us define the weight function $a$ as follows. Firstly, define a measure $\mu(t, \cdot)$ as a sum of Dirac measures in $x$:
\begin{align*}
\mu(t,x)=-\sum_{i:1-\text{wave}}|\sigma_i|\delta_{x_i(t)}+\sum_{i:2-\text{wave}}|\sigma_i|\delta_{x_i(t)}.
\end{align*}
Then, define the weight:
\begin{align}\label{adef}
a(t,x)=\text{exp}\left(\frac{3C_1}{4}\Bigl(V(t)+\frac{3\kappa}{2}Q(t)+\int_{-\infty}^x\mu(t,y)\diff y\Bigr) \right),
\end{align}
for $\kappa$ sufficiently large to be determined.
\begin{remark}
Note that our weight is almost identical to the weight $a(t,x)$ in \cite[Equation (5.13)]{cfk}, except that we weight all waves instead of just shocks. 
\end{remark}
We now state and prove some important properties of the function $a$.
\begin{proposition}\label{aprops}
For a fixed shifted $\nu$-approximate solution $\psi$, the weight $a$ defined in \eqref{adef} verifies the following properties:
\begin{enumerate}
    \item For every time without a wave interaction, and every $x$ such that a $1$-wave $\sigma_\alpha$ is located at $x=x_\alpha(t)$ in $\psi$,
    \begin{align}\label{1stability}
    1-2C_1|\sigma_\alpha| \leq \frac{a(t,x_\alpha(t)+)}{a(t,x_\alpha(t)-)} \leq 1-\frac{C_1}{2}|\sigma_\alpha|.
    \end{align}
    \item For every time without a wave interaction, and every $x$ such that a $2$-wave $\sigma_\alpha$ is located at $x=x_\alpha(t)$ in $\psi$,
    \begin{align}\label{2stability}
    1+\frac{C_2}{2}|\sigma_\alpha| \leq \frac{a(t,x_\alpha(t)+)}{a(t,x_\alpha(t)-)} \leq 1+2C_2|\sigma_\alpha|.
    \end{align}
    \item For every time $t$ with a wave interaction, and almost every $x$, the weight function decays in time,
    \begin{align}\label{atimedecay}
    a(t+,x) \leq a(t-,x),
    \end{align}
    when $\kappa$ is sufficiently large. This holds except at the finitely many points where interactions occur.
    \item Finally, we have
    \begin{align}\label{abd}
        \norm{{1}/{a}}_{L^\infty([0,T] \times \R)} + \norm{a}_{L^\infty([0,T] \times \R)} \leq K,
    \end{align}
    for a constant $K>0$.
\end{enumerate}
\end{proposition}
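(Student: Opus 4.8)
The plan is to peel apart the three ingredients of the weight \eqref{adef}. Since $V(t)$ and $Q(t)$ do not depend on $x$, the only $x$-dependence of $a(t,\cdot)$ comes through $\int_{-\infty}^x\mu(t,y)\,\diff y$, a piecewise-constant function whose jumps sit exactly at the wave locations $x_i(t)$. For \eqref{1stability} and \eqref{2stability} I would evaluate, at a non-interaction time, the one-sided ratio across a single wave $\sigma_\alpha$: crossing a $1$-wave from left to right decreases $\int_{-\infty}^x\mu$ by $|\sigma_\alpha|$, so $a(t,x_\alpha(t)+)/a(t,x_\alpha(t)-) = \exp(-\tfrac{3C_1}{4}|\sigma_\alpha|)$, and crossing a $2$-wave it equals $\exp(+\tfrac{3C_1}{4}|\sigma_\alpha|)$ (recall $C_1=C_2$ by \Cref{same}). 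Since $|\sigma_\alpha|\le V(t)\le U(t)<\eps$ by the definition of $\mathcal{D}$ in \eqref{epsforft} together with \Cref{lem:shiftedfnldec}, the elementary bounds $1-y\le e^{-y}\le 1-y+\tfrac{y^2}{2}$ and $1+y\le e^{y}\le 1+y+y^2$ sandwich these ratios inside $[1-2C_1|\sigma_\alpha|,\,1-\tfrac{C_1}{2}|\sigma_\alpha|]$ (resp.\ $[1+\tfrac{C_2}{2}|\sigma_\alpha|,\,1+2C_2|\sigma_\alpha|]$) once $\eps$ is small enough in terms of $C_1$. Read instead as $a_l/a_r = e^{\pm 3C_1|\sigma|/4}$, this same computation is precisely what makes the weights on the two sides of any wave admissible in \Cref{lem:shockdissipationestimate} and \Cref{prop:raredissipationestimate}.

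For \eqref{abd} I would simply note that $V(t),Q(t)\ge 0$, that $U(t)=V(t)+\kappa Q(t)\le U(0)<\eps$ for all $t$ by \Cref{lem:shiftedfnldec} and \eqref{epsforft}, and that $\bigl|\int_{-\infty}^x\mu(t,y)\,\diff y\bigr|\le\sum_i|\sigma_i|=V(t)<\eps$. Hence the exponent in \eqref{adef} lies in a fixed interval of size $O(\eps)$, so $a$ and $1/a$ are bounded by $e^{O(\eps)}$, uniformly in $\nu$ and $t\in[0,T]$.

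The substance of the proposition is \eqref{atimedecay}. Fix an interaction time $t$ and one of its finitely many interaction points $\bar x$. Since $V,Q$ are $x$-independent and $\int_{-\infty}^x\mu$ only sees waves strictly to the left of $x$, for $x\neq\bar x$ one has
\[
\log\frac{a(t+,x)}{a(t-,x)} = \tfrac{3C_1}{4}\Bigl(\Delta V(t) + \tfrac{3\kappa}{2}\Delta Q(t) + \Delta\!\int_{-\infty}^x\!\mu(t,y)\,\diff y\Bigr),
\]
with $\Delta\int_{-\infty}^x\mu = 0$ for $x<\bar x$ and $\Delta\int_{-\infty}^x\mu = -(|\sigma_1|-\textstyle\sum|\sigma_i'|)+(|\sigma_2|-\textstyle\sum|\sigma_i''|)$ for $x>\bar x$, in the notation of \Cref{lem:waveinteractionestimates} ($\sigma_i',\sigma_i''$ the incoming $1$- and $2$-waves, $\sigma_1,\sigma_2$ the aggregate outgoing ones, with rarefaction fans counted by total strength). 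From \Cref{lem:waveinteractionestimates}, $|\sigma_1-\sum\sigma_i'|,|\sigma_2-\sum\sigma_i''|\le K_0|\Delta Q|$, and since $\sum|\sigma_i'|\ge|\sum\sigma_i'|$ this gives $|\sigma_1|-\sum|\sigma_i'|\le K_0|\Delta Q|$ and likewise for the $2$-family; in particular $\Delta V=(|\sigma_1|-\sum|\sigma_i'|)+(|\sigma_2|-\sum|\sigma_i''|)\le 2K_0|\Delta Q|$. Because $\Delta Q(t)\le 0$ by \Cref{lem:shiftedfnldec}, in both the $x<\bar x$ and $x>\bar x$ cases the bracket is bounded above by $2K_0(-\Delta Q)+\tfrac{3\kappa}{2}\Delta Q=(\tfrac{3\kappa}{2}-2K_0)\Delta Q$, which is $\le 0$ as soon as $\kappa\ge\tfrac{4K_0}{3}$ — this is exactly the ``$\kappa$ sufficiently large'' in the statement — so $a(t+,x)\le a(t-,x)$ off the finite set of interaction locations.

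I expect the only real obstacle to be the bookkeeping in \eqref{atimedecay}: correctly tracking how the partial sum $\int_{-\infty}^x\mu$ changes through an interaction (including the case of a newly created rarefaction fan) and pairing the possible increase $\Delta V$ against the strictly negative $\Delta Q$ so that the Glimm-type interaction estimate \Cref{lem:waveinteractionestimates} absorbs it for large $\kappa$. Items \eqref{1stability}, \eqref{2stability}, and \eqref{abd} are routine once the structure of $a$ is unwound.
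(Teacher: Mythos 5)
Your proposal is correct, and for items \eqref{1stability}, \eqref{2stability}, and \eqref{abd} it is essentially identical to the paper's proof (ratio across a single wave equals $e^{\mp 3C_1|\sigma_\alpha|/4}$, then Taylor expansion for small $|\sigma_\alpha|$; uniform bounds on $V$ and $Q$ from \Cref{lem:shiftedfnldec} give \eqref{abd}). For the decay property \eqref{atimedecay} you follow the same overall strategy — compare the exponent of $a$ before and after an interaction and absorb the possible increase of $V$ and of the partial sum $\int_{-\infty}^x\mu$ into the strictly negative $\frac{3\kappa}{2}\Delta Q$ via \Cref{lem:waveinteractionestimates} — but your bookkeeping is genuinely cleaner than the paper's. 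The paper first bounds $\mu(t+)-\mu(t-)$ by $\bigl||\sigma_2|-\sum|\sigma_i''|\bigr|+\bigl||\sigma_1|-\sum|\sigma_i'|\bigr|$, which discards sign information and forces the two-sided estimate \eqref{2waveestimate}; establishing that estimate is what requires the casework (Cases 1 and 2) on whether the incoming waves of a family are all shocks, all rarefactions, or mixed, and leads to the threshold $\kappa>8K_0$. You instead keep the signs: for $x>\bar x$ the $1$-family contributions to $\Delta V+\Delta\int_{-\infty}^x\mu$ cancel exactly, leaving $2(|\sigma_2|-\sum|\sigma_i''|)$, and only the one-sided bound $|\sigma_j|-\sum|\sigma_i|\le K_0|\Delta Q|$ (an immediate triangle-inequality consequence of \Cref{lem:waveinteractionestimates}) is needed; for $x<\bar x$ only $\Delta V\le 2K_0|\Delta Q|$ is needed. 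This bypasses the paper's casework entirely and yields the milder threshold $\kappa\ge 4K_0/3$. The one point worth making explicit in a final write-up is the case of several simultaneous interaction points: $\Delta V$, $\Delta Q$, and $\Delta\int_{-\infty}^x\mu$ decompose additively over the interaction points (with the last receiving contributions only from points to the left of $x$), so your per-point estimate applies to each summand separately — this is the same implicit reduction the paper makes when it "assumes there is only one interaction point."
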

\begin{proof}
The properties \eqref{1stability} and \eqref{2stability} are clear from the definition \eqref{adef} for initial data $\psi(0,\cdot) \in \mathcal{D}$ for $\eps$ sufficiently small, as then the size of any shock $|\sigma|$ in $\psi(t,\cdot)$ is bounded by $\eps$ and $e^{\frac{3C_1}{4}|\sigma|} \approx 1+\frac{3C_1}{4}|\sigma|$ for $|\sigma|$ sufficiently small.
\par Now, we show \eqref{atimedecay}. Clearly, \eqref{atimedecay} holds in any region of the $(t,x)$-plane between jumps. It remains to consider the times when there is a wave interaction. We assume there is only one interaction point $(t,x_0)$ at time $t$. If there are more than one, applying the following proof to all interaction points separately suffices. 
\par Recall that we may decompose a function $f=f_+-f_-$, where $f_{\pm}=\frac{|f|\pm f}{2}$. If $1$-waves $\sum \sigma_i'$ and $2$-waves $\sum \sigma_i''$ interact at $(t,x_0)$, then
\begin{align}\label{intermed}
\mu(t+)-\mu(t-)&=\delta_{x_0}\left(|\sigma_2|-|\sigma_1|-\sum|\sigma_i''|+\sum|\sigma_i'|\right)\notag \\
            &\leq \delta_{x_0}\left(\left||\sigma_2|-\sum |\sigma_i''|\right|+\left||\sigma_1|-\sum |\sigma_i'|\right|\right).
\end{align}
We will use \Cref{lem:waveinteractionestimates} to show
\begin{align}\label{intermed2}
\eqref{intermed} \leq \left((\Delta V)_-+\frac{\kappa}{2}|\Delta Q|\right)\delta_{x_0},
\end{align}
for $\kappa$ sufficiently large. Note that when \eqref{intermed2} holds, by \eqref{adef}, \eqref{intermed}, and the fact that $V(t)+\kappa Q(t)$ and $Q(t)$ both decay in time by \Cref{lem:shiftedfnldec}, we obtain
\begin{align*}
    \frac{a(t+,x)}{a(t-,x)} \leq 1 \implies a(t+,x)-a(t-,x) \leq 0,
\end{align*}
for any $x \in \R$ besides the point of interaction $(t,x_0)$. 
\par Let us then prove \eqref{intermed2}. Firstly, we obtain the following estimate by casework:
\begin{align}\label{2waveestimate}
\left||\sigma_2|-\sum|\sigma_i''|\right| \leq \sum|\sigma_i''|-|\sigma_2|+2K_0|\Delta Q|. 
\end{align}
\par \textbf{Case 1:} If all incoming $2$-waves are the same type, then by \Cref{lem:waveinteractionestimates}, the outgoing $2$-wave is the same type as all the incoming waves. If all $2$-waves are shocks, then by \Cref{lem:waveinteractionestimates}
\begin{align*}
 \left|(\sigma_2)_+-\sum(\sigma_i'')_+\right|&=0, \\
 \left|(\sigma_2)_--\sum(\sigma_i'')_-\right| &\leq K_0|\Delta Q|.
\end{align*}
If the incoming $2$-wave is just one discretized rarefaction we obtain instead
\begin{align*}
 \left|(\sigma_2)_+-\sum(\sigma_i'')_+\right|& \leq K_0|\Delta Q|, \\
 \left|(\sigma_2)_--\sum(\sigma_i'')_-\right| &=0.
\end{align*}
So
\begin{align*}
\left||\sigma_2|-\sum|\sigma_i''|\right| \leq \left|(\sigma_2)_+-\sum(\sigma_i'')_+\right|+\left|(\sigma_2)_--\sum(\sigma_i'')_-\right| \leq K_0|\Delta Q|,
\end{align*}
from which \eqref{2waveestimate} follows.
\par \textbf{Case 2:} Otherwise, there must be both incoming $2$-rarefactions and $2$-shock waves. Firstly, assume $\sigma_2 \geq 0$. Then, by \Cref{lem:waveinteractionestimates}
\begin{align}
K_0|\Delta Q| \geq \left|\sigma_2-\sum \sigma_i''\right| &= \left|\sigma_2-\sum (\sigma_i'')_++\sum (\sigma_i'')_-\right| \notag \\ 
&\geq \sigma_2-\sum (\sigma_i'')_++\sum (\sigma_i'')_-, \notag
\intertext{so} 
\sigma_2-\sum(\sigma_i'')_+ &\leq K_0|\Delta Q|-\sum(\sigma_i'')_- \leq K_0|\Delta Q|. \label{pos}
\intertext{Now, if $\sigma_2-\sum(\sigma_i'')_+ \geq 0$, \eqref{pos} gives}
\left|(\sigma_2)_+-\sum(\sigma_i'')_+\right| & \leq K_0|\Delta Q|, \label{case1}
\intertext{while if $\sigma_2-\sum(\sigma_i'')_+ \leq 0$,}
\left|(\sigma_2)_+-\sum(\sigma_i'')_+\right|&=\sum(\sigma_i'')_+-|\sigma_2|. \label{case2}
\intertext{Together, \eqref{case1} and \eqref{case2} give}
\left|(\sigma_2)_+-\sum(\sigma_i'')_+\right| &\leq 2K_0|\Delta Q|+\sum(\sigma_i'')_+-|\sigma_2|. \notag 
\intertext{Combined with $\left|(\sigma_2)_--\sum(\sigma_i'')_-\right|=\sum(\sigma_i'')_-$, we obtain}
\left||\sigma_2|-\sum|\sigma_i''|\right| &\leq \left|(\sigma_2)_+-\sum(\sigma_i'')_+\right|+\left|(\sigma_2)_--\sum(\sigma_i'')_-\right| \notag \\
&\leq \sum|\sigma_i''|-|\sigma_2|+2K_0|\Delta Q|, \notag
\end{align}
establishing \eqref{2waveestimate}. 

Now, assume that $\sigma_2 \leq 0$. Then, \Cref{lem:waveinteractionestimates} gives
\begin{align*}
K_0|\Delta Q| \geq \left|\sigma_2-\sum \sigma_i''\right| &= \left|\sigma_2-\sum (\sigma_i'')_++\sum (\sigma_i'')_-\right| \\
&\geq -\sigma_2+\sum (\sigma_i'')_+-\sum (\sigma_i'')_-,
\intertext{so} 
-\sigma_2-\sum(\sigma_i'')_- &\leq K_0|\Delta Q|-\sum(\sigma_i'')_+ \leq K_0|\Delta Q|,
\intertext{and similarly to above:}
\left|(\sigma_2)_--\sum(\sigma_i'')_-\right| &\leq 2K_0|\Delta Q|+\sum(\sigma_i'')_-+\sigma_2 \\
&=\sum (\sigma_i'')_--|\sigma_2|+2K_0|\Delta Q|.
\intertext{Combined with $\left|(\sigma_2)_+-\sum(\sigma_i'')_+\right|=\sum(\sigma_i'')_+$, we obtain}
\left||\sigma_2|-\sum|\sigma_i''|\right| &\leq \left|(\sigma_2)_+-\sum(\sigma_i'')_+\right|+\left|(\sigma_2)_--\sum(\sigma_i'')_-\right| \\
&\leq \sum|\sigma_i''|-|\sigma_2|+2K_0|\Delta Q|.
\end{align*}
This shows \eqref{2waveestimate} holds in all cases. The estimate
\begin{align}\label{1waveestimate}
\left||\sigma_1|-\sum|\sigma_i'|\right| \leq \sum|\sigma_i'|-|\sigma_1|+2K_0|\Delta Q|,
\end{align}
follows from entirely similar considerations. Finally, choosing $\kappa > 8K_0$, \eqref{intermed2} follows from \eqref{1waveestimate}, \eqref{2waveestimate}, and the definition \eqref{tvandglimm}. 
\par Finally, uniform bounds on $V$ and $Q$ may be derived from the time decay of $U$ and $Q$ in \Cref{lem:shiftedfnldec}, immediately granting \eqref{abd}.
\end{proof}

\section{Proof of \Cref{prop:main}}\label{thmpf}
In this section, we prove \Cref{prop:main}, following \cite{cfk} with some small changes to account for the weighting of the rarefactions. Firstly, we address some important technical points that will arise in the proof.

\subsection{Stopping and restarting the clock}
When we combine the relative entropy method with the front-tracking scheme with respect to a weak solution $u \in \Sweak$, we need to ``stop and restart the clock'' every time two waves collide. As the solutions $u \in \Sweak$ live in a low regularity class, this requires the use of approximate limits (see \cite[p. 55-57]{MR3409135}). The following estimates will be used in a telescoping sum in Section \ref{dissipationcalculations}.

\begin{lemma}\label{stitchinglemma}
Let $u \in L^\infty(\R^+ \times \R)$ satisfy \eqref{cl}, \eqref{entropic} with initial data $u^0$. Further, assume that $u$ verifies the Strong Trace Property (\Cref{def:strongtrace}). Then, for all $v \in W$, and all $c,d \in \R$ with $c < d$, the approximate left- and right-hand limits:
\begin{align*}
    \aplim_{t \to t_0^\pm}\int_c^d\eta(u(t,x)|v)\diff x,
\end{align*}
exist for all $t_0 \in (0,T)$ and verify:
\begin{align*}
\aplim_{t \to t_0^-}\int_c^d\eta(u(t,x)|v)\diff x &\geq \aplim_{t \to t_0^+}\int_c^d\eta(u(t,x)|v)\diff x.
\intertext{Moreover, the approximate right-hand limit exists at $t_0=0$ and verifies:}
\int_c^d\eta(u^0(x)|v)\diff x &\geq \aplim_{t \to t_0^+}\int_c^d\eta(u(t,x)|v)\diff x.
\end{align*}
\end{lemma}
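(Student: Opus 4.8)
The plan is to exploit the entropy inequality \eqref{relentb} against carefully chosen test functions, where time plays the role usually played by space in the localization, and the Strong Trace Property (\Cref{def:strongtrace}) is what gives us control of the boundary terms at the times $t_0^\pm$. First I would fix $v \in W$, fix $c < d$, and work with the fixed entropy pair $(\eta(\cdot|v), q(\cdot;v))$; the distributional inequality $\partial_t \eta(u|v) + \partial_x q(u;v) \le 0$ holds in $\mathcal{D}'(\R^+\times\R)$. The key preliminary observation is that the map $t \mapsto \int_c^d \eta(u(t,x)|v)\,dx$ is (up to modification on a null set) nonincreasing and of bounded variation on $(0,T)$: indeed, for any nonnegative $\phi \in C_c^\infty((0,T))$ with $\psi(t,x) = \phi(t)\chi(x)$ and $\chi$ a smooth approximation of $\mathbbm 1_{(c,d)}$ from above, testing the inequality yields $-\int \phi'(t)\int \chi(x)\eta(u|v)\,dx\,dt \le \int \phi(t)[q(u;v) \text{ boundary terms}]\,dt$, and the spatial flux terms are bounded in $L^\infty$ (since $u$ is bounded and $q(\cdot;v)$ is continuous on $\overline W$), so $\frac{d}{dt}\int_c^d \eta(u(t,x)|v)\,dx$ is a (signed) measure with a one-sided bound, hence the function is BV in $t$. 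A BV function of one variable has well-defined left and right limits at every point, and these agree with the approximate limits. This gives existence of $\aplim_{t\to t_0^\pm}$ at every $t_0 \in (0,T)$, and the monotonicity-up-to-flux structure gives the inequality once the flux contribution is handled.

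The heart of the argument is that the spatial flux terms do not actually spoil monotonicity because we are integrating the \emph{full} strip $(c,d)$, not a cone: when we take $\chi \uparrow \mathbbm 1_{(c,d)}$, the flux term becomes $q(u(t,d-);v) - q(u(t,c+);v)$ evaluated using the traces from \Cref{def:strongtrace} along the vertical lines $x=c$ and $x=d$. So in fact $\frac{d}{dt}\int_c^d \eta(u(t,x)|v)\,dx \le q(u(t,c+);v) - q(u(t,d-);v)$ as measures. This is not sign-definite, so the bare function is \emph{not} monotone; rather, one shows $\aplim$ from the left dominates $\aplim$ from the right directly. The cleanest route: pick a sequence of test functions $\psi_\epsilon$ that are (i) equal to $\chi(x)$ (an approximation of $\mathbbm 1_{(c,d)}$) times a tent function in $t$ supported on $(t_0-\epsilon, t_0+\epsilon)$ rising on $(t_0-\epsilon,t_0)$ and falling on $(t_0,t_0+\epsilon)$, apply the entropy inequality, and pass to the limit $\epsilon \downarrow 0$. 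Using the Strong Trace Property along the horizontal lines $t = t_0^{\pm}$ (applied with the Lipschitz curve $h \equiv $ constant time — or more precisely, using that BV-in-$t$ structure just established gives genuine pointwise limits in $L^1_x$), the left tent captures $\aplim_{t\to t_0^-}\int_c^d\eta(u(t,x)|v)\,dx$ and the right tent captures $-\aplim_{t\to t_0^+}\int_c^d\eta(u(t,x)|v)\,dx$, while the spatial flux integrand is bounded and the time interval over which it acts shrinks, so its contribution vanishes as $\epsilon\downarrow0$. Since the entropy inequality gives that the total is $\le 0$, we conclude $\aplim_{t\to t_0^-}\int_c^d\eta(u(t,x)|v)\,dx \ge \aplim_{t\to t_0^+}\int_c^d\eta(u(t,x)|v)\,dx$.

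For the statement at $t_0 = 0$, I would use the same tent-function argument but with a one-sided tent: $\phi_\epsilon$ rising linearly on $(0,\epsilon)$ and equal to $1$ then cut off far from $0$ (or just a half-tent supported on $[0,\epsilon)$), times $\chi(x)$, plugged into the full weak-entropy formulation that includes the initial-data term $\int \phi(0,x)\eta(u^0(x))\,dx$. The constant-in-time and constant-times-$v$ pieces are handled by the exact equality \eqref{entequality}-type identities (i.e. $\eta(\cdot|v)$ and $q(\cdot;v)$ differ from $\eta,q$ by a linear-in-$u$ term which is exactly conserved since $u$ solves \eqref{cl} weakly), so the weak-entropy inequality \eqref{entropic} upgrades to the family \eqref{relentb} \emph{including} the initial trace: $\int_c^d \eta(u^0(x)|v)\,dx \ge \aplim_{t\to0^+}\int_c^d \eta(u(t,x)|v)\,dx$ after letting $\epsilon\downarrow 0$ and $\chi\uparrow\mathbbm 1_{(c,d)}$.

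The main obstacle I anticipate is the careful justification that the approximate limits exist and coincide with the genuine one-sided limits of the BV-in-$t$ representative, and in particular justifying the interchange of the $\epsilon\downarrow0$ and $\chi\uparrow\mathbbm 1_{(c,d)}$ limits together with the use of the Strong Trace Property along the vertical boundaries $x=c,\,x=d$ — one must confirm the flux traces $q(u(\cdot,c^+);v)$, $q(u(\cdot,d^-);v)$ are the relevant $L^1_{loc}(\R^+)$ objects guaranteed by \Cref{def:strongtrace} and that their contribution is $O(\epsilon)$. The rest is routine measure-theoretic bookkeeping: continuity and boundedness of $\eta(\cdot|v)$, $q(\cdot;v)$ on $\overline W$, dominated convergence, and the standard fact that a monotone-up-to-an-$L^\infty$-perturbation function of one real variable has approximate one-sided limits equal to its classical one-sided limits.
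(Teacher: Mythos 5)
Your argument is correct, and it is essentially the standard proof: the paper does not prove \Cref{stitchinglemma} itself but defers to the cited reference, whose argument is exactly the one you outline (upgrade \eqref{entropic} to the relative form including the initial trace, deduce that $t\mapsto\int_c^d\eta(u(t,\cdot)|v)\,\diff x$ equals a.e.\ a nonincreasing function plus a Lipschitz flux contribution, and read off the one-sided approximate limits and jump inequalities). Two small remarks. First, your invocation of the Strong Trace Property ``along the horizontal lines $t=t_0^{\pm}$'' is not licensed by \Cref{def:strongtrace}, which only covers Lipschitz curves of the form $x=h(t)$; but you correctly fall back on the BV-in-$t$ representative, which is the actual mechanism, so nothing is lost. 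Second, you do not really need the strong traces at $x=c$ and $x=d$ either: since $u$ is bounded and $q(\cdot;v)$ is continuous, the flux term $\int\phi(t)\int\chi'(x)q(u;v)\,\diff x\,\diff t$ is bounded by $K\|\phi\|_{L^1}$ uniformly in the approximation $\chi\uparrow\mathbbm{1}_{(c,d)}$, which already yields the one-sided Lipschitz bound on the distributional time derivative and hence the $O(\epsilon)$ vanishing of the flux contribution in the tent-function limit.
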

We refer to \cite[Lemma 2.5]{MR3954680} for the proof.
\par The next lemma will control the boundaries of the cone of information.
\begin{lemma}[{\cite[Equation (5.2.5), p. 123]{MR3468916}}]\label{lem:qcontrolbyeta}
There exists a constant $c > 0$ such that:
\[
|q(a;b)| \leq c\eta(a|b), \text{ for any } (a,b) \in W \times W.
\]
\end{lemma}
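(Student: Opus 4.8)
The approach is to exploit that $q(\cdot\,;b)$ and $\eta(\cdot\,|b)$ both vanish to second order in their first slot at $a=b$, together with the uniform strict convexity of $\eta$ already recorded in \Cref{relentisl2}. First I would note that $q\in C^{3}(W)$: from the defining relation $\nabla q=\nabla\eta\,f'$ together with $\eta\in C^{3}(W)$ and $f\in C^{4}(W)$ one gets $\nabla q\in C^{2}(W)$, hence $q\in C^{3}(W)$. Since, by construction, $W$ is a bounded neighborhood of $d$ whose closure is a compact subset of $\mathcal{G}$, all derivatives of $\eta$, $q$ and $f$ appearing below are uniformly bounded on $W$.

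Next, fix $b\in W$ and set $g_{b}(a):=q(a;b)=q(a)-q(b)-\nabla\eta(b)\bigl(f(a)-f(b)\bigr)$. Then $g_{b}(b)=0$, and using $\nabla q=\nabla\eta\,f'$,
\begin{align*}
\nabla_{a}g_{b}(a)=\nabla q(a)-\nabla\eta(b)\,f'(a)=\bigl(\nabla\eta(a)-\nabla\eta(b)\bigr)f'(a),
\end{align*}
so that $\nabla_{a}g_{b}(b)=0$ as well, while $\nabla_{a}^{2}g_{b}(a)=\nabla^{2}q(a)-\nabla\eta(b)\,f''(a)$. A first-order Taylor expansion with integral remainder — along the segment from $b$ to $a$, which stays inside $W$ once $|a-b|$ is small; the remaining pairs with $|a-b|$ bounded away from $0$ are handled directly, since there $\eta(a|b)$ is bounded below by \Cref{relentisl2} while $q(a;b)$ is bounded by compactness — then gives
\begin{align*}
|q(a;b)|=|g_{b}(a)|\le\frac12\Bigl(\norm{\nabla^{2}q}_{L^{\infty}(W)}+\norm{\nabla\eta}_{L^{\infty}(W)}\norm{f''}_{L^{\infty}(W)}\Bigr)|a-b|^{2}=:M\,|a-b|^{2},
\end{align*}
with $M<\infty$ independent of $b$.

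Finally, \Cref{relentisl2} furnishes $K>0$ with $|a-b|^{2}\le K\,\eta(a|b)$ for all $(a,b)\in W\times W$; chaining the two inequalities yields $|q(a;b)|\le KM\,\eta(a|b)$, and one takes $c:=KM$. I do not expect any real obstacle here — the heart is a two-line Taylor estimate — the only point deserving attention being the uniformity in $b$ of the second-order remainder, which is immediate from the boundedness of $\nabla^{2}q$ and of the product $\nabla\eta\cdot f''$ on the relatively compact set $W$. (Alternatively one could observe that the common second-order structure forces $\nabla^{2}q(b)-\nabla\eta(b)f''(b)=\nabla^{2}\eta(b)f'(b)$, so $(a,b)\mapsto q(a;b)/\eta(a|b)$ extends continuously to $\overline W\times\overline W$, but the Taylor bound is cleaner.)
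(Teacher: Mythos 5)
Your proposal is correct and follows essentially the same route as the paper: both observe that $q(\cdot\,;b)$ vanishes to second order at $a=b$ (since $\nabla q=\nabla\eta f'$), bound it by $K|a-b|^2$ via the boundedness of the second derivative on $W$, and conclude with the equivalence $|a-b|^2\le K\eta(a|b)$ from \Cref{relentisl2}. The extra care you take about the segment staying in $W$ and the uniformity in $b$ is fine but not needed beyond what the paper's one-line argument already implicitly uses.
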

\begin{proof}
We have $q(b;b)=\partial_1q(b;b)=0$ for any $b \in W$, so by Lemma \ref{relentisl2} and $\nabla^2q \in C^0(W)$, there exists a constant $c$ such that,
\[
|q(a;b)| \leq K|a-b|^2 \leq c \eta(a|b).
\]
\end{proof}

\subsection{Proof of \Cref{prop:main}} Finally, we give the proof of \Cref{prop:main}. Fix $R,T > 0$, and let $\tau \in [0,T]$. 

\subsubsection{Step 1: dissipation calculations}\label{dissipationcalculations}
Consider initial data $v^0$ such that $||v^0||_{BV} \leq \delta_0, ||v^0-d||_{L^\infty(\R)} \leq \eps$ for $\delta_0, \eps > 0$ sufficiently small so that \Cref{lem:localstructurelemma}, \Cref{lem:shockdissipationestimate}, \Cref{prop:raredissipationestimate}, and \eqref{epsforft} are satisfied. Let $u \in \Sweak$ be a fixed wild solution. Consider a sequence of front tracking approximations $v_\nu$ (defined in Section \ref{classical}) with initial data verifying $v_\nu(0,\cdot) \to v^0$ in $L^2$ as $\nu \to 0$.
\begin{remark}
Note that \cite[Proposition 4]{MR1367356}---the existence of a $\nu$-approximate solution---holds for piecewise constant functions with initial data $v_\nu(0, \cdot) \in \mathcal{D}$. In particular, the results of \Cref{sec:ft} require $v_\nu(0, \cdot)-d \in L^1(\mathbb{R};\mathbb{R}^2)$. Note this is not a priori guaranteed for an $L^2$-approximation of $v^0 \in \mathcal{S}_{BV,\eps}^0$. However, as the $L^2$-norms in \Cref{prop:main} are derived over a finite interval $[-R-cT, R+cT]$, we may simply work with the cutoff initial data
\begin{align*}
\tilde{v}^0=\begin{cases}
d & x \leq -\tilde{R}, \\
v^0(x) & x \in [-\tilde{R}, \tilde{R}], \\
d & x \geq \tilde{R},
\end{cases}
\end{align*}
for some $\tilde{R}$ sufficiently large. We pick $\delta^0$ and $\eps$ sufficiently small so that $||\tilde{v}^0||_{BV(\R)}$ is small enough for \cite[Proposition 4]{MR1367356} to apply. This does not affect the estimates~\eqref{L1L2} and~\eqref{L2holder} in \Cref{prop:main} due to the finite speed of propagation.
\end{remark}
For fixed $\nu$, consider the shifted $\nu$-approximate front tracking solution $\psi$ with $\psi(0,x)=v_\nu(0,x)$ (defined in \Cref{sec:shifted}). For this $\psi$, consider the associated weight $a(t,x)$ defined in \eqref{adef}. 
Consider two successive interaction times $t_j < t_{j+1}$ in the shifted front tracking solution $\psi$. Call the curves of discontinuity in $(t_j, t_{j+1})$ $h_1, \dots, h_N$ for some $N \in \mathbb{N}$ such that
\begin{align*}
    h_1(t) < \cdots < h_N(t),
\end{align*}
for all $t \in (t_j, t_{j+1})$. Let us now define the cone of information for $t \geq 0$:
\begin{equation}\label{conebdry}\begin{aligned}
h_0(t)&:=-R+c(t-\tau), \\
h_{N+1}(t)&:=R-c(t-\tau).
\end{aligned}
\end{equation} 
\begin{remark}\label{rem:soi}
Recall that $c$ is defined by \Cref{lem:qcontrolbyeta}. We further take $c$ bigger if necessary to be larger than the magnitudes of the speeds of any of the shift functions $\dot{h}$ in \Cref{lem:shockdissipationestimate} or the classical waves speeds in \eqref{nushockspeed}, \eqref{nuaprare}. We remark that there are no interactions between waves in $\psi$ and the cone of information (which has boundaries $h_0, h_{N+1}$). 
\end{remark} 
Now, for $t \in [t_j, t_{j+1}]$, note that on the fragment of the cone
\begin{align*}
    Q^i_{t_j,t}=\{(r,x)\,|\,t_j < r < t, h_i(r) < x < h_{i+1}(r)\},
\end{align*}
both the functions $\psi(r,x)$ and $a(r,x)$ are constant. Thus, integrating \eqref{entropic} on $Q^i_{t_j,t}$ and using the Strong Trace Property (\Cref{def:strongtrace}) grants
\begin{align*}
\aplim_{s \to t-}&\int_{h_i(t)}^{h_{i+1}(t)}a(t-,x)\eta(u(s,x)|\psi(t,x))\diff x \\
&\leq \aplim_{s \to t_j^+}\int_{h_i(t_j)}^{h_{i+1}(t_j)}a(t_j+,x)\eta(u(s,x)|\psi(t_j,x))\diff x +\int_{t_j}^t(F_i^+(r)-F_{i+1}^-(r))\diff r,
\end{align*}
where
\begin{align*}
&F_i^+(r)=a(r,h_i(r)+)\left(q(u(r,h_i(r)+);\psi(r,h_i(r)+))-\dot{h}_i(r)\eta(u(r,h_i(r)+)|\psi(r,h_i(r)+))\right), \\
&F_i^-(r)=a(r,h_i(r)-)\left(q(u(r,h_i(r)-);\psi(r,h_i(r)-))-\dot{h}_i(r)\eta(u(r,h_i(r)-)|\psi(r,h_i(r)-))\right).
\end{align*}
Summing in $i$, and combining terms corresponding to $i$ in one sum and terms corresponding to $i+1$ in another, we obtain
\begin{align*}
\aplim_{s \to t-}&\int_{-R+c(t-\tau)}^{R-c(t-\tau)}a(t-,x)\eta(u(s,x)|\psi(t,x))\diff x \\
&\leq \aplim_{s \to t_j^+}\int_{-R+c(t_j-\tau)}^{R-c(t_j-\tau)}a(t_j+,x)\eta(u(s,x)|\psi(t_j,x))\diff x +\sum_{i=1}^N\int_{t_j}^t(F_i^+(r)-F_{i}^-(r))\diff r,
\end{align*}
where we have used that $F_0^+ \leq 0$ and $F_{N+1}^- \geq 0$ due to \Cref{lem:qcontrolbyeta}, the definition of $c$, and $\dot{h}_0=c=-\dot{h}_{N+1}$. We break the sum on the right-hand side into two parts: the first part collects the shocks and the other part collects the rarefactions. We know from \Cref{lem:shockdissipationestimate} and Proposition \ref{aprops} that for any $i$ denoting a shock front:
\begin{equation}\label{shockdissipation}\begin{aligned}
F_i^+(r)-F_i^-(r) \leq& -\frac{1}{K}|\psi(r,h_i(r)+)-\psi(r,h_i(r)-)| \\
&\times\left(\lambda_{\alpha_i}^\varphi(\psi(r,h_i(r)+),\psi(r,h_i(r)-))-\dot{h}_i(r)\right)^2 \\
&+ K\nu|\psi(r,h_i(r)+)-\psi(r,h_i(r)-)|,
\end{aligned}\end{equation}
where $\lambda_{\alpha_i}^\varphi$ is the $\nu$-approximate speed of the shock~\eqref{nushockspeed} and $\alpha_i = 1,2$ is the wave family of the $i^{th}$ front. 
We note that the last term of the bound~\eqref{shockdissipation} is present to account for the errors at small shocks with $\sigma > -2\sqrt{\nu}$: they can be deduced by the bounds on the $\nu$-approximate curves and velocities in equation~\eqref{eq:small-shock-errs}.

Denote $\mathcal{R}(r)$ the set of $i$ corresponding to rarefactions at time $r$. Then, from \Cref{prop:raredissipationestimate} and Proposition \ref{aprops}, we have
\begin{align}\label{raredissipation}
\sum_{i \in \mathcal{R}(r)}\int_{t_j}^t(F_i^+(r)-F_i^-(r))\diff r \leq K \nu(t-t_j)+K\nu|\mu_u|(Q_{t_j,t}),
\end{align}
where $Q_{t_j,t} := \cup_iQ_{t_j,t}^i$ for $t \in [t_j,t_{j+1}]$.
Collecting both families of waves, and denoting $\mathcal{S}(r)$ the set of $i$ corresponding to shocks:
\begin{align*}
\aplim_{s \to t-}\int_{-R+c(t-\tau)}^{R-c(t-\tau)}a(t-,x)\eta(u(s,x)|\psi(t,x))\diff x \leq&\, \aplim_{s \to t_j^+}\int_{-R+c(t_j-\tau)}^{R-c(t_j-\tau)}a(t_j+,x)\eta(u(s,x)|\psi(t_j,x))\diff x \\
&-\frac{1}{K}\Biggl(\int_{t_j}^t\sum_{i \in \mathcal{S}(r)}|\psi(r,h_i(r)+)-\psi(r,h_i(r)-)| \\
&\hphantom{-\frac{1}{K}\Biggl(}\times(\lambda_{\alpha_i}^\varphi(\psi(r,h_i(r)+),\psi(r,h_i(r)-))-\dot{h}_i(r))^2 \diff r\Biggr) \\
&+K\nu (t-t_j)+K\nu|\mu_u|(Q_{t_j,t}).
\end{align*}
Finally, let $0 < t_1 < ... < t_J$ be the times of wave interactions before $\tau$. Also denote $t_0=0$ and $t_{J+1}=\tau$. By the convexity of $\eta$, Lemma \ref{stitchinglemma}, and \eqref{atimedecay}, we derive:
\begin{align*}
\int_{-R}^R a(\tau,x)&\eta(u(\tau,x)|\psi(\tau,x))\diff x - \int_{-R-c\tau}^{R+c\tau} a(0,x)\eta(u(0,x)|\psi(0,x))\diff x \\
\leq&\, \aplim_{s \to \tau^+}\int_{-R}^R a(\tau,x)\eta(u(s,x)|\psi(\tau,x))\diff x - \int_{-R-c\tau}^{R+c\tau} a(0,x)\eta(u(0,x)|\psi(0,x))\diff x \\
\leq& \sum_{j=1}^{J+1}\Biggl(\aplim_{s \to t_j+}\int_{-R+c(t_j-\tau)}^{R-c(t_j-\tau)} a(t_j-,x)\eta(u(s,x)|\psi(t_j,x))\diff x \\
&\hphantom{\sum_{j=1}^{J+1}\Biggl(}-\aplim_{s \to t_{j-1}+}\int_{-R+c(t_{j-1}-\tau)}^{R-c(t_{j-1}-\tau)} a(t_{j-1}-,x)\eta(u(s,x)|\psi(t_{j-1},x))\diff x \Biggr) \\
\leq&\sum_{j=1}^{J+1}\Biggl(\aplim_{s \to t_j-}\int_{-R+c(t_j-\tau)}^{R-c(t_j-\tau)} a(t_j-,x)\eta(u(s,x)|\psi(t_j,x))\diff x \\
&\hphantom{\sum_{j=1}^{J+1}\Biggl(}-\aplim_{s \to t_{j-1}+}\int_{-R+c(t_{j-1}-\tau)}^{R-c(t_{j-1}-\tau)} a(t_{j-1}-,x)\eta(u(s,x)|\psi(t_{j-1},x))\diff x \Biggr) \\
\leq& -\frac{1}{K}\Biggl(\int_{0}^{\tau}\sum_{i \in \mathcal{S}(r)}|\psi(r,h_i(r)+)-\psi(r,h_i(r)-)| \\
&\hphantom{-\frac{1}{K}\Biggl(}\times(\lambda_{\alpha_i}^\varphi(\psi(r,h_i(r)+),\psi(r,h_i(r)-))-\dot{h}_i(r))^2\diff r\Biggr) \\
&+K\nu+K\nu|\mu_u|(Q),
\end{align*}
where $Q := \cup_{j=0}^{J-1} Q_{t_j,t_{j+1}}$ is the full cone of information with boundaries given by \eqref{conebdry}.
\par From this, we obtain:
\begin{align}\label{step1estimates1}
\int_{-R}^R a(\tau,x)\eta(u(\tau,x)|\psi(\tau,x))\diff x \leq \int_{-R-c\tau}^{R+c\tau} a(0,x)\eta(u(0,x)|\psi(0,x))\diff x+K\nu+K\nu|\mu_u|(Q),
\end{align}
and 
\begin{equation}\label{step1estimates2}\begin{aligned}
\frac{1}{K}\Biggl(\int_{0}^{\tau}\sum_{i \in \mathcal{S}(r)}&|\psi(r,h_i(r)+)-\psi(r,h_i(r)-)| \times(\lambda_{\alpha_i}^\varphi(\psi(r,h_i(r)+),\psi(r,h_i(r)-))-\dot{h}_i(r))^2\diff r\Biggr)  \\ 
 \leq& \int_{-R-c\tau}^{R+c\tau} a(0,x)\eta(u(0,x)|\psi(0,x))\diff x+K \nu+K\nu|\mu_u|(Q),
\end{aligned}\end{equation}
where $\lambda_{\alpha_i}^\varphi$ is the $\nu$-approximate speed of the shock~\eqref{nushockspeed} and $\alpha_i = 1,2$ is the wave family of the $i^{th}$ front. 

\subsubsection{Step 2: calculation of the amount of shifting}
In this next step, we control the $L^1$-distance between $\psi$ and $v_\nu$, the classical $\nu$-approximate front tracking solution with initial data $u_\nu(0,\cdot)=\psi(0,\cdot)$. From Lemmas \ref{bclemma} and \ref{cfklemma}, we have
\begin{align*}
||v_\nu(\tau,\cdot)-\psi(\tau, \cdot)||_{L^1((-R,R))}  \leq&\, Kd_\nu(u_\nu(\tau,\cdot),\psi(\tau,\cdot)) \\
         \leq&\, K\int_0^\tau  \hspace{.4in}\sum_{\mathclap{\substack{i:\text{shock in $\psi$ at time $r$}}}}|\psi(r,h_i(r)-)-\psi(r,h_i(r)+)|\big|\dot{h}_i-\lambda_{\alpha_i}^\varphi\big|\diff r,
\end{align*}
where $\lambda_{\alpha_i}^\varphi$ is the $\nu$-approximate shock speed~\eqref{nushockspeed}.
Applying Cauchy-Schwarz, using the uniform BV bound on $\psi(t, \cdot)$~\eqref{eq:shifed-bv-bound}, and using \eqref{step1estimates2} gives
\begin{equation}\label{step2estimate}\begin{aligned}
||v_\nu(\tau&,\cdot)-\psi(\tau, \cdot)||_{L^1((-R,R))} \\
\leq&\, K\Bigl[\int_0^\tau  \hspace{.4in}\sum_{\mathclap{\substack{i:\text{shock in $\psi$ at time $r$}}}}|\psi(r,h_i(r)-)-\psi(r,h_i(r)+)| \diff r \Bigr]^\frac{1}{2}  \\
&\,\times\Bigl[\int_0^\tau  \hspace{.4in}\sum_{\mathclap{\substack{i:\text{shock in $\psi$ at time $r$}}}}|\psi(r,h_i(r)-)-\psi(r,h_i(r)+)|\big|\dot{h}_i-\lambda_{\alpha_i}^\varphi(\psi(r,h_i(r)+),\psi(r, h_i(r)-))\big|^2 \diff r \Bigr]^\frac{1}{2} \\
\leq& \,K \tau^{1/2}\Bigl[\int_{-R-c\tau}^{R+c\tau}a(0,x)\eta(u(0,x)|\psi(0,x))\diff x +K \nu+K\nu|\mu_u|(Q)\Bigr]^{\frac{1}{2}}.
\end{aligned}\end{equation}

\subsubsection{Step 3: putting it all together}
The triangle inequality gives
\begin{align*}
||u(\tau,\cdot)-v_\nu(\tau,\cdot)||_{L^1((-R,R))} \leq ||u(\tau,\cdot)-\psi(\tau,\cdot)||_{L^1((-R,R))}+||\psi(\tau,\cdot)-v_\nu(\tau,\cdot)||_{L^1((-R,R))}.
\end{align*}
H\"{o}lder's inequality gives:
\begin{align*}
||u(\tau,\cdot)-v_\nu(\tau,\cdot)||_{L^1((-R,R))} \leq \sqrt{2R}||u(\tau,\cdot)-\psi(\tau,\cdot)||_{L^2((-R,R))}+||\psi(\tau,\cdot)-v_\nu(\tau,\cdot)||_{L^1((-R,R))}.
\end{align*}
Then, from Lemma \ref{relentisl2}, \eqref{step1estimates1}, and \eqref{step2estimate}, we obtain
\begin{align*}
||u(\tau,\cdot)-v_\nu(\tau,\cdot)||_{L^1((-R,R))} \leq K\left(\sqrt{2R}+\sqrt{\tau}\right)\Bigl[||u(0,\cdot)-u_\nu(0,\cdot)||_{L^2((-R-ct, R+ct))}^2+K\nu +K\nu|\mu_u|(Q)\Bigr]^\frac{1}{2},
\end{align*}
where we have used also $u_\nu(0,\cdot)=\psi(0,\cdot)$. 
\par We recall that $\mu_u$ is locally finite and that $Q$ is compact, giving $|\mu_u|(Q) < \infty$. Finally, taking the limit $\nu \to 0$, we obtain a solution $v$ for \eqref{cl} with $v(0,\cdot)=v^0(\cdot)$ and $v_\nu(t,\cdot) \to v(t,\cdot)$ in $L^1$ for all $t$. We additionally obtain the estimate \eqref{L1L2}. Using the interpolation inequality $||f||_{L^2} \leq \sqrt{||f||_{L^1}||f||_{L^\infty}}$ gives \eqref{L2holder}. This completes the proof of \Cref{prop:main}.

\appendix 
\section{A hyperbolic system verifying \textbf{(GL)} which does not verify {Hypotheses 1}}\label{appendix-hypd}
In this section, we present an example of a hyperbolic system verifying \textbf{(GL)} which does not verify {Hypotheses 1} (d) relative to any neighborhood of the origin. We make the standard conventions
\begin{align}
\nabla \lambda_i(u) \cdot r_i(u) &> 0,\hphantom{_{ij}} \indent u \in \mathcal{V}, \label{gnl+} \\
\ell_i(u) \cdot r_j(u)&=\delta_{ij}, \indent i,j=1,2. \label{evorient}
\end{align}
First, we prove a lemma (following \cite{MR4487515}) that provides a useful criterion for whether a system verifies {Hypotheses 1} (d) relative to a set $\mathcal{V}$.
\begin{lemma}\label{lem:obstructiontod}
Let \eqref{cl} verify \textbf{(GL)} and additionally verify {Hypotheses 1} (d) relative to $\mathcal{V} \subset \mathcal{G}$. Then, for any $b \in \mathcal{V}$,
\begin{align*}
\ell_i(b)f''(b)(r_j(b),r_j(b)) \geq 0, \indent \text{ for $i \neq j \in \{1,2\}$}.
\end{align*}
\end{lemma}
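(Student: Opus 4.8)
The plan is to use Hypotheses 1 \ref{hyp-d} to convert the convexity or concavity of the scalar map $u \mapsto \ell_i(b)\cdot f(u)$ into a semidefiniteness statement for the symmetric bilinear form $(v,w) \mapsto \ell_i(b) f''(b)(v,w)$, and then to pin down the sign of that form by testing it against $r_i(b)$, where genuine nonlinearity forces strict positivity.

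First I would fix $b \in \mathcal{V}$ and $i \in \{1,2\}$, and set $g(u) := \ell_i(b)\cdot f(u)$, with $\ell_i(b)$ the left eigenvector of $f'(b)$ associated to $\lambda_i(b)$, normalized so that $\ell_i(b)\cdot r_j(b) = \delta_{ij}$ as in~\eqref{evorient}. Since $f \in C^4(\mathcal{G})$, the function $g$ is $C^2$ in a convex subneighborhood of $b$ inside $\mathcal{V}$, and its Hessian is exactly $\nabla^2 g(u)(v,w) = \ell_i(b) f''(u)(v,w)$. By Hypotheses 1 \ref{hyp-d}, $g$ is convex or concave on $\mathcal{V}$, hence $\nabla^2 g(b)$ is positive semidefinite or negative semidefinite; in either case the form $v \mapsto \ell_i(b) f''(b)(v,v)$ is of a single sign.

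To decide which sign occurs, I would differentiate the eigenvector relation $f'(u) r_i(u) = \lambda_i(u) r_i(u)$ in the direction $r_i$ at $u = b$, exactly as in the proof of \Cref{rarecontract}, obtaining
\[ f''(b)(r_i(b),r_i(b)) = \big(\nabla\lambda_i(b)\cdot r_i(b)\big)\, r_i(b) + \big(\lambda_i(b) I - f'(b)\big)\big(r_i'(b)\, r_i(b)\big). \]
Contracting on the left with $\ell_i(b)$ and using $\ell_i(b)\big(\lambda_i(b) I - f'(b)\big) = 0$ together with $\ell_i(b)\cdot r_i(b) = 1$ yields
\[ \ell_i(b) f''(b)(r_i(b),r_i(b)) = \nabla\lambda_i(b)\cdot r_i(b) > 0, \]
the strict positivity being genuine nonlinearity from \textbf{(GL)} (cf.~\eqref{gnl+}). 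Hence $\ell_i(b) f''(b)(\cdot,\cdot)$ must be positive semidefinite, so $\ell_i(b) f''(b)(v,v) \geq 0$ for every $v \in \R^2$; taking $v = r_j(b)$ with $j \neq i$ gives the claim.

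There is essentially no obstacle here: the only points requiring a little care are that the convexity/concavity of $g$ should be read on a convex subneighborhood of $b$ so that $\nabla^2 g(b)$ is genuinely (semi)definite, and that the differentiation identity for $f''(r_i,r_i)$ is precisely the one already derived in \Cref{rarecontract}.
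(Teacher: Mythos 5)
Your proof is correct and follows essentially the same route as the paper's: use Hypotheses 1 \ref{hyp-d} to conclude the Hessian of $u\mapsto \ell_i(b)\cdot f(u)$ has a single sign, then fix that sign by showing $\ell_i(b)f''(b)(r_i(b),r_i(b)) = (\nabla\lambda_i(b)\cdot r_i(b))(\ell_i(b)\cdot r_i(b)) > 0$ via genuine nonlinearity. The only (cosmetic) difference is in how that identity is obtained --- you differentiate the eigenvector relation $f'r_i=\lambda_i r_i$ and contract with $\ell_i(b)$, whereas the paper takes a finite-difference limit of $\ell_i(b)[f'(u)-f'(b)]r_i(u)$ along the $r_i(b)$ direction --- and both computations are valid.
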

\begin{proof}
Let $b \in \mathcal{V}$. Let $\ell_i(b)$ be a left eigenvector of the $i^{th}$ family at $b$. Then, by {Hypotheses 1} (d), $u \mapsto \ell_i(b) f(u)$ is convex or concave on $\mathcal{V}$. Now, let $r_i(u)$ be a right eigenvector of the $i^{th}$ family (not necessarily at $b$). Then, we compute:
\begin{align*}
\ell_i(b)[f'(u)-f'(b)]r_i(u)=\ell_i(b)\lambda_i(u)r_i(u)-\lambda_i(b)\ell_i(b)r_i(u)=\ell_i(b)\cdot r_i(u)[\lambda_i(u)-\lambda_i(b)].
\end{align*} 
Evaluating this at $u=b+\eps r_i(b)$, dividing by $\eps$, and taking the limit $\eps \to 0$, we obtain
\begin{align} \label{eq:GNL-via-SSJC}
\ell_i(b)f''(b)(r_i(b), r_i(b))=(\nabla \lambda_i(b) \cdot r_i(b))(\ell_i(b)\cdot r_i(b)) > 0,
\end{align}
by \eqref{gnl+} and \eqref{evorient}. By {Hypotheses 1} \cref{hyp-d}, this implies that the function $u \mapsto \ell_i(b)\cdot f(u)$ is convex for all $b \in \mathcal{V}$, i.e. the matrix:
\begin{align*}
\ell_i(b)f''(b),
\end{align*}
is positive semi-definite for all $b \in \mathcal{V}$. Finally evaluating at $r_j(b)$:
\begin{align}\label{nonstrictsjc}
\ell_i(b)f''(b)(r_j(b), r_j(b)) \geq 0, \indent i \neq j, \ b \in \mathcal{V}.
\end{align}
as desired.
\end{proof}
\begin{remark} In \cite{MR236527}, Smoller--Johnson constructed global $L^\infty$ solutions for some special systems verifying the \emph{Smoller--Johnson condition} globally, alongside some other exceptional properties. We note that \eqref{nonstrictsjc} is precisely a non-strict form of the Smoller--Johnson condition relative to $\mathcal{V}$.
They additionally note that the Smoller--Johnson condition implies the classical assumption of Glimm--Lax that the interaction of two weak $i$-waves always produces a $j$-rarefaction in their construction of small $L^\infty$ solutions \cite{MR265767}. \end{remark}

\par Now, we use \Cref{lem:obstructiontod} to show that the following system verifies \textbf{(GL)} but does not verify {Hypotheses 1} (d). Consider the system~\eqref{cl} with the flux 
\begin{equation} \label{eq:flux-no-ssjc} f(u,v) = \begin{pmatrix}
    (u-1)^2 + 3uv -v^2\\
    (v+1)^2 + 3uv - u^2
\end{pmatrix}. \end{equation}
At the origin we find 
$$ f'(0,0) = \begin{pmatrix}
    -2 & 0 \\
    0 & 2
\end{pmatrix}.$$
Hence, in a neighborhood of zero the flux is strictly hyperbolic, with $\lambda_1(0) = -2,\ \lambda_2(0) = 2$, and $r_i(0) = \ell_i(0)^t = e_{i}$ for $i =1,\ 2$ where $e_i$ are the standard coordinate vectors.
Computing the second derivative, we find 
$$ \ell_1(0) f''(0) = \begin{pmatrix}
    2 & 3 \\
    3 & -2
\end{pmatrix}.$$
We note that 
$$(\ell_1(0)r_1(0)) (\nabla \lambda_1(0) r_1(0)) = \ell_1(0) f''(0)(r_1(0),r_1(0)) = 2 > 0,$$ 
which holds for all $d$ sufficiently close to the origin due to continuity of $r_1,\ \ell_1,\ f''$, and $\nabla \lambda_1$. 
This shows the system is genuinely nonlinear in the first family by~\eqref{eq:GNL-via-SSJC} (and an identical computation shows the same property in the second family) in a neighborhood of the origin.
\par Finally, we observe $\ell_1(0) f''(0)(r_2(0), r_2(0)) = -2 < 0$. This shows the flux~\eqref{eq:flux-no-ssjc} fails to verify {Hypotheses 1} \ref{hyp-d} relative to any neighborhood of the origin, as if it did this would contradict \Cref{lem:obstructiontod}.

\section{Entropy conservation}\label{appendix-entropy}

In this section we prove \Cref{prop:entropy-conservation}. While our proof is written in the $1$-d setting, the same argument holds on $\T^d$. 

To begin, we recall that $u \in B_{p,\infty}^\alpha(\T^1)$ if 
$$ \norm{u}_{B_{p,\infty}^\alpha(\T^1)} := \norm{u}_{L^p(\T^1)} + \sup_{y \in \T^1} \frac{\norm{u(\cdot) - u(\cdot - y)}_{L^p(\T^1)}}{|y|^{\alpha}}  < \infty. $$
From this definition, we arrive at the following bounds on $u$ and its mollifications $u_\delta := u \ast \gamma_\delta$: 
\begin{align}
    \norm{\partial_x u_\delta}_{L^p(\T^1)} &\leq K \norm{u}_{B^\alpha_{p,\infty}(\T^1)} \delta^{\alpha -1}, \label{eq:besov-1}\\
    \norm{u - u_\delta}_{L^p(\T^1)} & \leq K \norm{u}_{B^\alpha_{p,\infty}(\T^1)} \delta^\alpha,\label{eq:besov-2}\\
    \norm{u(\cdot) - u(\cdot - y)}_{L^p(\T^1)} &\leq K \norm{u}_{B^\alpha_{p,\infty}(\T^1)} |y|^\alpha\label{eq:besov-3}.
\end{align}

The strategy we employ is entirely similar to that of \cite{CET-Onsager}. We will mollify the system~\eqref{cl} to deduce an entropy conservation (up to an error) and use the above bounds to deduce that entropy conservation holds at the limit. 

\noindent 
\textbf{Step 1: Negative time extension.} To begin, we note that give a solution $u \in L^\infty(\R^+\times \T^1)$ we may extend it to negative times by assigning 
$$ u(t,x) := u(0,x) \quad \ \forall t < 0.$$
After this extension, $u$ distributionally solves the equation 
\begin{equation}\label{eq:cl-weak-ext} \partial_t u + \partial_x f(u) = \begin{cases}
    0 & t \geq 0, \\
    \partial_x f(u(0,\cdot)) & t < 0.
\end{cases} \end{equation}

\noindent
\textbf{Step 2: Space-time mollification.}
Next, we take $\gamma$ to be a smooth mollifier; that is, $\gamma \in C^\infty(\R)$, $\int \gamma\,dx = 1$, $\text{supp}(\gamma) \subset [-1,1]$, and $\gamma \geq 0$. 
The mollifier is then 
$$ \gamma_\epsilon(x) := \frac{1}{\epsilon} \gamma\left( \frac{x}{\epsilon} \right).$$
For any fixed $t_0, x_0\in \R$ we define the test function
$$ \Phi(t,x) = \gamma_\epsilon(t_0 - t)\gamma_\delta(x_0 - x).$$
Testing~\eqref{eq:cl-weak-ext} with $\Phi$ yields the following equation evaluated at $(t_0,x_0)$,
\begin{equation} \partial_t u_{\epsilon,\delta} + \partial_x f(u_{\epsilon,\delta}) = \partial_x[ f(u_{\epsilon,\delta}) - f(u)_{\epsilon,\delta}] + \int_{-\infty}^0 \partial_x f(u_0)_\delta \gamma_\epsilon (s-t)\,\diff s \label{eq:space-time-mol} \end{equation}
where $u_{\epsilon,\delta} :=u\ast_t \gamma_\epsilon \ast_x \gamma_\delta$. 
We remark that in the above and in future formulas, $v_{\epsilon,\delta}$ denotes a function mollified in time and space, with parameter $\epsilon$ for time and $\delta$ for space. 

Multiplying~\eqref{eq:space-time-mol} by $\nabla \eta(u_{\epsilon,\delta})$ we use that $u_{\epsilon,\delta}$ is smooth to deduce
\begin{equation}
    \partial_t \eta(u_{\epsilon,\delta}) + \partial_x q(u_{\epsilon,\delta}) = \nabla\eta(u_{\epsilon,\delta})\partial_x[ f(u_{\epsilon,\delta}) - f(u)_{\epsilon,\delta}] + \nabla\eta(u_{\epsilon,\delta})\int_{-\infty}^0 \partial_x f(u_0)_\delta \gamma_\epsilon (s-t)\,\diff s \label{eq:space-time-mol-ent}.
\end{equation}
We now multiply~\eqref{eq:space-time-mol-ent} by a test function of the form $\phi \chi_\epsilon$, where $\phi \in C^\infty_c(\R\times \T^1)$ and $$\chi_\epsilon(t) = \int_{-\infty}^t \gamma_\epsilon(s+\epsilon)\,ds. $$
After integrating in time and space, we find
\begin{equation} \label{eq:test-before-eps-limit}
\begin{aligned}
\int_\R \int_{\T^1} \chi_\epsilon \partial_t \phi \eta(u_{\epsilon,\delta}) + \chi_\epsilon \partial_x \phi q(u_{\epsilon,\delta}) \,\diff x\,\diff t=&\, \int_\R \int_{\T^1} \chi_\epsilon\phi \nabla\eta(u_{\epsilon,\delta})\partial_x[ f(u_{\epsilon,\delta}) - f(u)_{\epsilon,\delta}]\,\diff x\,\diff t \\
    &\,- \int_\R \int_{\T^1} \partial_t \chi_\epsilon \phi\eta(u_{\epsilon,\delta})\,\diff x\,\diff t \\
    &\,+ \int_\R\int_{\T^1}\chi_\epsilon\phi\nabla\eta(u_{\epsilon,\delta})\int_{-\infty}^0 \partial_x f(u_0)_\delta \gamma_\epsilon (s-t)\,\diff s\,\diff x\,\diff t.
    \end{aligned}
\end{equation}

\noindent
\textbf{Step 3: Limits of our mollification parameters.}
We first note that 
$$ - \int_\R \int_{\T^1} \partial_t \chi_\epsilon \phi\eta(u_{\epsilon,\delta})\,\diff x\,\diff t = -\int_{-2\eps}^\eps \int_{\T^1} \gamma_\eps(t) \phi \eta(u_{\eps,\delta}(0,x))\,\diff x\,\diff t \to - \int_{\T^1}  \phi(0,x) \eta(u_{\eps,\delta}(0,x))\,\diff x$$
and observe the final term of~\eqref{eq:test-before-eps-limit} converges to zero, due to the bound
\begin{multline*}
    \int_\R\int_{\T^1}\chi_\epsilon\phi \nabla\eta(u_{\epsilon,\delta})\int_{-\infty}^0 \partial_x f(u_0)_\delta \gamma_\epsilon (s-t)\,\diff s\,\diff x\,\diff t \\\leq \norm{\phi \nabla \eta(u_{\epsilon,\delta})\partial_x f(u_0)_\delta}_{L^\infty(\R\times \T^1)} \int_{-\epsilon}^T \int_{-\infty}^0 \gamma_\epsilon(s-t) \,\diff s\,\diff t \lesssim \epsilon,
\end{multline*}
where $T$ is the largest time for which $\phi$ is supported.
As a result, taking $\epsilon \to 0$ in~\eqref{eq:test-before-eps-limit} we establish
\begin{equation} \label{eq:eps-lim}
\begin{aligned}
    \int_0^\infty \int_{\T^1} \partial_t \phi \eta(u_{\delta}) + \partial_x \phi q(u_{\delta})\,\diff x\,\diff t =&\, \int_0^\infty \int_{\T^1} \phi \nabla \eta(u_{\delta})\partial_x[ f(u_{\delta}) - f(u)_{\delta}]\,\diff x\,\diff t \\
    &\,- \int_{\T^1} \phi(0,\cdot)  \eta(u_{\delta}(0,\cdot))\,\diff x.
\end{aligned}
\end{equation}

Taking the $\delta \to 0$ limit of~\eqref{eq:eps-lim} gives us
\begin{equation} \label{eq:del-lim}
\begin{aligned}
    \int_0^\infty \int_{\T^1} \partial_t \phi \eta(u) + \partial_x \phi q(u)\,\diff x\,\diff t =&\, \lim_{\delta \to 0} \int_0^\infty \int_{\T^1} \phi \nabla \eta(u_{\delta})\partial_x[ f(u_{\delta}) - f(u)_{\delta}]\,\diff x\,\diff t \\
    &\,- \int_{\T^1} \phi(0,\cdot)  \eta(u_{}(0,\cdot))\,\diff x 
\end{aligned}
\end{equation}
To realize the entropy equality \eqref{eq:ent-eq}, we must show the commutator term converges to zero. 
To do this,we recall the following lemma,  
\begin{lemma}[{\cite[Lemma 3.1]{gwiazda2018note}}] \label{lem:commutator}
    Let $u \in L^2_\loc(\T^1; B_{\eps_1}(d))$ and  $f \in C^2(B_{\eps_1}(d)).$ Then there exists a constant $K$ such that 
    $$ \norm{f(u)_\delta - f(u_\delta)}_{L^q(\T^1)} \leq K\left( \norm{u_\delta - u}_{L^{2q}(\T^1)}^2 + \sup_{y \in \mathrm{supp}(\gamma_\delta)} \norm{u(\cdot) - u(\cdot - y)}_{L^{2q}(\T^1)}^2 \right).$$
\end{lemma}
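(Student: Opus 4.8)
The plan is to reproduce the classical Constantin--E--Titi commutator estimate (as in \cite{CET-Onsager}, \cite{gwiazda2018note}), now keeping track of the $L^q$ exponents. The entire point is that, since $\int \gamma_\delta = 1$, the first-order Taylor term of $f$ centered at the mollified value $u_\delta(x)$ integrates away, leaving only a quadratic remainder that is controlled by translation-differences of $u$.

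First I would record the pointwise identity. Fix $x \in \T^1$. Because $u_\delta(x) = \int \gamma_\delta(y)\, u(x-y)\,\diff y$ with $\int \gamma_\delta(y)\,\diff y = 1$, one has $\int \gamma_\delta(y)\, f'(u_\delta(x))(u(x-y)-u_\delta(x))\,\diff y = 0$, so
\[
f(u)_\delta(x) - f(u_\delta(x)) = \int \gamma_\delta(y)\big[\, f(u(x-y)) - f(u_\delta(x)) - f'(u_\delta(x))(u(x-y)-u_\delta(x))\,\big]\,\diff y .
\]
Since $B_{\eps_1}(d)$ is convex, both $u(x-y)$ and the weighted average $u_\delta(x)$ lie in its closure, so the bracketed term is the second-order Taylor remainder of $f$ and is bounded by $\tfrac12 \|f''\|_{L^\infty(B_{\eps_1}(d))}|u(x-y)-u_\delta(x)|^2$. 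Then I would split $|u(x-y)-u_\delta(x)|^2 \le 2|u(x-y)-u(x)|^2 + 2|u(x)-u_\delta(x)|^2$ to obtain the pointwise bound
\[
|f(u)_\delta(x) - f(u_\delta(x))| \le \|f''\|_{L^\infty(B_{\eps_1}(d))}\int \gamma_\delta(y)\big(|u(x-y)-u(x)|^2 + |u(x)-u_\delta(x)|^2\big)\,\diff y.
\]

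Next I would take the $L^q(\T^1)$ norm in $x$ and apply Minkowski's integral inequality to pull $\int \gamma_\delta(y)\,\diff y$ outside the norm, then use the elementary identity $\|\,|g|^2\,\|_{L^q(\T^1)} = \|g\|_{L^{2q}(\T^1)}^2$ with $g = u(\cdot-y)-u(\cdot)$ and $g = u(\cdot)-u_\delta(\cdot)$. Finally, since $\int \gamma_\delta(y)\,\diff y = 1$ and $\mathrm{supp}(\gamma_\delta)\subset[-\delta,\delta]$, this gives
\[
\norm{f(u)_\delta - f(u_\delta)}_{L^q(\T^1)} \le \|f''\|_{L^\infty(B_{\eps_1}(d))}\Big(\sup_{y\in\mathrm{supp}(\gamma_\delta)}\norm{u(\cdot)-u(\cdot-y)}_{L^{2q}(\T^1)}^2 + \norm{u-u_\delta}_{L^{2q}(\T^1)}^2\Big),
\]
which is precisely the asserted bound with $K = \|f''\|_{L^\infty(B_{\eps_1}(d))}$; all the right-hand norms are finite because $u$ is valued in a fixed bounded ball and $\T^1$ is compact.

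The argument is entirely routine, so there is no real obstacle; the only points needing a sentence of care are that convolution on $\T^1$ is periodic (for $\delta$ small one may view $\gamma_\delta$ as supported inside a fundamental domain, with all translations understood periodically), and that the Taylor remainder estimate requires the segment joining $u_\delta(x)$ to $u(x-y)$ to lie where $f$ is $C^2$ with bounded second derivative --- which holds by convexity of $B_{\eps_1}(d)$, after shrinking $\eps_1$ if necessary so that $\overline{B_{\eps_1}(d)} \subset \mathcal{G}$.
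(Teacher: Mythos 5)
Your argument is correct: the cancellation of the first-order Taylor term via $\int\gamma_\delta=1$, the quadratic remainder bound using convexity of $B_{\eps_1}(d)$, the splitting $|u(x-y)-u_\delta(x)|^2\le 2|u(x-y)-u(x)|^2+2|u(x)-u_\delta(x)|^2$, and the Minkowski/$L^{2q}$ step all check out, yielding the stated bound with $K=\norm{f''}_{L^\infty}$. The paper does not prove this lemma itself but quotes it from the cited reference, and your proof is essentially the standard Constantin--E--Titi commutator argument used there, so nothing further is needed.
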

Applying \Cref{lem:commutator} with the estimates~\eqref{eq:besov-1},~\eqref{eq:besov-2}, and~\eqref{eq:besov-3} we see that if $u \in L^3(0,T; B^\alpha_{3,\infty}(\R))$ for $\alpha > 1/3$ then
$$ \int_0^T \norm{\partial_x u_\delta(t,\cdot)}_{L^3(\T^1)} \norm{ f(u_{\delta}(t,\cdot)) - f(u(t,\cdot))_{\delta}}_{L^{3/2}(\T^1)}\,\diff t \leq C\norm{u}_{L^3(0,T; B^\alpha_{3,\infty}(\T^1))}^3\delta^{3\alpha - 1} \to 0, $$
establishing that $u$ locally conserves entropy. 

\section{Approximations of Sobolev data} \label{appendix-sobolev}
In this section we construct sequences of approximations $\{u_\delta\}_{\delta > 0}$ approximating data $u \in W_\loc^{s,p}(\R)$ for $s > 0$ and $p \geq 1$. 
We state estimates on both the variation of the $u_\delta$ and their rate of convergence to $u$. 
Specifically, we fix a mollifier $\gamma \in C^\infty(\R)$ satisfying~\eqref{eq:mol-assumptions}. Defining $\gamma_\delta(x):=\frac{1}{\delta}\gamma(\frac{x}{\delta}), $
we consider the sequence $$u_\delta = u\ast \gamma_\delta,$$
for an arbitrary $u \in W^{s,p}_\loc(\R;B_{\eps_1}(d))$.
\begin{lemma} \label{lem:sob-for-thm-obs}
    Let $\eps_1 > 0$. For any $u \in L^\infty(\R;B_{\eps_1}(d))$ we have 
    \beq \label{eq:TV-bound-Linf} TV(u_\delta; L) \leq K \norm{u}_{L^\infty(\R)} \frac{L}{\delta}, \quad \forall \ L > 0.\eeq
    Further, if $u \in W^{s,p}(\R;B_{\eps_1}(d))$ for $s > 0$ and $p \in (1, \infty]$, then for any fixed interval $(-R,R)$ there exists a constant $K > 0$ such that
    \begin{align}
        \norm{u_\delta}_{BV((a,b))} &\leq K \norm{u}_{W^{s,p}((-R-\delta,R+\delta))} \frac{|b-a|^{1/q}}{\delta^{1-s}}, \quad \forall \ (a,b)\subset (-R,R), \label{eq:TV-frac-L}\\
        \norm{u - u_\delta}_{L^2(( -R, R))} &\leq K  \begin{cases}\norm{u}_{W^{s,p}((-R-\delta,R+\delta))} \delta^s & p\geq 2, \\ \norm{u}_{W^{s,p}((-R-\delta,R+\delta))}^{p/2}\delta^{s p/2} & p < 2,\end{cases}\label{eq:L2-cv-frac-L}
    \end{align}
    where $q \geq 1$ is such that $\frac{1}{p} + \frac{1}{q} = 1.$
\end{lemma}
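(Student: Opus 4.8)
The plan is to establish all three bounds by elementary mollification estimates. Throughout set $u_\delta := u\ast\gamma_\delta$; recall from \eqref{eq:mol-assumptions} that $\gamma$ is $C^\infty$, nonnegative, supported in $[-1,1]$ with unit mass, so $u_\delta$ is $C^\infty$, $u_\delta' = u\ast\gamma_\delta'$ with $\int_\R\gamma_\delta' = 0$, and $u_\delta$ still takes values in $\overline{B_{\eps_1}(d)}$. I treat only $0 < s < 1$, which is the case used throughout the paper (cf.\ the reductions in the proofs of \Cref{thm:main-gl} and \Cref{prop:sob-obs}). The bound \eqref{eq:TV-bound-Linf} is immediate: $\gamma_\delta'(y) = \delta^{-2}\gamma'(y/\delta)$ has $L^1(\R)$-norm $\delta^{-1}\norm{\gamma'}_{L^1(\R)}$, so $\norm{u_\delta'}_{L^\infty(\R)} \leq \delta^{-1}\norm{\gamma'}_{L^1(\R)}\norm{u}_{L^\infty(\R)}$, and integrating over any interval of length $L$ and taking the supremum over its location yields \eqref{eq:TV-bound-Linf}.

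For the two Sobolev estimates the engine is the pair of inequalities
\begin{align*}
\norm{u_\delta'}_{L^p((a,b))} &\leq K\,\delta^{s-1}\,\norm{u}_{W^{s,p}((-R-\delta,R+\delta))},\qquad (a,b)\subset(-R,R),\\
\norm{u - u_\delta}_{L^p((-R,R))} &\leq K\,\delta^{s}\,\norm{u}_{W^{s,p}((-R-\delta,R+\delta))},
\end{align*}
for $p\in(1,\infty)$ with conjugate exponent $q$ (and with the evident modification for $p=\infty$, $q=1$, in which the Sobolev--Slobodeckij seminorm is replaced by the H\"older seminorm). To prove the first I would write $u_\delta'(x) = \int_{|y|\leq\delta}(u(x-y)-u(x))\gamma_\delta'(y)\,\diff y$ using $\int\gamma_\delta' = 0$, apply Minkowski's integral inequality to bring the $L^p_x$-norm inside, split $|\gamma_\delta'(y)| = \bigl(|\gamma_\delta'(y)|\,|y|^{(1+sp)/p}\bigr)\,|y|^{-(1+sp)/p}$, and apply H\"older in $y$: the first factor contributes $\bigl(\int_{|y|\leq\delta}|\gamma_\delta'(y)|^q|y|^{(1+sp)q/p}\,\diff y\bigr)^{1/q}$, which after the substitution $y = \delta z$ equals $K\delta^{s-1}$ (the exponent arithmetic relies on $2 - 1/p = 1 + 1/q$), while the second factor equals $\bigl(\int_{|y|\leq\delta}|y|^{-1-sp}\norm{u(\cdot-y)-u}_{L^p((a,b))}^p\,\diff y\bigr)^{1/p}$, which by Fubini and the change of variables $x' = x-y$ is at most $\norm{u}_{W^{s,p}((-R-\delta,R+\delta))}$ (the change of variables is legitimate because translating $(a,b)\subset(-R,R)$ by $|y|\leq\delta$ keeps it inside $(-R-\delta,R+\delta)$). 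The second inequality is obtained in exactly the same way, starting instead from $u(x) - u_\delta(x) = \int(u(x)-u(x-y))\gamma_\delta(y)\,\diff y$ and $\int\gamma_\delta = 1$; the only difference is that now $\int_{|y|\leq\delta}\gamma_\delta(y)^q|y|^{(1+sp)q/p}\,\diff y$ scales like $\delta^{sq}$, producing $\delta^s$ in place of $\delta^{s-1}$.

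Granting these, the lemma follows quickly. For \eqref{eq:TV-frac-L}, since $u_\delta\in C^1$ we have $\norm{u_\delta}_{BV((a,b))} = \int_a^b|u_\delta'|\,\diff x \leq |b-a|^{1/q}\norm{u_\delta'}_{L^p((a,b))}$ by H\"older, and the first engine estimate finishes it. For \eqref{eq:L2-cv-frac-L} with $p\geq 2$, H\"older on the bounded interval gives $\norm{u-u_\delta}_{L^2((-R,R))} \leq (2R)^{1/2-1/p}\norm{u-u_\delta}_{L^p((-R,R))}$, which the second engine estimate finishes; for $p < 2$ I would interpolate against $\norm{u-u_\delta}_{L^\infty} \leq 2\eps_1$, using $|u-u_\delta|^2 \leq (2\eps_1)^{2-p}|u-u_\delta|^p$ to get $\norm{u-u_\delta}_{L^2((-R,R))}^2 \leq K\,\delta^{sp}\norm{u}_{W^{s,p}((-R-\delta,R+\delta))}^p$, as claimed. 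No step is genuinely difficult here; the only points needing care are the exponent bookkeeping in the weighted H\"older step---so as to land precisely on $\delta^{s-1}$, $\delta^s$ and the factor $|b-a|^{1/q}$---and keeping the localization tight so that only the seminorm over the slightly enlarged interval $(-R-\delta,R+\delta)$ appears on the right-hand side.
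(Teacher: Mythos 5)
Your proof is correct and follows essentially the same route as the paper's: the same weighted H\"older argument with the Slobodeckij weight $|y|^{(1+sp)/p}$ to produce the factors $\delta^{s-1}$, $\delta^s$ and $|b-a|^{1/q}$, the same localization to $(-R-\delta,R+\delta)$, and the same $L^p$-nesting (for $p\geq 2$) and $L^\infty$-interpolation (for $p<2$) to reach \eqref{eq:L2-cv-frac-L}. The only cosmetic differences are that you extract the cancellation in $u_\delta'$ from $\int\gamma_\delta'=0$ rather than from the oddness of $\gamma'$ (the paper's symmetric difference $u(x-y)-u(x+y)$), and that you apply Minkowski before H\"older instead of H\"oldering the double integral directly.
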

\begin{proof}
    The bound~\eqref{eq:TV-bound-Linf} follows from  
    $$ TV(u_\epsilon;L) = \sup_{a \in \R} \int_a^{a+L} |u_\delta'(x)| \,\diff x = \sup_{a \in \R} \int_a^{a+L} \left | \int_\R u(y) \frac{1}{\delta^2} \gamma'\left(\frac{x-y}{\delta}\right)\,\diff y \right| \,\diff x \leq K\norm{u}_{L^\infty(\R)}\frac{L}{\delta}. $$

    To deduce~\eqref{eq:TV-frac-L} we compute for $1 < p < \infty$,
    \begin{align*}
        \norm{u_\delta}_{BV((a,b))} &= \int_a^{b} |u_\delta'(x)|\,\diff x \\
        &= \int_a^{b} \left | \int_{-\delta}^\delta u(x-y)\frac{1}{\delta^2} \gamma'\left(\frac{y}{\delta}\right) \,\diff y\right|\,\diff x \\
        &\leq \int_a^b \int_{0}^\delta |u(x-y) - u(x+y)|\frac{1}{\delta^2} \left|\gamma'\left(\frac{y}{\delta}\right) \right|\,\diff y\,\diff x, \\
        \intertext{as $\gamma$ is an even function. By H\"older,}
        &\leq  \frac{1}{\delta^2} \left(\int_a^b \int_{0}^\delta |u(x-y) - u(x+y)|^p \,\diff y\,\diff x\right)^{1/p} \left(\int_a^{b} \int_{0}^\delta \left|\gamma'\left(\frac{y}{\delta}\right)\right|^q \,\diff y\,\diff x\right)^{1/q} \\
        &\leq K  \frac{\delta^{1/p +s }}{\delta^2} \left(\int_a^{b} \int_{0}^\delta \frac{|u(x-y) - u(x+y)|^p}{(2y)^{1+sp}} \,\diff y\,\diff x\right)^{1/p} \left( |b-a| \delta \right)^{1/q} \\
        &\leq K\norm{u}_{W^{s,p}((-R-\delta,R+\delta)) } \frac{|b-a|^{1/q}}{\delta^{1 - s}}.
    \end{align*}
    When $p = \infty$, we note $W^{s,\infty}((-R-\delta,R+\delta)) = C^s((-R-\delta,R+\delta))$ and we similarly have
    \begin{align*}
        \norm{u_\delta}_{BV((a,b))} &= \int_a^b |u_\delta'(x)|\,\diff x \\
        &=  \int_a^{b} \left | \int_{-\delta}^\delta u(x-y)\frac{1}{\delta^2} \gamma'\left(\frac{y}{\delta}\right) \,\diff y\right|\,\diff x \\
        &\leq  \int_a^b \int_{0}^\delta |u(x-y) - u(x+y)|\frac{1}{\delta^2} \left|\gamma'\left(\frac{y}{\delta}\right)\right| \,\diff y\,\diff x \\ 
        &\leq \norm{u}_{C^{s}((-R-\delta,R+\delta))} \delta^s\int_a^{b}\int_0^\delta\frac{1}{\delta^2} \left|\gamma'\left(\frac{y}{\delta}\right)\right| \,\diff y\,\diff x  \\
        &\leq K\norm{u}_{C^{s}((-R-\delta,R+\delta))} \frac{|b-a|}{\delta^{1-s}}.
    \end{align*}

    Next, for $p < \infty $ we compute 
    \beq \label{eq:sob-L2-conv} \begin{aligned}
        \int_{-R}^R |u - u_\delta|^p\,\diff x &= \int_{-R}^R \left| \int_{-\delta}^\delta  (u(x) - u(x-y)\gamma_\delta(y)) \,\diff y\right|^p\,\diff x \\
        &\leq \int_{-R}^{R} \left(\int_{-\delta}^\delta |u(x) - u(x-y)|^p\,\diff y\right)\left(\int_{-\delta}^\delta \gamma_\delta(y)^q\,\diff y\right)^{p/q}\,\diff x \\
        &\leq K \frac{1}{\delta} \int_{-R}^{R} \int_{-\delta}^\delta \delta^{1+ps}\frac{|u(x) - u(x-y)|^p}{|y|^{1+ps}}\,\diff y\,\diff x\\
        &\leq K \delta^{ps} \norm{u}_{W^{s,p}((-R-\delta,R+\delta))}^p,
    \end{aligned}\eeq
    where $q$ is the H\"older conjugate to $p$. 
    For $p = \infty$, the proof follows similarly,
    \beq \label{eq:sob-L2-conv-infty} \begin{aligned}
        \norm{u - u_\delta}_{L^\infty((-R,R)} &= \sup_{x \in (-R,R)} \left| \int_{-\delta}^\delta  (u(x) - u(x-y)\gamma_\delta(y)) \,\diff y\right| \\
        &\leq \sup_{x \in (-R,R)} \norm{u(x) - u(x-\cdot )}_{L^\infty(x-\delta,x+\delta)} \int_{-\delta}^\delta \gamma_\delta(y)\,\diff y \\
        &\leq \norm{u}_{W^{s,\infty}((-R-\delta,R+\delta))} \delta^s.
    \end{aligned}\eeq
    By $L^p$ nesting,~\eqref{eq:sob-L2-conv} and~\eqref{eq:sob-L2-conv-infty} give us
    \beq \norm{u - u_\delta}_{L^2((-R,R))} \leq K \norm{u}_{W^{s,p}((-R-\delta,R+\delta))}\delta^s, \label{eq:prop-L2-p-big}\eeq when $p \geq 2$. 
    For $p < 2$ we interpolate the bound~\eqref{eq:sob-L2-conv} with the $L^\infty$ norm of $u$,
    \beq \label{eq:prop-L2-p-small}
    \begin{aligned}
        \norm{u-u_\delta}_{L^2( (-R,R))} &\leq \left(\norm{u-u_\delta}_{L^p( (-R,R))}^{\frac{p}{2}} \norm{u-u_\delta}_{L^\infty((-R,R))}^{1-\frac{p}{2}}\right) \\
        &\leq K \norm{u}_{W^{s,p}((-R-\delta,R+\delta))}^{\frac{p}{2}}\norm{u}_{L^\infty((-R,R))}^{1-\frac{p}{2}}\delta^{sp/2}.
    \end{aligned}\eeq
\end{proof}

\section{Proof of \Cref{lem:localstructurelemma}}\label{appendixB}
In this section, we prove \Cref{lem:localstructurelemma}. Much of this section follows \cite[Section 6.3]{MR2807139}. We prove only the properties for $1$-shocks; the hypotheses for $2$-shocks follow similarly. The fact that the minimal assumption of genuine nonlinearity is enough to verify most of Hypotheses 1 locally is due to the following lemma on the shock and rarefaction curves.

\begin{lemma}[From \protect{\cite[p.~263-265, Theorem 8.2.1, Theorem 8.3.1, Theorem 8.4.2]{MR3468916}}]\label{lem:dafermoslemma}
    For any fixed state $v\in \mathcal{G}$, there is an open neighborhood $U$ of $v$
    such that for each $i = 1, 2$, there exist functions $s_u\colon U \rightarrow \R$, $\sigma_u^i(s)\colon U\times [0,s_u) \rightarrow \R$ and $S^i_u(s)\colon U\times [0,s_u)\rightarrow \mathcal{G}_0$ satisfying the Rankine-Hugoniot condition, i.e. for any $u\in U$ and any $0 \leq s < s_u$,
    \begin{equation}
    f(S^i_u(s)) - f(u) = \sigma_u^i(s)\left(S^i_u(s) - u\right).
    \end{equation}
    Further, the Lax admissibility criterion is verified, i.e. for $u \in U$ and $0 < s < s_u$,
    \begin{equation}
    \lambda^i(S^i_u(s)) < \sigma_u^i(s) < \lambda^i(u).
    \end{equation}
    Furthermore, $u \mapsto s_u$ is Lipschitz, $(u,s) \mapsto \sigma^i_u(s)$ is $C^3$, and $(u,s) \mapsto S^i_u(s)$ is $C^3$. Finally, $\sigma^i_u(s)$ satisfies the asymptotic expansion,
    \begin{equation}
    \sigma_u^i(s) = \frac{1}{2}\left(\lambda^i(u) + \lambda^i(S^i_u(s))\right) + \mathcal{O}(s^2),
    \end{equation}
    and similarly $S^i_u(s)$ satisfies the asymptotic expansion
    \begin{equation}
    S^i_u(s) = u - r_i(u)s + \frac{s^2}{2} r_i'(u) r_i(u) + \mathcal{O}(s^3).
    \end{equation}
\end{lemma}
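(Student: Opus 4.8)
The statement to prove is \Cref{lem:dafermoslemma}, which asserts the standard local existence, regularity, and asymptotic expansion properties of the shock curves $S^i_u(s)$, shock speeds $\sigma^i_u(s)$, and the Lax admissibility of these curves, for a genuinely nonlinear strictly hyperbolic system near a fixed state $v$.

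\textbf{Overall approach.} Since the excerpt attributes the statement directly to \cite[p.~263--265, Theorem 8.2.1, Theorem 8.3.1, Theorem 8.4.2]{MR3468916}, the cleanest route is to reduce everything to those classical results by verifying their hypotheses hold in a neighborhood of $v$. The hypotheses required there are: $f \in C^k$ for sufficiently large $k$, strict hyperbolicity at $v$, and genuine nonlinearity of the $i$-th field at $v$. All of these are guaranteed by \textbf{(GL)} (which gives $C^4$ regularity, $f'(d)$ strictly hyperbolic, and both fields genuinely nonlinear) together with continuity, which propagates strict hyperbolicity and genuine nonlinearity from $d$ to a neighborhood --- and the same reasoning works around any $v \in \mathcal{G}$ where these conditions hold. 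So the plan is: (i) fix $v$ and choose $U$ small enough that $f'$ has two distinct real eigenvalues $\lambda_1 < \lambda_2$ on $U$ with $C^3$ eigenvalue/eigenvector fields, and $\nabla\lambda_i \cdot r_i \neq 0$ on $U$ for each $i$; (ii) apply the Rankine--Hugoniot locus construction via the implicit function theorem to produce $S^i_u(s)$ and $\sigma^i_u(s)$ with the claimed regularity; (iii) invoke the classical shock-curve calculus for the Lax condition and the asymptotic expansions.

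\textbf{Key steps in order.} First, I would set up the Rankine--Hugoniot system $G(u, w, \sigma) := f(w) - f(u) - \sigma(w-u) = 0$ and observe $w = u$, $\sigma = \lambda_i(u)$ is always a solution; a standard computation (e.g. Lax's construction, or \cite[Theorem 8.2.1]{MR3468916}) shows that near this trivial branch the solution set is, for each family $i$, a $C^3$ curve $s \mapsto (S^i_u(s), \sigma^i_u(s))$ parametrized so that $s = |u - S^i_u(s)|$, with $S^i_u(0) = u$ and $\sigma^i_u(0) = \lambda_i(u)$, and with the Lipschitz/$C^3$ joint dependence on $(u,s)$ following from smooth dependence of the eigen-data and the implicit function theorem applied uniformly over $u \in U$. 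Second, for the asymptotic expansions: differentiating $G(u, S^i_u(s), \sigma^i_u(s)) = 0$ repeatedly in $s$ at $s=0$ and using $\frac{d}{ds}S^i_u(0) = -r_i(u)$ (with the sign fixed by our parametrization convention) yields $S^i_u(s) = u - r_i(u)s + \tfrac{s^2}{2} r_i'(u)r_i(u) + \mathcal{O}(s^3)$ and, from the structure of the second-order term combined with $G = 0$, the speed expansion $\sigma^i_u(s) = \tfrac12(\lambda_i(u) + \lambda_i(S^i_u(s))) + \mathcal{O}(s^2)$; these are exactly the content of \cite[Theorem 8.3.1, Theorem 8.4.2]{MR3468916}. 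Third, for Lax admissibility $\lambda_i(S^i_u(s)) < \sigma^i_u(s) < \lambda_i(u)$ for $0 < s < s_u$: using genuine nonlinearity, $\tfrac{d}{ds}\lambda_i(S^i_u(s))\big|_{s=0} = \nabla\lambda_i(u)\cdot(-r_i(u)) < 0$ (after normalizing the sign of $r_i$ so that the eigenvalue decreases along the physical shock branch), together with $\sigma^i_u(s) = \lambda_i(u) + \mathcal{O}(s)$ and the expansion above, gives the strict inequalities for small $s$; shrinking $U$ and the threshold $s_u$ if necessary makes this hold throughout $[0, s_u)$. Finally, I would note that since $v$ was arbitrary in $\mathcal G$ (with the conditions holding there), and in our application $v$ ranges over a fixed compact neighborhood of $d$, one can take $U$ and $s_u$ uniform.

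\textbf{Main obstacle.} There is no genuine mathematical obstacle: the entire statement is a textbook consequence of the classical theory once strict hyperbolicity and genuine nonlinearity are localized, which \textbf{(GL)} plus continuity supplies immediately. The only point requiring a modicum of care is bookkeeping the \emph{sign conventions}: the parametrization $s = |u - S^i_u(s)|$, the orientation of $r_i$, and the direction (left vs. right state) along the $1$-shock versus $2$-shock curves must be chosen consistently so that the Lax inequalities come out with the stated orientation and so that the asymptotic expansion $S^i_u(s) = u - r_i(u)s + \cdots$ has the correct sign on the linear term. Since the excerpt explicitly says ``we prove only the properties for $1$-shocks; the hypotheses for $2$-shocks follow similarly,'' the proof reduces to quoting \cite{MR3468916} after these sign fixes, so I would keep the write-up short: verify the hypotheses of the cited theorems hold on a neighborhood of $v$, fix conventions, and cite.
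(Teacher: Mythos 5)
Your proposal is correct and coincides with the paper's treatment: the paper states this lemma as a direct quotation of Theorems 8.2.1, 8.3.1, and 8.4.2 of Dafermos's book and supplies no proof of its own, so localizing strict hyperbolicity and genuine nonlinearity by continuity (which \textbf{(GL)} provides, with $f \in C^4$) and then citing those theorems is exactly what is intended. Your sign-convention bookkeeping and the implicit-function-theorem sketch are consistent with the stated expansions, so no further argument is needed.
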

Now, we begin the proof of \Cref{lem:localstructurelemma}. Firstly, Hypotheses 1 \ref{hyp-a}, \ref{hyp-b} are satisfied by assumption \textbf{(GL)}. 
\par Fixing a state $d \in \mathcal{G}$ we can construct an entropy with arbitrary diagonal Hessian by following the procedure outlined in \cite[Section 9.3]{serre1999systems}. This amounts to solving Goursat's problem, which produces an entropy agreeing with any specified data along the Riemann invariants emanating from $d$. 
Selecting our specified data to be $C^4$ leads to an entropy $\eta_d$ solving Goursat's problem which is $C^3$. As a result, since $\nabla^2\eta_d(d)$ is positive definite, we can restrict $W$ to be a neighborhood of $d$ in which $\eta_d$ is strictly convex. This shows Hypotheses 1 \ref{hyp-c} is satisfied with the entropy $\eta_d$.   
\par Hypotheses 1 \ref{hyp-e} follows directly from continuity and compactness, taking $W \subset B_1(d)$. Hypotheses 1 \ref{hyp-f} follows directly from \Cref{lem:dafermoslemma}, and an elementary continuity argument. 
\par Regarding Hypothesis 1 \ref{hyp-k}, we compute
\begin{align*}
    \ds{\frac{d}{ds}\eta_d(u_L | S^1_{u_L}(s))}= ((S^1_{u_L})'(s))^t \nabla^2 \eta_d(S^1_{u_L}(s)) (S^1_{u_L}(s)-u_L).
\end{align*}
For $s$ sufficiently small, we have the expansion
\begin{align*}
((S^1_{u_L})'(s))^t \nabla^2 \eta_d(S^1_{u_L}(s))(S^1_{u_L}(s)-u_L)=r_1(u_L)^t \nabla^2 \eta_d(S^1_{u_L}(s))r_1(u_L)s+\mathcal{O}(s^2),
\end{align*}
 by \Cref{lem:dafermoslemma}. This is strictly positive for $s$ sufficiently small as $\nabla^2 \eta_d(S^1_{u_L}(s))$ is positive-definite. So taking $W \subset B_{\eps}(d)$ for $\eps > 0$ sufficiently small, we obtain Hypotheses 1 \ref{hyp-k}. The fact that $\ds{\frac{d}{ds}\sigma^1_{u_L}(s)} < 0$ follows immediately from \Cref{lem:dafermoslemma} and Hypothesis \ref{hyp-b}. 
\par Regarding Hypothesis 1 \ref{hyp-g}, it is well-known that weak shocks valued in $W$ are entropic with respect to the entropy $\eta_d$ if and only if they verify the Lax or Liu entropy condition \cite{MR350216}. Thus, if $(u_L,u_R)$ is a $1$-shock, the Lax entropy condition immediately gives \ref{hyp-g}. So, it remains only to check Hypothesis 1 \ref{hyp-g} for $2$-shocks. Let $(u_L, u_R)$ be a $2$-shock with both states in $W$. Then, \Cref{lem:dafermoslemma} gives
\begin{align*}
\sigma(u_L, u_R)=\lambda_2(u_{i})+\mathcal{O}(|u_L-u_R|), \text{ for $i=L,R$.}
\end{align*}
By strict hyperbolicity and taking $W$ smaller if needed, we see that $\sigma(u_L, u_R) > \max(\lambda_1(u_L), \lambda_1(u_R))$, granting Hypothesis 1 \ref{hyp-g}.
Hypothesis 1 \ref{hyp-i} follows identically to Hypothesis 1 \ref{hyp-g} by symmetry. 
Finally, regarding Hypothesis 1 \ref{hyp-h}, if $(u_L, u_R)$ is an entropic shock with speed $\sigma$ verifying 
\begin{equation*}
    \sigma \leq \lambda_1(u_L),
\end{equation*}
then it must be a $1-$shock by the same argument as above. Thus, $u_R=S^1_{u_L}(s_R)$ for some $s_R > 0$ by the Implicit Function Theorem. 

\bibliographystyle{plain}
\bibliography{refs}

\begin{center} 
\includegraphics[width=.4\linewidth]{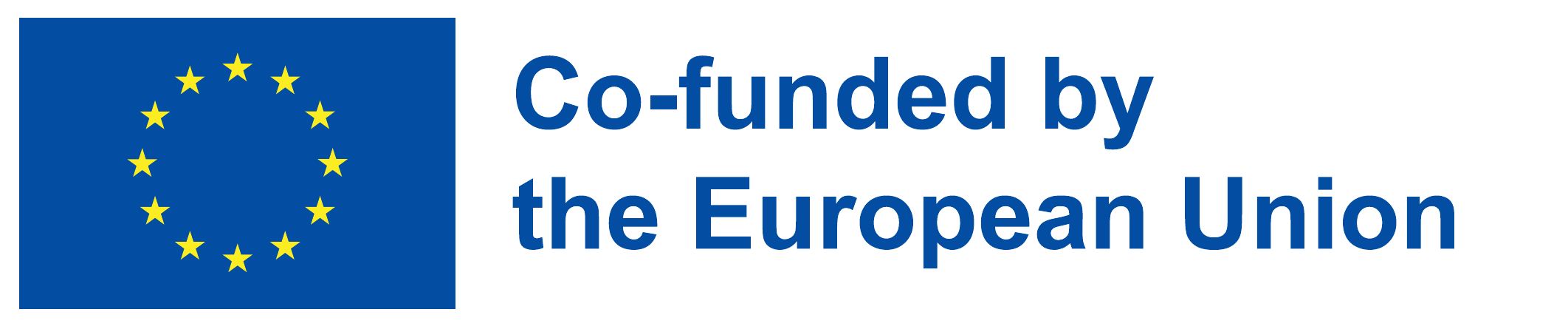}
\end{center}

\end{document}